\newtheorem{theo}{Theorem}[section]
\newtheorem{lemma}[theo]{Lemma}
\newtheorem{defi}[theo]{Definition}
\newtheorem{prop}[theo]{Proposition}
\newtheorem{remark}[theo]{Remark}
\newtheorem{example}[theo]{Example}
\numberwithin{equation}{section}
\def\bL{\mathbb{L}}
\def\Z{\mathbb{Z}}
\def\Q{\mathbb{Q}}
\def\P{{\mathcal{P}}}
\def\bR{{\mathbf R}}
\def\bL{{\mathbf L}}
\def\pre-tr{\operatorname{pre-tr}}
\def\Hom{\operatorname{Hom}}
\def\End{\operatorname{End}}
\def\Pic{\operatorname{Pic}}
\DeclareMathOperator*{\hocolim}{hocolim}
\newcommand{\QCoh}{\operatorname{QCoh}}
\newcommand{\Coh}{\operatorname{Coh}}
\newcommand{\Pol}{\operatorname{Pol}}
\newcommand{\pol}{\operatorname{pol}}
\newcommand{\cQ}{{\mathcal Q}}
\newcommand{\cF}{{\mathcal F}}
\newcommand{\cO}{{\mathcal O}}
\newcommand{\cP}{{\mathcal P}}
\newcommand{\cM}{{\mathcal M}}
\newcommand{\cN}{{\mathcal N}}
\newcommand{\cD}{{\mathcal D}}
\newcommand{\cA}{{\mathcal A}}
\newcommand{\cB}{{\mathcal B}}
\newcommand{\cI}{{\mathcal I}}
\newcommand{\cC}{{\mathcal C}}
\newcommand{\cE}{{\mathcal E}}
\newcommand{\cT}{{\mathcal T}}
\newcommand{\cHom}{\mathcal Hom}
\newcommand{\la}{\langle}
\newcommand{\ra}{\rangle}
\newcommand{\Fun}{\operatorname{Fun}}
\newcommand{\Perf}{\operatorname{Perf}}
\newcommand{\PsPerf}{\operatorname{PsPerf}}
\newcommand{\im}{\operatorname{Im}}
\newcommand{\rk}{\operatorname{rk}}
\newcommand{\Rep}{\operatorname{Rep}}
\newcommand{\Mat}{\operatorname{Mat}}
\newcommand{\Ext}{\operatorname{Ext}}
\newcommand{\add}{\operatorname{add}}
\newcommand{\Sym}{\operatorname{Sym}}
\newcommand{\Spec}{\operatorname{Spec}\,}
\newcommand{\id}{\operatorname{id}}
\newcommand{\Gr}{\operatorname{Gr}}
\newcommand{\mS}{\mathfrak{S}}
\title[Derived categories of Grassmannians and modular representation theory]
{Derived categories of Grassmannians over integers and modular representation theory}
\author{Alexander I. Efimov}
\address{Steklov Mathematical Institute of RAS, Gubkin str. 8, GSP-1, Moscow 119991, Russia}
\address{Laboratory of Algebraic Geometry, Higher School of Economics, 7 Vavilova str., Moscow, 117312, Russia}
\email{efimov@mccme.ru}
\thanks{MSC: 13D09, 14F05, 16G99.}
\thanks{The author was partially supported by RFBR grant 2998.2014.1 and RFBR research
project 15-51-50045.}
\begin{document}

\begin{abstract}In this paper we study the derived categories of coherent sheaves on Grassmannians $\Gr(k,n),$ defined over the ring of integers. We prove that the category $D^b(\Gr(k,n))$ has a semi-orthogonal decomposition, with components being full subcategories of the derived category of representations of $GL_k.$ This in particular implies existence of a full exceptional collection, which is a refinement of Kapranov's collection \cite{Kap}, which was constructed over a field of characteristic zero.

We also describe the right dual semi-orthogonal decomposition which has a similar form, and its components are full subcategories of the derived category of representations of $GL_{n-k}.$ The resulting equivalences between the components of the two decompositions are given by a version of Koszul duality for strict polynomial functors.

We also construct a tilting vector bundle on $\Gr(k,n).$ We show that its endomorphism algebra has two natural structures of a split quasi-hereditary algebra over $\Z,$ and we identify the objects of $D^b(\Gr(k,n)),$ which correspond to the standard and costandard modules in both structures.

All the results automatically extend to the case of arbitrary commutative base ring and the category of perfect complexes on the Grassmannian, by extension of scalars (base change). 

Similar results over fields of arbitrary characteristic were obtained independently in \cite{BLVdB}, by different methods.
\end{abstract}

\keywords{}

\maketitle

\tableofcontents

\section{Introduction}

In this paper we study the derived categories of Grassmannians over the ring of integers (and all the results automatically extend to the case of arbitrary commutative base ring).

Usually derived categories of coherent sheaves (or its smaller version, perfect complexes) are studied for algebraic varieties (or schemes) over fields, especially over the field of complex numbers. Sometimes the relative situation is considered, but again the base is usually a scheme over a field. However, most of the general notions (such as semi-orthogonal decompositions, tilting objects, exceptional collections) can be extended to the case of arbitrary basic ring. Moreover, given a scheme $Y,$ flat over $\Spec K,$ any result about the description of the (perfect) derived category of $Y$ immediately implies the corresponding result for $Y\times_K K',$ for any homomorphism $K\to K'$ to a commutative ring $K'.$

Fix some commutative base ring $K.$ Let $Y$ be a scheme which we assume smooth and proper over $\Spec K.$ Since $K$ may be non-coherent (hence non-noetherian), in general there is no abelian category of coherent sheaves on $Y.$ However, we always have a well defined triangulated category of  perfect complexes $\Perf(Y)\subset D(\QCoh Y)$ (which is exactly the subcategory of compact objects \cite{BVdB}).

\begin{remark}\label{remark:Perf=D^b}If $K$ is regular noetherian, then so is $Y,$ so in this case we have an equivalence $\Perf(Y)\simeq D^b(\Coh Y).$\end{remark}

A notion of exceptional object obviously extends to this setting: an object $\cE\in\Perf(Y)$ is called exceptional if $\bR\Hom(\cE,\cE)\cong K.$ Further, a sequence of exceptional objects $\cE_1,\dots,\cE_m\in \Perf(Y)$ is called an {\it exceptional collection} if $\bR\Hom(\cE_i,\cE_j)=0$ for $i>j.$ An exceptional collection in $\Perf(Y)$ is called full if it classically generates $\Perf(Y).$

For the definition of a tilting object to make sense, one additional assumption is needed.

\begin{defi}\label{def:tilting_object_intro}An object $\cE\in\Perf(Y)$ is called tilting if it is a generator, and satisfies the following properties:

(i) $\Hom^i(\cE,\cE)=0$ for $i\ne 0;$

(ii) the $K$-module $\Hom(\cE,\cE)$ is finitely generated projective.\end{defi}

The reason for adding an additional assumption (ii) in Definition \ref{def:tilting_object_intro} is the following: we want the class of tilting objects to be stable under base change. More precisely, if $\cE\in\Perf(Y)$ is a tilting object, and $K\to K'$ a homomorphism to a commutative ring $K',$ then the object $K'\stackrel{\bL}{\otimes}_K\cE\in\Perf(Y\times_K K')$ is also tilting.

Now let $X=\Gr(k,n)$ be the Grassmannian of $k$-dimensional subspaces in the $n$-dimensional space, where $0\leq k\leq n,$ defined over $K.$ Given a commutative $K$-algebra $R,$ the set $X(R)$ of $R$-points of $X$ is identified with the set of $R$-submodules $P\subset R^n$ such that the $R$-module $R^n/P$ is projective of constant rank $n-k$ (so that $P$ is projective of constant rank $k$). Clearly, $X$ is smooth and proper over $\Spec K.$ We have the tautological vector subbundle $\cF$ of rank $k,$ and the tautological quotient bundle $\cQ$ of rank $n-k$ on $X.$ They are related by a short exact sequence
$$0\to\cF\to\cO_X^{\oplus n}\to \cQ\to 0.$$

Recall that a Young diagram $\lambda$ is given by a non-increasing sequence of non-negative integers $\lambda_1\geq\lambda_2\geq\dots$ such that $\lambda_l=0$ for some $l.$ For non-negative integers $a,b\geq 0$ we denote by $\cP(a,b)$ the set of Young diagrams $\lambda$ such that $\lambda_1\leq a,$ $\lambda_{b+1}=0.$ The following result of Kapranov is well known.

\begin{theo}\label{th:kapranov}(\cite{Kap}) Suppose that $K$ is a field of characteristic zero. Then the category $D^b(X)$ has full strong exceptional collection $\{S_{\lambda}(\cF)\}_{\lambda\in\cP(n-k,k)}.$ Its right dual exceptional collection is $\{S_{\mu}(\cQ)[-|\mu|]\}_{\mu\in\cP(k,n-k)}.$ It is also strong.\end{theo}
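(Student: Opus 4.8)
The plan is to follow Kapranov's original argument, whose geometric heart is a Koszul resolution of the structure sheaf of the diagonal $\Delta\subset X\times_K X$. From such a resolution, fullness follows almost formally, the right dual collection is read off through the second projection, and the exceptionality and strongness reduce to the Borel--Weil--Bott theorem; this last step is where the hypothesis that $K$ has characteristic zero enters (semisimplicity of $GL_k$- and $GL_n$-representations, and the clean form of the Cauchy decomposition). \emph{Step 1 (resolution of the diagonal).} Write $p_1,p_2$ for the two projections $X\times X\to X$. The composite $p_1^*\cF\hookrightarrow\cO_{X\times X}^{\oplus n}\twoheadrightarrow p_2^*\cQ$ is a global section $s$ of $p_1^*\cF^\vee\otimes p_2^*\cQ$, a vector bundle of rank $k(n-k)=\dim X=\codim(\Delta,X\times X)$. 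Its zero scheme is $\Delta$: a point corresponds to a pair of subspaces $(P_1,P_2)$ with $P_1$ mapping to zero in $\cO^n/P_2$, i.e. $P_1\subseteq P_2$, hence $P_1=P_2$ by equality of ranks; a local computation shows $s$ is a regular section, so the Koszul complex of $s$ is a finite locally free resolution
$$0\to\wedge^{k(n-k)}\bigl(p_1^*\cF\otimes p_2^*\cQ^\vee\bigr)\to\cdots\to p_1^*\cF\otimes p_2^*\cQ^\vee\to\cO_{X\times X}\to\cO_\Delta\to 0.$$
In characteristic zero the Cauchy decomposition gives $\wedge^j\bigl(p_1^*\cF\otimes p_2^*\cQ^\vee\bigr)\cong\bigoplus_{|\lambda|=j}S_\lambda(p_1^*\cF)\boxtimes S_{\lambda'}(p_2^*\cQ^\vee)$, with $\lambda'$ conjugate to $\lambda$; since $\rk\cF=k$ and $\rk\cQ^\vee=n-k$, the surviving indices are exactly the $\lambda\in\cP(n-k,k)$. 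Using the identities relating $S_\nu(V^\vee)$ to $S_\bullet(V)$ up to a twist by a power of $\det V$ (for $V=\cQ$), together with a bookkeeping of the homological degrees of the Koszul terms, rewrites the $p_2$-factors as $S_\mu(p_2^*\cQ)[-|\mu|]$, with $\mu$ running over $\cP(k,n-k)$ and $\lambda\mapsto\mu$ a bijection $\cP(n-k,k)\isomoto\cP(k,n-k)$.

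\emph{Step 2 (fullness and the dual collection).} For $E\in D^b(X)$ one has $E\cong\bR p_{1*}\bigl(p_2^*E\otimes\cO_\Delta\bigr)$; replacing $\cO_\Delta$ by the Koszul resolution presents $E$ as a finite iterated extension of the objects $\bR p_{1*}\bigl(p_2^*E\otimes S_\lambda(p_1^*\cF)\boxtimes S_{\lambda'}(p_2^*\cQ^\vee)\bigr)\cong S_\lambda(\cF)\otimes_K\bR\Gamma\bigl(X,E\otimes S_{\lambda'}(\cQ^\vee)\bigr)$ (projection formula), each lying in the triangulated subcategory classically generated by $S_\lambda(\cF)$; hence $\{S_\lambda(\cF)\}_{\lambda\in\cP(n-k,k)}$ classically generates $D^b(X)$, and symmetrically so does $\{S_\mu(\cQ)\}_{\mu\in\cP(k,n-k)}$. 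Once $\{S_\lambda(\cF)\}$ is known to be a full exceptional collection (Step 3), the same resolution --- through the standard relation between a resolution of $\cO_\Delta$ whose graded pieces have the form $E_i\boxtimes F_i$ and dual exceptional collections --- identifies the right dual collection of $\{S_\lambda(\cF)\}$ with the $p_2$-factors of the resolution, i.e. with $\{S_\mu(\cQ)[-|\mu|]\}_{\mu\in\cP(k,n-k)}$ by the rewriting of Step 1.

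\emph{Step 3 (exceptionality and strongness).} It remains to compute $\bR\Hom(S_\lambda(\cF),S_\mu(\cF))=\bR\Gamma\bigl(X,S_\lambda(\cF)^\vee\otimes S_\mu(\cF)\bigr)$ for $\lambda,\mu\in\cP(n-k,k)$. In characteristic zero, $S_\lambda(\cF)^\vee\otimes S_\mu(\cF)=\bigoplus_\nu S_\nu(\cF)^{\oplus m_\nu}$ over dominant $GL_k$-weights $\nu$ with $\sum_i\nu_i=|\mu|-|\lambda|$ and every $|\nu_i|\leq n-k$ --- this bound holds precisely because $\lambda$ and $\mu$ fit in the $k\times(n-k)$ box --- the trivial summand $\nu=0$ occurring with multiplicity $\dim\Hom_{GL_k}(S_\lambda,S_\mu)=\delta_{\lambda\mu}$. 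By Bott's theorem on $X=GL_n/P$, for such $\nu$ the cohomology $\bR\Gamma(X,S_\nu(\cF))$ vanishes whenever $\nu$ has a strictly positive entry (the bound $|\nu_i|\leq n-k$ forces a repeated entry in the relevant $\rho$-shifted weight) and is concentrated in cohomological degree $0$ otherwise. Thus every summand contributes in degree $0$ or not at all, so $\bR\Hom(S_\lambda(\cF),S_\mu(\cF))$ always sits in degree $0$: this is strongness. A contributing summand has all $\nu_i\leq 0$, hence $\sum_i\nu_i\leq 0$; therefore $\bR\Hom(S_\lambda(\cF),S_\mu(\cF))=0$ when $|\mu|>|\lambda|$, it equals $\delta_{\lambda\mu}\cdot K$ when $|\mu|=|\lambda|$ (the only possible summand being $\nu=0$), and in particular it is $K$ when $\lambda=\mu$. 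Ordering $\cP(n-k,k)$ so that $|\lambda|$ is non-increasing then makes $\{S_\lambda(\cF)\}$ a full strong exceptional collection; the strongness of the right dual collection $\{S_\mu(\cQ)[-|\mu|]\}$ is obtained by the analogous Bott computation with the bundle $\cQ$ in place of $\cF$.

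\emph{Main obstacle.} The geometric input --- the Koszul resolution of $\cO_\Delta$ --- is routine once one verifies that $s$ is a regular section, and fullness is then formal. The real work is the cohomology vanishing: making Bott's algorithm interact with the Cauchy and Littlewood--Richardson decompositions so that the box constraint $\lambda,\mu\in\cP(n-k,k)$ kills all higher cohomology and cuts the relevant $\Hom$-spaces down to dimension at most one; a secondary but genuinely fiddly point is the bookkeeping with conjugate and complementary diagrams and powers of $\det\cQ$ needed to put the dual collection into the clean form $\{S_\mu(\cQ)[-|\mu|]\}$. This is exactly where characteristic zero is indispensable; in positive characteristic both the Cauchy formula and Bott's theorem degenerate, which is what forces the more elaborate constructions of the rest of the paper.
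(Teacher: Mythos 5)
The paper never proves this statement: Theorem \ref{th:kapranov} is quoted from \cite{Kap}, and what the paper itself establishes is the integral refinement (Theorems \ref{th:SOD_of_Gr}, \ref{th:right_dual_decomposition}, \ref{th:exceptional_on_Gr}) by an entirely different route --- the equivariant presentation $X\cong W^{ss}//GL_k$, the Cousin--Grothendieck spectral sequence, gluing of split quasi-hereditary algebras, and Koszul duality for strict polynomial functors. Your reconstruction of Kapranov's original argument is therefore a legitimately different (and for the char-$0$ statement, appropriate) route, and most of it is sound: the regular section of $p_1^*\cF^\vee\otimes p_2^*\cQ$ cutting out $\Delta$, the Cauchy decomposition of the Koszul terms, the fullness argument via $E\cong\bR p_{1*}(p_2^*E\otimes\cO_\Delta)$, and the Borel--Weil--Bott computation in Step 3 (concentration in degree $0$, vanishing for $|\mu|>|\lambda|$, $\delta_{\lambda\mu}K$ in equal degrees) are all correct and give exceptionality, strongness and fullness of $\{S_\lambda(\cF)\}_{\lambda\in\cP(n-k,k)}$.

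The genuine gap is the identification of the right dual collection. The second Koszul factors are $S_{\lambda'}(p_2^*\cQ^\vee)\cong S_{\lambda'}(\cQ)^\vee$, and these are \emph{not} of the form $S_\mu(\cQ)$ up to shift: the identity you invoke reads $S_{\lambda'}(\cQ^\vee)\cong S_{\widehat{\lambda'}}(\cQ)\otimes\det(\cQ)^{-\lambda'_1}$ with a strictly negative power of $\det\cQ$ that no bookkeeping removes. Already for $X=\PP^{n-1}=\Gr(1,n)$ the Koszul factor is $\Lambda^j(\cQ^\vee)=\Omega^j(j)=(\Lambda^j\cQ)^\vee$, whereas the corresponding member of the right dual collection is $\Lambda^j\cQ[-j]$; these are dual to each other, not isomorphic up to shift and determinant twist. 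So ``the $p_2$-factors of the resolution are the right dual collection'' is false as stated, and the ``standard relation'' you appeal to must be formulated with the second factors dualized --- at which point it is no longer formal. The clean repair, in line with Proposition \ref{prop:criterion_for_dual_collection}: perform one more Borel--Weil--Bott computation, now for the mixed bundles $S_\mu(\cQ)^\vee\otimes S_\lambda(\cF)$ (Bott's algorithm applied to the concatenated weight $(-\mu_{n-k},\dots,-\mu_1,\lambda_1,\dots,\lambda_k)$), to show $\bR\Hom(S_\mu(\cQ)[-|\mu|],S_\lambda(\cF))=K$ in degree $0$ if $\mu=\lambda'$ and $0$ otherwise; combined with fullness this pins down the right dual uniquely. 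Finally, note that ``it is also strong'' must be read for the unshifted objects $S_\mu(\cQ)$ (as your Bott computation with $\cQ$ indeed shows): with the shifts $[-|\mu|]$ inserted the nonzero Homs sit in degree $|\nu|-|\mu|$, so the literally shifted collection is not strong --- this is a looseness of the statement itself, not of your argument, but it is worth saying explicitly which assertion you are proving.
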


Here $S_{\lambda}$ denotes the Schur functor associated to $\lambda.$

In this paper we consider the "universal case" $K=\Z.$ We construct a semi-orthogonal decomposition of $D^b(X),$ with all components having full exceptional collections, and describe the dual decomposition. Moreover, we construct a tilting bundle with nice properties of its endomorphism algebra.

Let us write simply $GL_k$ for the group $GL_k(\Z)$ considered as an algebraic group over $\Z.$ Denote by $\Rep(\Z,GL_k)$ the exact category of $GL_k$-modules which are free finitely generated over $\Z.$ We have a natural exact tensor functor $\Phi:\Rep(\Z,GL_k)\to \Coh X,$ sending tautological representation to $\cF.$ Let us denote by $\Rep(\Z,GL_k)^d\subset\Rep(\Z,GL_k)$ the subcategory of representations of degree $d.$ Here the degree is taken w.r.t. the center $G_m\subset GL_k.$ Further, for any integers $a\leq b$ we denote by $\Rep(\Z,GL_k)^d_{[a,b]}\subset \Rep(\Z,GL_k)^d$ the subcategory of representations for which all the weights $\lambda\in\Z^k$ satisfy $a\leq \lambda_i\leq b,$ for $1\leq i\leq k.$ We denote by  $\Phi^d_{[a,b]}$ the restriction of $\Phi$ to $\Rep(\Z,GL_k)^d_{[a,b]}.$

Our first main result is the following theorem (see Theorem \ref{th:SOD_of_Gr} for a more precise statement).

\begin{theo}\label{th:SOD_of_Gr_intro}The functors $\Phi^d_{[0,n-k]}:D^b(\Rep(\Z,GL_k)^d_{[0,n-k]})\to D^b(X)$ are fully faithful for $0\leq d\leq k(n-k).$ We have a semi-orthogonal decomposition
$$D^b(X)=\la \im(\Phi^{k(n-k)}_{[0,n-k]}),\im(\Phi^{k(n-k)-1}_{[0,n-k]}),\dots,\im(\Phi^{0}_{[0,n-k]})\ra.$$
\end{theo}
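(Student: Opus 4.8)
The plan is to establish three ingredients: (a) each $\Phi^d_{[0,n-k]}$ is fully faithful; (b) $\bR\Hom_X\bigl(\im(\Phi^{d}_{[0,n-k]}),\im(\Phi^{d'}_{[0,n-k]})\bigr)=0$ for $d<d'$ (this is the order in the asserted decomposition); and (c) the subcategories $\im(\Phi^d_{[0,n-k]})$, $0\le d\le k(n-k)$, classically generate $D^b(X)$. For (a) and (b) it suffices, after reducing to a set of classical generators, to compare $\bR\Hom$'s there; a convenient choice of generators of each $D^b(\Rep(\Z,GL_k)^d_{[0,n-k]})$ is the family of Schur modules $S_\lambda(\Z^k)$ (equivalently the Weyl modules) with $\lambda\in\cP(n-k,k)$, $|\lambda|=d$, which generate because $\Rep(\Z,GL_k)^d_{[0,n-k]}$ is the category of $\Z$-free modules over a split quasi-hereditary $\Z$-algebra (the relevant truncation of the Schur algebra), whose standard modules classically generate the derived category. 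Since $\Phi$ is a tensor functor sending $\Z^k$ to $\cF$, everything reduces to computing
\[
\bR\Hom_X\bigl(S_\mu(\cF),S_\lambda(\cF)\bigr)\;=\;\bR\Gamma\bigl(X,\;S_\mu(\cF)^{\vee}\otimes S_\lambda(\cF)\bigr),\qquad \lambda,\mu\in\cP(n-k,k).
\]

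The technical core is a Bott-type vanishing over $\Z$: for $\lambda,\mu\in\cP(n-k,k)$ the complex $\bR\Gamma(X,S_\mu(\cF)^{\vee}\otimes S_\lambda(\cF))$ vanishes when $|\lambda|>|\mu|$, and when $|\lambda|=|\mu|=d$ it is concentrated in degree $0$ and computes $\bR\Hom_{D^b(\Rep(\Z,GL_k)^d_{[0,n-k]})}(S_\mu,S_\lambda)$; together these give (a) and (b). I would prove it from the GIT presentation $X=\bbA^{\circ}/GL_k$, where $\bbA=\Hom(\Z^k,\Z^n)$, $\bbA^{\circ}\subset\bbA$ is the open locus of injective maps, $\bbA^{\circ}\to X$ is a $GL_k$-torsor, and $\Phi(W)$ is the descent of $\cO_{\bbA^{\circ}}\otimes W$, so that $\bR\Gamma(X,\Phi(W))=\bR\Gamma\bigl(GL_k,\bR\Gamma(\bbA^{\circ},\cO)\otimes W\bigr)$, the outer functor being rational cohomology of the group scheme $GL_k$. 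The complement $\bbA\setminus\bbA^{\circ}$ is the locus of matrices of rank $\le k-1$, of codimension $n-k+1$, so $\bR\Gamma(\bbA^{\circ},\cO)$ agrees with $\cO(\bbA)=\Sym(\Z^k\otimes(\Z^n)^{\vee})$ in cohomological degrees $\le n-k-1$, the rest being local cohomology along the determinantal locus (degrees $\ge n-k$). Plugging the integral Cauchy filtration of $\cO(\bbA)$, with pieces $S_\nu(\Z^k)\otimes S_\nu((\Z^n)^{\vee})$, into the spectral sequence $H^p\bigl(GL_k,H^q(\bbA^{\circ},\cO)\otimes W\bigr)\Rightarrow H^{p+q}(X,\Phi(W))$ with $W=S_\mu(\Z^k)^{\vee}\otimes S_\lambda(\Z^k)$: on the row $q=0$, rational cohomology of $GL_k$ survives only on the $\bbG_m$-weight-$0$ summand, forcing $|\nu|=|\mu|-|\lambda|$, so the row vanishes when $|\lambda|>|\mu|$ and reduces to $H^{p}\bigl(GL_k,S_\mu(\Z^k)^{\vee}\otimes S_\lambda(\Z^k)\bigr)$ (the desired $\Ext$-groups) when $|\lambda|=|\mu|$. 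The remaining task is to show that the hypotheses that $\lambda,\mu$ fit in the $k\times(n-k)$ box and that $0\le|\lambda|,|\mu|\le k(n-k)$ confine all occurring $GL_k$-weights to a range in which the local-cohomology rows $q\ge n-k$ — governed by "large" partitions — contribute nothing, so the spectral sequence collapses to the $q=0$ row. (The same computation can be run on the relative full flag bundle $\pi\colon\Fl\to X$, using Kempf vanishing $\bR^{>0}\pi_{*}\cL_\lambda=0$ to reduce to line bundles.)

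For (c) I would use the Koszul resolution of the diagonal. The tautological composite $\cF_1\hookrightarrow\cO^n\twoheadrightarrow\cQ_2$ is a section of $\cF_1^{\vee}\boxtimes\cQ_2$ on $X\times X$ cutting out $\Delta$; since $\rk(\cF_1^{\vee}\boxtimes\cQ_2)=k(n-k)=\codim\Delta$ and the section is regular (a geometric condition, valid over $\Z$), the Koszul complex $\Lambda^{\bullet}(\cF_1\boxtimes\cQ_2^{\vee})\to\cO_\Delta$ is a resolution. By the integral Cauchy filtration each $\Lambda^{j}(\cF_1\boxtimes\cQ_2^{\vee})$ has a finite filtration with subquotients $S_\lambda(\cF_1)\boxtimes S_{\lambda^{t}}(\cQ_2^{\vee})$, $\lambda\in\cP(n-k,k)$, $|\lambda|=j$, with $0\le j\le k(n-k)$. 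Hence, for any $E\in D^b(X)$, the identity $E\cong\bR p_{1*}(p_2^{*}E\otimes\cO_\Delta)$ exhibits $E$ as an iterated cone of the objects $\bR\Gamma\bigl(X,E\otimes S_{\lambda^{t}}(\cQ^{\vee})\bigr)\otimes S_\lambda(\cF)$ (using the projection formula and flat base change for $p_1$), each of which lies in $\im(\Phi^{|\lambda|}_{[0,n-k]})$; thus these subcategories classically generate $D^b(X)$. Finally, full faithfulness together with the fact that each $\Phi^d_{[0,n-k]}$ has source $\Perf$ of a finite flat $\Z$-algebra (so that the image is admissible in the smooth proper $X$) upgrades (a), (b), (c) to the stated semi-orthogonal decomposition.

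The main obstacle is clearly the integral Bott vanishing of the second paragraph. In characteristic zero it is immediate from semisimplicity of $\Rep(GL_k)$ — this is exactly how Kapranov's Theorem~\ref{th:kapranov} is proved — but over $\Z$ one must genuinely control both the higher rational cohomology of $GL_k$ and the local cohomology along the determinantal locus, and show that the constraints coming from the box condition on $\lambda,\mu$ and the degree bound $d\le k(n-k)$ kill every contribution not already present in characteristic zero, while still reproducing exactly the higher $\Ext$-groups that make $\Phi^d_{[0,n-k]}$ fully faithful on the nose. Carrying out this bookkeeping — plausibly by induction, stripping off one projective-space bundle at a time, or by induction on $d$ together with a careful comparison to $\Ext$'s over the Schur algebra — is where the real work lies.
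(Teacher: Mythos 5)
Your architecture matches the paper's for the crucial parts (a) and (b): the paper also works with the GIT presentation $X\cong W^{ss}//GL_k$, identifies $\bR\Gamma(X,\Phi(M))$ with equivariant cohomology over the whole affine space $W$, and gets semiorthogonality from the central $\bbG_m$-degree exactly as in your $q=0$ row computation (this is Lemma \ref{lem:semi-orthogonality}, which also yields full faithfulness in the form $\Ext^\bullet_X(\Phi^{d}(E_1),\Phi^{d}(E_2))\cong\Ext^\bullet_{GL_k}(E_1,E_2)$ --- note, by the way, that this $\bR\Hom$ is generally \emph{not} concentrated in degree $0$ over $\Z$, so your phrase ``concentrated in degree $0$'' is a slip, though your subsequent description shows you mean the collapse to the group-cohomology row). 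But the step you explicitly defer --- showing that the contributions supported on the unstable (determinantal) locus vanish for coefficients with weights bounded by $n-k$ --- is precisely the technical core of the paper's proof, namely Lemma \ref{lem:vanishing_of_local_H^*} and Lemma \ref{lem:vanishing}, and your proposal contains no proof of it; ``carrying out this bookkeeping is where the real work lies'' is an admission of the gap, not an argument. Moreover the route you sketch (control the non-equivariant local cohomology $H^\bullet_{Y}(W,\cO)$ along the determinantal locus over $\Z$ and then feed it into rational $GL_k$-cohomology) is substantially harder than what the paper does: integral local cohomology of determinantal loci is delicate, and the paper never computes it. Instead it stratifies the unstable locus by rank, $W\setminus W^{ss}=Y_0\sqcup\dots\sqcup Y_{k-1}$, and proves, stratum by stratum, that the \emph{equivariant} local cohomology $H^\bullet_{G,Y_r}(M\otimes\cO_{W_{\geq r}})$ vanishes: one restricts to a slice $Z_r$ with its parabolic $P_r$, runs Hochschild--Serre spectral sequences for $Q_r$ and its unipotent radical $U_r$, and concludes by a $\bbG_m$-weight estimate, where the hypothesis that all weights of $M$ are $\leq n-k$ combined with the weight $-i-(n-r)(k-r)$ of $\Gamma^i(\cN)\otimes\det(\cN)$ forces everything into strictly negative weight. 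The Cousin--Grothendieck spectral sequence then collapses, giving $H^\bullet(X,\Phi(M))\cong\bigoplus_d H^\bullet(GL_k,M\otimes\Sym^d(V_k\otimes V^\vee))$. Without this (or an equivalent ``quantization commutes with reduction over $\Z$'' statement), your (a) and (b) are unproven.

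Your part (c) is fine and is a genuinely different, legitimate route: you generate via the Koszul resolution of the diagonal and the integral Cauchy filtration of $\Lambda^j(\cF_1\boxtimes\cQ_2^\vee)$, which is exactly the alternative the paper acknowledges in the remark after Theorem \ref{th:SOD_of_Gr} (the BLVdB/Kapranov argument). The paper itself instead argues by induction on degree with the Koszul complexes $\Lambda^\bullet(V)\otimes\Gamma^{\lambda_1-\bullet}(\cF)$ and the Littlewood--Richardson filtration (Lemma \ref{lem:incl_im_Phi_geq0}), concluding by generation from powers of the anti-ample line bundle $\det(\cF)$; both work, and neither is the bottleneck. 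The bottleneck is the vanishing lemma above, which your proposal names but does not supply.
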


Further, we have a similar exact tensor functor $\Psi:\Rep(\Z,GL_{n-k})\to \Coh(X),$ which sends the tautological representation to $\cQ.$
We denote by $\Psi^d_{[a,b]}$ its restriction to $\Rep(\Z,GL_{n-k})^d_{[a,b]}.$ The next main result is the following theorem (see Theorem \ref{th:right_dual_decomposition}).

\begin{theo}\label{th:right_dual_decomposition_intro}The functors $\Psi^d_{[0,k]}:D^b(\Rep(\Z,GL_{n-k})^d_{[0,k]})\to D^b(X)$ are fully faithful and we have a semi-orthogonal decomposition
\begin{equation}\label{decomposition_Psi}D^b(X)=\la \im(\Psi^0_{[0,k]}),\im(\Psi^1_{[0,k]}),\dots,\im(\Psi^{k(n-k)}_{[0,k]})\ra,\end{equation}
which is right dual to the decomposition
$$D^b(X)=\la \im(\Phi^{k(n-k)}_{[0,n-k]}),\im(\Phi^{k(n-k)-1}_{[0,n-k]}),\dots,\im(\Phi^{0}_{[0,n-k]})\ra$$
from Theorem \ref{th:SOD_of_Gr_intro}.
\end{theo}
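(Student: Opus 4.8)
The plan is to reduce the statement, by general properties of semi-orthogonal decompositions and their mutations, to three assertions, and to establish these using the isomorphism $\Gr(k,n)\cong\Gr(n-k,n)$ together with the Schur complexes of Akin--Buchsbaum--Weyman. Recall that the decomposition of Theorem~\ref{th:SOD_of_Gr_intro} has a unique right dual decomposition, and that a full semi-orthogonal decomposition $D^b(X)=\la\cC_0,\dots,\cC_{k(n-k)}\ra$ is that right dual precisely when the biorthogonality relations $\bR\Hom_{D^b(X)}(\cC_d,\im(\Phi^e_{[0,n-k]}))=0$ hold for $d\neq e$. Thus the theorem is equivalent to: (a) every $\Psi^d_{[0,k]}$ is fully faithful; (b) \eqref{decomposition_Psi} is a semi-orthogonal decomposition of $D^b(X)$; and (c) $\bR\Hom_{D^b(X)}(\Psi^d_{[0,k]}(N),\Phi^e_{[0,n-k]}(M))=0$ for all $d\neq e$ and all $M,N$. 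I would prove these in this order.

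For (a) and (b) I would use the isomorphism of $\Z$-schemes $X=\Gr(k,n)\isomoto\Gr(n-k,n)$ sending a rank-$k$ subbundle $P\subset\cO_X^{\oplus n}$ to the quotient $\cO_X^{\oplus n}/P$. Under it the tautological quotient bundle $\cQ$ on $\Gr(k,n)$ corresponds to the dual of the tautological subbundle on $\Gr(n-k,n)$; hence, after composing with the autoequivalence $M\mapsto M^\vee$ of $\Rep(\Z,GL_{n-k})$ (precomposition with the outer automorphism $g\mapsto(g^{\mathrm t})^{-1}$) and a twist by a power of the determinant character, $\Psi^d_{[0,k]}$ for $\Gr(k,n)$ is carried to $\Phi^{\,k(n-k)-d}_{[0,k]}$ for $\Gr(n-k,n)$. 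Then (a) is Theorem~\ref{th:SOD_of_Gr_intro} for $\Gr(n-k,n)$, and transporting the $\Phi$-decomposition of $D^b(\Gr(n-k,n))$ along the isomorphism yields exactly \eqref{decomposition_Psi}; the reindexing $d\mapsto k(n-k)-d$ reverses the order, which is why the superscripts in \eqref{decomposition_Psi} increase.

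It remains to prove (c), which is the heart of the matter. Since the functors are exact and the truncated representation categories are classically generated by Schur (equivalently Weyl) modules $S_\lambda$ with $\lambda$ in the relevant box, (c) reduces to the vanishing of $\bR\Hom_{D^b(X)}(S_\mu(\cQ),S_\lambda(\cF))$ for $|\mu|\neq|\lambda|$, $\mu\in\cP(k,n-k)$, $\lambda\in\cP(n-k,k)$ --- a cohomology computation on $X$. Because Bott vanishing fails over $\Z$, I would not argue directly, but use the Akin--Buchsbaum--Weyman Schur complexes attached to $0\to\cF\to\cO_X^{\oplus n}\to\cQ\to 0$: realising $\cQ$ as the two-term complex $[\cF\to\cO_X^{\oplus n}]$ gives a finite resolution of $S_\mu(\cQ)$ whose terms are $S_{\beta^{\mathrm t}}(\cF)\otimes_\Z W_\beta$ with $\beta\subseteq\mu$ and $W_\beta$ a free $\Z$-module; since $\mu\in\cP(k,n-k)$, each transpose $\beta^{\mathrm t}$ again lies in $\cP(n-k,k)$ with $|\beta^{\mathrm t}|=|\beta|\le|\mu|$, so $S_\mu(\cQ)\in\la\im(\Phi^0_{[0,n-k]}),\dots,\im(\Phi^{|\mu|}_{[0,n-k]})\ra$. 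Dually, realising $\cF$ as $[\cO_X^{\oplus n}\to\cQ]$ gives $S_\lambda(\cF)\in\la\im(\Psi^0_{[0,k]}),\dots,\im(\Psi^{|\lambda|}_{[0,k]})\ra$. If $|\lambda|>|\mu|$ then $S_\mu(\cQ)$ lies in the right-hand part and $S_\lambda(\cF)$ in the left-hand part of the decomposition of Theorem~\ref{th:SOD_of_Gr_intro}, so the $\bR\Hom$ from the former to the latter vanishes by semi-orthogonality; if $|\lambda|<|\mu|$ the symmetric argument applied to \eqref{decomposition_Psi} (already established in (b)) gives the vanishing. Together with (a), (b) and the characterisation of the right dual recalled above, this proves the theorem.

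I expect the main obstacle to lie entirely in step (c): one must verify, over $\Z$, that checking (c) on Schur/Weyl generators is legitimate (a statement about the generalized Schur algebras underlying the categories $\Rep(\Z,GL_k)^d_{[a,b]}$), and that the Akin--Buchsbaum--Weyman complexes are honest resolutions with exactly the stated terms which, crucially, never leave the weight-truncated subcategories --- this last point being what makes the transpose combinatorics $\cP(k,n-k)\leftrightarrow\cP(n-k,k)$ indispensable. Everything else is formal once Theorem~\ref{th:SOD_of_Gr_intro} is granted for both $\Gr(k,n)$ and $\Gr(n-k,n)$.
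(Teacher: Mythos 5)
Your overall strategy is sound and it is genuinely different from the paper's. The paper obtains full faithfulness of the $\Psi^d_{[0,k]}$ and the decomposition \eqref{decomposition_Psi} from the second quotient presentation $X\cong W^{\prime ss}//GL_{n-k}$ (the mirror image of the argument proving Theorem \ref{th:SOD_of_Gr}), and then proves duality via Proposition \ref{prop:criterion_for_dual_decomposition}(i): by induction on $d$ it establishes the equalities $\la\im(\Phi^{d}_{[0,n-k]}),\dots,\im(\Phi^{0}_{[0,n-k]})\ra=\la\im(\Psi^{0}_{[0,k]}),\dots,\im(\Psi^{d}_{[0,k]})\ra$, using only the elementary Koszul resolutions of $\Sym^{l}(\cQ)$ (and their tensor products) together with the fact that the bundles $\Lambda^{\mu}(\cF)$, $\mu\in\cP(k,n-k;d)$, generate $\im(\Phi^{d}_{[0,n-k]})$ (Proposition \ref{prop:tilting_E(m,n;d)}). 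You instead characterize the right dual by biorthogonality plus fullness, reduce to Schur generators (legitimate, since the truncated categories are highest weight and their standard/costandard objects generate), and feed in Akin--Buchsbaum--Weyman Schur complexes; your uniqueness argument for that characterization is correct, and the transpose combinatorics $\cP(k,n-k)\leftrightarrow\cP(n-k,k)$ is indeed exactly what keeps the resolutions inside the weight-truncated pieces. The paper's route avoids ABW acyclicity entirely (it needs only the symmetric-power Koszul complex and Boffi's filtration, both already in use elsewhere); yours avoids the inductive comparison of partial subcategories and exhibits the dual pairing directly on generators.

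Two points need repair. First, the transport step is stated incorrectly: under $\Gr(k,n)\cong\Gr(n-k,n)$, precomposition with $g\mapsto(g^{\mathrm t})^{-1}$ and twisting by $\det^{k}$ on the representation side, the functor $\Psi^{d}_{[0,k]}$ is identified with the degree $k(n-k)-d$ piece of the $\Phi$-functor of $\Gr(n-k,n)$ only up to a further tensor by the line bundle $\det(\cQ)^{\otimes k}$ on $X$; so transporting the decomposition of $D^b(\Gr(n-k,n))$ does not yield \eqref{decomposition_Psi} on the nose (already for $\PP^1$ the transported collection is $\la\cO(-1),\cO\ra$, not $\la\cO,\cO(1)\ra$). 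This is harmless --- a line-bundle twist is an autoequivalence and carries semi-orthogonal decompositions to semi-orthogonal decompositions, and full faithfulness is unaffected --- but the correction must be made for (a) and (b) to follow as claimed. Second, the terms of the ABW complex attached to $S_\mu$ of $[\cF\to\cO_X^{\oplus n}]$ are free multiplicity modules (skew Schur functors of $\Z^n$) tensored with Weyl-type functors of transposed shapes of $\cF$, not literally $S_{\beta^{\mathrm t}}(\cF)$, and dually for $[\cO_X^{\oplus n}\to\cQ]$ one lands on Weyl rather than Schur functors of $\cQ$ depending on conventions; this changes nothing in your argument, since standard and costandard objects generate the same truncated pieces, but it is exactly the bookkeeping you flag, including the acyclicity over $\Z$ of these complexes for an injection of bundles with locally free cokernel.
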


The resulting equivalence $D^b(\Rep(\Z,GL_k)^d_{[0,n-k]})\simeq D^b(\Rep(\Z,GL_{n-k})^d_{[0,k]})$ is shown to be a certain version of (inverse) Koszul duality functor for strict polynomial functors (see Proposition \ref{prop:koszul_for_GL}).

As a direct application, we get full exceptional collections on $X$ (see Theorem \ref{th:exceptional_on_Gr}).

\begin{theo}\label{th:exceptional_on_Gr_intro}1) The category $D^b(X)$ has a full exceptional collection $\{S_{\lambda}(\cF)\}_{\lambda\in\cP(n-k,k)}.$ Its right dual exceptional collection is $\{S_{\mu}(\cQ)[-|\mu|]\}_{\mu\in\cP(k,n-k)}.$

2) The category $D^b(X)$ has a full exceptional collection $\{W_{\lambda}(\cF)\}_{\lambda\in\cP(n-k,k)}.$ Its right dual exceptional collection is $\{W_{\mu}(\cQ)[-|\mu|]\}_{\mu\in\cP(k,n-k)}.$\end{theo}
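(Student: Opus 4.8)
The plan is to derive Theorem~\ref{th:exceptional_on_Gr_intro} directly from Theorems~\ref{th:SOD_of_Gr_intro} and~\ref{th:right_dual_decomposition_intro}, by exhibiting full exceptional collections \emph{inside each component} of the semi-orthogonal decompositions and then concatenating them. First I would note that each component $D^b(\Rep(\Z,GL_k)^d_{[0,n-k]})$ is the bounded derived category of the exact category of polynomial $GL_k$-representations of degree $d$ with weights in $[0,n-k]$, and that this exact category has a natural description in terms of strict polynomial functors. The key observation is that, although this category is \emph{not} semisimple over $\Z$, it still has two natural ``exceptional-type'' collections of objects indexed by Young diagrams $\lambda$ with $\lambda_1\le n-k$ and $\lambda_{k+1}=0$ and $|\lambda|=d$: the Schur objects $S_\lambda$ and the Weyl (co-Schur) objects $W_\lambda$. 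I would check that inside each $D^b(\Rep(\Z,GL_k)^d_{[0,n-k]})$ these form exceptional collections, i.e. $\bR\Hom(S_\lambda,S_\lambda)\cong\Z$, $\bR\Hom(W_\lambda,W_\lambda)\cong\Z$, and the appropriate Hom-vanishing for an ordering refining the dominance (or size) order; this is a statement purely about strict polynomial functors / the Schur algebra over $\Z$ and is where I expect to invoke the standard facts about standard and costandard modules over the (integral) Schur algebra, together with the quasi-hereditary structure alluded to in the abstract.

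Next I would assemble the global collection. Applying the fully faithful functors $\Phi^d_{[0,n-k]}$ sends $S_\lambda\mapsto S_\lambda(\cF)$ and $W_\lambda\mapsto W_\lambda(\cF)$ (up to a shift/twist bookkeeping coming from the grading by $d$), so each $S_\lambda(\cF)$ and each $W_\lambda(\cF)$ is exceptional in $D^b(X)$ and the collections within a fixed $d$ are exceptional. Because the semi-orthogonal decomposition of Theorem~\ref{th:SOD_of_Gr_intro} is ordered by decreasing $d$, and $\bR\Hom$ between objects lying in different components vanishes in the prescribed direction, the concatenation over all $d$ from $0$ to $k(n-k)$ of these per-component exceptional collections is again an exceptional collection in $D^b(X)$, with ordering refining $|\lambda|$. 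Fullness is immediate: since each per-component collection classically generates its component, and the components generate $D^b(X)$ by the SOD, the concatenated collection classically generates $D^b(X)$. This proves part 1) for $S_\lambda(\cF)$ and part 2) for $W_\lambda(\cF)$ except for the statement about the \emph{right dual} collection.

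For the duality statement I would use Theorem~\ref{th:right_dual_decomposition_intro}: the right dual SOD has components $\im(\Psi^d_{[0,k]})$, i.e. full subcategories of $D^b(\Rep(\Z,GL_{n-k}))$, with $\Psi^d_{[0,k]}$ sending $S_\mu\mapsto S_\mu(\cQ)$ and $W_\mu\mapsto W_\mu(\cQ)$ for $\mu\in\cP(k,n-k)$ with $|\mu|=d$. Thus the collections $\{S_\mu(\cQ)[-|\mu|]\}$ and $\{W_\mu(\cQ)[-|\mu|]\}$ are again full exceptional collections by the same component-wise argument. It then remains to identify these with the right dual collections of the corresponding $\cF$-collections. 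This follows from two inputs: (a) the right dual of a concatenation of exceptional collections built from a SOD is the concatenation of the right duals of the pieces, taken in the dual decomposition, so it suffices to check duality \emph{within each matching pair of components}; and (b) within each component the right dual basis of $\{S_\lambda\}$ in $D^b(\Rep(\Z,GL_k)^d_{[0,n-k]})$ is identified, under the Koszul-duality equivalence of Proposition~\ref{prop:koszul_for_GL}, with $\{S_\mu\}$ (up to the shift $[-|\mu|]$), and similarly for $\{W_\lambda\}$. For the pairing $\bR\Hom_X(S_\lambda(\cF),S_\mu(\cQ)[-|\mu|])$ I would compute via the projection/Euler sequence relating $\cF$ and $\cQ$ (or via the Littlewood--Richardson-type decomposition of $S_\lambda(\cF)\otimes S_\mu(\cQ)$ and Bott-type vanishing, all of which hold over $\Z$ since the relevant sheaf cohomology groups are free and commute with base change here), obtaining $\delta_{\lambda\mu}\Z$ in degree $0$ and $0$ otherwise, which is precisely the right-dual condition.

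The main obstacle I anticipate is step (b): making the integral Koszul duality of Proposition~\ref{prop:koszul_for_GL} interact correctly with the Schur/Weyl objects and the shift $[-|\mu|]$, i.e. verifying that Koszul duality interchanges the ``standard'' side with itself in the right way so that $S_\lambda\leftrightarrow S_\mu$ and $W_\lambda\leftrightarrow W_\mu$ at the level of dual collections, rather than, say, swapping $S$ with $W$. Over a field of characteristic zero this is transparent because everything is semisimple and one can argue by dimension count and weights; over $\Z$ one must instead track the quasi-hereditary structures and the behaviour of tensor products $S_\lambda\otimes W$ under the Cauchy-type filtration, controlling torsion. Everything else is formal manipulation of semi-orthogonal decompositions and their dual bases, combined with the already-established fully faithfulness statements.
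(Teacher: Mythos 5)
Your first half is essentially the paper's own argument: the per-component collections $\{S_\lambda\}$, $\{W_\lambda\}$ are the costandard/standard exceptional collections of the highest weight categories $\Rep(\Z,GL_k)^d_{[0,n-k]}$, and concatenating them along the semi-orthogonal decomposition of Theorem \ref{th:SOD_of_Gr} gives fullness and exceptionality of $\{S_\lambda(\cF)\}$ and $\{W_\lambda(\cF)\}$. The gap is in the duality statement, and it is twofold. First, you verify the wrong pairing: with the paper's conventions (Proposition \ref{prop:criterion_for_dual_collection}) the \emph{right} dual collection is characterized by $\Hom^{\bullet}(E_i'',E_j)$, i.e.\ by $\bR\Hom\bigl(S_\mu(\cQ)[-|\mu|],\,S_\lambda(\cF)\bigr)$, not by $\bR\Hom\bigl(S_\lambda(\cF),\,S_\mu(\cQ)[-|\mu|]\bigr)$, which is the criterion for the \emph{left} dual. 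Moreover the value you claim for the latter is false: already for $X=\Gr(1,2)=\PP^1$ one has $\bR\Hom(\cF,\cQ[-1])=\bR\Hom(\cO(-1),\cO(1)[-1])\cong\Z^3[-1]$, not $\Z$; the actual left dual of $\{S_\lambda(\cF)\}$ is $\{S_\mu(\cQ)\otimes\omega_X[k(n-k)-|\mu|]\}$ (cf.\ the proof of Theorem \ref{th:B(k,n)_quasi_hereditary}). So the cohomological shortcut you sketch neither addresses the relevant pairing nor is correct for the one you state.

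Second, the step you defer as the ``main obstacle'' is exactly where the content lies, and your expected resolution is backwards. Integral Koszul duality \emph{does} interchange Weyl and Schur functors with conjugate diagrams, $\Lambda^d\stackrel{\bL}{\otimes}_{\Gamma^d_K}W_\lambda\cong S_{\lambda'}$ (Proposition \ref{prop:properties_of_koszul}), so the equivalence between matching components of the two decompositions sends $W_\lambda(V_k)$ to $S_{\lambda'}(V_{n-k})$ up to shift, as in \eqref{Lambda^d_n,m(W_lambda(V_n))}; establishing that this equivalence really is $\Lambda^d_{k,n-k}[-d]$ is the substance of Theorem \ref{th:right_dual_decomposition} 2), which your reduction step (a) silently presupposes along with part 1). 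Granting that, the paper's computation runs: for $|\lambda|=|\mu|=d$, $\bR\Hom\bigl(S_\mu(\cQ)[-|\mu|],S_\lambda(\cF)\bigr)\cong\bR\Hom_{GL_k}\bigl((\Lambda^d_{k,n-k})^{-1}S_\mu(V_{n-k}),S_\lambda(V_k)\bigr)\cong\bR\Hom_{GL_k}(W_{\mu'}(V_k),S_\lambda(V_k))$, and the standard/costandard orthogonality gives $\Z$ exactly when $\mu'=\lambda$. It is precisely the $S\leftrightarrow W$ swap that makes the $S$-collection right dual to the $S$-collection; if, as you hope, the equivalence matched $S_\lambda$ with $S_{\lambda'}$ and $W_\lambda$ with $W_{\lambda'}$, the pairing would be an Ext between two costandard objects and the argument would collapse. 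A direct sheaf-cohomology verification of the correct pairing over $\Z$ is possible (this is close to what \cite{BLVdB} do), but it requires Kempf-type vanishing and Cauchy filtrations rather than the Euler sequence alone, so as written the duality half of your proposal does not go through.
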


Here $W_{\lambda}$ is a Weyl functor associated to $\lambda.$ In characteristic zero we have $S_{\lambda}=W_{\lambda},$ which agrees with Theorem \ref{th:kapranov}. We refer to Subsection \ref{ssec:Schur_algebras} for the definitions of $S_{\lambda}$ and $W_{\lambda}$ in the characteristic-free approach.

Another main result concerns the tilting vector bundle on $X.$  For a Young diagram, we put $$\Lambda^{\lambda}(\cF):=\bigotimes\limits_{i\geq 0}\Lambda^{\lambda_i}(\cF).$$  Consider the following vector bundle on $X:$
$$\cE(k,n)=\bigoplus\limits_{\lambda\in\cP(k,n-k)}\Lambda^{\lambda}(\cF).$$

We obtain the following result (see Theorem \ref{th:tilting_cE(k,n)_on_X}).

\begin{theo}\label{th:tilting_cE(k,n)_on_X_intro}The vector bundle $\cE(k,n)$ is a tilting object of $D^b(X).$\end{theo}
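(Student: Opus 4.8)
The plan is to deduce tiltingness of $\cE(k,n)$ from the semi-orthogonal decomposition of Theorem \ref{th:SOD_of_Gr_intro} (and its dual, Theorem \ref{th:right_dual_decomposition_intro}), rather than by a direct cohomology computation on $X$. First I would establish that $\cE(k,n)$ is a generator of $\Perf(X)\simeq D^b(X)$: since each $\Lambda^d(\cF)$ is a direct summand (after base change, and in fact integrally) of tensor powers of $\cF$, and since the objects $S_\lambda(\cF)$ with $\lambda\in\cP(n-k,k)$ already classically generate $D^b(X)$ by Theorem \ref{th:exceptional_on_Gr_intro}, it suffices to check that every $S_\lambda(\cF)$ lies in the thick subcategory generated by the $\Lambda^\mu(\cF)$. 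This is a statement internal to $\Rep(\Z,GL_k)$: the exterior-power modules $\Lambda^\mu$ for partitions $\mu$ with at most $n-k$ columns and at most $k$ rows generate the relevant polynomial-degree pieces of $D^b(\Rep(\Z,GL_k))$, because over $\Z$ the Weyl/Schur modules $W_\lambda$ are obtained from the $\Lambda^\mu$ by the classical (characteristic-free) filtration with sections indexed by partitions dominated in dominance order, an upper-triangularity that lets one peel them off one at a time. Applying the exact tensor functor $\Phi$ transports this to $X$.

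The heart of the proof is vanishing of $\Hom^i(\cE(k,n),\cE(k,n))$ for $i\neq 0$, i.e. $\Ext^i_X(\Lambda^\lambda(\cF),\Lambda^\mu(\cF))=0$ for all $\lambda,\mu\in\cP(k,n-k)$ and $i\neq 0$. Here I would use the semi-orthogonal decomposition. Each $\Lambda^\lambda(\cF)$ is a representation of $GL_k$ concentrated in a single central degree $d=|\lambda|$, with all weights in $[0,n-k]$ (a column has length $\le k$, and the number of columns is $\lambda_1\le n-k$ since $\lambda\in\cP(k,n-k)$ means $\lambda$ has $\le k$ rows and each part $\le n-k$ — note $\cE(k,n)$ is indexed by $\cP(k,n-k)$, dual to the $\cE$-index in some references, so the weight bound $[0,n-k]$ is exactly what makes $\Lambda^\lambda$ land in $\Rep(\Z,GL_k)^d_{[0,n-k]}$). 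Thus $\Lambda^\lambda(\cF)=\Phi^d_{[0,n-k]}(\Lambda^\lambda)$ sits in the component $\im(\Phi^d_{[0,n-k]})$. For $\lambda,\mu$ with $|\lambda|=|\mu|$, full faithfulness of $\Phi^d_{[0,n-k]}$ gives $\Ext^i_X(\Lambda^\lambda(\cF),\Lambda^\mu(\cF))\cong\Ext^i_{\Rep(\Z,GL_k)}(\Lambda^\lambda,\Lambda^\mu)$, which vanishes for $i\neq 0$ since both modules are exact (free over $\Z$) and $\Ext$ between $\Lambda$'s in the polynomial representation category is concentrated in degree $0$ — concretely $\Hom_{GL_k}(\Lambda^\lambda,\Lambda^\mu)$ is the free $\Z$-module spanned by lattice paths / is computed by the Cauchy formula, with no higher $\Ext$ because $\Lambda^d$ is both a tilting object of the Schur algebra $S(k,d)$ and its own projective-injective-type resolution is trivial. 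For $|\lambda|\neq|\mu|$, say $|\lambda|<|\mu|$, semi-orthogonality of the decomposition (the component of degree $|\mu|$ appears to the left of that of degree $|\lambda|$, and $\bR\Hom$ from left to right vanishes — or the reverse, whichever orientation Theorem \ref{th:SOD_of_Gr_intro} fixes) kills all $\Ext^i_X(\Lambda^\lambda(\cF),\Lambda^\mu(\cF))$ including $i=0$; the opposite order is handled by the dual decomposition of Theorem \ref{th:right_dual_decomposition_intro}, or simply by noting one direction of vanishing plus self-duality of the situation. Either way, both cross-degree Hom's vanish in all degrees, so in particular in nonzero degrees.

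Finally, property (ii) of Definition \ref{def:tilting_object_intro}: $\Hom_X(\cE(k,n),\cE(k,n))=\bigoplus_{\lambda,\mu}\Hom_X(\Lambda^\lambda(\cF),\Lambda^\mu(\cF))=\bigoplus_{\lambda,\mu}\Hom_{GL_k}(\Lambda^\lambda,\Lambda^\mu)$ by the fully faithful functors (the cross-degree terms being zero), and each $\Hom_{GL_k}(\Lambda^\lambda,\Lambda^\mu)$ is a free $\Z$-module of finite rank by the classical characteristic-free description of homomorphisms between tensor products of exterior powers (Cauchy/Pieri combinatorics), so the endomorphism $\Z$-module is finitely generated free, a fortiori projective. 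The main obstacle I anticipate is bookkeeping the weight/degree bounds so that every $\Lambda^\lambda(\cF)$ with $\lambda\in\cP(k,n-k)$ genuinely lands inside one of the blocks $\im(\Phi^d_{[0,n-k]})$ — i.e. checking the weights of $\Lambda^\lambda$ lie in $[0,n-k]$ — and the clean identification of $\Ext^{>0}_{GL_k}(\Lambda^\lambda,\Lambda^\mu)=0$ over $\Z$; the latter is where one must invoke that $\bigoplus_\mu\Lambda^\mu$ is a tilting generator of the (split quasi-hereditary) Schur algebra $S_\Z(k,d)$, which is essentially the content promised in the last paragraph of the abstract and so may be cited from the body of the paper.
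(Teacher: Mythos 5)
Your argument for generation is essentially the paper's (each block $\im(\Phi^d_{[0,n-k]})$ is generated by the exterior powers, since $E(n-k,k;d)=\bigoplus_{\lambda}\Lambda^{\lambda'}(V_k)$ is a tilting generator of $D^b(\Rep(\Z,GL_k)^d_{[0,n-k]})$, and the blocks generate $D^b(X)$), and the same-degree $\Ext$-vanishing via full faithfulness plus the tilting property of $E(n-k,k;d)$ is also fine. The genuine gap is in your treatment of the cross-degree terms. You claim that for $|\lambda|\neq|\mu|$ \emph{all} $\Ext^i_X(\Lambda^\lambda(\cF),\Lambda^\mu(\cF))$ vanish, one direction by semi-orthogonality and the other ``by the dual decomposition or self-duality.'' This is false: semi-orthogonality kills only the direction from lower central degree to higher (Lemma \ref{lem:semi-orthogonality}(i)), while in the opposite direction the Hom's are genuinely nonzero --- already $\Hom_X(\cF,\cO_X)\cong V^{\vee}\neq 0$, i.e.\ $\Hom(\Lambda^{(1)}(\cF),\Lambda^{\emptyset}(\cF))\neq 0$. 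The right dual decomposition of Theorem \ref{th:right_dual_decomposition} has components $\im(\Psi^d_{[0,k]})$ built from $GL_{n-k}$-representations; it is not the $\Phi$-decomposition with the order reversed, and it gives no vanishing of morphisms from higher-degree $\Phi$-blocks to lower-degree ones. (If all cross-degree Hom's vanished in both directions, $\End(\cE(k,n))$ would be block-diagonal, whereas the whole point of Theorem \ref{th:B(k,n)_quasi_hereditary} is that $\cB(k,n)$ is a nontrivial gluing along the nonzero bimodules $M_{d_1,d_2}$; your final paragraph repeats this error, although the conclusion that $\End(\cE(k,n))$ is a finitely generated free $\Z$-module survives, since each graded piece is a $\Hom$ in $\Rep(\Z,GL_k)$.)

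What is actually needed, and what the paper proves, is the vanishing of \emph{higher} $\Ext$'s in the non-orthogonal direction while $\Hom^0$ stays nonzero. The paper does this in Lemma \ref{lem:higher_exts_vanishing_cE(k,n)}: for $d_1\geq d_2$, Lemma \ref{lem:semi-orthogonality}(ii) identifies $\Ext^\bullet_X(\Phi^{d_1}_{[0,n-k]}(E_1),\Phi^{d_2}_{[0,n-k]}(E_2))$ with $\Ext^\bullet_{GL_k}(E_1,\Sym^{d_1-d_2}(V_k\otimes V^\vee)\otimes E_2)$; then the universal Littlewood--Richardson rule (Theorem \ref{th:Littlewood-Richardson}) shows $\Sym^{d_1-d_2}(V_k\otimes V^\vee)\otimes S_\mu(V_k)$ is costandardly filtered, and since $E(n-k,k;d_1)$ is standardly filtered (tilting), the higher $\Ext$'s vanish; finally $\cE(k,n)$ itself is filtered by the $S_\mu(\cF)$, which gives $\Ext^{>0}(\cE(k,n),\cE(k,n))=0$. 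Your proposal never supplies a substitute for this step, so as written the central vanishing claim does not hold and the proof is incomplete.
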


We refer to Subsection \ref{ssec:hw_def_and _properties} for the definitions and basic properties of split quasi-hereditary $K$-algebras and highest weight categories. Here we just mention that a split quasi-hereditary $K$-algebra $A$ is finitely generated projective as a $K$-module, and the triangulated category $\Perf(A)$ has two natural exceptional collections. One of them consists of the so-called {\it standard} $A$-modules, and its left dual consists of the {\it costandard} $A$-modules. The class of split quasi-hereditary algebras, as well as standard and costandard modules, is stable under extension of scalars.

Let us put
$$\cB(k,n):=\End(\cE(k,n)).$$ Our final main result is the following (see Theorem \ref{th:B(k,n)_quasi_hereditary}).

\begin{theo}\label{th:B(k,n)_quasi_hereditary_intro}The algebra $\cB(k,n)$ has two natural structures of a split quasi-hereditary algebra.

1) In the first structure, the standard (resp. costandard) $\cB(k,n)$-modules correspond under the equivalence $\Perf(\cB(k,n))\simeq\Perf(X)$ exactly to $S_{\lambda}(\cF)$ (resp. $S_{\mu}(\cQ)\otimes\omega_X[k(n-k)-|\mu|]$), where $\lambda\in\cP(n-k,k)$ (resp. $\mu\in\cP(k,n-k)$).

2) In the second structure, the standard (resp. costandard) $\cB(k,n)$-modules correspond under the equivalence $\Perf(\cB(k,n))\simeq\Perf(X)$ exactly to $W_{\mu}(\cQ)\otimes\omega_X[k(n-k)-|\mu|]$ (resp. $W_{\lambda}(\cF)\otimes\omega_X[k(n-k)]$), where $\mu\in\cP(k,n-k)$ (resp. $\lambda\in\cP(n-k,k)$).\end{theo}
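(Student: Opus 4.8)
The plan is to deduce the quasi-hereditary structures on $\cB(k,n)$ directly from the semi-orthogonal decompositions of Theorems \ref{th:SOD_of_Gr_intro} and \ref{th:right_dual_decomposition_intro} together with the tilting equivalence $\Perf(\cB(k,n))\simeq\Perf(X)$ of Theorem \ref{th:tilting_cE(k,n)_on_X_intro}. The key observation is that a split quasi-hereditary structure on a $K$-algebra $A$ that is finitely generated projective as a $K$-module is the same datum as a \emph{full exceptional collection} in $\Perf(A)$ whose objects are modules concentrated in degree $0$, each of them being a quotient of a projective by previous standard objects (so that the standard modules are the $\Delta$'s and their left dual collection consists of the costandard $\nabla$'s). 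So the first task is to record this dictionary carefully, using the material of Subsection \ref{ssec:hw_def_and _properties}: an ordered collection of objects $(\Delta_\lambda)$ in $\Perf(X)$ is "the collection of standard modules of some split quasi-hereditary structure on $\cB(k,n)$" precisely when (a) it is a full exceptional collection, (b) each $\bR\Hom(\cE(k,n), \Delta_\lambda)$ is a module over $\cB(k,n)$ concentrated in cohomological degree $0$, finitely generated projective over $K$, and (c) a certain standardization/filtration condition holds, which under the tilting equivalence becomes the statement that $\cE(k,n)^\vee$ — or rather each indecomposable summand $\Lambda^\mu(\cF)$ — admits a filtration by the $\Delta_\lambda$'s. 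Condition (c) is automatic once one checks that $\cE(k,n)$ itself has a filtration by the putative standard objects; this is where the combinatorics of $\cP(k,n-k)$ versus $\cP(n-k,k)$ enters.

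For part 1), I would take the candidate standard modules to be (the images under $\bR\Hom(\cE(k,n),-)$ of) the $S_\lambda(\cF)$ for $\lambda\in\cP(n-k,k)$, ordered so that $|\lambda|$ is non-decreasing (refining to a total order arbitrarily), and the candidate costandard modules to be the $S_\mu(\cQ)\otimes\omega_X[k(n-k)-|\mu|]$. That these two collections are full and exceptional is Theorem \ref{th:exceptional_on_Gr_intro}(1); that they are left/right dual to each other is also part of that theorem. So the only thing to verify is: (i) $\bR\Hom(\cE(k,n), S_\lambda(\cF))$ is concentrated in degree $0$ and $K$-projective — this follows because $\cE(k,n)$ is a direct sum of $\Lambda^\mu(\cF)$ and $\Lambda^\mu(\cF)\otimes S_\lambda(\cF)$ is (functorially in the universal case) a direct sum of $S_\nu(\cF)$'s by the Littlewood–Richardson / Pieri rule applied to $GL_k$-representations, hence has vanishing higher cohomology on $X$ by Bott-type vanishing for $S_\nu(\cF)$ with $\nu_1\le n-k$ (which is exactly where the range $\cP(n-k,k)$ is used), plus the acyclicity is compatible with base change since everything is built from free $\Z$-modules; similarly $\bR\Hom(\cE(k,n), S_\mu(\cQ)\otimes\omega_X[\cdot])$ lands in degree $0$ after the shift; and (ii) the $\Delta$-filtration condition, i.e. that $\cE(k,n)=\bigoplus_{\mu\in\cP(k,n-k)}\Lambda^\mu(\cF)$ is filtered by the $S_\lambda(\cF)$, which again is the statement that each $\Lambda^\mu(\cF)$ has a filtration with subquotients $S_\lambda(\cF)$, $\lambda\in\cP(n-k,k)$ — the classical filtration of a tensor product of exterior powers by Schur functors, available over $\Z$ (James/Akin–Buchsbaum–Weyman), with the shape constraint $\lambda_1\le n-k$ guaranteed because $\mu_i\le k$ forces the conjugate partition's rows to be bounded by $n-k$... here one must be a little careful about which of $S$ and $W$ gets the good filtration, and I expect to use that $\Lambda^\mu(\cF)$ has a $W_\lambda$-filtration and an $S_\lambda$-cofiltration, dualizing via $\cF\mapsto\cF^\vee$ and the self-duality of $\cP$ under conjugation.

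Part 2) is formally the "other side" of the picture: by Theorem \ref{th:right_dual_decomposition_intro} the decomposition by the $W_\mu(\cQ)$-type objects is right dual to the $\Phi$-decomposition, and Theorem \ref{th:exceptional_on_Gr_intro}(2) gives the full exceptional collections $\{W_\lambda(\cF)\}$ and $\{W_\mu(\cQ)[-|\mu|]\}$. The same two checks — degree-$0$ concentration of $\bR\Hom$ from $\cE(k,n)$, and a $\Delta$-filtration of $\cE(k,n)$ — go through with $S$ replaced by $W$ and the roles of $\cF$ and $\cQ$ swapped, using the exterior-power filtration in its "Weyl" incarnation and Serre duality on $X$ (which accounts for the $\omega_X$ and the shift $[k(n-k)]$): note $\cE(k,n)$ is a sum of $\Lambda^\mu(\cF)$ and $\Lambda^j(\cF)^\vee\cong\Lambda^{k-j}(\cF)\otimes\det(\cF)^{-1}$, so the two structures are exchanged by the duality $\cE(k,n)\mapsto\cE(k,n)^\vee\otimes(\det\cF)^{\otimes ?}$ composed with $k\leftrightarrow n-k$. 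I would phrase 2) as: apply 1) on the dual Grassmannian $\Gr(n-k,n)$ (isomorphic to $X$ via $\cF\leftrightarrow\cQ^\vee$) and transport along this isomorphism, tracking how $\omega_X$ and the cohomological shifts transform; this reduces 2) to 1) with minimal extra combinatorics.

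The main obstacle, I expect, is not the existence of the exceptional collections — that is already packaged in Theorems \ref{th:SOD_of_Gr_intro}--\ref{th:exceptional_on_Gr_intro} — but verifying the \emph{standardness/$\Delta$-filtration axiom} integrally and compatibly with base change: one must produce, over $\Z$, explicit filtrations of the tensor products $\bigotimes_i \Lambda^{\lambda_i}(\cF)$ by Schur (resp. Weyl) bundles with subquotients indexed exactly by $\cP(n-k,k)$, and check that under $\bR\Hom(\cE(k,n),-)$ these become $\Delta$-filtrations of the relevant projective $\cB(k,n)$-modules — i.e. that the connecting maps are the right ones and that no higher Ext's intervene. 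The compatibility with the axioms of a \emph{split} quasi-hereditary algebra (heredity ideals split as $K$-modules, the idempotents exist integrally) is exactly what the finite-projectivity of all the Hom-modules — itself a consequence of Bott vanishing in the allowed weight range, proved earlier — buys us; so the proof is really an assembly of the preceding theorems plus the classical integral filtrations of exterior-power tensor products, and I would present it in that order: (1) recall the dictionary {split quasi-hereditary structure} $\leftrightarrow$ {full exceptional collection of module objects with a standardization}; (2) feed in the two pairs of dual exceptional collections; (3) check degree-$0$ concentration via Bott vanishing; (4) check the $\Delta$-filtration via the exterior-power filtrations; (5) deduce 2) from 1) by the $k\leftrightarrow n-k$ Grassmann duality and Serre duality.
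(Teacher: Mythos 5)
Your outline replaces the paper's mechanism with a ``recognition dictionary'': a split quasi-hereditary structure on $\cB(k,n)$ from a full exceptional collection of module objects plus a $\Delta$-filtration of $\cE(k,n)$. That dictionary is not in the paper and is not a formality over a commutative ring: Definition \ref{def:highest_weight_over_ring}(iii) requires, for \emph{each} standard object, a projective surjecting onto it whose kernel is filtered by \emph{strictly larger} standards, and a $\Delta$-filtration of the projective generator in some order does not yield this without a reordering argument (using $\Ext^1$-directedness and splittings over $K$) or explicit surjections such as $\Lambda^{\lambda'}(\cF)\twoheadrightarrow S_\lambda(\cF)$ with controlled kernels. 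This is precisely the work the paper packages into Theorem \ref{th:gluing_of_quasi_hereditary_alg}, applied to the presentation $\cB(k,n)\cong A_{k(n-k)}\times_M\dots\times_M A_0$ with $A_d=\End(E(n-k,k;d))$, the condition $(\star)$ being checked via Theorem \ref{th:Littlewood-Richardson} and Lemma \ref{lem:equivalence_of_5_conditions_on_bimodule}. Your cohomological step is also looser than what is needed: over $\Z$ there is no ``Bott-type vanishing'', $\Lambda^{\mu}(\cF)\otimes S_\lambda(\cF)$ is only filtered by (not a direct sum of) Schur bundles, and the relevant object is $\Lambda^{\mu}(\cF)^{\vee}\otimes S_\lambda(\cF)$, which is not a polynomial functor of $\cF$; the actual vanishing $\Ext^{>0}(\cE(k,n),S_\lambda(\cF))=0$ is Lemma \ref{lem:higher_exts_vanishing_cE(k,n)}, proved by reducing via Lemma \ref{lem:semi-orthogonality} to $\Ext_{GL_k}$ against a costandardly filtered module and using that $E(n-k,k;d)$ is tilting (Proposition \ref{prop:tilting_E(m,n;d)}). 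These points are repairable, but they are the substance of the proof, not bookkeeping.

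The step that genuinely fails is your reduction of part 2). Transporting part 1) along $\Gr(k,n)\cong\Gr(n-k,n)$ replaces $\cE(k,n)=\bigoplus_{\lambda}\Lambda^{\lambda'}(\cF)$ by a tilting bundle built from exterior powers of $\cQ^{\vee}$, and therefore produces a quasi-hereditary structure on the endomorphism algebra of a \emph{different} tilting object, not a second structure on $\cB(k,n)$ itself. The theorem asserts that the objects $W_{\mu}(\cQ)\otimes\omega_X[k(n-k)-|\mu|]$ become $\cB(k,n)$-modules under $\bR\Hom(\cE(k,n),-)$ and are the standards of a second structure on the \emph{same} algebra; this needs new input on the $\cQ$-side (degree-zero concentration of $\bR\Hom(\cE(k,n),W_{\mu}(\cQ)\otimes\omega_X[k(n-k)-|\mu|])$, and filtrations of the projectives $\Hom(\cE(k,n),\Lambda^{\lambda'}(\cF))$ by these objects), which the Grassmann duality does not supply. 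In the paper both structures are produced simultaneously by Theorem \ref{th:gluing_of_quasi_hereditary_alg}, and the second set of standards is identified by the fact that $\Delta_{(2)}$ is blockwise right dual to $\nabla_{(1)}$ together with Theorem \ref{th:right_dual_decomposition} 2), i.e.\ the Koszul duality exchanging $W_{\lambda}$ and $S_{\lambda'}$; some such mechanism (or the self-duality $\cE(k,n)^{\vee}\otimes\det(\cF)^{\otimes(n-k)}\cong\cE(k,n)$ giving $\cB(k,n)\cong\cB(k,n)^{op}$, plus the same identifications) is needed and is missing from your sketch.
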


The paper is organized as follows.

In Section \ref{sec:triangulated_DG} we recall the basic notions about triangulated and DG categories. In Subsection \ref{ssec:SOD} we recall semi-orthogonal decompositions and the notion of left and right dual decompositions. In Subsection \ref{ssec:smoothness_properness} we recall smooth and proper DG categories. In Subsection \ref{ssec:exceptional} we define the notion of exceptional collections in enhanced triangulated categories over a commutative ring, the notion of left and right dual exceptional collection, and the notion of a tilting object.

Section \ref{sec:highest_weight} is devoted to split quasi-hereditary algebras and highest weight categories over a commutative ring. In this version they were defined by Rouquier \cite{Ro}.  In Subsection \ref{ssec:hw_def_and _properties} we recall the definitions and basic properties, essentially all the results here are contained in \cite{Ro}. The slight difference here is that we consider the case of arbitrary basic commutative ring $K,$ while in \cite{Ro} $K$ is assumed to be noetherian. In particular for a finite projective $K$-algebra $A,$ instead of the category $\text{mod-}A$ of finitely generated $A$-modules (which may not be abelian) we consider exact category $\Rep(K,A)$ of right $A$-modules which are finitely generated projective over $K.$ Subsection \ref{ssec:gluing_of_quasi_her} is devoted to the gluing of split quasi-hereditary algebras via bimodules. Here we obtain a new result which states that split quasi-hereditary structures are preserved (at least in two natural ways) by gluing under some very natural assumption on bimodules (to be standardly filtered), very similar to the gluing of smooth DG categories.

Section \ref{sec:pol_functors_GL_n_Koszul} is devoted to strict polynomial functors, Schur algebras and representations of $GL_n.$ In Subsection \ref{ssec:pol_functors} we recall the category of strict polynomial functors of degree $d$ over a commutative ring $K,$ which was introduced by Friedlander and Suslin \cite{FS}. Here we follow Krause \cite{Kr}. We define the internal tensor product, internal $\Hom,$ and external tensor product. In section \ref{ssec:Schur_algebras} we recall the definition of Schur algebra $S_K(n,d).$ This algebra is split quasi-hereditary for all $n,d\geq 0,$ and for $n\geq d$ the category $S_K(n,d)\text{-Mod}$ is equivalent to the category of strict polynomial functors of degree $d.$ We recall Schur and Weyl functors, which are respectively costandard and standard objects in the category of strict polynomial functors. Further, we formulate the universal form of Littlewood-Richardson rule \cite{Bo}. In Subsection \ref{ssec:Koszul_duality} we define the Koszul duality functor on the derived category of strict polynomial functors, as well as its inverse. These functors are of the form $\bR\cHom_{\Gamma^d_K}(\Lambda^d,-)$ and $\Lambda^d\stackrel{\bL}{\otimes}_{\Gamma^d_K}-$ respectively. Here $-\otimes_{\Gamma^d_K}-$ states for the inner tensor product, and $\cHom_{\Gamma^d_K}(-,-)$ for the inner $\Hom.$ Finally, the subsection \ref{ssec:reps_of_GL_n} is devoted to the category $\Rep(K,GL_n)$ of representations of $GL_n$ over $K.$ We discuss a connection of this category with strict polynomial functors, and show that Koszul duality induces equivalences $D^b(\Rep(\Z,GL_n)^d_{[0,m]})\simeq D^b(\Rep(\Z,GL_{m})^d_{[0,n]})$ for non-negative integers $n,m$ (Proposition \ref{prop:koszul_for_GL}). Here we also recall a tilting object
in $D^b(\Rep(K,GL_n))^d_{[0,m]}.$

In section \ref{sec:base_change} we recall the basic fact about base change (Proposition \ref{prop:base_change}). It implies that essentially all the results about semi-orthogonal decompositions, tilting objects and exceptional collections for a quasi-compact quasi-separated scheme $X,$ flat over $\Spec K,$ are preserved under base change (extension of scalars) $K\to K'.$

In Section \ref{sec:D^b(Grassmannians)} we formulate and prove our main results about derived categories of coherent sheaves on the Grassmannian $X=\Gr(k,n)$ over integers. In Subsection \ref{ssec:SOD_of_Gr} we construct the semi-orthogonal decomposition (Theorem \ref{th:SOD_of_Gr}), with components being equivalent to $D^b(\Rep(\Z,GL_k)^d_{[0,n-k]}),$ $0\leq d\leq k(n-k).$ The proof uses GIT quotient presentation of Grassmannian, and Cousin-Grothendieck spectral sequence. In Subsection \ref{ssec:dual_decomposition_Koszul} we describe the dual decomposition (Theorem \ref{th:right_dual_decomposition}), with components being equivalent to $D^b(\Rep(\Z,GL_{n-k})^d_{[0,k]}).$ The resulting equivalence $D^b(\Rep(\Z,GL_k)^d_{[0,n-k]})\simeq D^b(\Rep(\Z,GL_{n-k})^d_{[0,k]})$ is just the version of inverse Koszul duality from Proposition \ref{prop:koszul_for_GL}. In Subsection \ref{ssec:tilting_vector_bundle} we prove Theorem \ref{th:tilting_cE(k,n)_on_X_intro} (this is Theorem \ref{th:tilting_cE(k,n)_on_X} below) and Theorem \ref{th:B(k,n)_quasi_hereditary_intro} (this is Theorem \ref{th:B(k,n)_quasi_hereditary} below).

{\noindent{\bf Acknowledgements.}} I am grateful to Alexander Kuznetsov, Dmitry Kaledin and Dmitri Orlov for useful discussions. The paper was prepared within the framework of a
subsidy granted to the HSE by the Government of the Russian Federation for the
implementation of the Global Competitiveness Program. 

The similar results were obtained by R.-O. Buchweitz, G.J. Leuschke and M. Van den Bergh \cite{BLVdB} over the field of arbitrary characteristic, by different methods (using in particular the Kempf vanishing theorem).

\section{Triangulated and DG categories.}
\label{sec:triangulated_DG}

\subsection{Semi-orthogonal decompositions}
\label{ssec:SOD}

For a class of object $\cC$ in a triangulated category $\cT,$ we use the standard notation for the left and right orthogonals to $\cC:$

$$\cC^{\perp}=\{X\in\cT\mid \Hom^{\bullet}(E,X)=0\text{ for any }E\in\cC\},$$
$${}^{\perp}\cC=\{X\in\cT\mid \Hom^{\bullet}(X,E)=0\text{ for any }E\in\cC\}.$$

\begin{defi}(\cite{BK}) Let $\cT$ be a triangulated category. A semi-orthogonal decomposition of $\cT$ is by definition a collection of full triangulated subcategories $\la\cA_1,\cA_2,\dots,\cA_n\ra,$ such that $\cA_i\subset\cA_j^{\perp}$ for $i<j,$ and $\cT$ is generated by $\cA_1,\dots,\cA_m$ as triangulated category.\end{defi}

\begin{defi}(\cite{BK},\cite{B}) Let $\cT$ be a triangulated category. A full triangulated subcategory $\cA\subset\cT$ is called left (resp. right) admissible if the inclusion functor $i:\cA\to\cT$ admits a left (resp.) right adjoint. A subcategory $\cA$ is called admissible if it is both left and right admissible.\end{defi}

\begin{lemma}(\cite{B}) For a semi-orthogonal decomposition $\cT=\la\cA,\cB\ra,$ the subcategory $\cA$ is left admissible and the subcategory $\cB$ is right admissible. Conversely, if $\cA\subset\cT$ is left (resp. right) admissible, then there is a semi-orthogonal decomposition $\cT=\la\cA,{}^{\perp}\cA\ra$ (resp. $\cT=\la\cA^{\perp},\cA\ra$).\end{lemma}

Given a semi-orthogonal decomposition $\cT=\la\cA,\cB\ra$ with $\cA$ admissible (which in this case is equivalent to right admissibility), we put
$$\bL_{\cA}\cB:=\cA^{\perp},$$
so that we have a semi-orthogonal decomposition $\cT=\la\bL_{\cA}\cB,\cA\ra.$ We also denote by $\bL_{\cA}$ the following composition of equivalences:
\begin{equation}\label{equivalence_L_A}\bL_{\cA}:\cB\stackrel{\sim}{\to}\cT/\cA\stackrel{\sim}{\to}\bL_{\cA}\cB.\end{equation}
Similarly, if $\cB$ is admissible (which in this case is equivalent to left admissibility) we put
$$\bR_{\cB}\cA:={}^{\perp}\cB,$$
\begin{equation}\label{equivalence_R_B}\bR_{\cB}:\cA\stackrel{\sim}{\to}\cT/\cB\stackrel{\sim}{\to}\bR_{\cB}\cA.\end{equation}

\begin{remark}If the triangulated subcategories $\cA,\cB\subset\cT$ are semi-orthogonal but do not generate $\cT,$ we may (and will) still consider mutations in the triangulated subcategory $\cT'\subset\cT,$ generated by $\cA$ and $\cB.$\end{remark}

\begin{defi}\label{def:dual_decompositions}Let $\cT=\la\cA_1,\dots,\cA_m\ra$ be an SOD.

1) Let us assume that for all $i$ the subcategory
$\cA_i$ is (right) admissible in $\la\cA_i,\dots,\cA_m\ra.$ Then the left dual SOD
$$\cT=\la\cB_m,\dots,\cB_1\ra$$ is defined by the formula
$$\cB_i:=\bL_{\cA_1}\bL_{\cA_2}\dots\bL_{\cA_{i-1}}\cA_i=\bL_{\la\cA_1,\dots,\cA_{i-1}\ra}\cA_i,\quad 1\leq i\leq m.$$

2) Let us assume that for all $i$ the subcategory
$\cA_i$ is (left) admissible in $\la\cA_1,\dots,\cA_i\ra.$ Then the right dual SOD
$$\cT=\la\cC_m,\dots,\cC_1\ra$$ is defined by the formula
$$\cC_i:=\bR_{\cA_m}\bR_{\cA_{m-1}}\dots\bR_{\cA_{i+1}}\cA_i=\bR_{\la\cA_{i+1},\dots,\cA_{m}\ra}\cA_i,\quad 1\leq i\leq m.$$\end{defi}

\begin{remark}Under the assumptions of Definition \ref{def:dual_decompositions} 1), the components of the left dual decomposition are determined by the equality
$$\cB_i:=\la\cA_1,\dots,\cA_{i-1},\cA_{i+1},\dots,\cA_{m}\ra^{\perp}.$$

Similarly, under the assumptions of Definition \ref{def:dual_decompositions} 2), the components of the right dual decomposition are determined by the equality
$$\cC_i:={}^{\perp}\la\cA_1,\dots,\cA_{i-1},\cA_{i+1},\dots,\cA_{m}\ra.$$

Moreover, the operations of taking left and right dual decompositions are mutually inverse.\end{remark}

\begin{prop}\label{prop:criterion_for_dual_decomposition}Let $\cT=\la\cA_1,\dots,\cA_m\ra$ be a semi-orthogonal decomposition, satisfying the assumptions of Definition \ref{def:dual_decompositions} 1).

i) Then the left dual semi-orthogonal decomposition $\cT=\la \cB_m,\dots,\cB_1\ra$ is determined uniquely by the following property:

 - for each $1\leq i\leq m,$ we have an equality of full triangulated subcategories $\la\cA_1,\dots,\cA_i\ra=\la \cB_{i},\dots,\cB_1\ra.$

ii) Denote by $F_i:\cA_i\stackrel{\sim}{\to}\cB_i$ the equivalence $\bL_{\la\cA_1,\dots,\cA_{i-1}\ra}$ (see \eqref{equivalence_L_A}). Then we have $$\Hom_{\cT}(X,Y)\cong\Hom_{\cB_i}(F_i(X),Y),\quad X\in\cA_i,\,Y\in\cB_i.$$

iii) Denote by $G_i:\cB_i\stackrel{\sim}{\to}\cA_i$ the equivalence $\bR_{\la\cB_{i-1},\dots,\cB_1\ra}$ (see \eqref{equivalence_R_B}). Then we have $$\Hom_{\cT}(X,Y)\cong\Hom_{\cA_i}(X,G_i(Y)),\quad X\in\cA_i,\,Y\in\cB_i.$$
\end{prop}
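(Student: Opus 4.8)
The plan is to prove the three parts by induction on $m$, peeling off $\cA_1$ (equivalently $\cB_1$) at each step. The base case $m=1$ is trivial: $\cB_1=\cA_1$, $F_1=G_1=\id$, and all three statements are tautologies.

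For part (i), I would first check that the subcategories $\cB_i$ as defined in Definition \ref{def:dual_decompositions} 1) genuinely form a semi-orthogonal decomposition: semi-orthogonality $\cB_i\subset\cB_j^{\perp}$ for $i>j$ follows because $\cB_i=\bL_{\la\cA_1,\dots,\cA_{i-1}\ra}\cA_i\subset\la\cA_1,\dots,\cA_{i-1}\ra^{\perp}$ by construction, while each $\cB_j$ with $j<i$ lies in $\la\cA_1,\dots,\cA_{i-1}\ra$ (this is exactly the inclusion $\la\cB_{i-1},\dots,\cB_1\ra\subset\la\cA_1,\dots,\cA_{i-1}\ra$ to be established); generation follows since the mutation functors $\bL_{\cA_j}$ are equivalences onto the relevant quotients. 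The key identity $\la\cA_1,\dots,\cA_i\ra=\la\cB_i,\dots,\cB_1\ra$ is proved by induction on $i$: assuming it for $i-1$, we have $\cB_i=\bL_{\la\cA_1,\dots,\cA_{i-1}\ra}\cA_i\subset\la\cA_1,\dots,\cA_{i-1},\cA_i\ra$ since mutation through an admissible subcategory stays inside the ambient triangulated category generated by the pieces; combined with the inductive hypothesis this gives ``$\supseteq$'', and a dimension/generation count (or the fact that $\bL_{\la\cA_1,\dots,\cA_{i-1}\ra}:\cA_i\to\cB_i$ is an equivalence and $\la\cA_1,\dots,\cA_{i-1}\ra$ is common to both) gives equality. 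Uniqueness: if $\cT=\la\cB_m',\dots,\cB_1'\ra$ is another SOD with $\la\cA_1,\dots,\cA_i\ra=\la\cB_i',\dots,\cB_1'\ra$ for all $i$, then intersecting the semi-orthogonality relations forces $\cB_i'=\la\cA_1,\dots,\cA_{i-1}\ra^{\perp}\cap\la\cA_1,\dots,\cA_i\ra=\cB_i$, using the characterization recalled in the Remark after Definition \ref{def:dual_decompositions}.

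For part (ii), fix $X\in\cA_i$ and $Y\in\cB_i$. I would use that $\cB_i\subset\la\cA_1,\dots,\cA_{i-1}\ra^{\perp}$ and that $F_i$ is the composite $\cA_i\xrightarrow{\sim}\la\cA_1,\dots,\cA_m\ra/\la\cA_1,\dots,\cA_{i-1}\ra\xrightarrow{\sim}\cB_i$. Because $Y$ is right-orthogonal to $\la\cA_1,\dots,\cA_{i-1}\ra$, the localization functor $\pi$ to the quotient induces an isomorphism $\Hom_{\cT}(Z,Y)\cong\Hom_{\cT/\la\cA_1,\dots,\cA_{i-1}\ra}(\pi Z,\pi Y)$ for all $Z$ — this is the standard fact that objects in the right orthogonal compute quotient-category Hom's on the nose. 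Applying this with $Z=X$ and noting $\pi X$ corresponds to $F_i(X)$ under the equivalence with $\cB_i$, and $\pi Y$ to $Y$, yields $\Hom_{\cT}(X,Y)\cong\Hom_{\cB_i}(F_i(X),Y)$. Part (iii) is dual: $G_i$ is the composite through $\cT/\la\cB_{i-1},\dots,\cB_1\ra=\cT/\la\cA_1,\dots,\cA_{i-1}\ra$ (using part (i)!), the object $X\in\cA_i$ is \emph{left}-orthogonal to $\la\cA_1,\dots,\cA_{i-1}\ra=\la\cB_{i-1},\dots,\cB_1\ra$, so the same localization principle on the other side gives $\Hom_{\cT}(X,Y)\cong\Hom_{\cA_i}(X,G_i(Y))$.

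The main obstacle is bookkeeping rather than any deep difficulty: one must be careful that ``$\la\cA_1,\dots,\cA_{i-1}\ra$'' is admissible in the intermediate categories $\la\cA_1,\dots,\cA_m\ra$ (this follows from the hypothesis of Definition \ref{def:dual_decompositions} 1) together with an induction, since a finite semi-orthogonal sequence of admissible pieces generates an admissible subcategory), so that the quotient categories and mutation equivalences in \eqref{equivalence_L_A}, \eqref{equivalence_R_B} are legitimate; and one must consistently invoke part (i) when rewriting $\la\cB_{i-1},\dots,\cB_1\ra$ as $\la\cA_1,\dots,\cA_{i-1}\ra$ in the proof of part (iii). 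Once the admissibility is in place, everything reduces to the single clean fact that $\Hom$ into a right-orthogonal (resp. out of a left-orthogonal) object is unchanged by the Verdier quotient.
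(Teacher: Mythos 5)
Your argument is correct and is exactly the natural unpacking of what the paper dismisses with ``this follows immediately from Definition \ref{def:dual_decompositions} 1)'': the identity $\la\cA_1,\dots,\cA_i\ra=\la\cB_i,\dots,\cB_1\ra$ by induction from the mutation triangle, uniqueness from the fact that in a two-term SOD each component is the (left or right) orthogonal of the other, and (ii), (iii) from the definitions \eqref{equivalence_L_A}, \eqref{equivalence_R_B} of $\bL$ and $\bR$ as quotient equivalences together with the standard fact that $\Hom$ into an object of $\cS^{\perp}$ (resp.\ out of an object of ${}^{\perp}\cS$) is computed in the Verdier quotient by $\cS$. No gaps beyond the admissibility bookkeeping you already flag, so this matches the paper's intended (unwritten) proof.
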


\begin{proof}This follows immediately from Definition \ref{def:dual_decompositions} 1).\end{proof}

\subsection{Smoothness and properness}
\label{ssec:smoothness_properness}

Let now $K$ denote the basic commutative ring. All DG categories will be assumed to be small $K$-linear. All triangulated categories from now on will be assumed to be Karoubi (idempotent) complete, and to be equipped with a DG enhancement (in particular, they themselves are $K$-linear). Sometimes we will tacitly identify a triangulated category with its DG enhancement.

All (DG) modules are assumed to be right. For a DG category $\cA$ we denote by $\Perf(\cA)$ the triangulated category of perfect DG $\cA$-modules.

\begin{defi}(\cite{TV}) 1) A DG category $\cA$ is called smooth over $K$ if the diagonal $\cA$-bimodule is perfect:
$$I_{\cA}\in\Perf(\cA\stackrel{\bL}{\otimes}_K\cA^{op}).$$

2) A DG category is called proper over $K$ if for any two objects $X,Y\in\cA$ we have
$$\Hom_{\cA}(X,Y)\in\Perf(K).$$\end{defi}

For convenience, we say that an enhanced triangulated category is smooth (resp. proper) if its DG enhancement is.

\begin{defi}(\cite{TV}) Let $\cA$ be a DG category. A DG $\cA$-module $M\in\text{Mod-}\cA$ is called pseudo-perfect if for any object $X\in\cA$ we have $M(X)\in\Perf(K).$ Pseudo-perfect $\cA$-modules form a triangulated subcategory of $D(\cA),$ which we denote by $\PsPerf(\cA).$\end{defi}

\begin{prop}\label{prop:Perf_vs_PsPerf}(\cite{TV}) If a DG category $\cA$ is smooth (resp. proper), then $\PsPerf(\cA)\subset\Perf(A)$ (resp. $\Perf(\cA)\subset\PsPerf(\cA)$).\end{prop}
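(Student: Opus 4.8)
The plan is to prove both implications by the standard "devissage" argument, reducing each statement to the case of a representable module and then using the hypothesis on the diagonal bimodule (resp. on $\Hom$-complexes). First I would recall the two key formal facts that make this work. For any DG category $\cA$ and any $M\in\text{Mod-}\cA$, one has a functorial evaluation $M(X)\cong \bR\Hom_{\cA}(h_X,M)$ where $h_X=\Hom_{\cA}(-,X)$ is the representable module; and the assignment $X\mapsto h_X$ is the Yoneda functor, which identifies $\Perf(\cA)$ with the thick subcategory of $D(\cA)$ classically generated by the representables. Dually, tensoring with the diagonal bimodule $I_{\cA}$ acts as the identity on $D(\cA)$, and for a bimodule $B\in D(\cA\otimes_K^{\bL}\cA^{op})$ the "partial evaluation" $X\mapsto B(-,X)$ sends a bimodule that is perfect over $\cA\otimes_K^{\bL}\cA^{op}$ to a module that is perfect over $\cA$ after base change, because perfectness is preserved by restriction of scalars along the functor $\cA\to \cA\otimes_K^{\bL}\cA^{op}$, $X\mapsto X\otimes$.

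For the first implication ($\cA$ smooth $\Rightarrow \PsPerf(\cA)\subset\Perf(\cA)$): let $M$ be pseudo-perfect. Write $M$ via the diagonal bimodule as $M\cong M\otimes^{\bL}_{\cA} I_{\cA}$. Since $\cA$ is smooth, $I_{\cA}$ lies in $\Perf(\cA\otimes^{\bL}_K\cA^{op})$, so it is a retract of a finite complex built from bimodules of the form $h_X\boxtimes {}^{op}h_Y$ (outer products of representables). Tensoring $M$ with such a generating bimodule over the left copy of $\cA$ produces $M(X)\otimes_K {}^{op}h_Y$; since $M(X)\in\Perf(K)$ by pseudo-perfectness, this is a finite complex of shifts of copies of the representable ${}^{op}h_Y$, hence lies in $\Perf(\cA)$. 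As $\Perf(\cA)$ is a thick subcategory, passing to the finite colimit and retract defining $I_{\cA}$ shows $M\cong M\otimes^{\bL}_{\cA}I_{\cA}\in\Perf(\cA)$.

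For the second implication ($\cA$ proper $\Rightarrow \Perf(\cA)\subset\PsPerf(\cA)$): let $P\in\Perf(\cA)$. The full subcategory of those $P$ with $P(X)\in\Perf(K)$ for all $X$ is thick, so it suffices to check it on the representables $P=h_Y$. But $h_Y(X)=\Hom_{\cA}(X,Y)$, which lies in $\Perf(K)$ precisely by the properness hypothesis. Hence every perfect module is pseudo-perfect. I expect the main obstacle to be purely bookkeeping: making the manipulations with the diagonal bimodule and outer tensor products rigorous at the level of DG (or $\infty$-categorical) model structures — in particular, checking that $M\otimes^{\bL}_{\cA}(h_X\boxtimes {}^{op}h_Y)\simeq M(X)\otimes_K {}^{op}h_Y$ and that all the functors involved commute with the relevant (homotopy) colimits and retracts. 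This is where one must be careful, but it is all contained in \cite{TV}, so in the write-up I would simply cite the relevant lemmas there rather than reprove them.
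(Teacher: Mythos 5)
Your argument is correct, and it is essentially the standard one: the paper itself gives no proof of this proposition, quoting it from \cite{TV}, and your d\'evissage (representables classically generate $\Perf(\cA)$, evaluation at objects is exact, and $M\cong M\stackrel{\bL}{\otimes}_{\cA}I_{\cA}$ with $I_{\cA}$ a retract of a finite iterated extension of split bimodules $h_X\otimes_K{}^{op}h_Y$) is exactly how one proves it there. One caution for the write-up: your preliminary claim that restriction of scalars along $\cA\to\cA\stackrel{\bL}{\otimes}_K\cA^{op}$ preserves perfectness, and hence that partial evaluation of a perfect bimodule is automatically perfect over $\cA$, is false in general --- for a split representable bimodule the partial evaluation has the form $h_X\otimes_K\Hom_{\cA}(Y,-)$ evaluated at an object, and the $\Hom$-complex need not lie in $\Perf(K)$ unless $\cA$ is proper. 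Fortunately this remark is never used: in the smooth case you correctly tensor the pseudo-perfect module $M$ against the split bimodules, obtaining (evaluation of $M$ at an object)$\,\otimes_K\,$(a representable), which lies in $\Perf(\cA)$ precisely because $M$ evaluates into $\Perf(K)$; and in the proper case you only need that the class of modules with all evaluations in $\Perf(K)$ is thick and contains the representables. Apart from the index bookkeeping of which variable of the split bimodule gets contracted (a point you already flag), nothing needs to change.
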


\begin{prop}\label{prop:gluing_of_smooth_DG}(\cite{LS}) Suppose that a proper triangulated category $\cT$ has a semi-orthogonal decomposition: $\cT=\la\cA,\cB\ra$ (hence $\cA$ and $\cB$ are also proper). Then $\cT$ is smooth if and only if both $\cA$ and $\cB$ are smooth.\end{prop}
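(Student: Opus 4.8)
The plan is to reduce the statement to the behaviour of the diagonal bimodule under the gluing datum associated to a two-step semi-orthogonal decomposition. Recall that if $\cT=\la\cA,\cB\ra$ is a semi-orthogonal decomposition of a proper triangulated category (with fixed DG enhancement), then $\cB$ is right admissible and $\cA$ is left admissible, and $\cT$ is recovered from $\cA,$ $\cB$ and the $\cA$-$\cB$-bimodule $\Phi$ defined by $\Phi(A,B)=\Hom_{\cT}(A,B)$ for $A\in\cA,$ $B\in\cB$; this is the gluing bimodule. Since $\cT$ is proper, each $\Hom_{\cT}(A,B)$ lies in $\Perf(K),$ so $\Phi$ is a \emph{pseudo-perfect} bimodule, i.e. $\Phi\in\PsPerf(\cA^{op}\otimes^{\bL}_K\cB).$ The diagonal bimodule $I_{\cT}$ then sits in a natural exact triangle of $\cT$-bimodules
\begin{equation}\label{eq:diag-triangle}
\Phi\to I_{\cT}\to I_{\cA}\oplus I_{\cB},
\end{equation}
where $I_{\cA},I_{\cB}$ are viewed as $\cT$-bimodules via the projection/inclusion functors, and $\Phi$ is viewed as a $\cT$-bimodule supported "off-diagonal". (More precisely one writes the semiorthogonal decomposition of $\cT\otimes^{\bL}_K\cT^{op}$-modules and extracts this triangle; this is exactly the computation in \cite{LS} or \cite{Kuz}.)

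The key steps, in order, are: (1) Record that properness of $\cT$ is inherited by $\cA$ and $\cB$ — this is immediate, since $\Hom_{\cA}(X,Y)=\Hom_{\cT}(iX,iY)$ for the inclusion $i,$ and likewise for $\cB$ — and that the gluing bimodule $\Phi$ is pseudo-perfect, again by properness. (2) Establish the triangle \eqref{eq:diag-triangle} of $\cT$-bimodules. (3) Observe that $I_{\cT}$ is perfect as a $\cT$-bimodule if and only if both $I_{\cA}$ and $I_{\cB}$ are perfect as $\cT$-bimodules \emph{and} $\Phi$ is perfect as a $\cT$-bimodule; the "if" direction is clear from \eqref{eq:diag-triangle} and the two-out-of-three property, while the "only if" direction follows because $I_{\cA}$ and $I_{\cB}$ are direct summands (via the idempotent decomposition of $I_{\cT}$ coming from the two admissible pieces) combined again with the triangle. (4) Identify perfectness of $I_{\cA}$ as a $\cT$-bimodule with smoothness of $\cA$: restriction of bimodules along the (fully faithful, admissible) inclusion $\cA\hookrightarrow\cT$ and its adjoint send perfect bimodules to perfect bimodules in both directions, because the inclusion and its adjoint are given by perfect bimodules (here properness of $\cT$ enters, ensuring that the adjoint functor's kernel, being a pseudo-perfect bimodule over the smooth-or-not pieces, behaves well — in fact one only needs that $I_\cA$ as a $\cT$-bimodule is the pushforward of $I_\cA$ as an $\cA$-bimodule along a perfect bimodule). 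Likewise for $\cB.$ (5) Show that the cross-term $\Phi,$ which is always pseudo-perfect by step (1), is perfect as a $\cT$-bimodule precisely when $\cA$ and $\cB$ are both smooth: by Proposition \ref{prop:Perf_vs_PsPerf}, over a smooth category pseudo-perfect implies perfect, so if $\cA,\cB$ are smooth then $\Phi\in\PsPerf(\cA^{op}\otimes^{\bL}_K\cB)=\Perf(\cA^{op}\otimes^{\bL}_K\cB)$ and hence is perfect as a $\cT$-bimodule; conversely, if $I_{\cT}$ is perfect then by step (3) each of $I_\cA,I_\cB$ is perfect, i.e. $\cA$ and $\cB$ are smooth. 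Assembling (3)–(5) gives the equivalence: $\cT$ smooth $\iff$ $I_{\cT}\in\Perf$ $\iff$ $I_\cA,I_\cB\in\Perf$ and $\Phi\in\Perf$ $\iff$ ($\cA$ smooth and $\cB$ smooth), where in the last step the smoothness of $\cA,\cB$ automatically upgrades the always-pseudo-perfect $\Phi$ to a perfect bimodule, so no separate hypothesis on $\Phi$ is needed.

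The main obstacle I expect is step (4)/(5): carefully checking that "perfect as a $\cT$-bimodule" and "perfect as an $\cA$-bimodule" match up under the restriction and induction functors between $\cA\text{-}\cA$-bimodules, $\cB\text{-}\cB$-bimodules, $\cA\text{-}\cB$-bimodules and $\cT\text{-}\cT$-bimodules, and in particular that these functors preserve perfectness in the relevant directions without extra hypotheses. This is where the semi-orthogonal decomposition of $\cT\otimes^{\bL}_K\cT^{op}$ (induced by that of $\cT$ in each variable) must be used, together with the fact that the projection functors $\cT\to\cA,$ $\cT\to\cB$ and the inclusions are all represented by bimodules that are perfect (as one-sided modules) because $\cT$ is proper. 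Once this bookkeeping is in place, combining it with Proposition \ref{prop:Perf_vs_PsPerf} to handle the off-diagonal term is routine. One should also note at the end that this is precisely the bimodule-gluing picture that will be mirrored, in the abelian/quasi-hereditary setting, by the gluing of split quasi-hereditary algebras via standardly filtered bimodules discussed in Subsection \ref{ssec:gluing_of_quasi_her}.
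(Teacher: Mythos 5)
First, note that the paper does not prove this proposition at all: it is quoted from \cite{LS}, so there is no internal proof to compare with. Your plan does follow the strategy behind the cited result (the gluing-bimodule description of a two-step semi-orthogonal decomposition), and the half you handle best is exactly the one the present paper needs: if $\cA$ and $\cB$ are smooth, then the gluing bimodule $\Phi$, which is pseudo-perfect by properness of $\cT$, lies in $\PsPerf(\cA^{op}\stackrel{\bL}{\otimes}_K\cB)=\Perf(\cA^{op}\stackrel{\bL}{\otimes}_K\cB)$ by Proposition \ref{prop:Perf_vs_PsPerf} (you should also say explicitly that a tensor product of smooth DG categories is smooth), and then the triangle $\Phi\to I_{\cT}\to I_{\cA}\oplus I_{\cB}$ together with the bookkeeping of step (4) gives $I_{\cT}\in\Perf$. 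That bookkeeping, which you defer, does go through in this direction, but only because properness plus smoothness of the pieces makes the one-sided restrictions of $\Phi$ perfect; it is not a formality independent of the smoothness hypotheses.

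The genuine gap is in the converse direction, and it is concentrated exactly where you wave your hands. In step (3) you assert that $I_{\cA}$ and $I_{\cB}$ are direct summands of $I_{\cT}$ "via the idempotent decomposition coming from the two admissible pieces". This is false: $I_{\cA}\oplus I_{\cB}$ is the cone of $\Phi\to I_{\cT}$, not a summand (it would be a summand only if the gluing were trivial, i.e. $\cT\simeq\cA\times\cB$). To extract perfectness of the three components of $I_{\cT}$ from perfectness of $I_{\cT}$ you must show that the projections onto the four components of the induced semi-orthogonal decomposition of the bimodule category preserve perfectness, and the naive attempts (restriction along $\cA^{op}\stackrel{\bL}{\otimes}_K\cA\to\cT^{op}\stackrel{\bL}{\otimes}_K\cT$, or the analogue of $e(-)e$) run into a circularity: checking that they preserve $\Perf$ requires knowing that certain one-sided restrictions of $\Phi$ are perfect, which is equivalent to the smoothness of $\cA$, $\cB$ that you are trying to prove. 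The standard repairs are either to invoke the full gluing theorem of \cite{LS} (or Kuznetsov--Lunts): a gluing is smooth if and only if both pieces are smooth and the gluing bimodule is perfect; or, in the proper case, to use that a smooth proper category is saturated, so both $\cA$ and $\cB$ are admissible on both sides, after which $\Hom_{\cT}(S,i(-))$ and $\Hom_{\cT}(i(-),T)$ are representable over $\cA$ and the restriction of $I_{\cT}$ along $i^{op}\otimes i$ is seen to be perfect, giving smoothness of $\cA$ directly. As written, steps (3) and (4) do not constitute a proof of the "only if" implication.
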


\subsection{Exceptional collections and tilting objects}
\label{ssec:exceptional}

\begin{defi} Let $\cT$ be a triangulated category. An object $E\in\cT$ is called exceptional if $$\Hom^n(E,E)=\begin{cases}K & \text{for }n=0;\\
0 & \text{otherwise.}\end{cases}$$\end{defi}

It follows from our assumptions that for an exceptional object $E\in\cT$ we have $$\la E\ra\simeq\Perf(K).$$

\begin{defi}Let $\cT$ be a proper triangulated category. An exceptional collection in $\cT$ is by definition a collection of exceptional objects $\la E_1,\dots,E_m\ra$ such that $\Hom^{\bullet}(E_i,E_j)=0$ for $i>j.$ An exceptional collection is full if it generates $\cT$ as a triangulated category.\end{defi}

Clearly, any full exceptional collection $\la E_1,\dots,E_m\ra$ provides a semi-orthogonal decomposition with components $\la E_i\ra,$ which are equivalent to $\Perf(K).$

IF $\la E, F\ra$ is a (not necessarily full) exceptional pair in a proper triangulated category $\cT,$ then the left and right mutations are defined by exact triangles:
$$L_E F[-1]\to \bR\Hom(E,F)\stackrel{\bL}{\otimes}_K E\to F\to \bL_E F,$$
$$\bR_F E\to E\to \bR\Hom(E,F)^{\vee}\stackrel{\bL}{\otimes}_K F\to\bR_F E[1].$$
It is easy to check that both $\la \bL_E F,E\ra$ and $\la F,\bR_F E\ra$ are still exceptional pairs, which generate the same triangulated subcategory of $\cT$ as $\la E,F\ra.$ Moreover, we have
$$\la\bL_E F\ra=\bL_{\la E\ra}\la F\ra,\quad \la\bR_F E\ra=\bR_{\la F\ra}\la E\ra.$$

\begin{defi}\label{def:dual_collections}Let $\la E_1,\dots,E_m\ra$ be an exceptional collection in a proper triangulated category $\cT.$

The left dual exceptional collection $\la E_m',\dots,E_1'\ra$ is defined by the formula
$$E_i':=\bL_{E_1}\bL_{E_2}\dots\bL_{E_{i-1}}E_i,\quad 1\leq i\leq m.$$

The right dual exceptional collection $\la E_m'',\dots,E_1''\ra$ is defined by the formula
$$E_i'':=\bR_{E_m}\bR_{E_{m-1}}\dots\bR_{E_{i+1}}E_i,\quad 1\leq i\leq m.$$
\end{defi}

\begin{prop}\label{prop:criterion_for_dual_collection}Let $\la E_1,\dots,E_m\ra$ be a full exceptional collection in a triangulated category $\cT.$ The left dual exceptional collection $\la E_m',\dots,E_1'\ra$ is uniquely determined by the following property:
$$\Hom^l(E_i,E_j')=\begin{cases}K & \text{for }l=0,\,i=j;\\
0 & \text{otherwise.}\end{cases}$$
Similarly, the right dual exceptional collection $\la E_m'',\dots,E_1''\ra$ is uniquely determined by the following property:
$$\Hom^l(E_i'',E_j)=\begin{cases}K & \text{for }l=0,\,i=j;\\
0 & \text{otherwise.}\end{cases}$$\end{prop}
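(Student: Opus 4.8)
The plan is to prove the characterization of left and right dual exceptional collections by induction on $m$, reducing everything to the definition of mutations and the semi-orthogonality relations. I will treat the left dual case in detail; the right dual case is completely parallel (it is obtained, e.g., by passing to the opposite category).

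\smallskip

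First I would establish the two ``orthogonality'' facts that make the induction work. From the triangle defining the left mutation, $\bL_E F[-1]\to\bR\Hom(E,F)\stackrel{\bL}{\otimes}_K E\to F\to\bL_E F$, one applies $\Hom^{\bullet}(E,-)$ and uses $\bR\Hom(E,E)\cong K$ together with exceptionality to conclude $\Hom^{\bullet}(E,\bL_E F)=0$; this is exactly the statement $\la\bL_E F\ra=\bL_{\la E\ra}\la F\ra$ recorded in the excerpt. Iterating, for $E_i':=\bL_{E_1}\cdots\bL_{E_{i-1}}E_i$ one gets $\la E_i'\ra=\bL_{\la E_1,\dots,E_{i-1}\ra}\la E_i\ra$, hence $\Hom^{\bullet}(E_j,E_i')=0$ for all $j<i$. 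For $j>i$: since $\la E_1,\dots,E_m\ra$ is an exceptional collection, $E_i\in\la E_{i+1},\dots,E_m\ra^{\perp}$ inside $\cT'=\la E_1,\dots,E_m\ra$; mutating only involves $E_1,\dots,E_{i-1}$, so $E_i'$ still lies in $\la E_{i+1},\dots,E_m\ra^{\perp}$, giving $\Hom^{\bullet}(E_j,E_i')=0$ for $j>i$. Finally, for $j=i$: the equivalence $\bL_{\la E_1,\dots,E_{i-1}\ra}:\la E_i\ra\stackrel{\sim}{\to}\la E_i'\ra$ of \eqref{equivalence_L_A} identifies $\la E_i'\ra$ with the quotient $\la E_1,\dots,E_i\ra/\la E_1,\dots,E_{i-1}\ra$, and by Proposition \ref{prop:criterion_for_dual_decomposition}(ii) (applied to the SOD $\la\la E_1\ra,\dots,\la E_m\ra\ra$) we get $\Hom_{\cT}^{\bullet}(E_i,E_i')\cong\Hom_{\la E_i'\ra}^{\bullet}(\bL_{\la E_1,\dots,E_{i-1}\ra}E_i,E_i')=\Hom^{\bullet}(E_i',E_i')\cong K$ concentrated in degree $0$. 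This proves that the collection $\la E_m',\dots,E_1'\ra$ satisfies the stated property; note it is automatically a full exceptional collection since each mutation preserves the generated subcategory.

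\smallskip

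For uniqueness, suppose $\la F_m,\dots,F_1\ra$ is another collection (of objects, not assumed exceptional a priori) satisfying $\Hom^l(E_i,F_j)=K$ for $l=0$, $i=j$ and $0$ otherwise. Since $\la E_1,\dots,E_m\ra$ is full, each $F_j$ is built from the $E_i$; I would argue by downward induction on $j$ that $F_j\cong E_j'$. Concretely, consider the SOD $\cT=\la\la E_1\ra,\dots,\la E_m\ra\ra$ with its left dual $\la\la E_m'\ra,\dots,\la E_1'\ra\ra$. The conditions $\Hom^{\bullet}(E_i,F_j)=0$ for $i\neq j$ say precisely that $F_j\in\la\dots,\la E_{i}\ra,\dots\ra^{\perp}$ over all $i\neq j$, i.e. $F_j\in\la E_1,\dots,E_{j-1},E_{j+1},\dots,E_m\ra^{\perp}=\la E_j'\ra$ by the Remark following Definition \ref{def:dual_decompositions}. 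Within $\la E_j'\ra\simeq\Perf(K)$, the remaining condition $\Hom^{\bullet}(E_j,F_j)\cong K$ in degree $0$, combined with $\Hom_{\cT}^{\bullet}(E_j,-)|_{\la E_j'\ra}\cong\Hom_{\la E_j'\ra}^{\bullet}(E_j',-)$ from Proposition \ref{prop:criterion_for_dual_decomposition}(ii), pins down $F_j\cong E_j'$ (an object $M$ of $\Perf(K)$ with $\bR\Hom(K,M)\cong K$ is $\cong K$). This gives uniqueness.

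\smallskip

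The right dual statement follows by the same argument with the roles of left/right mutations, orthogonals, and Proposition \ref{prop:criterion_for_dual_decomposition}(iii) interchanged, or formally by applying the left dual result in $\cT^{op}$. The main obstacle, such as it is, is purely bookkeeping: correctly tracking in which ambient subcategory $\la E_1,\dots,E_i\ra$ (or $\la E_i,\dots,E_m\ra$) each mutation and each admissibility statement lives, so that the identifications of Proposition \ref{prop:criterion_for_dual_decomposition} apply verbatim; there is no genuine difficulty once the SOD $\la\la E_1\ra,\dots,\la E_m\ra\ra$ and its dual are set up, since everything reduces to the already-proven statements about dual decompositions together with $\la E_i'\ra\simeq\Perf(K)$.
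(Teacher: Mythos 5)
Your proof is correct, and it is exactly the standard argument this paper has in mind: the paper states Proposition \ref{prop:criterion_for_dual_collection} without proof, and your reduction to the semi-orthogonal decomposition $\la\la E_1\ra,\dots,\la E_m\ra\ra$, its left dual, the Remark after Definition \ref{def:dual_decompositions}, and Proposition \ref{prop:criterion_for_dual_decomposition}(ii), together with $\la E_j'\ra\simeq\Perf(K)$, is the intended justification. The only step you quote at the category level but use at the object level is the identification $\bL_{\la E_1,\dots,E_{i-1}\ra}E_i\cong E_i'$ (iterated single mutations compute the mutation through the generated subcategory); this follows from the triangle $A\to E_i\to E_i'$ with $A\in\la E_1,\dots,E_{i-1}\ra$ and $E_i'\in\la E_1,\dots,E_{i-1}\ra^{\perp}$ obtained by composing the mutation triangles, so it is exactly the bookkeeping you acknowledge and poses no gap.
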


We will also use the notion of an exceptional collection with a partial order.

\begin{defi}Let $\cT$ be a proper triangulated category, and $(\Delta\subset\cT,\preceq)$ a finite collection of exceptional objects, together with a partial order. We say that $\Delta$ is a partially ordered exceptional collection if for any $D_1,D_2\in\Delta$ we have $\Hom^{\bullet}(D_1,D_2)=0$ unless $D_1\preceq D_2.$\end{defi}

\begin{remark}Clearly, if $(\Delta\subset\cT,\preceq)$ is a partially ordered exceptional collection, and the order $\preceq'$ is a refinement of $\preceq,$ then $(\Delta,\preceq')$ is also a partially ordered exceptional collection. Moreover, any exceptional collection has a smallest partial order $\preceq,$ such that $\Hom^{\bullet}(D_1,D_2)\ne 0$ implies $D_1\preceq D_2.$\end{remark}

\begin{prop}\label{prop:order_on_dual_collection}Let $(\Delta,\preceq)$ be a full exceptional collection in a proper triangulated category $\cT.$ Choose some total (linear) refinement of the order $\preceq,$ and take the left dual exceptional collection $\Delta'$ (Definition \ref{def:dual_collections}). Then the set of objects $\Delta'$ does not depend on the order $\preceq$ and its refinement (up to isomorphism), and moreover $\Delta'$ is also exceptional w.r.t the partial order $\preceq'$ which corresponds to the opposite of $\preceq$ under the natural bijection $\Delta'\simeq\Delta.$

The same holds for the right dual exceptional collection.\end{prop}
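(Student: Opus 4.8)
The plan is to reduce everything to the case of a total order, where the duality statement is governed by Proposition~\ref{prop:criterion_for_dual_collection}, and then check that the two "independence" claims follow from the uniqueness characterization there together with the fact that refining a partial order does not change which Hom-spaces are forced to vanish. First I would fix a partially ordered full exceptional collection $(\Delta,\preceq)$ and two total refinements $\preceq_1,\preceq_2$ of $\preceq$. Writing $\Delta=\{E_1,\dots,E_m\}$ enumerated by $\preceq_1$, the left dual collection $\{E_i'\}$ is, by Proposition~\ref{prop:criterion_for_dual_collection}, the unique collection satisfying $\Hom^l(E_i,E_j')=K$ if $l=0,i=j$ and $0$ otherwise — a condition that refers only to the underlying set $\Delta$, not to the chosen order. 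Hence the set $\{E_i'\}$ is independent of which total refinement was used; this disposes of the first assertion.

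Next I would identify, for each $D'\in\Delta'$, which element $D\in\Delta$ it is "dual to" under the bijection $\Delta'\simeq\Delta$: namely $D'\leftrightarrow D$ iff $\Hom^{\bullet}(D,D')\ne 0$, equivalently iff $\bR\Hom(D,D')=K$ concentrated in degree $0$. I then want to show that for $D_1',D_2'\in\Delta'$ corresponding to $D_1,D_2\in\Delta$, one has $\Hom^{\bullet}(D_1',D_2')=0$ unless $D_2\preceq D_1$ (i.e.\ $\Delta'$ is exceptional w.r.t.\ the opposite partial order). The key observation is standard for mutations but must be phrased order-theoretically: if $D_1',D_2'$ are built by the mutation formulas of Definition~\ref{def:dual_collections} relative to a total refinement $\preceq_1$, then $\la E_1,\dots,E_i\ra=\la E_i',\dots,E_1'\ra$ for every $i$ (Proposition~\ref{prop:criterion_for_dual_decomposition}(i) applied to the SOD by the one-object subcategories $\la E_i\ra$). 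So if $j<i$ in the $\preceq_1$-enumeration, then $E_j'\in\la E_1,\dots,E_j\ra\subset{}^{\perp}\la E_{j+1},\dots,E_m\ra^{\perp}$... more precisely $E_i'\in\la E_i,\dots,E_m\ra^{\perp}$ — wait, I would instead directly use $\Hom^{\bullet}(E_i',E_j')$: since $\la E_1,\dots,E_j\ra$ is generated by $E_1',\dots,E_j'$ and $E_i'$ for $i>j$ lies in $\la E_1,\dots,E_j\ra^{\perp}$ (because $E_i'\in\la E_j+1,\dots\ra$-part), we get $\Hom^{\bullet}(E_i',E_j')=0$ for $i>j$, i.e.\ $\Delta'$ is exceptional for the \emph{opposite} of $\preceq_1$.

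Finally I would upgrade "opposite of the total order $\preceq_1$" to "opposite of the partial order $\preceq$". Suppose $D_1,D_2\in\Delta$ are $\preceq$-incomparable; I must show $\Hom^{\bullet}(D_1',D_2')=0$ and $\Hom^{\bullet}(D_2',D_1')=0$. Choose a total refinement in which $D_1\prec_1 D_2$ and another in which $D_2\prec_2 D_1$ — both are genuine refinements of $\preceq$ since $D_1,D_2$ are incomparable. Applying the previous paragraph to $\preceq_1$ gives $\Hom^{\bullet}(D_2',D_1')=0$; applying it to $\preceq_2$ gives $\Hom^{\bullet}(D_1',D_2')=0$. Combined with the order-independence of the set $\Delta'$ established in the first step (so the objects $D_i'$ really are the same regardless of which refinement we used), this shows that $\Hom^{\bullet}(D_1',D_2')\ne 0$ forces $D_1$ and $D_2$ to be $\preceq$-comparable, and the sign of the comparison is pinned down by running the total-order case once more. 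The main obstacle is the bookkeeping in matching the bijection $\Delta'\simeq\Delta$ across two different total refinements — one must verify that the element of $\Delta'$ singled out by the non-vanishing condition $\bR\Hom(D,-)\ne 0$ is literally the same object no matter which refinement produced it, which is exactly what Proposition~\ref{prop:criterion_for_dual_collection} guarantees. The right dual case is entirely parallel, replacing $\bL$ by $\bR$ and $\perp$ by ${}^{\perp}$, and I would simply remark that it follows by the dual argument (or by applying the left-dual statement in the opposite category $\cT^{op}$).
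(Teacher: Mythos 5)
Your overall strategy is legitimate and genuinely different from the paper's at the key step. The first assertion (independence of the refinement, including the fact that the bijection $D\mapsto D'$ itself is refinement-independent) is handled exactly as in the paper, via the uniqueness in Proposition \ref{prop:criterion_for_dual_collection}. For the second assertion the paper passes to a DG enhancement: it identifies the dual objects with the perfect DG modules $M_D$ over the directed DG category on $\Delta$ and reads off $\bR\Hom(M_{D_1},M_{D_2})=0$ unless $D_1\succeq D_2$ from the bar resolution, which gives the partial-order vanishing in one stroke, with no need for total refinements or a separate treatment of incomparable pairs. You instead stay at the triangulated level, using $\la E_1,\dots,E_j\ra=\la E_j',\dots,E_1'\ra$ together with orthogonality, and then two different total refinements to kill both Hom-spaces between duals of incomparable elements; that route works, and your observation that Proposition \ref{prop:criterion_for_dual_collection} pins down each $D'$ independently of the refinement is exactly what makes the two-refinement comparison legitimate.

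However, your central vanishing claim is stated with the indices reversed, and as written it is false: for the collection $(\cO,\cO(1))$ on $\PP^1$ one has $E_1'=\cO$, $E_2'=\cO(-1)[1]$, and $\Hom^{\bullet}(E_2',E_1')\ne 0$, so it is not true that $\Hom^{\bullet}(E_i',E_j')=0$ for $i>j$; moreover, with the paper's convention that a partially ordered exceptional collection has $\Hom^{\bullet}(D_1,D_2)=0$ unless $D_1\preceq D_2$, the vanishing you wrote would say that $\Delta'$ is exceptional for the \emph{same} order, not the opposite one. What your own setup actually yields is the correct statement $\Hom^{\bullet}(E_j',E_i')=0$ for $j<i$: one has $E_i'\in\la E_1,\dots,E_{i-1},E_{i+1},\dots,E_m\ra^{\perp}\subset\la E_1,\dots,E_j\ra^{\perp}$ (this, or equivalently $E_i'\in\la E_{j+1}',\dots,E_m'\ra$, is the right justification --- the unprimed version $E_i'\in\la E_{j+1},\dots,E_m\ra$ that you invoke parenthetically is false in general, e.g.\ $\cO(-1)[1]\notin\la\cO(1)\ra$), while $E_j'\in\la E_1,\dots,E_j\ra$, so all Homs from $E_j'$ into $E_i'$ vanish. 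With the direction corrected, comparable pairs are handled by any single refinement and incomparable pairs by your two-refinement argument, and the proof closes; the right dual case is indeed obtained by passing to $\cT^{op}$.
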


\begin{proof}Indeed, by Proposition \ref{prop:criterion_for_dual_collection} the set $\Delta'$ depends only on $\Delta,$ not on the order $\preceq.$

Now we show that $\Delta'$ is exceptional w.r.t. $\preceq'.$ Denote by $\widetilde{\cT}$ the DG enhancement of $\cT,$ and $\cA\subset\widetilde{\cT}$ the full DG subcategory with the set of objects $\Delta.$ In particular, we have an equivalence $\cT\simeq\Perf(\cA).$

We may and will assume that $\Hom_{\cA}(D,D)=K$ for $D\in\Delta,$ and $\Hom_{\cA}(D_1,D_2)=0$ unless $D_1\preceq D_2.$ By Proposition \ref{prop:criterion_for_dual_collection}, the objects of the left dual exceptional collection corresponds to the perfect DG modules $M_D\in\text{Mod-}\cA,$ $D\in\Delta,$ such that $$M_D(E)=\begin{cases}K &\text{for }E=D;\\
0 &\text{for }E\in\Delta\setminus \{D\}.\end{cases}$$
Computting $\bR\Hom_{\cA}(M_{D_1},M_{D_2})$ via the bar resolution, we see that
$\bR\Hom_{\cA}(M_{D_1},M_{D_2})=0$ unless $D_1\succeq D_2.$\end{proof}

\section{Highest weight categories and split quasi-hereditary algebras}
\label{sec:highest_weight}

\subsection{Definitions and basic properties}
\label{ssec:hw_def_and _properties}

Originally, quasi-hereditary algebras were introduced in \cite{S}. Highest weight categories were introduced in \cite{CPS}, \cite{CPS2}. Rouquier \cite{Ro} generalized this to arbitrary noetherian commutative ring. In this subsection we follow Rouquier \cite{Ro} to define split quasi-hereditary algebras and highest weight categories over an arbitrary commutative ring $K,$ not necessarily noetherian.

From now on we denote by $\cP_K$ the category of finitely generated projective $K$-modules. We write $-\otimes-$ for $-\otimes_K-,$ and $\Hom(-,-)$ for $\Hom_K(-,-).$ We also write $(-)^*$ for $\Hom(-,K).$ All the modules over associative algebras are assumed to be right unless otherwise stated.

\begin{defi}A $K$-algebra (associative, unital) $A$ is called finite projective if $A\in\cP_K.$\end{defi}

From now on in this subsection we assume that $A$ is a finite projective $K$-algebra. We denote by $\cC=\Rep(K,A)$ the category of right $A$-modules which are finitely generated projective as $K$-modules. Clearly, $\cC$ is an exact category in which the exact structure is induced from the ambient abelian category $\text{Mod-}A.$

\begin{remark}Rouquier \cite{Ro} considers the case when $K$ is noetherian, and hence so is $A.$ He deals with the category of finitely generated right $A$-modules. However, it is easy to see that all the considerations can  be made in the exact category $\Rep(K,A),$ without any assumption on a commutative ring $K.$\end{remark}




\begin{defi}\label{def:heredity_ideal}(\cite{Ro}, Definition 4.1) An ideal $J\subseteq A$ is said to be indecomposable split heredity ideal if the following conditions hold:

(i) the $K$-module $A/J$ is projective;

(ii) $J$ is projective as a right $A$-module;

(iii) $J^2=J;$

(iv) the $K$-algebra $\End_A(J_A)$ is Morita equivalent to $K.$\end{defi}

\begin{defi}A full subcategory $\cM(\cC)\subset\cC$ consists of projective objects $L\in\cC$ which are faithful as $K$-modules, and satisfy the following condition:

- for any projective object $P\in\cC,$ the evaluation map $\tau_{L,P}':L\otimes\Hom(L,P)\to P$ is a split injection of $K$-modules.

We denote by $M(\cC)$ the set of isomorphism classes of objects in $\cM(\cC).$\end{defi}

Clearly, the set $M(\cC)$ is acted on by $\Pic(K).$

\begin{prop}\label{prop:bijection_for_M(C)/Pic(K)}(\cite{Ro}, Proposition 4.7) There is a bijection between $M(\cC)/Pic(K)$ and the set of indecomposable split heredity ideals $J\subset A,$ given by $L\mapsto \im(\tau_{L,A}').$\end{prop}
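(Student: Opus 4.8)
The plan is to study the assignment $\Theta\colon[L]\mapsto J_L:=\im(\tau'_{L,A})$ directly and to exhibit an explicit inverse built from Morita theory for $K$. For $L\in\cM(\cC)$ I write $B=\End_A(L)$ and $L^\vee=\Hom_A(L,A)$; since $L$ is projective in $\cC$ it is finitely generated projective over $A$, so $L^\vee$ is a finitely generated projective left $A$-module, $L$ is reflexive, and every $A$-linear map $L\to A$ has image inside $J_L=\operatorname{tr}_A(L):=\sum_{f\in L^\vee}f(L)$, the trace ideal of $L$ in $A$. In particular $J_L$ is automatically a two-sided ideal, and $L^\vee=\Hom_A(L,J_L)$. (Here and below I read the $\Hom$ in the definition of $\cM(\cC)$ as $\Hom_A$, as must be the case for the statement to hold.)

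\emph{Step 1: $\Theta$ is well defined and $\Pic(K)$-invariant.} The argument rests on two observations about $L\in\cM(\cC)$. Feeding $P=L$ into the defining split-injectivity condition, the evaluation $L\otimes B\to L$ is a split monomorphism of finitely generated projective $K$-modules, so $\operatorname{rank}B\le\operatorname{rank}L$ pointwise on $\Spec K$; since $L$ is faithful (hence of rank $\ge1$ everywhere) and $K\hookrightarrow B$ (again by faithfulness), this forces $B=\End_A(L)\cong K$. Feeding in $P=A$ shows $J_L$ is a $K$-direct summand of $A$; in particular $A/J_L$ is finitely generated projective over $K$ (condition (i) of Definition~\ref{def:heredity_ideal}), and $\tau'_{L,A}$ is a monomorphism, so $J_L\cong L\otimes L^\vee$ as right $A$-modules ($A$ acting through $L$). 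This identification gives the remaining conditions at once: $J_L$ is a summand of $L^m$, hence projective over $A$ (condition (ii)); $J_L^2=J_L$ is the usual trace-ideal identity for $L$ projective over $A$ (condition (iii)); and $\End_A(J_L)\cong\End_A(L\otimes L^\vee)\cong\End_A(L)\otimes\End_K(L^\vee)\cong\End_K(L^\vee)$, with $L^\vee$ a finitely generated projective $K$-module which is faithful because $J_L$ is a nonzero $K$-direct summand of the faithful $K$-module $A$ --- so $L^\vee$ is a $K$-progenerator and $\End_A(J_L)$ is Morita equivalent to $K$ (condition (iv), the single matrix algebra $\End_K(L^\vee)$ accounting for ``indecomposable''). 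Finally $\operatorname{tr}_A(L\otimes Q)=\operatorname{tr}_A(L)$ for $Q\in\Pic(K)$, so $\Theta$ factors through $M(\cC)/\Pic(K)$.

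\emph{Step 2: an inverse.} For an indecomposable split heredity ideal $J$, write $\End_A(J)\cong\End_K(V)$ for a $K$-progenerator $V$ (unique up to $\Pic(K)$); then $J$ becomes an $(\End_K V,A)$-bimodule and I set $\Xi(J):=[L_J]$ with $L_J:=\Hom_{\End_K V}(V,J)$, a right $A$-module, finitely generated projective over $K$, with $V\otimes L_J\cong J$ as $(\End_K V,A)$-bimodules (Morita). One checks $\Xi$ is a two-sided inverse of $\Theta$. That $\Xi\circ\Theta=\operatorname{id}$ is immediate once one upgrades $J_L\cong L\otimes L^\vee$ of Step 1 to an isomorphism of $(\End_A(J_L),A)$-bimodules with $\End_A(J_L)=\End_K(L^\vee)$ acting on the $L^\vee$-factor: then $\Hom_{\End_K L^\vee}(L^\vee,J_L)\cong L$. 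For $\Theta\circ\Xi=\operatorname{id}$ I would verify, in order: $L_J$ is projective over $A$ (a summand of $J^s$, as $K$ is a summand of some $V^s$ and $J$ is projective over $A$); $\End_A(L_J)\cong Z(\End_A(J))\cong Z(\End_K V)\cong K$; every $A$-linear $g\colon J\to A$ satisfies $g(J)=g(J^2)\subseteq g(J)\cdot J\subseteq J$, and since $L_J$ is a summand of some $J^m$ the same holds for maps $L_J\to A$, whence $L_J^\vee=\Hom_A(L_J,J)\cong V$; next $\operatorname{tr}_A(L_J)=J$, the inclusion $\subseteq$ being the previous point and $\supseteq$ following because $J$ is a direct summand of $L_J^r$ as a right $A$-module ($V$ being a summand of $K^r$), so composing with $J\hookrightarrow A$ places every element of $J$ into $\operatorname{tr}_A(L_J)$; and finally $L_J\in\cM(\cC)$. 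For the last point: since $\tau'_{L_J,-}$ is additive, a direct summand of a split monomorphism of $K$-modules is again one, and every projective object of $\cC$ is a summand of some $A^n$, it suffices that $\tau'_{L_J,A}\colon L_J\otimes L_J^\vee\to A$ be a split monomorphism of $K$-modules; but its image is $\operatorname{tr}_A(L_J)=J$, and $L_J\otimes L_J^\vee\cong L_J\otimes V$ and $J\cong V\otimes L_J$ are finitely generated projective $K$-modules of equal rank, so $\tau'_{L_J,A}$ is an isomorphism onto $J$, which is a $K$-direct summand of $A$. Hence $L_J\in\cM(\cC)$ and $J_{L_J}=J$.

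\emph{Main obstacle.} The delicate work is concentrated in checking $\Theta\circ\Xi=\operatorname{id}$: one must see that the module $L_J$ built from $J$ really lies in $\cM(\cC)$, which hinges on the facts that every $A$-linear map out of $L_J$ lands in $J$ (this is where $J^2=J$ enters) and that the trace map $\tau'_{L_J,A}$ is then forced by a rank count to be an isomorphism onto $J$; and one must keep the Morita dictionary --- the left $\End_A(J)$-action versus the right $A$-action on the bimodule $J$, and the ambiguity of $V$ up to a line bundle, which is precisely what quotienting $M(\cC)$ by $\Pic(K)$ absorbs --- entirely consistent. By contrast, Step 1 and the identity $\Xi\circ\Theta=\operatorname{id}$ are routine rank counts and Morita bookkeeping once $B\cong K$ and $J_L\cong L\otimes L^\vee$ are in hand.
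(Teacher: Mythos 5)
The paper never proves this proposition; it is quoted verbatim from Rouquier (\cite{Ro}, Proposition 4.7), so there is no in-paper argument to measure you against. Your strategy --- read $\Hom$ as $\Hom_A$, show $\im(\tau'_{L,A})$ is the trace ideal $\operatorname{tr}_A(L)\cong L\otimes_K L^\vee$ and verify (i)--(iv) directly, then invert by Morita theory via $L_J=\Hom_{\End_K(V)}(V,J)$ --- is the standard (essentially Rouquier's) route, and the overall architecture, including the bookkeeping of the $\Pic(K)$-ambiguity in the choice of $V$, is sound.

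There are, however, three places where the justification as written does not establish what is claimed, all of them exactly where ``nonzero'' and ``faithful'' diverge over a general commutative ring. First, for condition (iv) you assert $L^\vee$ is $K$-faithful ``because $J_L$ is a nonzero $K$-direct summand of the faithful $K$-module $A$''; a nonzero direct summand of a faithful module need not be faithful (take $K=K_1\times K_2$), so this argument proves nothing. What is needed is that $J_L$ is nonzero after localizing at every prime $\mathfrak{p}$, which does hold: $L_{\mathfrak{p}}\ne 0$ since $L$ is $K$-faithful, $\Hom_A(L,A)$ commutes with the flat base change $K\to K_{\mathfrak{p}}$ because $L$ is finitely generated projective over $A$, and the trace ideal of a nonzero projective module is nonzero by the dual basis lemma; hence $\rank(L^\vee)\geq 1$ everywhere. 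Second, in checking $L_J\in\cM(\cC)$ you verify projectivity and the split-injectivity condition but never verify that $L_J$ is faithful over $K$, which is part of the definition of $\cM(\cC)$; the same localization argument repairs this (if $(L_J)_{\mathfrak{p}}=0$ then $J_{\mathfrak{p}}\cong V_{\mathfrak{p}}\otimes(L_J)_{\mathfrak{p}}=0$, so $\End_A(J)_{\mathfrak{p}}=0$, contradicting $\End_A(J)\cong\End_K(V)$ with $V$ faithful). Third, ``$\rank B\leq 1$ and $K\hookrightarrow B$ forces $B=K$'' is not valid at the level of modules (an injection of rank-one projectives need not be onto, e.g.\ $2\Z\subset\Z$); it is correct here only because the injection is the unit of the $K$-algebra $B$, and a $K$-algebra whose underlying module is invertible has bijective unit (check locally: if $1_B=\lambda b$ with $b$ a local basis, then $\lambda$ is a unit). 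These are local, easily patched defects rather than flaws in the approach; with them filled in, your verifications of $\Xi\circ\Theta=\id$ and $\Theta\circ\Xi=\id$ go through.
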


\begin{remark}1) Assume that $J\subset A$ is an indecomposable split heredity ideal. Put $B:=\End_A(J).$ Let $P$ be a right $B$-module which corresponds to $K$ via Morita equivalence $\text{Mod-}B\cong \text{Mod-}K.$ Put $$L:=P\otimes_B J.$$ Then $L\in\cM(\cC),$ and moreover its class in $M(\cC)/\Pic(K)$ corresponds to $J$ under the bijection from Proposition \ref{prop:bijection_for_M(C)/Pic(K)}.

2) Assume that $L\in\cM(\cC)$ is isomorphic to $eA$ for an idempotent $e\in A$ (this can always be achieved by replacing $A$ with $M_n(A)$). Then we have that $J=AeA\cong Ae\otimes eA,$ and $eAe=K.$\end{remark}

\begin{defi}Let $\Lambda$ be a poset. A subset $\Gamma\subseteq\Lambda$ is called an ideal (resp. a coideal) if for any $\gamma\in\Gamma$ the set $[-\infty,\gamma]:=\{\tau\in\Lambda\mid \tau\leq\gamma\}$ (resp. $[\gamma,+\infty]:=\{\tau\in\Lambda\mid \tau\geq\gamma\}$) is contained in $\Gamma.$
\end{defi}

Clearly, $\Gamma\subseteq\Lambda$ is an ideal if and only if $\Lambda\setminus\Gamma\subseteq\Lambda$ is a coideal.

\begin{defi}A split quasi-hereditary algebra over $K$ is a finite projective $K$-algebra $A,$ together with a finite poset $\Lambda$ and a set of (two-sided) ideals $\{\cI_{\Omega}\subseteq A\mid \Omega\subseteq\Lambda\text{ a coideal}\}$ satisfying the following conditions:

(i) if $\Omega\subseteq\Omega'\subseteq\Lambda$ are coideals, then $\cI_{\Omega}\subseteq \cI_{\Omega'};$

(ii) if $\Omega\subseteq\Omega'\subseteq\Lambda$ are coideals and $|\Omega'\setminus\Omega|=1,$ then $\cI_{\Omega'}/I_{\Omega}\subseteq A/\cI_{\Omega}$ is a split indecomposable heredity ideal;

(iii) $\cI_{\emptyset}=0$ and $\cI_{\Lambda}=A.$\end{defi}

\begin{prop}Let $K\to K'$ be a homomorphism of commutative rings.

1) If $A$ is a finite projective $K$-algebra, then $K'\otimes_K A$ is a finite projective $K'$-algebra. Moreover, if $J\subseteq A$ is a split heredity ideal, then $K'\otimes_K J\subseteq K'\otimes_K A$ is a split heredity ideal as well.

2) If $(A,\Lambda,\cI)$ is a split quasi-hereditary $K$-algebra, then $(K'\otimes_K A,\Lambda,K'\otimes_K\cI)$ is a split quasi-hereditary $K'$-algebra.\end{prop}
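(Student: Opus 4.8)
Both parts are base-change arguments; the one recurring point is that flatness of the relevant quotients keeps the pertinent short exact sequences exact after applying $K'\otimes_K-$, and once that is arranged everything is formal.

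For part~1, $K'\otimes_K A\in\cP_{K'}$ because base change preserves finitely generated projective modules. Given a split heredity ideal $J\subseteq A$, I would check conditions (i)--(iv) of Definition~\ref{def:heredity_ideal} for $K'\otimes_K J\subseteq K'\otimes_K A$. Since $A/J\in\cP_K$, the sequence $0\to J\to A\to A/J\to 0$ splits over $K$, so $J\in\cP_K$ as well; hence $J$ and $A/J$ are flat over $K$ and $0\to K'\otimes_K J\to K'\otimes_K A\to K'\otimes_K(A/J)\to 0$ is exact. This realizes $K'\otimes_K J$ as an ideal with quotient $K'\otimes_K(A/J)\in\cP_{K'}$, which is (i); condition (ii) holds because a right $A$-module summand of $A^{\oplus n}$ base-changes to a summand of $(K'\otimes_K A)^{\oplus n}$; for (iii), $(K'\otimes_K J)^2$ is the image of $K'\otimes_K(J\otimes_K J)\to K'\otimes_K A$, which by right exactness equals the image of $K'\otimes_K J^2=K'\otimes_K J$; and for (iv), $J$ is finitely generated projective over $A$ by (ii), so base change commutes with $\Hom_A(J,-)$, giving $\End_{K'\otimes_K A}(K'\otimes_K J)\cong K'\otimes_K\End_A(J)$, while a $K$-algebra Morita equivalent to $K$ has the form $\End_K(P)$ for a finitely generated projective faithful $K$-module $P$, whose base change $\End_{K'}(K'\otimes_K P)$ is again the endomorphism algebra of a finitely generated projective faithful $K'$-module (faithfulness being checked on fibres) and hence Morita equivalent to $K'$. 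Equivalently, by the Remark after Proposition~\ref{prop:bijection_for_M(C)/Pic(K)} one may pass to a matrix algebra $\Mat_n(A)$ and assume $J=AeA$ with $eAe=K$, in which case $K'\otimes_K J=(K'\otimes_K A)\,\bar e\,(K'\otimes_K A)$ and $\bar e\,(K'\otimes_K A)\,\bar e=K'\otimes_K eAe=K'$, making (iii)--(iv) transparent.

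For part~2 the first step is to show $A/\cI_\Omega\in\cP_K$ for every coideal $\Omega$. Choosing a chain of coideals $\emptyset=\Omega_0\subsetneq\Omega_1\subsetneq\cdots\subsetneq\Omega_r=\Omega$ with $|\Omega_{j+1}\setminus\Omega_j|=1$, this follows by induction on $j$: $A/\cI_{\Omega_0}=A\in\cP_K$, and if $A/\cI_{\Omega_j}\in\cP_K$ then, since by axiom (ii) $\cI_{\Omega_{j+1}}/\cI_{\Omega_j}$ is an indecomposable split heredity ideal of $A/\cI_{\Omega_j}$, condition (i) of Definition~\ref{def:heredity_ideal} gives $A/\cI_{\Omega_{j+1}}=(A/\cI_{\Omega_j})/(\cI_{\Omega_{j+1}}/\cI_{\Omega_j})\in\cP_K$. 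Then for any coideals $\Omega\subseteq\Omega'$ the sequence $0\to\cI_{\Omega'}/\cI_\Omega\to A/\cI_\Omega\to A/\cI_{\Omega'}\to 0$ splits over $K$, so $\cI_{\Omega'}/\cI_\Omega\in\cP_K$; in particular $\cI_\Omega$, $A/\cI_\Omega$ and $\cI_{\Omega'}/\cI_\Omega$ are flat over $K$, and applying $K'\otimes_K-$ identifies $K'\otimes_K\cI_\Omega$ with an ideal of $K'\otimes_K A$, identifies $(K'\otimes_K A)/(K'\otimes_K\cI_\Omega)$ with $K'\otimes_K(A/\cI_\Omega)$, and identifies $(K'\otimes_K\cI_{\Omega'})/(K'\otimes_K\cI_\Omega)$ with $K'\otimes_K(\cI_{\Omega'}/\cI_\Omega)$. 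Granting these identifications, the three axioms for $(K'\otimes_K A,\Lambda,K'\otimes_K\cI)$ are immediate: axiom (i) is the base change of the inclusions $\cI_\Omega\subseteq\cI_{\Omega'}$, which remain inclusions by flatness; axiom (iii) is $K'\otimes_K\cI_\emptyset=0$ and $K'\otimes_K\cI_\Lambda=K'\otimes_K A$; and axiom (ii) for $|\Omega'\setminus\Omega|=1$ is part~1 applied to the finite projective $K$-algebra $A/\cI_\Omega$ and its indecomposable split heredity ideal $\cI_{\Omega'}/\cI_\Omega$.

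The step I expect to require genuine care is condition (iv) of part~1, namely that ``Morita equivalent to $K$'' is stable under base change: over a possibly non-noetherian $K$ one should argue through the progenerator description (or the idempotent presentation $J=AeA$, $eAe=K$) and invoke the fact that a progenerator over $K$ base-changes to a progenerator over $K'$. The remaining steps are formal consequences of flatness and right exactness of the tensor product.
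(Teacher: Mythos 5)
Your proof is correct, and it is essentially the paper's argument: the paper disposes of this proposition with ``follows immediately from the definitions,'' and your write-up is just the careful unwinding of those definitions (split exactness over $K$ from projectivity of the quotients, compatibility of $\Hom_A(J,-)$ and $\End_K(P)$ with base change for the Morita condition, and induction along a chain of coideals for part 2). No gaps; the points you flag as delicate (condition (iv) and the identification of $K'\otimes_K\cI_\Omega$ as an ideal) are handled correctly.
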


\begin{proof}This follows immediately from the definitions.\end{proof}

Let now $\cC$ be a $K$-linear exact category which is equivalent to $\Rep(K,A)$ for some finite projective $K$-algebra $A.$ For any finite set of objects $S\subset Ob(\cC),$ we put $\widetilde{S}:=\{D\otimes U\mid D\in S,\,U\in \cP_K\}.$ For a strictly full subcategory $\cD\subset\cC,$ we denote by $\cC^{\cD}\subseteq\cC$ the full subcategory of objects which have a finite filtration with subquotients in $\cD.$

Let $(\Delta\subseteq Ob(\cC),\leq)$ be a finite set of objects, together with a poset structure.

\begin{defi}\label{def:highest_weight_over_ring}A pair $(\cC,\Delta)$ as above is called a highest weight category if the following conditions hold:

(i) $\End_{\cC}(D)=K$ for each $D\in\Delta;$

(ii) For $D_1,D_2\in\Delta,$ if $\Hom_{\cC}(D_1,D_2)\ne 0$ then $D_1\leq D_2$ w.r.t. the poset structure on $\Delta;$

(iii) for any $D\in\Delta$ there exists a projective object $P\in\cC$ and a surjection $f:P\to D$ such that $\ker(f)\in\cC^{\widetilde{\Delta_{>D}}}.$

(iv) the subcategory $\cC^{\widetilde{\Delta}}\subset\cC$ contains a projective generator of $\cC$.\end{defi}

\begin{theo}(\cite{Ro}, Theorem 4.16)\label{th:quasi_her_equiv_highest_weight} Let $A$ be a finite projective $K$-algebra, and $\cC:=\Rep(K,A).$

Let $(A,\Lambda,\cI)$ be a split quasi-hereditary algebra. For any $\lambda\in\Lambda,$ take some object $\Delta(\lambda)\in \cM(\Rep(K,A/\cI_{\Omega_{>\lambda}}))\subset\Rep(K,A),$ such that its image in $M(\Rep(K,A/\cI_{\Omega_{>\lambda}}))/\Pic(K)$ corresponds to the ideal $\cI_{\Omega_{\geq\lambda}}/\cI_{\Omega_{>\lambda}}\subset A/\cI_{\Omega_{>\lambda}}$ under the bijection from Proposition \ref{prop:bijection_for_M(C)/Pic(K)}. Then $(\cC,\{\Delta(\lambda)\}_{\lambda\in\Lambda})$ is a highest weight category.

Conversely, suppose that $(\cC,\{\Delta(\lambda)\}_{\lambda\in\Lambda})$ is a highest weight category. Then for each coideal $\Omega\subseteq\Lambda$ define the ideal $\cI_{\Omega}\subseteq A$ to be the annihilator of all objects in $\cC^{\widetilde{\Delta\setminus\Omega}}.$ Then $(A,\Lambda,\cI)$ is a split quasi-hereditary algebra.\end{theo}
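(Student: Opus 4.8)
The statement is an "if and only if" between the split quasi-hereditary data $(A,\Lambda,\cI)$ and the highest weight structure $(\cC,\{\Delta(\lambda)\})$, and the natural strategy is induction on $|\Lambda|$, using condition (ii) in the definition of split quasi-hereditary algebra to peel off one maximal element at a time. First I would set up the dictionary: for a coideal $\Omega\subseteq\Lambda$, the quotient $A/\cI_\Omega$ should again carry a split quasi-hereditary structure with poset $\Lambda\setminus\Omega$ (an ideal), and the category $\Rep(K,A/\cI_\Omega)$ is the full subcategory of $\cC$ of modules killed by $\cI_\Omega$, which I claim is exactly $\cC^{\widetilde{\Delta\setminus\Omega}}$ once the equivalence is established. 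Dually, for an ideal $\Gamma$, the subalgebra-type data attached to $\cI_\Gamma$ (its endomorphism ring, which by Definition \ref{def:heredity_ideal}(iv) is Morita-equivalent to $K$ at each step) recovers the $\Delta(\lambda)$ for $\lambda$ minimal. The base case $|\Lambda|=1$ is precisely Proposition \ref{prop:bijection_for_M(C)/Pic(K)} together with the observation that a single split heredity ideal $J=A$ forces $A$ Morita-equivalent to $K$, so $\cC\simeq\cP_K$ and the unique $\Delta$ is the progenerator.

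For the forward direction, given $(A,\Lambda,\cI)$, pick a maximal $\lambda_0\in\Lambda$, so $\Omega:=\{\lambda_0\}$ is a coideal of size one; then $J:=\cI_\Omega\subseteq A$ is an indecomposable split heredity ideal by condition (ii) with $\Omega'=\Omega$, $\Omega=\emptyset$. By the remark after Proposition \ref{prop:bijection_for_M(C)/Pic(K)} there is $\Delta(\lambda_0)=P\otimes_B J\in\cM(\cC)$, projective and $K$-faithful, with $\im(\tau_{\Delta(\lambda_0),A}')=J$; this verifies (i) of Definition \ref{def:highest_weight_over_ring} for $\lambda_0$ (since $\End_A(J)$ Morita-equivalent to $K$ gives $\End_\cC(\Delta(\lambda_0))=K$) and shows $\Delta(\lambda_0)$ is projective so (iii) holds trivially for it with $\ker(f)=0$. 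Passing to $\bar A:=A/J$, which is split quasi-hereditary over $\Lambda\setminus\{\lambda_0\}$ by conditions (i)–(iii) restricted to coideals avoiding $\lambda_0$, induction produces the remaining $\Delta(\lambda)$ as objects of $\Rep(K,\bar A)\subset\cC$ forming a highest weight structure there; I then need to check (ii)–(iv) for the full $\Delta$ in $\cC$. Conditions (ii) and (iii) follow because $\Hom_\cC(\Delta(\lambda_0),-)$ and $\Hom_\cC(-,\Delta(\lambda_0))$ are controlled by $J$ being a heredity ideal (standard: $J$ heredity gives $JM=0$ for $M$ in the quotient category, and projectivity of $J_A$ controls $\Ext$), while (iv) follows by lifting a projective generator of $\Rep(K,\bar A)$ through $A\to\bar A$ and adding the projective cover of $\Delta(\lambda_0)$, which exists since $J$ is a projective right $A$-module, a direct summand of $A^{\oplus r}$.

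For the converse, given a highest weight category $(\cC,\{\Delta(\lambda)\})$ with $\cC=\Rep(K,A)$, define $\cI_\Omega$ as the annihilator of $\cC^{\widetilde{\Delta\setminus\Omega}}$ for each coideal $\Omega$. Monotonicity (i) is immediate since $\Omega\subseteq\Omega'$ gives $\Delta\setminus\Omega'\subseteq\Delta\setminus\Omega$, hence more objects to annihilate for the larger coideal. For (iii): $\cI_\emptyset$ annihilates $\cC^{\widetilde{\Delta}}$, which by Definition \ref{def:highest_weight_over_ring}(iv) contains a projective generator, so $\cI_\emptyset=0$; and $\cI_\Lambda$ annihilates the zero category so equals $A$. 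The heart is (ii): for a coideal $\Omega\subsetneq\Omega'$ with $\Omega'\setminus\Omega=\{\lambda_0\}$ (so $\lambda_0$ is minimal in $\Lambda\setminus\Omega$), I must show $\cI_{\Omega'}/\cI_\Omega$ is an indecomposable split heredity ideal in $A/\cI_\Omega$. Replacing $A$ by $A/\cI_\Omega$ and $\cC$ by the highest weight category $\cC^{\widetilde{\Delta\setminus\Omega}}$ on the ideal $\Lambda\setminus\Omega$, this reduces to: if $\lambda_0$ is minimal, then the annihilator of $\cC^{\widetilde{\Delta\setminus\{\lambda_0\}}}$ is an indecomposable split heredity ideal, and this is exactly where I invoke the characterization in Proposition \ref{prop:bijection_for_M(C)/Pic(K)} in reverse — one shows $\Delta(\lambda_0)$, being projective (as $\lambda_0$ minimal forces $\Delta_{>\lambda_0}$ to involve only non-minimal weights, and a careful use of (iii) plus (iv) shows $\Delta(\lambda_0)$ is actually projective in $\cC$), $K$-faithful, and with the split-injectivity of $\tau_{\Delta(\lambda_0),P}'$ holding, lies in $\cM(\cC)$; then $J:=\im(\tau_{\Delta(\lambda_0),A}')$ is the required heredity ideal and one checks $J=\mathrm{Ann}(\cC^{\widetilde{\Delta\setminus\{\lambda_0\}}})$.

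The main obstacle I anticipate is this last identification of the annihilator ideal with the image of the trace map $\tau'$, and more generally the verification that the inductively constructed objects satisfy the projectivity/faithfulness clauses of $\cM(\cC)$ uniformly over the non-noetherian base $K$ — the arguments in \cite{Ro} are phrased for noetherian $K$ and finitely generated modules, and one must check that replacing "finitely generated $A$-modules" by the exact category $\Rep(K,A)$ does not disturb the splitting statements, which ultimately rests on all the relevant short exact sequences being $K$-split (so that $-\otimes_K K'$ preserves them) rather than merely exact. Once one grants that every exact sequence appearing in the heredity-ideal analysis is degreewise $K$-split — which it is, since the subquotients lie in $\cP_K$ — the arguments of \cite{Ro} transfer essentially verbatim.
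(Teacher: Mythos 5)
The first thing to say is that the paper itself offers no proof of this theorem: it is imported from Rouquier (\cite{Ro}, Theorem 4.16), together with the remark made earlier in the section that the noetherian hypothesis on $K$ may be dropped because all constructions take place in the exact category $\Rep(K,A)$ and the relevant exact sequences are $K$-split. Your closing paragraph identifies precisely this transfer issue, and your overall plan (induction on $|\Lambda|$, stripping off one split heredity ideal at a time via axiom (ii)) is the strategy of the cited proof, so in outline you are reconstructing the right argument.

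As written, however, the key step of your converse direction is wrong. If $\Omega\subset\Omega'$ are coideals with $\Omega'\setminus\Omega=\{\lambda_0\}$, then $\lambda_0$ is \emph{maximal}, not minimal, in $\Lambda\setminus\Omega$: coideals are upward closed, so every $\tau>\lambda_0$ lies in $\Omega'$ and hence in $\Omega$. Your verification of axiom (ii) of the split quasi-hereditary definition rests on the claim that minimality of $\lambda_0$ forces $\Delta(\lambda_0)$ to be projective; this is false in general --- a minimal-weight standard object is typically a proper quotient of any projective mapping onto it (already for path algebras of directed quivers, where in one of the two natural structures the standard objects are the simple modules, cf.\ the example at the end of Subsection \ref{ssec:gluing_of_quasi_her}). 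The correct, and much easier, statement is that for $\lambda_0$ maximal in the reduced poset, axiom (iii) of Definition \ref{def:highest_weight_over_ring} yields a surjection $P\to\Delta(\lambda_0)$ whose kernel is filtered by $\widetilde{\Delta_{>\lambda_0}}=\emptyset$, so $\Delta(\lambda_0)\cong P$ is projective, after which Proposition \ref{prop:bijection_for_M(C)/Pic(K)} can be invoked as you intend. A second place where real content is elided is the forward direction: lifting a projective from $A/J$ to $A$ produces a kernel with an extra layer filtered by objects $\Delta(\lambda_0)\otimes U$ coming from $PJ$, while axiom (iii) only permits layers indexed by weights strictly greater than $\mu$; when $\lambda_0$ and $\mu$ are incomparable this is not automatic and requires the straightening argument of \cite{Ro} (resting on the vanishing of $\Ext^1$ between standards in the forbidden direction), and the analogous point arises in your check of axiom (ii) that $\Hom(\Delta(\mu),\Delta(\lambda_0))=0$ for $\mu$ incomparable with $\lambda_0$. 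So the skeleton is correct, but the two steps carrying the actual mathematical weight are, respectively, incorrectly justified and missing.
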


\begin{prop}\label{prop:costandard}(\cite{Ro}, Proposition 4.19) Let $(\cC,\{\Delta(\lambda)\}_{\lambda\in\Lambda})$ be a highest weight category. Then there is a unique (up to a natural isomorphism) collection of objects $\{\nabla(\lambda)\}_{\lambda\in\Lambda})$ in $\cC$ satisfying the following conditions:

(i) $(C^{op},\{\nabla(\lambda)\}_{\lambda\in\Lambda})$ is a highest weight category;

(ii) for $\lambda,\mu\in\Lambda$ we have $\Ext^i(\Delta(\lambda),\nabla(\mu))=\begin{cases}k & \text{if }\lambda=\mu, \,i=0\\
0 & \text{otherwise.}\end{cases}$\end{prop}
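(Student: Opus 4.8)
\textbf{Proof proposal for Proposition \ref{prop:costandard}.}

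The statement to prove is that in a highest weight category $(\cC,\{\Delta(\lambda)\})$ there is a unique family $\{\nabla(\lambda)\}$ making $(\cC^{op},\{\nabla(\lambda)\})$ a highest weight category and satisfying the $\Ext$-orthogonality (ii) against the standard objects. The natural strategy is to reduce everything to a statement about the split quasi-hereditary algebra $A$ with $\cC=\Rep(K,A)$, invoke Theorem \ref{th:quasi_her_equiv_highest_weight} and Proposition \ref{prop:bijection_for_M(C)/Pic(K)}, and then exploit the symmetry between $A$ and $A^{op}$. First I would observe that, since $(A,\Lambda,\cI)$ is split quasi-hereditary, so is the opposite algebra: the conditions (i)--(iv) of Definition \ref{def:heredity_ideal} applied to a heredity ideal $J$ are self-dual once one knows $K$-projectivity of $A/J$, the idempotency $J^2=J$, and $\End_A(J)$ Morita equivalent to $K$ (this last being symmetric under $A\mapsto A^{op}$ because $eAe=K$ from the remark after Proposition \ref{prop:bijection_for_M(C)/Pic(K)}). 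Hence there is a split quasi-hereditary structure $(A^{op},\Lambda,\cI)$ --- note the poset need not be reversed, since a two-sided ideal of $A$ is a two-sided ideal of $A^{op}$ and a coideal stays a coideal, but one should double-check whether Rouquier's convention reverses $\Lambda$; in \cite{Ro} the costandard objects are the standard objects for $A^{op}$ with the \emph{same} poset, and that is what I would use. Applying the first half of Theorem \ref{th:quasi_her_equiv_highest_weight} to $(A^{op},\Lambda,\cI)$ produces a highest weight category $(\Rep(K,A^{op}),\{\nabla(\lambda)\})$, i.e. $(\cC^{op},\{\nabla(\lambda)\})$, which gives condition (i) and defines the candidate objects.

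The substantive part is condition (ii), the $\Ext$-vanishing $\Ext^i_{\cC}(\Delta(\lambda),\nabla(\mu))=\delta_{\lambda\mu}\delta_{i0}K$, together with uniqueness. For this I would argue by induction on $|\Lambda|$ using the heredity ideal filtration. Pick a maximal element $\lambda_0\in\Lambda$ (so $\{\lambda_0\}$ is a coideal) and let $J=\cI_{\{\lambda_0\}}=AeA$ be the corresponding split heredity ideal, with $\bar A=A/J$ split quasi-hereditary on $\Lambda\setminus\{\lambda_0\}$. Standard heredity-ideal homological algebra (exactly as in the classical Cline--Parshall--Scott / Dlab--Ringel setting, carried out over $K$ in \cite{Ro}) gives: $\Delta(\lambda_0)=eA$ is projective with $\Ext^{>0}(\Delta(\lambda_0),-)=0$; for $\mu\neq\lambda_0$ the module $\nabla(\mu)$ is annihilated by $J$, hence is an $\bar A$-module, and $\Hom_A(eA,\nabla(\mu))=\nabla(\mu)e=0$ because $e$ acts as zero on $\bar A$-modules coming from the complement --- here one uses $eAe=K$ and the explicit description $\Delta(\lambda_0)=P\otimes_{eAe}eA$ from the Remark; for $\mu=\lambda_0$, $\nabla(\lambda_0)=Ae$ (the costandard at the top), and $\Hom_A(eA,Ae)\cong eAe=K$ with higher $\Ext$ vanishing by projectivity. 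Dually, for $\lambda\neq\lambda_0$ the module $\Delta(\lambda)$ is an $\bar A$-module and $\Ext^i_A(\Delta(\lambda),\nabla(\mu))$ for $\mu\neq\lambda_0$ reduces to $\Ext^i_{\bar A}(\Delta(\lambda),\nabla(\mu))$ via the change-of-rings spectral sequence (the higher $\Tor$/$\Ext$ terms vanish because $J$ is $A$-projective on one side and $\bar A$ is $K$-projective), so the inductive hypothesis applies; and $\Ext^i_A(\Delta(\lambda),\nabla(\lambda_0))=\Ext^i_A(\Delta(\lambda),Ae)$ vanishes for all $i$ when $\lambda\neq\lambda_0$, again by the description of $\Delta(\lambda)$ as filtered by modules killed by $J$ against the $A$-module $Ae$ which is $eA$-flat, reducing to $\Hom(\Delta(\lambda)e, \text{--})=0$. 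Assembling these cases over the short exact sequences $0\to J\to A\to\bar A\to 0$ and the filtrations defining $\Delta(\lambda),\nabla(\mu)$ yields the full orthogonality table.

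Uniqueness I would extract from orthogonality itself: if $\{\nabla'(\lambda)\}$ is another family satisfying (i) and (ii), then (ii) says both $\{\nabla(\lambda)\}$ and $\{\nabla'(\lambda)\}$ form the left dual collection (in the sense of Proposition \ref{prop:criterion_for_dual_collection}, applied inside $D^b(\cC)\simeq\Perf(A)$) to the standard exceptional collection $\{\Delta(\lambda)\}$; since the standards \emph{do} form a (partially ordered) exceptional collection by Definition \ref{def:highest_weight_over_ring}(i),(ii) and generate by (iv), Proposition \ref{prop:criterion_for_dual_collection} forces $\nabla'(\lambda)\cong\nabla(\lambda)$ in the derived category, and a degree/cohomology-amplitude argument (each $\nabla(\lambda)$ sits in degree $0$, being an object of the exact category in the highest weight structure for $\cC^{op}$) upgrades this to an isomorphism of objects of $\cC$, canonical up to the evident automorphism group, which is $K^{\times}$ by (i).

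The main obstacle I anticipate is bookkeeping the change-of-rings comparison $\Ext_A^*(M,N)\cong\Ext_{\bar A}^*(M,N)$ for $\bar A$-modules $M,N$ when $J=AeA$ is a \emph{split} heredity ideal over a non-noetherian $K$: one must verify that $J$ being projective as a right $A$-module (and $\bar A$ being $K$-projective) is enough to kill the correction terms, rather than relying on any finiteness of $K$. This is where I would lean most heavily on Rouquier's \cite{Ro} setup --- in particular on the fact that $\cM(\cC)$ is defined via \emph{split} injections of $K$-modules, so that all the relevant evaluation maps split and the homological reductions go through without flatness hypotheses on $K$. Apart from that, every step is a faithful transcription of the classical highest-weight-category argument into the exact-category language of Section \ref{sec:highest_weight}.
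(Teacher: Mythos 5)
The paper does not prove Proposition \ref{prop:costandard} at all: it is imported from Rouquier (\cite{Ro}, Proposition 4.19), so there is no internal argument to compare you against, and what you have written is in effect a reconstruction of Rouquier's proof. In outline it is correct and it is the standard route: existence by passing to the opposite algebra and applying Theorem \ref{th:quasi_her_equiv_highest_weight} to $(A^{op},\Lambda,\cI)$ (transporting back through the duality $\Rep(K,A)^{op}\simeq\Rep(K,A^{op})$, $M\mapsto M^*$); the $\Ext$-orthogonality by induction along a heredity chain; uniqueness from the observation that condition (ii) pins the $\nabla(\lambda)$ down as the dual collection to the full exceptional collection of standards. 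Note that the two external ingredients you lean on are available inside the paper without circularity: the change-of-rings comparison for $\bar A=A/J$-modules is exactly Proposition \ref{prop:D^b_of_highest_weight_category} 1) (whose proof, like Proposition \ref{prop:criterion_for_dual_collection}, does not use Proposition \ref{prop:costandard}), and full generation by the standards is the first assertion of Proposition \ref{prop:D^b_of_highest_weight_category} 3).

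A few steps need tightening. (1) The left--right symmetry of Definition \ref{def:heredity_ideal} is a genuine lemma, not a formality: conditions (ii) and (iv) are stated for $J$ as a right module, and one needs the bimodule identification $J\cong Ae\otimes_K eA$ with $eAe\cong K$ (Rouquier's argument, classically Cline--Parshall--Scott) to get projectivity of $J$ as a left module and the dual endomorphism condition before you may quasi-hereditarily structure $A^{op}$. (2) Your claim that $\nabla(\mu)$ is annihilated by $J=\cI_{\{\lambda_0\}}$ for every $\mu\neq\lambda_0$ is not immediate when $\mu$ is incomparable to $\lambda_0$ (the containment $\cI_{\{\lambda_0\}}\subseteq\cI_{\Omega_{>\mu}}$ is not part of the axioms); over a field it is the socle/composition-factor argument, and over general $K$ it can be checked after base change to residue fields, all modules being finitely generated projective over $K$. (3) Keep sides straight: as a right $A$-module the top costandard is $(Ae)^*$, not $Ae$; the computations still go through, since $\Hom_A(eA,(Ae)^*)\cong(eAe)^*\cong K$ and $\Ext^{>0}_A(\Delta(\lambda),(Ae)^*)=0$ because $Ae$ is projective as a left module, so the relevant Tor's vanish and the $K$-dualized resolution stays exact. (4) On the poset convention you flag: with Definition \ref{def:highest_weight_over_ring} as stated, conditions (ii) and (iii) transport to $\cC^{op}$ with the poset unreversed, which is the convention you chose and the one under which your verification of (i) works; the reversed order is what the $\nabla(\lambda)$ carry when regarded as the dual exceptional collection inside $D^b(\cC)$ (Propositions \ref{prop:order_on_dual_collection} and \ref{prop:D^b_of_highest_weight_category} 3)), which is what the remark following the proposition records, so be explicit about which ordering you mean at each step.
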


\begin{remark}In Proposition \ref{prop:costandard} (i), the order on $\{\nabla(\lambda)\}_{\lambda\in\Lambda})$ is the opposite of the order on $\{\Delta(\lambda)\}_{\lambda\in\Lambda}).$\end{remark}

\begin{prop}\label{prop:product_of_quasi_her}1) Let $(A_1,\Lambda_1,\cI_1)$ and $(A_2,\Lambda_2,\cI_2)$ be split quasi-hereditary $K$-algebras. Then $(A_1\otimes A_2,\Lambda_1\times\Lambda_2,\cI)$ is also a split quasi-hereditary algebra, where for a coideal $\Omega\subseteq\Lambda_1\times\Lambda_2$ we put
$$\cI_{\Omega}=\sum\limits_{(\lambda_1,\lambda_2)\in\Omega}(\cI_1)_{[\lambda_1,+\infty]}\otimes (\cI_2)_{[\lambda_2,+\infty]}.$$

2) Denote the collection of standard (resp. costandard) objects in $\Rep(K,A_i)$ by $\{\Delta_i(\lambda)\}_{\lambda\in\Lambda_i}$ (resp. $\nabla_i(\lambda)_{\lambda\in\Lambda_i}$). Then the collection of standard (resp. costandard) objects in $\Rep(K,A_1\otimes A_2)$ is exactly $\{\Delta_1(\lambda_1)\otimes\Delta_2(\lambda_2)\}_{(\lambda_1,\lambda_2)\in\Lambda_1\times\Lambda_2}$ (resp. $\{\nabla_1(\lambda_1)\otimes\nabla_2(\lambda_2)\}_{(\lambda_1,\lambda_2)\in\Lambda_1\times\Lambda_2}$)\end{prop}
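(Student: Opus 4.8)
The plan is to verify the axioms of a split quasi-hereditary algebra for $(A_1\otimes A_2,\Lambda_1\times\Lambda_2,\cI)$ directly, using the characterization via highest weight categories (Theorem \ref{th:quasi_her_equiv_highest_weight}) to make part 2) essentially automatic. First I would observe that $A_1\otimes A_2\in\cP_K$ since $A_i\in\cP_K$ and $\cP_K$ is closed under tensor products, so $A_1\otimes A_2$ is a finite projective $K$-algebra. Next I would establish that for each coideal $\Omega\subseteq\Lambda_1\times\Lambda_2$, the $K$-submodule $\cI_\Omega=\sum_{(\lambda_1,\lambda_2)\in\Omega}(\cI_1)_{[\lambda_1,+\infty]}\otimes(\cI_2)_{[\lambda_2,+\infty]}$ is a two-sided ideal (clear, as a sum of products of ideals) and that it depends on $\Omega$ monotonically, giving axiom (i); axiom (iii) is immediate since $\cI_\emptyset=0$ and $\cI_{\Lambda_1\times\Lambda_2}\supseteq(\cI_1)_{[\min,+\infty]}\otimes(\cI_2)_{[\min,+\infty]}$ but more directly contains $A_1\otimes A_2$ by taking a maximal element, so $\cI_{\Lambda_1\times\Lambda_2}=A_1\otimes A_2$.

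The heart of the argument is axiom (ii): if $\Omega\subseteq\Omega'$ are coideals of $\Lambda_1\times\Lambda_2$ with $\Omega'\setminus\Omega=\{(\mu_1,\mu_2)\}$, then $\cI_{\Omega'}/\cI_\Omega$ is a split indecomposable heredity ideal in $A/\cI_\Omega$ where $A=A_1\otimes A_2$. The key point is that $(\mu_1,\mu_2)$ being a minimal element of the coideal $\Omega'$ forces $\mu_i$ to be minimal in the coideals $\Omega_i'$ it generates in each factor; I would track down a convenient presentation of $\cI_{\Omega'}/\cI_\Omega$ as a tensor product of heredity ideals. Concretely, writing $J_i=(\cI_i)_{[\mu_i,+\infty]}/(\cI_i)_{\Omega_i'\setminus\{\mu_i\}}$ for the split indecomposable heredity ideal in the appropriate quotient of $A_i$ supplied by the quasi-hereditary structure on $A_i$, I expect to identify $\cI_{\Omega'}/\cI_\Omega$ with the image of $J_1\otimes J_2$ in $A/\cI_\Omega$, and then to use Remark after Proposition \ref{prop:bijection_for_M(C)/Pic(K)} part 2): after passing to a matrix algebra we may take $J_i=A_i e_i A_i\cong A_ie_i\otimes e_iA_i$ with $e_iA_ie_i=K$, so that $J_1\otimes J_2$ corresponds to the idempotent $e_1\otimes e_2$ with $(e_1\otimes e_2)(A_1\otimes A_2)(e_1\otimes e_2)=K\otimes K=K$, which exhibits it as a split indecomposable heredity ideal; one then checks this descends correctly to the quotient $A/\cI_\Omega$. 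Properness and projectivity of the relevant modules over the quotient algebra follow by combining the corresponding facts for the two factors with the flatness of tensoring over $K$ with finite projective modules.

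For part 2), rather than computing standard objects by hand I would invoke Theorem \ref{th:quasi_her_equiv_highest_weight}: by definition $\Delta(\lambda_1,\lambda_2)$ is the object of $\cM(\Rep(K,A/\cI_{\Omega_{>(\lambda_1,\lambda_2)}}))$ whose class corresponds to $\cI_{\Omega_{\geq(\lambda_1,\lambda_2)}}/\cI_{\Omega_{>(\lambda_1,\lambda_2)}}$. Since we have just identified that subquotient ideal with (the image of) $J_1\otimes J_2$ where $J_i$ corresponds to $\Delta_i(\lambda_i)$, and since the functor $L\mapsto\im(\tau'_{L,A})$ of Proposition \ref{prop:bijection_for_M(C)/Pic(K)} is compatible with tensor products (the evaluation map for $L_1\otimes L_2$ over $A_1\otimes A_2$ is the tensor product of the evaluation maps, and a tensor product of split injections of finite projective $K$-modules is again one), we get $\Delta(\lambda_1,\lambda_2)\cong\Delta_1(\lambda_1)\otimes\Delta_2(\lambda_2)$. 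The costandard statement follows either by the same argument applied to $A^{op}=A_1^{op}\otimes A_2^{op}$, or from Proposition \ref{prop:costandard} together with the Künneth isomorphism $\Ext^i_{A_1\otimes A_2}(\Delta_1(\lambda_1)\otimes\Delta_2(\lambda_2),\nabla_1(\mu_1)\otimes\nabla_2(\mu_2))\cong\bigoplus_{p+q=i}\Ext^p_{A_1}(\Delta_1(\lambda_1),\nabla_1(\mu_1))\otimes\Ext^q_{A_2}(\Delta_2(\lambda_2),\nabla_2(\mu_2))$, which is $K$ when $(\lambda_1,\lambda_2)=(\mu_1,\mu_2)$ and $i=0$ and vanishes otherwise, so $\{\nabla_1(\mu_1)\otimes\nabla_2(\mu_2)\}$ satisfies the characterizing property in Proposition \ref{prop:costandard}(ii) — and this Künneth formula holds because $\Delta_i(\lambda_i)$ is $K$-projective, making everything in sight flat over $K$ and the usual external-product spectral sequence degenerate. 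The main obstacle I anticipate is the bookkeeping in axiom (ii): matching up the lattice of coideals in the product poset with tensor products of ideals in the factors, and in particular checking that when $\Omega'\setminus\Omega$ is a single element the relevant subquotient really is a single tensor product $J_1\otimes J_2$ rather than a sum of several such, which uses crucially the minimality of $(\mu_1,\mu_2)$ in $\Omega'$.
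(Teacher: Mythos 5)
Your route is genuinely different from the paper's. The paper does not touch the ideal lattice of $A_1\otimes A_2$ at all: it invokes Theorem \ref{th:quasi_her_equiv_highest_weight} once and reduces everything to checking that $(\Rep(K,A_1\otimes A_2),\{\Delta_1(\lambda_1)\otimes\Delta_2(\lambda_2)\})$ satisfies the four axioms of Definition \ref{def:highest_weight_over_ring}, which comes down to K\"unneth-type isomorphisms for $\Hom$'s between finitely generated $K$-projective modules, tensoring the projective covers $P_i\twoheadrightarrow\Delta_i(\lambda_i)$, and tensoring filtrations; part 1) is then automatic. You instead verify the heredity-ideal axioms on the algebra side and recover part 2) from Proposition \ref{prop:bijection_for_M(C)/Pic(K)}. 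Both directions of travel through Theorem \ref{th:quasi_her_equiv_highest_weight} are legitimate, but the paper's side is the one on which the checking is actually "straightforward"; yours requires auxiliary facts about the ideal lattice (e.g.\ $\cI_{\Omega}=\sum_{\tau\in\Omega}\cI_{[\tau,+\infty]}$, $\cI_{\Omega_1}\cI_{\Omega_2}\subseteq\cI_{\Omega_1\cap\Omega_2}$, $K$-projectivity of all $A/\cI_{\Omega}$) that are nowhere recorded in the paper and would have to be proved first.

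The concrete soft spot is exactly the step you wave at with "one then checks this descends correctly to the quotient $A/\cI_\Omega$." The idempotent presentation $J_i=\bar A_ie_i\bar A_i$ lives in $\bar A_i:=A_i/(\cI_i)_{>\mu_i}$ (or a matrix algebra over it), so the element $e_1\otimes e_2$ and the heredity-ideal structure of $J_1\otimes J_2$ live in $\bar A_1\otimes\bar A_2$. But for a general coideal $\Omega$ with $\Omega'\setminus\Omega=\{(\mu_1,\mu_2)\}$ there is no algebra homomorphism in either direction between $\bar A_1\otimes\bar A_2$ and $A/\cI_\Omega$: one would need $\cI_\Omega$ to contain, or be contained in, $(\cI_1)_{>\mu_1}\otimes A_2+A_1\otimes(\cI_2)_{>\mu_2}$, and both inclusions fail as soon as $\Omega$ contains elements incomparable to $(\mu_1,\mu_2)$. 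So nothing "descends"; you must verify the four conditions of Definition \ref{def:heredity_ideal} for the image $\cI_{\Omega'}/\cI_\Omega\subseteq A/\cI_\Omega$ directly (right-projectivity over $A/\cI_\Omega$ is the delicate one, and it is essentially axiom (iii) of Definition \ref{def:highest_weight_over_ring} in disguise), using the bimodule surjection from $J_1\otimes J_2$ together with the ideal-lattice lemmas above; the same computation is needed again in part 2) to see that $\im(\tau'_{\Delta_1\otimes\Delta_2})$ is the correct subquotient ideal inside $A/\cI_{\Omega_{>(\lambda_1,\lambda_2)}}$. Finally, a small logical point: identifying the costandards via Proposition \ref{prop:costandard}(ii) and K\"unneth alone is not enough, since the characterization there requires condition (i) as well; your alternative argument through $A^{op}\cong A_1^{op}\otimes A_2^{op}$ is the one that actually closes this.
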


\begin{proof}Indeed, by Theorem \ref{th:quasi_her_equiv_highest_weight} it suffices to show that $$(\Rep(K,A_1\otimes A_2),\{\Delta_1(\lambda_1)\otimes\Delta_2(\lambda_2)\}_{(\lambda_1,\lambda_2)\in\Lambda_1\times\Lambda_2})$$ is a highest weight category. This is proved by straightforward checking.\end{proof}

For an ideal $\Gamma\subseteq\Delta,$ denote by $\cC[\Gamma]\subseteq\cC$ the full subcategory which consists of objects $M\in Ob(\cC)$ such that $\Hom(D,M)=0$ for each $D\in\Delta\setminus\Gamma.$

\begin{prop}\label{prop:C[Gamma]}1) Let $(A,\Lambda,\cI)$ be a split quasi-hereditary algebra, and $\Omega\subset\Lambda$ a coideal. Then the algebra $A/I_{\Omega}$ has a natural structure of a split quasi-hereditary algebra $(A/I_{\Omega},\overline{\Lambda},\overline{\cI}).$ Here $\overline{\Lambda}=\Lambda\setminus\Omega,$ and for any coideal $\overline{\Omega}\subset \overline{\Lambda}$ we have $$\cI_{\overline{\Omega}}=\cI_{\overline{\Omega}\sqcup\Omega}/\cI_{\Omega}\subset A/\cI_{\Omega}.$$

2) In the assumptions of 1), if $(\cC=\Rep(K,A),\{\Delta(\lambda)\}_{\lambda\in\Lambda})$ is the corresponding highest weight category, then $(\cC[\overline{\Lambda}],\{\Delta_{\overline{\lambda}}\}_{\overline{\lambda}\in\overline{\Lambda}})$ is the highest weight category corresponding to $(A/I_{\Omega},\overline{\Lambda},\overline{\cI}).$ Moreover, the functor $D^b(\cC[\overline{\Lambda}])\to D^b(\cC)$ is fully faithful.\end{prop}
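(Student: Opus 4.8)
The plan is to treat parts 1) and 2) in tandem, deducing 1) as an essentially formal consequence once the correct highest weight structure on $\cC[\overline\Lambda]$ has been identified. First I would verify that the proposed family of ideals $\{\cI_{\overline\Omega}\}$ on $A/\cI_\Omega$ satisfies the three axioms of a split quasi-hereditary algebra. Axiom (i) (monotonicity) and (iii) (the empty and full coideals) are immediate from the correspondence $\overline\Omega\mapsto\overline\Omega\sqcup\Omega$ between coideals of $\overline\Lambda=\Lambda\setminus\Omega$ and coideals of $\Lambda$ containing $\Omega$, together with the third isomorphism theorem for the quotients $\cI_{\overline\Omega\sqcup\Omega}/\cI_\Omega$. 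For axiom (ii), if $\overline\Omega\subseteq\overline\Omega'$ with $|\overline\Omega'\setminus\overline\Omega|=1$, then $\Omega\sqcup\overline\Omega\subseteq\Omega\sqcup\overline\Omega'$ is a one-step enlargement of coideals of $\Lambda$, so by hypothesis $\cI_{\Omega\sqcup\overline\Omega'}/\cI_{\Omega\sqcup\overline\Omega}$ is a split indecomposable heredity ideal in $A/\cI_{\Omega\sqcup\overline\Omega}$; but this is exactly $\overline\cI_{\overline\Omega'}/\overline\cI_{\overline\Omega}$ inside $(A/\cI_\Omega)/\overline\cI_{\overline\Omega}=A/\cI_{\Omega\sqcup\overline\Omega}$, so axiom (ii) holds.

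Next, for part 2), I would identify the highest weight category attached to $(A/\cI_\Omega,\overline\Lambda,\overline\cI)$ via Theorem \ref{th:quasi_her_equiv_highest_weight}. The category $\Rep(K,A/\cI_\Omega)$ is the full subcategory of $\cC=\Rep(K,A)$ consisting of modules annihilated by $\cI_\Omega$; I claim this coincides with $\cC[\overline\Lambda]$, i.e. with the modules $M$ such that $\Hom(\Delta(\lambda),M)=0$ for all $\lambda\in\Omega$. One inclusion is clear since each $\Delta(\lambda)$ with $\lambda\in\Omega$ is a module over $A/\cI_{\Omega_{>\lambda}}$ on which $\cI_\Omega$ need not vanish — here one uses the characterization of $\cI_\Omega$ in the converse part of Theorem \ref{th:quasi_her_equiv_highest_weight} as the annihilator of $\cC^{\widetilde{\Delta\setminus\Omega}}$, dualized appropriately, to see that $M$ is killed by $\cI_\Omega$ precisely when it has no maps from (equivalently, admits a $\Delta$-filtration supported off) the $\lambda\in\Omega$. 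Granting this identification, the standard objects of $A/\cI_\Omega$ are, by construction in Theorem \ref{th:quasi_her_equiv_highest_weight}, the objects of $\cM$ over the quotients $A/\cI_{\Omega_{>\overline\lambda}\sqcup\Omega}$ associated to the heredity ideals $\cI_{\overline\Omega_{\geq\overline\lambda}\sqcup\Omega}/\cI_{\overline\Omega_{>\overline\lambda}\sqcup\Omega}$; since the coideal $\Omega_{>\lambda}$ of $\Lambda$ equals $\overline\Omega_{>\overline\lambda}\sqcup\Omega$ for $\lambda=\overline\lambda\notin\Omega$, these are literally the same modules $\Delta(\lambda)$, $\lambda\in\overline\Lambda$, that occur in the original highest weight category. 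Thus $(\cC[\overline\Lambda],\{\Delta_{\overline\lambda}\}_{\overline\lambda\in\overline\Lambda})$ is the highest weight category corresponding to $(A/\cI_\Omega,\overline\Lambda,\overline\cI)$.

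Finally, full faithfulness of $D^b(\cC[\overline\Lambda])\to D^b(\cC)$ reduces to showing that $\Ext^i_{A}(M,N)=\Ext^i_{A/\cI_\Omega}(M,N)$ for $M,N\in\Rep(K,A/\cI_\Omega)$ and all $i\geq 0$. I would prove this by induction on the size of $\Omega$, peeling off one element of $\Omega$ at a time so that at each step one adjoins a single split indecomposable heredity ideal $J\subset A$ with $J^2=J$, $J$ projective as a right $A$-module, and $A/J$ projective over $K$; for such $J$ the classical heredity-ideal argument (e.g. as in \cite{CPS}, \cite{Ro}) gives $\Tor^A_i(M,A/J)=0$ for $i>0$ when $M$ is an $A/J$-module, hence $\Ext^i_A(M,N)\xrightarrow{\sim}\Ext^i_{A/J}(M,N)$, and one iterates. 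This inductive Ext-comparison step is the part requiring genuine input from the theory of heredity ideals rather than formal manipulation; everything else is bookkeeping with coideals and the third isomorphism theorem, and the main obstacle is being careful that the module $\Delta(\lambda)$ produced by Theorem \ref{th:quasi_her_equiv_highest_weight} for the sub-quotient algebra genuinely agrees (not merely up to the $\Pic(K)$-ambiguity) with the one in the ambient category, which one arranges by choosing the representatives compatibly from the start.
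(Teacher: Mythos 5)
Your route is genuinely different from the paper's: the paper disposes of part 2) by citing \cite{Ro}, Proposition 4.13, and then deduces part 1) from 2) via Theorem \ref{th:quasi_her_equiv_highest_weight}, whereas you attempt a self-contained argument in the opposite direction. Two pieces of your plan are fine: the direct verification of the quasi-hereditary axioms for $(A/\cI_{\Omega},\overline{\Lambda},\overline{\cI})$ (the correspondence $\overline{\Omega}\mapsto\overline{\Omega}\sqcup\Omega$ between coideals of $\overline{\Lambda}$ and coideals of $\Lambda$ containing $\Omega$, plus the third isomorphism theorem, does give axioms (i)--(iii)), and the inductive $\Ext$-comparison for full faithfulness of $D^b(\cC[\overline{\Lambda}])\to D^b(\cC)$ (a maximal element of the coideal $\Omega$ is maximal in $\Lambda$, so one can peel off one split heredity ideal at a time and use the classical change-of-rings isomorphism; this is essentially the mechanism the paper itself uses later in Proposition \ref{prop:D^b_of_highest_weight_category}).

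The genuine gap is in the middle step, the identification of the highest weight category of $(A/\cI_{\Omega},\overline{\Lambda},\overline{\cI})$ with $(\cC[\overline{\Lambda}],\{\Delta(\overline{\lambda})\}).$ First, the poset identity you use --- that the coideal $\{\mu\in\Lambda\mid\mu>\lambda\}$ equals $\{\mu\in\overline{\Lambda}\mid\mu>\lambda\}\sqcup\Omega$ for $\lambda\in\overline{\Lambda}$ --- is false in general: take $\Lambda=\{a,b\}$ with $a,b$ incomparable, $\Omega=\{b\},$ $\lambda=a;$ the left side is empty and the right side is $\{b\}.$ Only the inclusion $\Lambda_{>\lambda}\subseteq\overline{\Lambda}_{>\lambda}\sqcup\Omega$ holds, so the standard module of $A/\cI_{\Omega}$ at $\overline{\lambda}$ is produced by Theorem \ref{th:quasi_her_equiv_highest_weight} from a heredity ideal in the \emph{further} quotient $A/\cI_{\overline{\Lambda}_{>\lambda}\sqcup\Omega},$ not in $A/\cI_{\Lambda_{>\lambda}};$ the two constructions are not ``literally the same,'' and their agreement with the original $\Delta(\lambda)$ is exactly what must be proved. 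Second, the identification $\Rep(K,A/\cI_{\Omega})=\cC[\overline{\Lambda}]$ is only half argued: the implication ``$M\cI_{\Omega}=0\Rightarrow\Hom(\Delta(\lambda),M)=0$ for $\lambda\in\Omega$'' is easy, but the converse requires knowing that $\cI_{\Omega},$ as a right $A$-module, has a finite filtration with subquotients $\Delta(\lambda)\otimes U,$ $U\in\cP_K,$ $\lambda\in\Omega,$ and the equivalence you invoke instead (``no maps from $\Delta(\lambda),$ $\lambda\in\Omega$'' iff ``admits a $\Delta$-filtration supported off $\Omega$'') is false --- objects of $\cC[\overline{\Lambda}]$ such as costandard or simple modules need not be standardly filtered. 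Both missing ingredients are precisely the content of Rouquier's Propositions 4.13/4.15, i.e.\ of the citation the paper relies on; to make your proof self-contained you would need to establish them, for instance by checking directly that $(\cC[\overline{\Lambda}],\{\Delta(\lambda)\}_{\lambda\in\overline{\Lambda}})$ satisfies Definition \ref{def:highest_weight_over_ring} and that its ideals in the sense of Theorem \ref{th:quasi_her_equiv_highest_weight} are the $\overline{\cI}_{\overline{\Omega}}.$
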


\begin{proof}2) is proved in \cite{Ro}, Proposition 4.13. By Theorem \ref{th:quasi_her_equiv_highest_weight}, 2) implies 1).\end{proof}

We now discuss the derived-categorical viewpoint on highest weight categories.

\begin{prop}\label{prop:D^b_of_highest_weight_category}1) Let $J\subseteq A$ be an indecomposable split heredity ideal in a finite projective $K$-algebra $A.$ Then we have a semi-orthogonal decomposition
$$\Perf(A)=\la \Perf(A/J), \Perf(K)\ra.$$

2) Suppose that $(A,\Lambda,\cI)$ is a split quasi-hereditary algebra. Then the algebra $A$ is homologically smooth and proper over $K.$ In particular, we have that $\Perf(A)=D^b(\Rep(K,A)).$

3) In the assumption of 2), let $\{\Delta(\lambda)\}_{\lambda\in\Lambda}$ be the poset of standard objects in $\Rep(K,A).$ Then $\{\Delta(\lambda)\}_{\lambda\in\Lambda}$ is a full exceptional collection in $D^b(\Rep(K,A)),$ and its left dual is exactly $\{\nabla(\lambda)\}_{\lambda\in\Lambda}.$\end{prop}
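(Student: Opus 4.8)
The plan is to prove the three parts in order, as 2) and 3) will build on 1).

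\emph{Part 1.} The key is that an indecomposable split heredity ideal $J\subseteq A$ behaves like the projective module generating an exceptional component. By Definition \ref{def:heredity_ideal}, $J$ is projective as a right $A$-module, $J^2=J$, and $B:=\End_A(J_A)$ is Morita equivalent to $K$. Pick, as in the Remark following Proposition \ref{prop:bijection_for_M(C)/Pic(K)}, an object $L:=P\otimes_B J\in\cM(\cC)$ with $\End_A(L)=K$; passing to $M_n(A)$ if necessary (which does not change $\Perf$), we may assume $L=eA$ for an idempotent $e$ with $eAe=K$ and $J=AeA\cong Ae\otimes_K eA$. Then $L=eA$ is an exceptional object of $\Perf(A)$, and I claim $\la L\ra\simeq\Perf(K)$ is right admissible with right orthogonal $\Perf(A/J)$. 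Indeed, the functor $M\mapsto M\otimes_A^{\bL}(A/J)$ is left adjoint to the inclusion $\Perf(A/J)\hookrightarrow\Perf(A)$ (since $A/J$ is a $K$-projective, hence perfect, $A$-bimodule by condition (i), its image lies in $\Perf(A)$), and $\bR\Hom_A(eA,A/J)=e(A/J)=eA/eJ$. Now $eJ=eAeA=eA$ because $eAe=K\ni 1$, so $\bR\Hom_A(eA,A/J)=0$; thus $\Perf(A/J)\subseteq\la L\ra^\perp$. For generation: for any perfect $M$, the counit triangle $M\otimes_A^{\bL} J\to M\to M\otimes_A^{\bL}(A/J)$ together with $M\otimes_A^{\bL}J\cong (M\otimes_A^{\bL}Ae)\otimes_K eA\cong (Me)\otimes_K eA\in\la eA\ra$ (using $J\cong Ae\otimes_K eA$ as a bimodule and that $Ae$ is $A$-perfect, $eA$ is $K$-perfect) shows $M$ lies in the triangulated subcategory generated by $\la L\ra$ and $\Perf(A/J)$. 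Hence $\Perf(A)=\la\Perf(A/J),\Perf(K)\ra$.

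\emph{Part 2.} Iterate Part 1 along a maximal chain of coideals $\emptyset=\Omega_0\subset\Omega_1\subset\dots\subset\Omega_N=\Lambda$ with $|\Omega_{i}\setminus\Omega_{i-1}|=1$ (such a refinement exists since $\Lambda$ is a finite poset). By axiom (ii) of split quasi-heredity, each $\cI_{\Omega_i}/\cI_{\Omega_{i-1}}\subset A/\cI_{\Omega_{i-1}}$ is an indecomposable split heredity ideal, so Part 1 gives $\Perf(A/\cI_{\Omega_{i-1}})=\la\Perf(A/\cI_{\Omega_i}),\Perf(K)\ra$; composing these yields a semi-orthogonal decomposition of $\Perf(A)$ into $N$ copies of $\Perf(K)$, i.e.\ a full exceptional collection of length $N$. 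In particular $\Perf(A)$ is generated by exceptional (hence proper) objects, so $\Perf(A)$ is proper; and smoothness follows from Proposition \ref{prop:gluing_of_smooth_DG} applied inductively (each $\Perf(K)$ is trivially smooth and proper over $K$). With $A$ smooth and proper, Proposition \ref{prop:Perf_vs_PsPerf} gives $\Perf(A)=\PsPerf(A)$; since $A$ is $K$-projective, $\PsPerf(A)$ is exactly $D^b(\Rep(K,A))$ (a bounded complex of $A$-modules with $K$-perfect cohomology is quasi-isomorphic to a bounded complex of objects of $\Rep(K,A)$), whence $\Perf(A)=D^b(\Rep(K,A))$.

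\emph{Part 3.} By Theorem \ref{th:quasi_her_equiv_highest_weight}, $(\cC,\{\Delta(\lambda)\})$ is a highest weight category, and by construction $\Delta(\lambda)$ is the standard object attached to the heredity ideal $\cI_{\Omega_{\geq\lambda}}/\cI_{\Omega_{>\lambda}}$ appearing at the step where $\lambda$ is adjoined. Tracking the exceptional collection produced in Part 2 under a linear refinement of $\Lambda$ compatible with the chosen chain, the $i$-th exceptional object is precisely (the image in $\Perf(A)$ of) $\Delta(\lambda_i)$; axiom (ii) of Definition \ref{def:highest_weight_over_ring} gives $\Hom^\bullet(\Delta(\lambda),\Delta(\mu))=0$ unless $\lambda\leq\mu$, so $\{\Delta(\lambda)\}_{\lambda\in\Lambda}$ is a (partially ordered, hence after refinement ordinary) full exceptional collection. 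Finally, by Proposition \ref{prop:costandard}(ii) we have $\Ext^l(\Delta(\lambda),\nabla(\mu))=K$ if $l=0,\lambda=\mu$ and $0$ otherwise; comparing with the characterization of the left dual collection in Proposition \ref{prop:criterion_for_dual_collection} (the ordering on $\{\nabla(\lambda)\}$ being opposite, matching the dual-collection indexing, cf.\ Proposition \ref{prop:order_on_dual_collection}), we conclude that $\{\nabla(\lambda)\}_{\lambda\in\Lambda}$ is the left dual of $\{\Delta(\lambda)\}_{\lambda\in\Lambda}$.

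\emph{Main obstacle.} The genuinely delicate point is Part 1: verifying that $\bR\Hom_A(eA,A/J)=0$ (equivalently $eJ=eA$) and that the counit triangle really decomposes every perfect module, all while being careful that the reduction $L=eA$ via $A\rightsquigarrow M_n(A)$ is harmless for $\Perf$ and that the bimodule identification $J\cong Ae\otimes_K eA$ is compatible with the derived tensor products. Everything downstream (Parts 2 and 3) is a formal iteration plus bookkeeping against the earlier propositions.
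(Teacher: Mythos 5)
Your overall route coincides with the paper's: part 1) via the heredity ideal, part 2) by induction along a maximal chain of coideals together with Proposition \ref{prop:gluing_of_smooth_DG} (plus Proposition \ref{prop:Perf_vs_PsPerf}), and part 3) via Proposition \ref{prop:costandard}(ii) and the characterization of dual collections in Proposition \ref{prop:criterion_for_dual_collection}. Parts 2) and 3) are sound and are at the same level of detail as the paper; your generation argument in part 1), using $J\cong Ae\otimes_K eA$ and the triangle $M\stackrel{\bL}{\otimes}_AJ\to M\to M\stackrel{\bL}{\otimes}_A(A/J),$ is actually more explicit than the paper's.

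The gap is in part 1). The assertion $\Perf(A)=\la\Perf(A/J),\Perf(K)\ra$ includes the claim that restriction of scalars $\pi_*:\Perf(A/J)\to\Perf(A)$ is \emph{fully faithful}, and you never verify this: you call it ``the inclusion'' and declare the right orthogonal of $\la eA\ra$ to be $\Perf(A/J),$ whereas your argument only shows that the image of $\pi_*$ lies in that orthogonal and that, together with $eA,$ it generates. Full faithfulness is precisely the nontrivial point the paper establishes, via $\bR\Hom_A(J_A,A/J)=0$ (a consequence of $J^2=J$ and $J_A$ projective) and the resolution $0\to J_A\to A\to A/J\to 0,$ giving $\bR\Hom_A(A/J,A/J)\cong A/J;$ equivalently, in your idempotent picture, $A/J\stackrel{\bL}{\otimes}_AJ\cong (A/J)e\otimes_K eA=0$ since $Ae\subseteq AeA=J,$ so $\pi$ is a homological epimorphism. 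Thus the repair is one line with ingredients you already have, but it must be stated. Two smaller slips: you justify $A/J\in\Perf(A)$ (needed for $\pi_*$ to land in $\Perf(A)$) by condition (i) alone, but $K$-projectivity does not give perfection over $A$ --- the correct reason is the two-term resolution above with $J_A$ finitely generated projective, i.e.\ condition (ii) (with (i) used only to see that $J$ is a $K$-module summand of $A,$ hence finitely generated); and for right modules $\bR\Hom_A(eA,A/J)\cong (A/J)e$ rather than $e(A/J),$ though it vanishes either way.
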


\begin{proof}1) Denote by $\pi:A\to A/J$ the projection homomorphism. Then the restriction of scalars functor $$\pi_*:D^b(\Rep(K,A/J))\to D^b(\Rep(K,A))$$ takes $\Perf(A/J)$ to $\Perf(A),$ since the object $A/J\in\Rep(K,A)$ has a projective resolution
 $$0\to J_A\to A\to A/J_A\to 0.$$ The right adjoint to $\pi_*$ is
 $$\pi^!:D^b(\Rep(K,A))\to D^b(\Rep(K,A/J)),\quad \pi^!(-)=\bR\Hom_A(A/J_A,-).$$
 By Definition \ref{def:heredity_ideal}, we have $\bR\Hom_A(J_A,A/J_A)=0.$ Hence, we have $\pi^!\pi_*(A/J)\cong A/J.$ Hence, the functor
 $$\pi_*:\Perf(A/J)\to\Perf(A)$$ is fully faithful. Let $L\in\cM(\Rep(K,A))$ be an object such that its class in $M(\Rep(K,A))/\Pic(K)$ corresponds to $J$ under the bijection from Proposition \ref{prop:bijection_for_M(C)/Pic(K)}. It defines a full embedding $-\otimes_K L:\Perf(K)\to \Perf(A).$ Clearly, we get a semi-orthogonal decomposition $$\Perf(A)=\la\pi_*(\Perf(A/J)),\Perf(K)\otimes_K L\ra.$$

2) Since $A$ is a finitely generated projective $K$-module, $A$ is proper over $K.$ We prove homological smoothness by induction on $|\Lambda|.$

If $|\Lambda|=0,$ then $A=0$ and there is nothing to prove.

Suppose that the statement is proved for $|\Lambda|<n,$ where $n\in\Z_{>0}.$ Let us prove it for $|\Lambda|=n.$
Take some maximal element $\lambda\in\Lambda.$ By 1), we have a semi-orthogonal decomposition
$$\Perf(A)=\la \Perf(A/I_{\{\lambda\}}),\Perf(K)\ra.$$
By Proposition \ref{prop:C[Gamma]}, the algebra $A/I_{\{\lambda\}}$ is split quasi-hereditary. Since $|\Lambda\setminus\{\lambda\}|=n-1,$ by induction hypothesis the algebra $A/I_{\{\lambda\}}$ is homologically smooth. Therefore, by Proposition \ref{prop:gluing_of_smooth_DG} the algebra $A$ is homologically smooth as well. This proves the statement of induction.

3) The fact that $\{\Delta(\lambda)\}_{\lambda\in\Lambda}$ is a full exceptional collection in $D^b(\Rep(K,A)),$ follows from 1) by induction, as in 2). The fact that $\{\nabla(\lambda)\}_{\lambda\in\Lambda}$ is the left dual exceptional collection, is a direct corollary of Proposition \ref{prop:costandard}, (ii).\end{proof}

\begin{lemma}\label{lem:criterion_for_cC^Delta}Let $A$ be a split quasi-hereditary $K$-algebra, with standard objects $\{\Delta(\lambda)\}_{\lambda\in\Lambda}$ and costandard objects $\{\nabla(\lambda)\}_{\lambda\in\Lambda}.$ Let $X\in\Perf(A)\simeq D^b(\Rep(K,A))$ be an object. Then the following are equivalent:

(i) $H^i(X)=0$ for $i\ne 0,$ and $H^0(X)\in\Rep(K,A)^{\widetilde{\Delta}};$

(ii) $\forall\lambda\in\Lambda,$ we have $\bR\Hom(X,\nabla(\lambda))\in \cP_K[0];$

(iii) $\forall\lambda\in\Lambda,$ we have $X\stackrel{\bL}{\otimes}_A\nabla(\lambda)^*\in \cP_K[0].$

Dually, the following are equivalent:

(i') $H^i(X)=0$ for $i\ne 0,$ and $H^0(X)\in\Rep(K,A)^{\widetilde{\nabla}};$

(ii') $\forall\lambda\in\Lambda,$ we have $\bR\Hom(\Delta(\lambda),X)\in \cP_K[0];$

(iii') $\forall\lambda\in\Lambda,$ we have $\Delta(\lambda)\stackrel{\bL}{\otimes}_A X^*\in \cP_K[0].$\end{lemma}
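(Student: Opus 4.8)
The plan is to prove the equivalence (i)$\Leftrightarrow$(ii) directly, then derive (i)$\Leftrightarrow$(iii) from it, and finally obtain the dual statements by applying the already-proved equivalences to the opposite algebra $A^{op}$, which is again split quasi-hereditary with roles of $\Delta$ and $\nabla$ interchanged (Proposition \ref{prop:costandard}), and using that $(-)^*=\Hom_K(-,K)$ intertwines $\Perf(A)$ and $\Perf(A^{op})$. The whole argument will run by induction on $|\Lambda|$, peeling off a maximal element $\lambda_0\in\Lambda$ as in the proof of Proposition \ref{prop:D^b_of_highest_weight_category}(2)--(3). The base case $|\Lambda|=0$, $A=0$, is vacuous.

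For the inductive step, take $\lambda_0$ maximal, so $J:=\cI_{\{\lambda_0\}}$ is an indecomposable split heredity ideal and $\bar A:=A/J$ is split quasi-hereditary over the poset $\Lambda\setminus\{\lambda_0\}$ with standard objects $\{\Delta(\lambda)\}_{\lambda\ne\lambda_0}$ and costandard objects $\{\nabla(\lambda)\}_{\lambda\ne\lambda_0}$ (Proposition \ref{prop:C[Gamma]}; note $\nabla(\lambda_0)$ is the costandard object associated to the maximal weight, hence is the simple/projective object supported at $\lambda_0$, concretely $\nabla(\lambda_0)\cong L$ for the object $L\in\cM(\Rep(K,A))$ attached to $J$, and $\Delta(\lambda_0)\cong L$ as well since $\lambda_0$ is maximal). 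By Proposition \ref{prop:D^b_of_highest_weight_category}(1) we have the semi-orthogonal decomposition $\Perf(A)=\la\pi_*\Perf(\bar A),\,\Perf(K)\otimes_K L\ra$, and every $X\in\Perf(A)$ sits in a triangle
$$X'\to X\to X''\to X'[1],\qquad X''\in\Perf(K)\otimes_K L,\ X'\in\pi_*\Perf(\bar A).$$
The key homological input is the orthogonality already recorded in the proof of Proposition \ref{prop:D^b_of_highest_weight_category}: $\bR\Hom_A(J_A,A/J_A)=0$, equivalently $\bR\Hom_A(L,\pi_*(-))=0$, so testing $X$ against $\nabla(\lambda_0)=L$ computes only the $X''$-part, $\bR\Hom_A(X,\nabla(\lambda_0))\cong\bR\Hom_A(X'',L)$, which, since $X''=V\otimes_K L$ for $V\in\Perf(K)$ and $\End_A(L)$ is Morita-trivial ($\bR\End_A(L)=K$), equals $V^*$ up to a shift-free twist; hence condition (ii) for $\lambda=\lambda_0$ is equivalent to $V\in\cP_K[0]$, i.e. $X''\in(\cP_K\otimes_K L)[0]$. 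Granting this, conditions (ii) for $\lambda\ne\lambda_0$ say exactly $\bR\Hom_{\bar A}(\pi^*\text{-truncation of }X,\nabla(\lambda))\in\cP_K[0]$ after identifying $\bR\Hom_A(X,\pi_*\nabla(\lambda))\cong\bR\Hom_A(X',\pi_*\nabla(\lambda))\cong\bR\Hom_{\bar A}(\bar X,\nabla(\lambda))$ where $\bar X:=\pi^!X'\simeq\pi^*X'$; one then invokes the induction hypothesis for $\bar A$ to get $\bar X\in\Rep(K,\bar A)^{\widetilde\Delta}[0]$. Feeding both conclusions back through the triangle and using that $\Rep(K,A)^{\widetilde\Delta}$ is closed under extensions and contains both the $L$-part and the $\pi_*$-image of $\Rep(K,\bar A)^{\widetilde\Delta}$ (this is built into Definition \ref{def:highest_weight_over_ring}(iii)), one recovers (i).

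The converse direction (i)$\Rightarrow$(ii) is the easier half: if $H^0(X)$ has a finite $\widetilde\Delta$-filtration it suffices, by the long exact sequences, to check $\bR\Hom(\Delta(\mu)\otimes U,\nabla(\lambda))\in\cP_K[0]$ for $U\in\cP_K$, which is immediate from Proposition \ref{prop:costandard}(ii) (it is $U^*$ if $\mu=\lambda$, zero otherwise), and an induction on filtration length shows the Ext-groups are concentrated in degree $0$ and projective there, using that extensions of projectives-in-degree-zero by the same stay so. For (ii)$\Leftrightarrow$(iii), observe that $\nabla(\lambda)^*$ is a costandard-or-standard object over $A^{op}$ and that $\bR\Hom_A(X,\nabla(\lambda))\cong(X\stackrel{\bL}{\otimes}_A\nabla(\lambda)^*)^*$ by the standard duality $\bR\Hom_A(X,Y)\cong\bR\Hom_{K}(X\stackrel{\bL}{\otimes}_A Y^*,K)$ valid since $Y=\nabla(\lambda)\in\cP_K$; since $(-)^*$ preserves and reflects membership in $\cP_K[0]$, (ii) and (iii) are equivalent termwise. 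Finally the primed statements are the unprimed ones for $A^{op}$, because $(\cC^{op},\{\nabla(\lambda)\})$ is a highest weight category with standard objects $\nabla(\lambda)$ and costandard $\Delta(\lambda)$ (Proposition \ref{prop:costandard}(i)), and $X\mapsto X^*$ is an equivalence $\Perf(A)\simeq\Perf(A^{op})$ carrying $\Rep(K,A)^{\widetilde\nabla}$ to $\Rep(K,A^{op})^{\widetilde\Delta}$ and $\bR\Hom_A(\Delta(\lambda),X)$ to $\bR\Hom_{A^{op}}(X^*,\Delta(\lambda)^*)$.

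The main obstacle I anticipate is the careful bookkeeping in the inductive step: namely checking that the semi-orthogonal triangle for $X$ is compatible with the cohomological truncations in a way that lets one conclude $X''$ is a genuine module in degree $0$ (not just that its Hom against $L$ is projective in degree $0$) and that $\bar X$ likewise lands in the heart, and then that the extension $X'\to X\to X''$ of two heart objects with $\widetilde\Delta$-filtrations is itself such an object. This requires knowing $\bR\Hom_A(L,\pi_*M)=0$ in \emph{all} degrees (which is exactly $\bR\Hom_A(J_A,A/J_A)=0$, already available) together with the fact that $L$ is a projective object of $\Rep(K,A)$; once these are in hand the truncation argument is forced, but it is the step where one must resist hand-waving.
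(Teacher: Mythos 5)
Your global plan --- induction on $|\Lambda|$, peeling off a maximal weight $\lambda_0$ via the semi-orthogonal decomposition $\Perf(A)=\la\pi_*\Perf(A/J),\Perf(K)\otimes_K L\ra$ of Proposition \ref{prop:D^b_of_highest_weight_category}, plus the duality isomorphism for (ii)$\Leftrightarrow$(iii) and passage to $A^{op}$ for the primed statements --- is sound, and it is essentially the argument hidden behind the paper's two-line proof (which simply quotes Proposition \ref{prop:D^b_of_highest_weight_category}(3), i.e.\ that $\{\nabla(\lambda)\}$ is the left dual exceptional collection of $\{\Delta(\lambda)\}$, together with $\bR\Hom(X,\nabla(\lambda))^*\cong X\stackrel{\bL}{\otimes}_A\nabla(\lambda)^*$). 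However, the central step of your induction is wrong as written. First, while $\Delta(\lambda_0)\cong L$ (up to a $\Pic(K)$-twist) for $\lambda_0$ maximal, the claim $\nabla(\lambda_0)\cong L$ is false in general: already for $A$ the upper-triangular $2\times2$ matrices over a field, with the maximal weight carried by the $2$-dimensional projective, $\Delta(\lambda_0)$ is that projective while $\nabla(\lambda_0)$ is the $1$-dimensional simple. Second, and consequently, your justification that testing $X$ against $\nabla(\lambda_0)$ sees only the $\la L\ra$-component ``because $\bR\Hom_A(L,\pi_*(-))=0$'' does not work: the term that must be killed is $\bR\Hom_A(X',\nabla(\lambda_0))$ with $X'\in\pi_*\Perf(A/J)$, and the vanishing needed for that is $\bR\Hom(\Delta(\lambda),\nabla(\lambda_0))=0$ for $\lambda\ne\lambda_0$, i.e.\ exactly the defining Ext-property of costandard objects (Proposition \ref{prop:costandard}(ii)), not the heredity-ideal orthogonality. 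The heredity-ideal vanishing $\bR\Hom_A(L,\pi_*(-))=0$ is instead what you need for the \emph{other} tests, to identify $\bR\Hom_A(X,\nabla(\lambda))\cong\bR\Hom_{A/J}(\bar X,\nabla(\lambda))$ for $\lambda\ne\lambda_0$; you have the two orthogonalities swapped. Third, your decomposition triangle is in the wrong order: for $\Perf(A)=\la\pi_*\Perf(A/J),\Perf(K)\otimes_K L\ra$ the $\la L\ra$-part is the subobject and the $A/J$-part the quotient, so the triangle is $X''\to X\to X'$; the triangle $X'\to X\to X''$ you write has connecting map in $\Hom(\la L\ra,\pi_*\Perf(A/J)[1])=0$ and hence exists only when $X$ splits.

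All three defects are repairable: with the correct triangle, the $\lambda_0$-test is handled by the standard/costandard orthogonality and gives $X''\cong V\otimes_K L$ with $V\in\cP_K[0]$, while the tests at $\lambda\ne\lambda_0$ are handled by $\bR\Hom_A(L,\pi_*(-))=0$ and feed the induction hypothesis for $A/J$; the final extension step and the rest of your proposal (the easy direction, (ii)$\Leftrightarrow$(iii) via $\bR\Hom(X,\nabla(\lambda))^*\cong X\stackrel{\bL}{\otimes}_A\nabla(\lambda)^*$, and the reduction of (i$'$)--(iii$'$) to $A^{op}$) are correct and agree with the paper. But as stated, the key step of the hard implication (ii)$\Rightarrow$(i) relies on a false identification of $\nabla(\lambda_0)$ and on an orthogonality that does not apply, so the proof does not go through without the corrections above.
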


\begin{proof}Indeed, both equivalences $(i)\Leftrightarrow(ii)$ and $(i')\Leftrightarrow(ii')$ follow immediately from the fact that $\{\nabla(\lambda)\}_{\lambda\in\Lambda}$ is the left dual exceptional collection to $\{\Delta(\lambda)\}_{\lambda\in\Lambda}$ (by Proposition \ref{prop:D^b_of_highest_weight_category}). Equivalences $(ii)\Leftrightarrow(iii)$ and $(ii')\Leftrightarrow(iii')$ are implied by the natural isomorphisms
$$\bR\Hom(X,\nabla(\lambda))^*\cong X\stackrel{\bL}{\otimes}_A\nabla(\lambda)^*,\quad \bR\Hom(\Delta(\lambda),X)^*\cong \Delta(\lambda)\stackrel{\bL}{\otimes}_A X^*.$$
This proves the lemma.\end{proof}

\begin{defi}The objects of the category $\cC^{\widetilde{\Delta}}$ are called standardly filtered. The objects of the category $\cC^{\widetilde{\nabla}}$ are called costandardly filtered.\end{defi}

\begin{prop}\label{prop:C[Omega]}Let $(\cC,\Delta)$ be a highest weight category, and $\Omega\subset\Delta$ a coideal. For each $D\in\Omega,$ choose a projective object $P_{D}\in\cC$ with a surjection $f_{D}:P_{D}\to D$ such that $\ker(f_{D})\in \cC^{\widetilde{\Delta_{>D}}}.$

1) The algebra $A_{\Omega}:=\End_{\cC}(\bigoplus\limits_{D\in\Omega}P_{D})$ is split quasi-hereditary.

2) The subcategory $\add(\bigoplus\limits_{D\in\Omega}P_{D})\subset\cC$ does not depend on the choice of $P_{D}.$ In particular, the highest weight category $\cC[\Omega]:=\Rep(K,A_{\Omega})$ depends only on the coideal $\Omega\subset\Lambda.$

3) The functor $$\pi_{\Omega}:\cC\to\cC[\Omega],\quad \pi_{\Omega}(X)=\Hom_{\cC}(\bigoplus\limits_{D\in\Omega}P_{D},X),$$
is exact and induces a quotient functor $\bR\pi_{\Omega}:D^b(\cC)\to D^b(\cC[\Omega]),$ with kernel $D^b(\cC[\Delta\setminus\Omega]).$ The functor $\bR\pi_{\Omega}$ has a fully faithful left (resp. right) adjoint $i_{\Omega}$ (resp. $j_{\Omega}$), and we have semi-orthogonal decompositions
$$D^b(\cC)=\la\cC[\Delta\setminus\Omega],i_{\Omega}(\cC[\Omega])\ra,\quad D^b(\cC)=\la j_{\Omega}(\cC[\Omega]),\cC[\Delta\setminus\Omega]\ra.$$\end{prop}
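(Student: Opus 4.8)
The plan is to build everything from the indecomposable split heredity ideal case (Proposition \ref{prop:D^b_of_highest_weight_category}, 1)) by induction on the size of the coideal $\Omega$, combined with the analysis of $\cC[\overline{\Lambda}]$ already recorded in Proposition \ref{prop:C[Gamma]}. Concretely, for a maximal element $\lambda$ of $\Omega$ (equivalently, a maximal element of $\Delta$ lying in $\Omega$, since $\Omega$ is a coideal), the projective $P_\lambda$ surjects onto $D=\Delta(\lambda)$ with kernel standardly filtered by $\Delta(\mu)$'s with $\mu>\lambda$; these lie outside $\Omega$ only if... no — they may lie inside. So the induction should instead run on $\Delta\setminus\Omega$, i.e.\ we delete minimal elements of $\Delta$, or equivalently grow the coideal $\Omega$ one element at a time. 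I would start from $\Omega=\Delta$, where $\bigoplus_{D\in\Delta}P_D$ is a projective generator, $A_\Delta=A$, $\pi_\Delta=\operatorname{id}$, and all the claims are tautological, and then peel off a minimal element.

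So let $\Omega\subset\Delta$ be a coideal and let $D_0$ be a minimal element of $\Delta$ (hence $D_0\notin\Omega$ unless $\Omega=\Delta$; in general pick $D_0$ minimal in $\Delta\setminus\Omega$, so $\Omega':=\Omega\sqcup\{D_0\}$ is again a coideal). First I would verify that $A_\Omega=\End_\cC(\bigoplus_{D\in\Omega}P_D)$ is split quasi-hereditary: by Proposition \ref{prop:C[Gamma]} the category $\cC[\overline\Lambda]$ with $\overline\Lambda=\Omega$ (viewing $\Delta\setminus\Omega$ as an ideal, which it is since $\Omega$ is a coideal) is the highest weight category attached to the split quasi-hereditary quotient $A/I_{\Delta\setminus\Omega}$, and one checks $\Rep(K,A_\Omega)\simeq\cC[\overline\Lambda]$ by showing $\bigoplus_{D\in\Omega}P_D$ maps to a projective generator of $\cC[\overline\Lambda]$ — here I use that applying $\pi_{\Delta\setminus\Omega}$-style truncation to $P_D$ kills exactly the $\Delta(\mu)$ subquotients with $\mu\notin\Omega$, which are precisely the ones in the kernel filtration, so the image stays projective. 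This gives 1). For 2), the independence of $\add(\bigoplus P_D)$ on the choice of $P_D$: any two choices of projective cover data for the same $D$ differ by adding projective summands with standard filtration avoiding $D$, i.e.\ summands of $P_{D'}$ for $D'\neq D$; so the additive span is the span of the indecomposable projectives indexed by $\Omega$, which is intrinsic — I would phrase this via the fact that $\cC[\overline\Lambda]$ is canonically defined by the coideal through $A/I_{\Delta\setminus\Omega}$.

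For 3), the functor $\pi_\Omega=\Hom_\cC(\bigoplus_{D\in\Omega}P_D,-)$ is exact because $\bigoplus P_D$ is projective, and it is identified with the exact restriction along $A\twoheadrightarrow A/I_{\Delta\setminus\Omega}$ composed with the equivalence $\cC[\overline\Lambda]\simeq\Rep(K,A_\Omega)$; hence $\bR\pi_\Omega$ is just $-\stackrel{\bL}{\otimes}_A(A/I_{\Delta\setminus\Omega})$-type functor, whose kernel consists of complexes of modules on which $\bigoplus_{D\in\Omega}P_D$ acts trivially, namely $D^b(\cC[\Delta\setminus\Omega])$. The two adjoints and the two semi-orthogonal decompositions then come by iterating Proposition \ref{prop:D^b_of_highest_weight_category}, 1): writing $\Omega=\{D_0<D_1<\dots\}$ as an increasing union of coideals differing by one element, each step $\cC[\Omega_j]\hookleftarrow\cC[\Omega_{j-1}]$ realizes a heredity ideal (item (ii) of the split quasi-hereditary structure on $A/I_{\Delta\setminus\Omega_j}$), giving $\Perf(A/I_{\Delta\setminus\Omega_{j}}) = \langle \Perf(A/I_{\Delta\setminus\Omega_{j-1}}),\Perf(K)\rangle$; splicing these and using $D^b(\cC)=\la\cC[\Delta\setminus\Omega],\dots\ra$ produces the left adjoint $i_\Omega$, and the dual argument on $\cC^{op}$ (using costandard objects and the left/right symmetry of heredity ideals) produces $j_\Omega$. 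The main obstacle I anticipate is purely bookkeeping: carefully matching the two "directions" of induction (growing $\Omega$ vs.\ shrinking $\Delta\setminus\Omega$) and checking that the image of $P_D$ under truncation remains a projective generator of the smaller highest weight category with the right indecomposable decomposition — everything else is a direct assembly of Propositions \ref{prop:D^b_of_highest_weight_category} and \ref{prop:C[Gamma]} and Definition \ref{def:dual_decompositions}.
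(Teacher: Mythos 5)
Your reduction of part 1) to Proposition \ref{prop:C[Gamma]} does not work, and this is the core of the proposal rather than bookkeeping. Proposition \ref{prop:C[Gamma]} only produces highest weight categories of the form $\cC[\Gamma]\subseteq\cC$ for $\Gamma$ an \emph{ideal} of the poset, namely $\Rep(K,A/\cI_{\Omega_0})$ for a coideal $\Omega_0$, with standard objects indexed by the complementary ideal $\Lambda\setminus\Omega_0$. The object ``$\cI_{\Delta\setminus\Omega}$'' you quotient by is not defined (the algebra ideals $\cI$ are indexed by coideals of the poset), and the natural substitute --- the trace ideal in $A$ of $\bigoplus_{D\in\Omega}P_D$ --- gives a quotient algebra whose module category is $\cC[\Delta\setminus\Omega]$, a highest weight category with poset $\Delta\setminus\Omega$, not $\Omega$. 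The category the proposition is actually about, $\Rep(K,A_\Omega)$ with $A_\Omega=\End_{\cC}(\bigoplus_{D\in\Omega}P_D)$ (essentially an idempotent truncation $eAe$), is a \emph{quotient} of $\cC$, not a full subcategory: that is precisely what part 3) asserts, with $\cC[\Delta\setminus\Omega]$ appearing as the kernel of $\bR\pi_{\Omega}$. So its split quasi-heredity cannot be imported from Proposition \ref{prop:C[Gamma]}; the paper instead verifies the axioms of Definition \ref{def:highest_weight_over_ring} directly for $(\Rep(K,A_{\Omega}),\pi_{\Omega}(\Omega))$, the key point being that, since $\Omega$ is a coideal, each $P_D$ with $D\in\Omega$ is filtered by standard objects indexed by $\Delta_{\geq D}\subseteq\Omega$ only, whence $\pi_{\Omega}(P_D)$ is a projective $A_{\Omega}$-module and $\pi_{\Omega}(f_D)$ still has a standardly filtered kernel; independence in 2) then follows because the $P_D$, $D\in\Omega$, generate the same triangulated subcategory of $D^b(\cC)$ as $\Omega$.

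Several subsidiary claims are wrong for the same reason. You say the truncation of $P_D$ ``kills exactly the $\Delta(\mu)$ subquotients with $\mu\notin\Omega$, which are precisely the ones in the kernel filtration'': in fact for $D\in\Omega$ one has $\ker(f_D)\in\cC^{\widetilde{\Delta_{>D}}}$ with $\Delta_{>D}\subseteq\Omega$, so $P_D$ has no standard subquotients outside $\Omega$ at all. To enlarge a coideal by one element you must adjoin a \emph{maximal} element of $\Delta\setminus\Omega$, not a minimal one. And $\bR\pi_{\Omega}$ is not restriction along a surjection $A\to A/J$ (that functor goes the other way and lands in the subcategory $\cC[\Delta\setminus\Omega]$); it is $\bR\Hom_{\cC}(\bigoplus_{D\in\Omega}P_D,-)$, and the paper gets the adjoints directly by setting $i_{\Omega}(M)=M\stackrel{\bL}{\otimes}_{A_{\Omega}}(\bigoplus_{D\in\Omega}P_{D})$ and checking $\bR\pi_{\Omega}\circ i_{\Omega}\cong\id$, rather than by splicing one-step heredity decompositions. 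Your observation that $\pi_{\Omega}$ is exact because $\bigoplus_{D\in\Omega}P_D$ is projective is correct, but parts 1) and 2) need the direct verification sketched above, not Proposition \ref{prop:C[Gamma]}.
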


\begin{proof}1) It suffices to show that the category $\Rep(K,A_{\Omega})$ is a highest weight category. For any $D\in\Omega,$ we have an object $\pi_{\Omega}(D)=\Hom_{\cC}(\bigoplus\limits_{E\in\Omega}P_{E},D)\in\Rep(K,A_{\Omega}).$ We claim that $(\Rep(K,A_{\Omega}),\pi_{\Omega}(\Omega))$ is a highest weight category.

First, note that the sets of objects $\Omega$ and $\{P_D\}_{D\in\Omega}$ generate the same triangulated subcategory of $D^b(\cC).$ It follows immediately that the properties (i) and (ii) from Definition \ref{def:highest_weight_over_ring} hold for $\pi_{\Omega}(\Omega).$ Further, we have that for any $D\in\Omega$ the $A_{\Omega}$-module $\pi_{\Omega}(P_D)$ is projective, and $\pi_{\Omega}(f_D):\pi_{\Omega}(P_D)\to \pi_{\Omega}(D)$ is a surjection with $\ker(\pi_{\Omega}(f_D))\in\Rep(K,A_{\Omega})^{\widetilde{\pi_{\Omega}(\Delta_{>D})}}.$ This verifies the property (iii) for $\pi_{\Omega}(\Omega).$ Finally, we have by definition that $\pi_{\Omega}(P_D)\in\Rep(K,A_{\Omega})^{\widetilde{\pi_{\Omega}(\Omega)}},$ hence $A_{\Omega}\cong\bigoplus\limits_{D\in\Omega}\pi_{\Omega}(P_D)\in\cC^{\widetilde{\pi_{\Omega}(\Omega)}}.$ This verifies the property (iv).

2) It suffices to note again that triangulated subcategory of $D^b(\cC)$ generated by $P_D,$ $D\in\Omega,$ coincides with triangulated subcategory generated by $\Omega,$ hence depends only on $\Omega,$ not on the choice of $P_D.$ Hence the subcategory $\add(\bigoplus\limits_{D\in\Omega}P_{D})\subset\cC$ also depends only on $\Omega.$

3) First we define the functor $i_{\Omega}:D^b(\cC[\Omega])\to D^b(\cC)$ by the formula $$i_{\Omega}(M):=M\stackrel{\bL}{\otimes}_{A_{\Omega}}(\bigoplus\limits_{D\in\Omega}P_{D}).$$
Clearly, it is left adjoint to $\bR\pi_{\Omega}.$ Moreover, we have that $\bR\pi_{\Omega}(i_{\Omega}(A_{\Omega}))\cong A_{\Omega},$ hence $\bR\pi_{\Omega}\circ i_{\Omega}(A_{\Omega})\cong\id.$ It follows that $\bR\pi_{\Omega}$ is a semi-orthogonal projection. Its kernel is the right orthogonal to $i_{\Omega}(D^b(\cC[\Omega])),$ which is by definition $D^b(\cC[\Delta\setminus\Omega])\subset D^b(\cC).$ The remaining decomposition $$D^b(\cC)=\la j_{\Omega}(\cC[\Omega]),\cC[\Delta\setminus\Omega]\ra$$ follows.\end{proof}

\begin{defi}1) Let $\Lambda$ be a poset. For $a,b\in\Lambda$ we put $$[a,b]:=\{c\in\Lambda\mid a\leq c\leq b\}.$$
A subset $\Theta\subset\Lambda$ is called convex if for any $a,b\in\Theta$ we have $[a,b]\in\Theta.$ Equivalently, $\Theta\subset\Lambda$ is convex if $\Theta=\Gamma\cap\Omega$ for some ideal $\Gamma\subseteq\Lambda$ and coideal $\Omega\subseteq\Lambda.$

2) Let $(\cC,\Delta)$ be a highest weight category. For a convex subset $\Theta\subseteq\Delta$ we define the highest weight category $\cC[\Theta]$ by the formula
$$\cC[\Theta]:=\cC[\Gamma][\Theta],$$
where $\Gamma\subseteq\Delta$ is an ideal generated by $\Theta$ (clearly, $\Theta$ is a coideal in $\Gamma,$ so $\cC[\Gamma][\Theta]$ is well-defined by Proposition \ref{prop:C[Omega]} 2)).\end{defi}

\subsection{Gluing of split quasi-hereditary algebras and highest weight categories}
\label{ssec:gluing_of_quasi_her}

As in the previous subsection, $K$ denotes the basic commutative ring.

Suppose that $A_1,\dots,A_m$ are finite projective $K$-algebras. Suppose that $$M_{ij}\in\Rep(K,A_i\otimes_K A_j^{op}),\quad 1\leq i<j\leq m,$$
is a collection of finitely generated $K$-projective bimodules. Also, suppose that the maps
$$m_{ijk}:M_{jk}\otimes_{A_j}M_{ij}\to M_{ik},\quad 1\leq i<j<k\leq m,$$
are given, satisfying the associativity condition
$$m_{ikl}\circ(\id_{M_{kl}}\otimes_{A_k}m_{ijk})=m_{ijl}\circ(m_{jkl}\otimes_{A_j}\id_{M_{ij}}).$$

\begin{defi}\label{def:gluing_of_A_i_via_M}The $K$-algebra $A_1\times_{M}A_2\dots\times_{M} A_m$ is defined by the formula
$$A_1\times_{M}A_2\dots\times_{M} A_m:=\bigoplus\limits_{i=1}^m A_i\oplus\bigoplus\limits_{1\leq i<j\leq m}M_{ij}.$$
The nonzero components of the multiplication map in $A_1\times_{M}A_2\dots\times_{M} A_m$ are:

- multiplication maps $A_i\otimes A_i\to A_i,$ $1\leq i\leq m;$

- bimodule structure maps $A_j\otimes M_{ij}\to M_{ij},$ $M_{ij}\otimes A_i\to M_{ij},$ $1\leq i<j\leq m;$

- the compositions $M_{jk}\otimes M_{ij}\to M_{jk}\otimes_{A_j} M_{ij}\stackrel{m_{ijk}}{\longrightarrow}M_{ik}.$\end{defi}

It is clear that $A_1\times_{M}A_2\dots\times_{M} A_m$ is a finite projective $K$-algebra. For convenience we denote it by $\widetilde{A}.$

Let us put $\cC_i:=\Rep(K,A_i),$ $1\leq i\leq m;$ $\widetilde{\cC}:=\Rep(K,\widetilde{A}).$ Further, we define the colimit-preserving functor
$$\phi_{ij}:\text{Mod-}A_j\to\text{Mod-}A_i,\quad \phi_{ij}(N)=N\otimes_{A_j}M_{ij},\quad 1\leq i<j\leq m.$$
It has a right adjoint given by the formula
$$\phi_{ij}^!:\text{Mod-}A_i\to\text{Mod-}A_j,\quad \phi_{ij}^!(N)=\Hom_{A_i}(M_{ij},N).$$
The maps $m_{ijk}$ induce the natural transformations $m_{ijk}:\phi_{ij}\circ\phi_{jk}\to \phi_{ik},$ which we denote by the same symbol. These natural transformations satisfy the analogous associativity conditions.

\begin{defi}\label{def:gluing_of_cC_i_via_phi}The (additive $K$-linear) category $\cC_1\times_{\phi}\cC_2\dots\times_{\phi}\cC_m$ is defined as follows.

An object of $\cC_1\times_{\phi}\cC_2\dots\times_{\phi}\cC_m$ is a collection $(\{N_i\},\{f_{ij}\})$ of objects $N_i\in\cC_i,$ $1\leq i\leq m,$ and morphisms $f_{ij}:\phi_{ij}(N_j)\to N_i$ in $\text{Mod-}A_i,$ $1\leq i<j\leq m,$ such that the following diagrams commute:
$$\begin{CD}
\phi_{ij}(\phi_{jk}(N_k)) @> m_{ijk}(N_k) >> \phi_{ik}(N_k)\\
@V \phi_{ij}(f_{jk}) VV                              @V f_{ik} VV\\
\phi_{ij}(N_j) @> f_{ij} >> N_i,
\end{CD}$$
where $1\leq i<j<k\leq m.$

A morphism $g:(\{N_i\},\{f_{ij}\})\to (\{N_i'\},\{f_{ij}'\})$ is a collection of morphisms $g_i:N_i\to N_i'$ in $\cC_i,$ $1\leq i\leq m,$ such that the following diagrams commute:
$$\begin{CD}
\phi_{ij}(N_j) @> \phi_{ij}(g_j) >> \phi_{ij}(N_j')\\
@V f_{ij} VV                              @V f_{ij}' VV\\
N_i @> g_i >> N_i',
\end{CD}$$
where $1\leq i<j\leq m.$ The composition is defined componentwise.\end{defi}

\begin{prop}The category $\widetilde{\cC}$ is naturally equivalent to the category $\cC_1\times_{\phi}\cC_2\dots\times_{\phi}\cC_m$ from Definition \ref{def:gluing_of_cC_i_via_phi}.
\end{prop}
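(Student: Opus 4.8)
The plan is to construct an explicit equivalence of categories in both directions and check that it respects composition. Recall that the glued algebra $\widetilde{A}=\bigoplus_i A_i\oplus\bigoplus_{i<j}M_{ij}$ carries, for each $i$, an idempotent $e_i\in A_i\subset\widetilde{A}$ (the unit of $A_i$), and these idempotents are orthogonal with $\sum_i e_i=1$. First I would send a right $\widetilde{A}$-module $N$ (finitely generated projective over $K$) to the collection $(\{N_i\},\{f_{ij}\})$ with $N_i:=Ne_i$; this is a right $A_i$-module since $e_i\widetilde{A}e_i=A_i$. The structure maps are obtained from right multiplication on $N$ by the components $M_{ij}\subset\widetilde{A}$: multiplication by $M_{ij}$ maps $N_j=Ne_j$ into $N_i=Ne_i$ (because $e_iM_{ij}e_j=M_{ij}$), and since this multiplication is right $A_i$-linear and factors through $N_j\otimes_{A_j}M_{ij}=\phi_{ij}(N_j)$, it yields $f_{ij}:\phi_{ij}(N_j)\to N_i$. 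The associativity of the multiplication in $\widetilde{A}$, whose mixed component is exactly $m_{ijk}$, translates verbatim into the commutativity of the pentagon-type square in Definition \ref{def:gluing_of_cC_i_via_phi}. A morphism of $\widetilde{A}$-modules $g$ restricts to $g_i:=g|_{Ne_i}$, and $\widetilde{A}$-linearity with respect to $M_{ij}$ is precisely the compatibility square for morphisms; so we get a functor $\widetilde{\cC}\to\cC_1\times_\phi\cdots\times_\phi\cC_m$.

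For the inverse functor, given $(\{N_i\},\{f_{ij}\})$ I would set $N:=\bigoplus_i N_i$ as a $K$-module and define a right $\widetilde{A}$-action componentwise: $A_i$ acts on $N_i$ through its given module structure (and kills $N_j$ for $j\ne i$); an element $\mu\in M_{ij}$ sends $n\in N_j$ to $f_{ij}(n\otimes\mu)\in N_i$ and kills $N_l$ for $l\ne j$. One must check this is an associative $\widetilde{A}$-action: the only nontrivial relation is $n\cdot(\mu'\mu)=(n\cdot\mu')\cdot\mu$ for $\mu\in M_{ij}$, $\mu'\in M_{jk}$, where $\mu'\mu$ is computed via $m_{ijk}$, and this is exactly the commuting square in the definition of an object of the glued category. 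Since $N_i\in\cP_K$ and there are finitely many of them, $N\in\cP_K$, so $N\in\widetilde{\cC}$. On morphisms one assembles the $g_i$ into $g=\bigoplus g_i$; $\widetilde{A}$-linearity reduces, for the $M_{ij}$-part, to the morphism compatibility square. These two functors are mutually inverse essentially on the nose: the decomposition $N=\bigoplus_i Ne_i$ for a $\widetilde{A}$-module, together with $e_i\widetilde{A}e_i=A_i$ and $e_i\widetilde{A}e_j=M_{ij}$ for $i<j$ (and $e_i\widetilde{A}e_j=0$ for $i>j$), shows that reconstructing $N$ from its pieces recovers it, and conversely.

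The main thing to be careful about — and the only real obstacle — is bookkeeping with the one-sided nature of the gluing: $M_{ij}$ is nonzero only for $i<j$, so the $\widetilde{A}$-module $Ne_i$ receives maps only from the $Ne_j$ with $j>i$ and maps only to $Ne_l$ with $l<i$; one must verify that there is no ``backward'' component $e_i\widetilde{A}e_j$ for $i>j$ creating extra structure maps, which is immediate from Definition \ref{def:gluing_of_A_i_via_M} since the listed nonzero multiplication components involve only $M_{ij}$ with $i<j$. A second routine point is that the functor is $K$-linear and exact for the ambient exact structures, and that it restricts correctly to the subcategories of $K$-finitely-generated-projective modules; this follows from the fact that the underlying $K$-module of $N$ is $\bigoplus_i Ne_i$, so $K$-projectivity of $N$ is equivalent to $K$-projectivity of each $N_i$. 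No deeper input (no spectral sequences, no smoothness) is needed: the statement is a direct algebraic identification, and the proof is a matching of data on both sides.

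\begin{proof}
Write $e_i\in\widetilde{A}$ for the unit of $A_i$, regarded as an idempotent of $\widetilde{A}$; by Definition \ref{def:gluing_of_A_i_via_M} the $e_i$ are orthogonal, $\sum_i e_i=1$, $e_i\widetilde{A}e_i=A_i$, $e_i\widetilde{A}e_j=M_{ij}$ for $i<j$, and $e_i\widetilde{A}e_j=0$ for $i>j$.

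Given $N\in\widetilde{\cC}$, set $N_i:=Ne_i$; since $e_i\widetilde{A}e_i=A_i$, this is a right $A_i$-module, and it lies in $\cP_K$ because it is a direct summand of $N\in\cP_K$. For $i<j$, right multiplication $N\otimes M_{ij}\to N$ maps $N_j=Ne_j$ into $Ne_i=N_i$ (as $e_iM_{ij}e_j=M_{ij}$), is right $A_i$-linear, and is $A_j$-balanced, hence factors through a map $f_{ij}:\phi_{ij}(N_j)=N_j\otimes_{A_j}M_{ij}\to N_i$. The associativity of multiplication in $\widetilde{A}$, whose mixed component $M_{jk}\otimes M_{ij}\to M_{ik}$ is $m_{ijk}$, yields the commutativity of the square in Definition \ref{def:gluing_of_cC_i_via_phi}. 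A morphism $g\colon N\to N'$ in $\widetilde{\cC}$ restricts to $g_i:=g|_{N_i}\colon N_i\to N_i'$, and its $\widetilde{A}$-linearity along $M_{ij}$ is exactly the compatibility square for morphisms. This defines a $K$-linear functor
$$\Theta\colon\widetilde{\cC}\longrightarrow\cC_1\times_{\phi}\cC_2\dots\times_{\phi}\cC_m.$$

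Conversely, given $(\{N_i\},\{f_{ij}\})$, put $N:=\bigoplus_i N_i\in\cP_K$ and let $\widetilde{A}$ act by: $a\in A_i$ acts on $N_i$ via its module structure and by $0$ on $N_l$ for $l\ne i$; $\mu\in M_{ij}$ (with $i<j$) sends $n\in N_j$ to $f_{ij}(n\otimes\mu)\in N_i$ and kills $N_l$ for $l\ne j$. Using Definition \ref{def:gluing_of_A_i_via_M}, the only nontrivial associativity relation to verify is $n\cdot(\mu'\mu)=(n\cdot\mu')\cdot\mu$ for $\mu\in M_{ij}$, $\mu'\in M_{jk}$, and this is precisely the commuting square defining an object of $\cC_1\times_{\phi}\cdots\times_{\phi}\cC_m$; all other components of associativity reduce to associativity of the $A_i$-actions and to the bimodule axioms for $M_{ij}$. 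A morphism $(g_i)$ assembles to $g:=\bigoplus_i g_i\colon N\to N'$, whose $\widetilde{A}$-linearity along $M_{ij}$ is the morphism compatibility square. This defines a $K$-linear functor $\Xi$ in the other direction.

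Finally, $\Theta$ and $\Xi$ are mutually quasi-inverse: for $N\in\widetilde{\cC}$ the canonical decomposition $N=\bigoplus_i Ne_i$ together with the identifications $e_i\widetilde{A}e_j=M_{ij}$ ($i<j$), $e_i\widetilde{A}e_i=A_i$, $e_i\widetilde{A}e_j=0$ ($i>j$) shows $\Xi\Theta(N)\cong N$ naturally, and for an object of the glued category the equalities $\Xi\Theta=\id$ hold componentwise by construction. Both natural isomorphisms are compatible with morphisms, so $\widetilde{\cC}$ is equivalent to $\cC_1\times_{\phi}\cC_2\dots\times_{\phi}\cC_m$.
\end{proof}
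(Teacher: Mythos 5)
Your proof is correct and follows essentially the same route as the paper: the paper constructs exactly your functor $\Xi$ (assembling $\bigoplus_i N_i$ with the componentwise $\widetilde{A}$-action) and leaves the rest as straightforward checking, which your explicit inverse $\Theta$ and the associativity-versus-square translation supply. One bookkeeping slip: with the paper's conventions $M_{ij}$ is a left $A_j$-, right $A_i$-bimodule, so the correct identifications are $e_j\widetilde{A}e_i=M_{ij}$ for $i<j$ and $e_i\widetilde{A}e_j=0$ for $i<j$, not the transposed versions you wrote; your substantive claims (right multiplication by $M_{ij}$ carries $Ne_j$ into $Ne_i$ and factors through $\phi_{ij}(N_j)$) are nonetheless the right ones.
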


\begin{proof}Indeed, let us define the functor $F:\cC_1\times_{\phi}\cC_2\dots\times_{\phi}\cC_m\to \widetilde{\cC}.$ As $K$-modules,
$$F(\{N_i\},\{f_{ij}\}):=\bigoplus\limits_{i=1}^m N_i.$$ The multiplication map $N_i\otimes A_j\to F(\{N_i\},\{f_{ij}\})$ is zero for $i\ne j,$ and is given by the $A_j$-module structure on $N_j$ for $i=j.$ Finally, the multiplication map $N_i\otimes M_{jk}\to F(\{N_i\},\{f_{ij}\})$ is zero for $i\ne k,$ and is given by the composition
$$N_k\otimes M_{jk}\to N_k\otimes_{A_k} M_{jk}\stackrel{f_{jk}}{\to} N_j$$
for $i=k.$

A straightforward checking shows that $F$ is a $K$-linear equivalence of categories.\end{proof}

Let us define the functor $G_i^*:\text{Mod-}A_i\to \text{Mod-}\widetilde{A}$ by the formula
$$G_i^*(N):=N\oplus\bigoplus\limits_{j=1}^{i-1}\phi_{ji}(N),$$
with the obvious $\widetilde{A}$-module structure. Clearly, $G_i^*$ is colimit-preserving. Its right adjoint is given by the formula
$$G_{i*}:\text{Mod-}\widetilde{A}\to \text{Mod-}A_i,\quad G_{i*}(\widetilde{N}):==\widetilde{N}e_i,$$
where $e_i\in\widetilde{A}$ is the idempotent given by the identity $1\in A_i.$ The functor $G_{i*}$ also has a right adjoint, given by the formula
$$G_i^!:\text{Mod-}A_i\to \text{Mod-}\widetilde{A},\quad G_i^!(N):=N\oplus\bigoplus\limits_{j=i+1}^{m}\phi_{ij}^!(N),$$ with the obvious $\widetilde{A}$-module structure.

We also define the functor $F_i:\text{Mod-}A_i\to \text{Mod-}\widetilde{A}$ by the formula
$F_i(N):=N.$ The right $\widetilde{A}$-module structure is obvious: the multiplication map $N\otimes A_j\to N$ is zero for $j\ne i,$ and coincides with $A_i$-module structure map for $j=i;$ the multiplication map $N\otimes M_{ij}\to N$ is zero.

We need the following auxiliary result.

\begin{lemma}\label{lem:equivalence_of_5_conditions_on_bimodule}Let $B$ and $B'$ be split quasi-hereditary $K$-algebras. Denote by $\{\Delta(\lambda)\}_{\lambda\in\Lambda}$ (resp. $\{\Delta'(\lambda')\}_{\lambda'\in\Lambda'}$) the poset of standard objects in $\Rep(K,B)$ (resp. in $\Rep(K,B')$). Similarly, denote by $\{\nabla(\lambda)\}_{\lambda\in\Lambda}$ (resp. $\{\nabla'(\lambda')\}_{\lambda'\in\Lambda'}$) the poset of costandard objects in $\Rep(K,B)$ (resp. in $\Rep(K,B')$). Let $M\in\Rep(K,B^{op}\otimes B')$ Then the following are equivalent:

(a) $\forall\lambda\in\Lambda,$ $\Delta(\lambda)\stackrel{\bL}{\otimes}_B M=\Delta(\lambda)\otimes_B M\in\Rep(K,B')^{\widetilde{\Delta'}};$

(b) $\forall\lambda'\in\Lambda',$ $\bR\Hom_{B'}(M,\nabla'(\lambda'))=\Hom_{B'}(M,\nabla'(\lambda'))\in \Rep(K,B)^{\widetilde{\nabla}};$

(c) $\forall\lambda'\in\Lambda',$ $M\stackrel{\bL}{\otimes}_{B'}\nabla'(\lambda')^*=M\otimes_{B'}\nabla'(\lambda')^*
\in\Rep(K,B)^{\widetilde{\nabla^*}};$

(d) $\forall\lambda\in\Lambda,$ $\bR\Hom_{B}(M,\Delta(\lambda)^*)=\Hom_{B}(M,\Delta(\lambda)^*)\in \Rep(K,B')^{\widetilde{(\Delta')^*}}.$

(e) $M$ is standardly filtered.\end{lemma}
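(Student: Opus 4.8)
The plan is to prove the equivalence of the five conditions (a)--(e) by establishing a cycle of implications, exploiting the characterizations of standardly/costandardly filtered objects already available in Lemma \ref{lem:criterion_for_cC^Delta}. The key observation is that (a)--(d) are each a ``standardly/costandardly filtered'' condition applied to a module obtained from $M$ by tensoring or $\Hom$-ing against a standard or costandard object on one side, and condition (e) is the same kind of statement for $M$ regarded as a $B^{op}$-module; so one should try to reduce each of (a)--(d) to an application of Lemma \ref{lem:criterion_for_cC^Delta} on the appropriate algebra, and then connect everything through (e).

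First I would record the adjunction/duality isomorphisms that make the tensor and $\Hom$ descriptions interchangeable: for $X\in\Perf(B)$ and $Y\in\Perf(B')$ one has $\bR\Hom_{B'}(X\stackrel{\bL}{\otimes}_B M, Y)\cong \bR\Hom_B(X, \bR\Hom_{B'}(M,Y))$ and $(X\stackrel{\bL}{\otimes}_B M)^*\cong \bR\Hom_B(X, M^*)$ and similar identities on the other side, together with $\bR\Hom_{B'}(M,\nabla'(\lambda'))^* \cong M\stackrel{\bL}{\otimes}_{B'}\nabla'(\lambda')^*$ as in the proof of Lemma \ref{lem:criterion_for_cC^Delta}. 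Using these, the equivalences (b)$\Leftrightarrow$(c) and (a)$\Leftrightarrow$(d) are purely formal: dualizing swaps $\Delta\leftrightarrow\nabla^*$ and turns a $\Rep(K,B)^{\widetilde\nabla}$-membership into a $\Rep(K,B)^{\widetilde{\nabla^*}}$-membership. The substantive implications are then (a)$\Leftrightarrow$(b) and the link to (e). For (a)$\Leftrightarrow$(b): condition (a) says $\Delta(\lambda)\stackrel{\bL}{\otimes}_B M$ is standardly filtered over $B'$ for every $\lambda$, which by Lemma \ref{lem:criterion_for_cC^Delta} (applied to $B'$, the equivalence (i)$\Leftrightarrow$(ii)) is the same as $\bR\Hom_{B'}(\Delta(\lambda)\stackrel{\bL}{\otimes}_B M, \nabla'(\mu'))\in\cP_K[0]$ for all $\lambda,\mu'$; by the adjunction this equals $\bR\Hom_B(\Delta(\lambda), \bR\Hom_{B'}(M,\nabla'(\mu')))$, and since $\{\Delta(\lambda)\}$ and $\{\nabla(\lambda)\}$ are dual exceptional collections (Proposition \ref{prop:D^b_of_highest_weight_category}), requiring this to lie in $\cP_K[0]$ for all $\lambda$ is exactly condition (ii') of Lemma \ref{lem:criterion_for_cC^Delta} for $B$, i.e.\ that $\bR\Hom_{B'}(M,\nabla'(\mu'))$ is costandardly filtered over $B$ --- which is (b).

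For the connection with (e): $M$ as a right $B^{op}$-module (equivalently a left $B$-module, equivalently a right module over the quasi-hereditary algebra $B^{op}$, whose standard objects are the $\nabla(\lambda)^*$ and costandard objects the $\Delta(\lambda)^*$ --- this is the content of Proposition \ref{prop:costandard} together with the duality, which I would spell out) being standardly filtered means, by Lemma \ref{lem:criterion_for_cC^Delta} (criterion (iii) or (ii) on $B^{op}$), that $\bR\Hom_B(\Delta(\lambda)^{**}, M)$, i.e.\ roughly $\Delta(\lambda)\stackrel{\bL}{\otimes}_B M$ after unwinding the left/right conventions, is concentrated in degree $0$ and $K$-projective for all $\lambda$; comparing with (a) one sees that (e) is equivalent to the degreewise part of (a), and then the filtration statement follows because a complex all of whose $\bR\Hom$'s against the costandard collection are in $\cP_K[0]$ is automatically in the heart with a $\widetilde{\Delta'}$-filtration by Lemma \ref{lem:criterion_for_cC^Delta} again. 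The main obstacle I anticipate is bookkeeping the left/right module conventions and the identification of the quasi-hereditary structure on $B^{op}$ (its poset is $\Lambda$ with the \emph{same} order, its standard objects are the $K$-duals of the costandard objects of $B$): getting the variances right so that ``$M$ standardly filtered as a $B^{op}$-module'' genuinely matches the $\lambda$-uniform vanishing in (a) is where a sign-of-the-argument error would most easily creep in. Everything else is a formal manipulation of adjunctions plus repeated invocation of Lemma \ref{lem:criterion_for_cC^Delta}, so once the conventions are fixed the cycle (a)$\Rightarrow$(b)$\Rightarrow$(c), (a)$\Leftrightarrow$(d), (a)$\Leftrightarrow$(e) closes without further difficulty.
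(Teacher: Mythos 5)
Your treatment of (a)$\Leftrightarrow$(b) is exactly the paper's argument (Lemma \ref{lem:criterion_for_cC^Delta} plus the tensor--Hom adjunction), and reducing (c),(d) to (a),(b) by $K$-linear duality is in the same spirit as the paper's passage to the pair $(B'^{op},B^{op})$; those parts are fine. The genuine gap is in your link between (a) and (e). In (e), ``standardly filtered'' refers to $M$ as an object of $\Rep(K,B^{op}\otimes B')$, i.e.\ filtered by $\widetilde{\Delta}$-objects for the split quasi-hereditary algebra $B^{op}\otimes B'$, whose standard objects are exactly $\nabla(\lambda)^*\otimes\Delta'(\lambda')$ by Proposition \ref{prop:product_of_quasi_her} --- not to $M$ as a one-sided $B^{op}$-module, which is how you read it. With your reading the claimed equivalence is false: take $B=K$, so your version of (e) only says $M\in\cP_K$ and holds for every $M\in\Rep(K,B')$, while (a) demands $M\in\Rep(K,B')^{\widetilde{\Delta'}}$. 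Correspondingly, your closing step is a non sequitur: knowing only that $\Delta(\lambda)\stackrel{\bL}{\otimes}_B M$ lies in $\cP_K[0]$ as a complex of $K$-modules gives no control over $\bR\Hom_{B'}(\Delta(\lambda)\stackrel{\bL}{\otimes}_B M,\nabla'(\mu'))$, so the assertion that the $\widetilde{\Delta'}$-filtration ``follows automatically'' from Lemma \ref{lem:criterion_for_cC^Delta} does not go through.

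What is actually needed (and is the paper's route) is a bimodule-level argument. For (e)$\Rightarrow$(a): use $\Delta(\lambda_1)\stackrel{\bL}{\otimes}_B\nabla(\lambda_2)^*\cong\bR\Hom_B(\Delta(\lambda_1),\nabla(\lambda_2))^*$, which is $K$ for $\lambda_1=\lambda_2$ and $0$ otherwise; hence applying $\Delta(\lambda)\stackrel{\bL}{\otimes}_B-$ to each standard layer $\nabla(\lambda_2)^*\otimes\Delta'(\lambda')$ of $M$ yields (a $K$-projective multiple of) $\Delta'(\lambda')$ or $0$, so a $\widetilde{\Delta}$-filtration of $M$ over $B^{op}\otimes B'$ produces a $\widetilde{\Delta'}$-filtration of $\Delta(\lambda)\otimes_B M$ over $B'$. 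For (a)$\Rightarrow$(e): apply Lemma \ref{lem:criterion_for_cC^Delta}(iii) to the algebra $B^{op}\otimes B'$ itself; it suffices that $M\stackrel{\bL}{\otimes}_{B^{op}\otimes B'}(\Delta(\lambda)\otimes\nabla'(\lambda')^*)\in\cP_K[0]$ for all $\lambda,\lambda'$, and this object is isomorphic to $(\Delta(\lambda)\stackrel{\bL}{\otimes}_B M)\stackrel{\bL}{\otimes}_{B'}\nabla'(\lambda')^*$, which lies in $\cP_K[0]$ by (a) combined with Lemma \ref{lem:criterion_for_cC^Delta}. Without this use of Proposition \ref{prop:product_of_quasi_her} and of the criterion for the tensor algebra, your cycle of implications does not close.
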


\begin{proof}We first note that by Lemma \ref{lem:criterion_for_cC^Delta}, $(a)$ is equivalent to the following:
\begin{equation}\label{criterion_for_(a)}\bR\Hom_{B'}(\Delta(\lambda)\stackrel{\bL}{\otimes}_B M,\nabla'(\lambda'))\in\cP_K[0],\quad \lambda\in\Lambda,\,\lambda'\in\Lambda'.\end{equation}

$(a)\Leftrightarrow(b).$ By Lemma \ref{lem:criterion_for_cC^Delta}, $(b)$ is equivalent to
$$\bR\Hom_{B}(\Delta(\lambda),\bR\Hom_{B'}(M,\nabla'(\lambda')))\in\cP_K[0],\quad \lambda\in\Lambda,\,\lambda'\in\Lambda',$$
which is in turn equivalent to \eqref{criterion_for_(a)} by adjunction. This proves the equivalence $(a)\Leftrightarrow(b).$

$(e)\Rightarrow(a).$ Let us note that $$\Delta(\lambda_1)\stackrel{\bL}{\otimes}_B \nabla(\lambda_2)^*\cong\bR\Hom_{B}(\Delta(\lambda_1),\nabla(\lambda_2))^*=\begin{cases}K & \text{if }\lambda_1=\lambda_2;\\ 0 & \text{otherwise.}\end{cases}.$$
It follows immediately that $$\Delta(\lambda_1)\stackrel{\bL}{\otimes}_B(\nabla(\lambda_2)^*\otimes\Delta'(\lambda'))\in \Rep(K,B')^{\widetilde{\Delta'}}$$ for any $\lambda_1,\lambda_2\in\Lambda,$ $\lambda'\in\Lambda'.$ But by Proposition \ref{prop:product_of_quasi_her} standard objects in $\Rep(K,B^{op}\otimes B')$ are exactly of the form $\nabla(\lambda)^*\otimes\Delta'(\lambda'),$ $\lambda\in\Lambda,$ $\lambda'\in\Lambda'.$ This proves $(e)\Rightarrow(a).$

$(a)\Rightarrow(e).$ By Lemma \ref{lem:criterion_for_cC^Delta}, it suffices to check that
$$M\stackrel{\bL}{\otimes}_{B^{op}\otimes B'}(\Delta(\lambda)\otimes\nabla'(\lambda')^*)\in\cP_K[0],\quad\lambda\in\Lambda,\,\lambda'\in\Lambda'.$$
But we have
$$M\stackrel{\bL}{\otimes}_{B^{op}\otimes B'}(\Delta(\lambda)\otimes\nabla'(\lambda')^*)\cong (\Delta(\lambda)\stackrel{\bL}{\otimes}_B M)\stackrel{\bL}{\otimes}_{B'}\nabla(\lambda')^*\in\cP_K[0],$$
since $\Delta(\lambda)\stackrel{\bL}{\otimes}_B M=\Delta(\lambda)\otimes_B M$ is standardly filtered. This proves $(a)\Rightarrow(e).$

The equivalences $(c)\Leftrightarrow(d)$ and $(c)\Leftrightarrow(e)$ are obtained by replacing the pair $(A_1,A_2)$ with $(A_2^{op},A_1^{op}).$ Lemma is proved.
\end{proof}

\begin{theo}\label{th:gluing_of_quasi_hereditary_alg}Suppose that the algebras $A_1,\dots,A_m$ are split quasi-hereditary, so that $(\cC_i,\Delta_i=\{\Delta_i(\lambda)\}_{\lambda\in\Lambda_i}),$ $1\leq i\leq m,$ are highest weight categories. Assume that the following condition holds:

$(\star)$ the object $M_{ij}\in\Rep(K,A_i\otimes A_j^{op})$ is standardly filtered for $1\leq i<j\leq k.$

Then the algebra $\widetilde{A}$ (resp. the category $\widetilde{\cC}$) has two natural structures of a split quasi-hereditary algebra (resp. of a highest weight category).

1) In the first structure, the set of standard objects $\Delta_{(1)}\subset \widetilde{\cC}$ is exactly
 $$\{\Delta_{(1)}(\lambda_i):=G_i^*(\Delta_i(\Lambda_i))\mid \lambda_i\in\Lambda_i,\,1\leq i\leq m\}.$$
 The set of costandard objects $\nabla_{(1)}\subset\widetilde{\cC}$ is exactly
 $$\{\nabla_{(1)}(\lambda_i):=F_i(\nabla_i(\lambda_i))\mid \lambda_i\in\Lambda_i,\,1\leq i\leq m\}.$$
 The poset structure on $\Delta_{(1)}$ is the following: for $\lambda_i\in\Lambda_i,$ $\lambda_j'\in\Lambda_j$ we have
 $$\Delta_{(1)}(\lambda_i)< \Delta_{(1)}(\lambda_j')\Leftrightarrow \begin{cases}i<j\\
 i=j,\quad \lambda_i<\lambda_j'.\end{cases}$$

 2) In the second structure, the set of standard objects $\Delta_{(2)}\subset \widetilde{\cC}$ is exactly
 $$\{\Delta_{(2)}(\lambda_i):=F_i(\Delta_i(\Lambda_i))\mid \lambda_i\in\Lambda_i,\,1\leq i\leq m\}.$$
 The set of costandard objects $\nabla_{(2)}\subset\widetilde{\cC}$ is exactly
 $$\{\nabla_{(2)}(\lambda_i):=G_i^!(\nabla_i(\lambda_i))\mid \lambda_i\in\Lambda_i,\,1\leq i\leq m\}.$$
 The poset structure on $\Delta_{(2)}$ is the following: for $\lambda_i\in\Lambda_i,$ $\lambda_j'\in\Lambda_j$ we have
 $$\Delta_{(2)}(\lambda_i)< \Delta_{(2)}(\lambda_j')\Leftrightarrow \begin{cases}i>j\\
 i=j,\quad \lambda_i<\lambda_j'.\end{cases}$$\end{theo}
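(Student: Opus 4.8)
The plan is to proceed by induction on $m$, reducing to the case $m=2$ and iterating. For $m=2$, write $\widetilde A = A_1 \times_M A_2$ with bimodule $M = M_{12}$, and observe that the idempotent $e_2 \in \widetilde A$ gives $e_2 \widetilde A e_2 = A_2$ and $\widetilde A/\widetilde A e_2 \widetilde A = A_1$ (this is immediate from Definition \ref{def:gluing_of_A_i_via_M}), while $\widetilde A e_1 \widetilde A$ is the ideal $A_1 \oplus M$. So $\Perf(\widetilde A)$ carries a recollement / semi-orthogonal decomposition gluing $\Perf(A_1)$ and $\Perf(A_2)$; concretely, $G_1^* $ and $G_2^*$ (resp. $F_1$, $F_2$) realize the two components, and one checks $\bR\pi(i_\Omega(\cdot))$ identities as in Proposition \ref{prop:C[Omega]}. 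The task is then to verify the four axioms of a split quasi-hereditary structure for the two proposed posets, i.e.\ to produce, for each coideal, the required chain of split heredity ideals.

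For the \emph{first} structure, take the coideal filtration refining ``$i$ first, then $\Lambda_i$''; since $\{2\}\times\Lambda_2$-type coideals correspond to the ideal $\widetilde A e_1 \widetilde A$ and its sub-ideals pulled back from the quasi-hereditary structure of $A_1$, and the remaining part is governed by $A_2$, axioms (i) and (iii) are formal. The content is axiom (ii): each one-step quotient must be a split indecomposable heredity ideal. For steps internal to a single $A_i$-block this follows from $A_i$ being split quasi-hereditary together with the fact that $G_i^*$ (resp.\ $F_i$) sends projectives to projectives and preserves the relevant $\Ext$-vanishing — here is where hypothesis $(\star)$ enters, via Lemma \ref{lem:equivalence_of_5_conditions_on_bimodule}: standard-filteredness of $M$ is exactly what guarantees that $G_1^*$ applied to a heredity ideal of $A_1$ stays projective as a right $\widetilde A$-module (using $G_1^*(N) = N \oplus \phi_{21}^!$-type formulas and that $\phi_{12}(\Delta_1(\lambda)) = \Delta_1(\lambda)\otimes_{A_1} M$ is standardly filtered over $A_2$). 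One then identifies the standard and costandard objects by the characterization in Proposition \ref{prop:costandard}(ii): check $\Ext^\bullet_{\widetilde A}(G_i^*\Delta_i(\lambda), F_j\nabla_j(\mu)) = \delta_{ij}\delta_{\lambda\mu}K$, which by adjunction ($G_i^* \dashv G_{i*}$, and $G_{i*} F_j = 0$ for $i<j$, $=\id$ for $i=j$, while for $i>j$ one uses that $G_{i*}F_j\nabla_j = 0$ as well since $F_j\nabla_j$ is supported on the $j$-block) reduces to the corresponding statement inside $\Rep(K,A_i)$.

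The \emph{second} structure is obtained from the first by the symmetry $(A_1,\dots,A_m)\mapsto (A_m^{op},\dots,A_1^{op})$, $M_{ij}\mapsto M_{ij}^{op}$, which reverses the order of the blocks and replaces standards by costandards; the equivalence $(c)\Leftrightarrow(d)\Leftrightarrow(e)$ in Lemma \ref{lem:equivalence_of_5_conditions_on_bimodule} shows hypothesis $(\star)$ is preserved under this operation, so the first-structure result applies to the dual data and, transported back, yields the claimed $\Delta_{(2)}$, $\nabla_{(2)}$ with the reversed block order. Finally, the inductive step from $m$ to $m+1$: group $A_1,\dots,A_m$ as a single glued algebra $\widetilde A_{\leq m}$ (which is split quasi-hereditary by induction, in the first structure) and glue on $A_{m+1}$ via $\bigoplus_{i\leq m} M_{i,m+1}$, checking this composite bimodule is standardly filtered over $\widetilde A_{\leq m}$ — which again follows from $(\star)$ and Lemma \ref{lem:equivalence_of_5_conditions_on_bimodule}, since standard objects of $\widetilde A_{\leq m}$ are the $G_i^*\Delta_i(\lambda)$ and $\phi$ applied to them lands in standardly filtered modules by the associativity of the $m_{ijk}$.

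I expect the main obstacle to be the bookkeeping in axiom (ii): verifying that each single-step subquotient ideal is genuinely a \emph{split indecomposable heredity} ideal (conditions (i)--(iv) of Definition \ref{def:heredity_ideal}), and in particular that the relevant pieces remain projective as one-sided $\widetilde A$-modules. This is precisely the point where $(\star)$ is indispensable, and getting the projectivity right — rather than merely the $\Ext$-vanishing — is the delicate part; everything else is either formal recollement yoga or a transcription of the $m=2$ case via the stated symmetry.
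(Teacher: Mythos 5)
Your reduction to $m=2$ plus induction has a genuine gap at the inductive step. Gluing $A_{m+1}$ onto $\widetilde{A}_{\leq m}$ \emph{equipped with its first structure}, the hypothesis you must verify is that $N=\bigoplus_{i\leq m}M_{i,m+1}$ is standardly filtered in $\Rep(K,\widetilde{A}_{\leq m}\otimes A_{m+1}^{op})$, which by Lemma \ref{lem:equivalence_of_5_conditions_on_bimodule} ((a)$\Leftrightarrow$(e)) means that $\Delta_{m+1}(\mu)\otimes_{A_{m+1}}N$ is filtered by the \emph{glued} standards $G_i^*(\Delta_i(\lambda))$ for every $\mu.$ Condition $(\star)$ only gives that each component $\phi_{i,m+1}(\Delta_{m+1}(\mu))$ is $\Delta_i$-filtered over $A_i,$ which is strictly weaker. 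Concretely: let $K$ be a field, $m+1=3,$ $A_1=A_2=A_3=K,$ $M_{12}=M_{23}=K,$ $M_{13}=0$ (so $m_{123}=0$). Then $(\star)$ is vacuous, $\widetilde{A}_{\leq 2}$ is the path algebra of $2\to 1$ whose first-structure standards are the projectives $P_1,P_2,$ but $N=M_{13}\oplus M_{23}$ is the simple module at vertex $2,$ which is not filtered by $P_1,P_2.$ So the two-block case cannot be invoked with your grouping. The induction can be repaired by peeling off $A_1$ instead (for the first structure): there the needed condition reduces, via $G_{i,\geq 2}^*(\Delta_i(\lambda))\otimes_{\widetilde{A}_{\geq 2}}\bigl(\bigoplus_{j\geq 2}M_{1j}\bigr)\cong\phi_{1i}(\Delta_i(\lambda)),$ exactly to $(\star)$; dually one peels off the last block for the second structure. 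The paper avoids induction altogether.

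Separately, even for $m=2$ the crux of your plan --- that each one-step subquotient in your coideal chain is a split indecomposable heredity ideal of $\widetilde{A},$ in particular projective as a right $\widetilde{A}$-module (Definition \ref{def:heredity_ideal} (ii),(iv)) --- is precisely the step you defer, and it is the entire content; nothing in the sketch supplies it. The paper's proof never manipulates heredity ideals: it checks directly that $(\widetilde{\cC},\Delta_{(1)})$ satisfies conditions (i)--(iv) of Definition \ref{def:highest_weight_over_ring}, the only use of $(\star)$ being (via Lemma \ref{lem:equivalence_of_5_conditions_on_bimodule}) that the $\phi_{ji}$ are exact on standardly filtered objects, so that $G_i^*$ applied to a projective cover of $\Delta_i(\lambda)$ has standardly filtered kernel; the split quasi-hereditary structure on $\widetilde{A}$ then comes for free from Theorem \ref{th:quasi_her_equiv_highest_weight}. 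The costandard objects are identified by Proposition \ref{prop:costandard} (ii), and there one needs $\bL_{>0}G_i^*(\Delta_i(\lambda))=0$ (again from $(\star)$) before the derived adjunction applies --- a point your Ext computation glosses; note also that Ext-orthogonality against $F_j(\nabla_j(\mu))$ only identifies the costandards once you already know $\Delta_{(1)}$ is the set of standards of the structure you built. Your treatment of the second structure via $\widetilde{A}^{op}\cong A_m^{op}\times_M\dots\times_M A_1^{op}$ agrees with the paper's argument.
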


 \begin{proof}By Theorem \ref{th:quasi_her_equiv_highest_weight}, it suffices to check that $\cC$ has indeed the two structures of a highest weight category with required properties.

 By Lemma \ref{lem:equivalence_of_5_conditions_on_bimodule}, condition $(\star)$ implies that the functors $\phi_{ij}$ induce exact functors
 \begin{equation}\label{functors_on_standardly_filtered}C_j^{\widetilde{\Delta_j}}\to C_i^{\widetilde{\Delta_i}},\quad 1\leq i<j\leq m.\end{equation}

 1) We now verify the conditions of Definition \ref{def:highest_weight_over_ring} for $\Delta_{(1)}$. By adjunction, for $\lambda_i\in\Lambda_i$ we have
 $$\End_{\widetilde{\cC}}(\Delta_{(1)}(\lambda_i))\cong\Hom_{\cC_i}(\Delta_i(\lambda_i),G_{i*}G_i^*\Delta_i(\lambda_i))=\End_{\cC_i}(\Delta_i(\lambda_i))=K.$$
 This verifies (i).

 Suppose that for some $\lambda_i\in\Lambda_i,$ $\lambda_j'\in\Lambda_j,$ we have
 $$\Hom_{\widetilde{\cC}}(\Delta_{(1)}(\lambda_i),\Delta_{(1)}(\lambda_j'))\ne 0.$$
 Then $G_{i*}(G_j^*(\Delta_j(\lambda_j')))\ne 0,$ hence $i\leq j.$ Suppose that $i=j.$ Then
 $$\Hom_{\widetilde{\cC}}(\Delta_{(1)}(\lambda_i),\Delta_{(1)}(\lambda_j'))\cong
 \Hom_{\cC_i}(\Delta_i(\lambda_i),G_{i*}G_i^*\Delta_i(\lambda_i'))=\Hom_{\cC_i}(\Delta_i(\lambda_i),\Delta_i(\lambda_i'))\ne 0,$$
 hence $\lambda_i\leq \lambda_i'.$ This verifies (ii).

Take some $\lambda_i\in\Lambda_i,$ and a surjective morphism $f:P_i\to\Delta_i(\lambda_i),$ where $P_i\in\cC_i$ is projective, and $\ker(f)\in\cC_i^{\widetilde{\Delta_i(>\lambda_i)}}.$ Then put $\widetilde{P_i}:=G_i^*(P_i),$ and $\widetilde{f}:=G_i^*(f):\widetilde{P_i}\to\Delta_{(1)}(\lambda_i).$
Clearly, $\widetilde{f}$ is surjective, and exactness of \eqref{functors_on_standardly_filtered} implies that $$\ker(\widetilde{f})=G_i^*(\ker(f))\in\widetilde{\cC}^{\widetilde{\Delta_{(1)}(>\lambda_i)}}.$$ This verifies (iii).

Finally, let $P_i\in\cC_i$ be a projective generator which is contained in $\cC_i^{\widetilde{\Delta_i}}$ (e.g. $P_i=A_i$). Then $\bigoplus\limits_{i=1}^m G_i^*(P_i)\in\widetilde{\cC}$ is a projective generator which is contained in $\widetilde{\cC}^{\widetilde{\Delta_{(1)}}}.$ This verifies (iv). Therefore, $(\widetilde{\cC},\Delta_{(1)})$ is a highest weight category.

Now we identify the set of costandard objects $\nabla_{(1)}\subset\widetilde{\cC}.$ Exactness of \eqref{functors_on_standardly_filtered} implies that $\bL_{>0} G_i^*(\Delta_i(\lambda_i))=0.$ Hence, by adjunction
\begin{multline*}\bR\Hom_{D^b(\widetilde{\cC})}(\Delta_{(1)}(\lambda_i),\nabla_{(1)}(\lambda_j'))=
\bR\Hom_{D^b(\cC_i)}(\Delta_i(\lambda_i),G_{i*}F_j\nabla_j(\lambda_j'))\\=
\begin{cases}\bR\Hom_{D^b(\cC_i)}(\Delta_i(\lambda_i),\nabla_i(\lambda_i')) & \text{if }i=j;\\
0 & \text{otherwise.}\end{cases}\end{multline*}
Since $(\cC_i,\{\Delta_i(\lambda_i)\}_{\lambda_i\in\Lambda_i})$ is a highest weight category, for any $\lambda_i\in\Lambda_i,$ $\lambda_j'\in\Lambda_j$ we get
$$\bR\Hom_{D^b(\widetilde{\cC})}(\Delta_{(1)}(\lambda_i),\nabla_{(1)}(\lambda_j'))=\begin{cases}K & \text{for }i=j,\,\lambda_i=\lambda_i';\\
0 & \text{otherwise.}\end{cases}$$
This implies that $\nabla_{(1)}\subset\widetilde{\cC}$ is indeed the set of costandard objects.

2) For the second structure, we will show that it is obtained by duality from the first one. Namely, let us note that $$\widetilde{A}^{op}\cong A_m^{op}\times_{M}A_{m-1}^{op}\dots\times_{M} A_1^{op}.$$ It follows from 1) that the first highest weight structure on $\Rep(K,\widetilde{A}^{op})$ exists. It is easy to see that the standard (resp. costandard) objects in this structure are exactly $\nabla_{(2)}(\lambda_i)^*$ (resp. $\Delta_{(2)}(\lambda_i)^*$), where $\lambda_i\in\Lambda_i,$ $1\leq i\leq m.$ This proves existence of the second highest weight structure on $\widetilde{\cC}.$ Theorem is proved.
\end{proof}

\begin{example} Let us assume for simplicity that $K$ is a field. In the above notation, suppose that $A_i=K$ for $1\leq i\leq m,$ so $M_{ij}$ are finite-dimensional vector spaces and $\widetilde{A}$ is a path algebra of some directed quiver with relations. The condition $(\star)$ of Theorem \ref{th:gluing_of_quasi_hereditary_alg} is automatically satisfied.

In the first highest weight structure on $\Rep(K,\widetilde{A}),$ the standard objects are indecomposable projectives $P_i,$ $1\leq i\leq m,$ and costandard objects are simples $S_i,$ $1\leq i\leq m.$

In the second highest weight structure on $\Rep(K,\widetilde{A}),$ the standard objects are simples $S_i,$ $1\leq i\leq m,$ and costandard objects are indecomposable injectives $I_i,$ $1\leq i\leq m.$\end{example}

\section{Strict polynomial functors, representations of $GL_n$ and Koszul duality}
\label{sec:pol_functors_GL_n_Koszul}

 Again, we fix some basic commutative ring $K.$

\subsection{Strict polynomial functors}
\label{ssec:pol_functors}

Strict polynomial functors were introduced and studied by Friedlander and Suslin \cite{FS}. In this subsection we mostly follow Krause \cite{Kr}.

As above we denote by $\P_K$ the $K$-linear additive category of finitely generated projective $K$-modules.

Denote by $\mS_d$ the symmetric group on $\{1,\dots,d\}.$

\begin{defi}Let $V$ be a finitely generated projective $K$-module. The $d$-th divided power of $V$ is by definition $$\Gamma^dV:=(V^{\otimes d})^{\mS_d}\in\P_K.$$
The $d$-th symmetric power of $V$ is by definition
$$\Sym^d V:=(V^{\otimes d})_{\mS_d}\in\P_K.$$\end{defi}

Clearly, we have a functorial isomorphism $\Gamma^d(V^*)\cong (\Sym^dV)^*.$ To check that $\Gamma^dV$ and $\Sym^dV$ are indeed in $\P_K,$ it suffices to note that in the case when $V$ is free finitely generated, both $\Gamma^dV$ and $\Sym^dV$ are free finitely generated as well.

\begin{prop}\label{prop:gamma^d_monoidal} For $V,W\in\P_K$ we have a natural morphism $\Gamma^dV\otimes\Gamma^dW\to\Gamma^d(V\otimes W)$ (restriction of the isomorphism $V^{\otimes d}\otimes W^{\otimes d}\stackrel{\sim}{\to}(V\otimes W)^{\otimes d}$). Also, we have a natural isomorphism $\Gamma^dK\cong K.$ These morphisms endow $\Gamma^d$ with a structure of a symmetric monoidal functor.\end{prop}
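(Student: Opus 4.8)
The statement to prove is Proposition \ref{prop:gamma^d_monoidal}: that $\Gamma^d$ carries a natural structure of a symmetric monoidal functor on $\P_K$, with the structure maps $\Gamma^d V \otimes \Gamma^d W \to \Gamma^d(V\otimes W)$ obtained by restricting the canonical isomorphism $V^{\otimes d}\otimes W^{\otimes d}\xrightarrow{\sim}(V\otimes W)^{\otimes d}$, and $\Gamma^d K \cong K$ the obvious identification.

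The plan is to reduce everything to the already-established fact that the $d$-th tensor power $(-)^{\otimes d}$ is a symmetric monoidal functor, together with the observation that $\mathfrak S_d$ acts on $(-)^{\otimes d}$ by monoidal natural transformations, so that the invariants functor inherits the structure. More precisely, I would first record that the shuffle isomorphism $\mu_{V,W}\colon V^{\otimes d}\otimes W^{\otimes d}\xrightarrow{\sim}(V\otimes W)^{\otimes d}$ (permute the $2d$ tensor factors so that the $i$-th factor of $V$ lands next to the $i$-th factor of $W$) is $\mathfrak S_d$-equivariant for the diagonal action of $\mathfrak S_d$ on the source and the standard action on the target; this is a direct check on the permutation groups. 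Consequently $\mu_{V,W}$ restricts to a map on $\mathfrak S_d$-invariants, which is exactly the claimed map $\Gamma^d V\otimes\Gamma^d W\to\Gamma^d(V\otimes W)$ — here one uses that for $\mathfrak S_d$ acting on a $K$-module $N$, $(N)^{\mathfrak S_d}$ is a submodule, and the restriction of $\mu$ to invariants has image in the invariants of the target. (Since everything is finitely generated projective, taking invariants commutes with the ground-ring maps we need, but in fact for this proposition we only need the elementary submodule statement.)

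Next I would verify the three coherence diagrams defining a (symmetric lax) monoidal functor — associativity, left/right unitality, and compatibility with the symmetry — for the candidate structure $(\Gamma^d,\ \Gamma^dV\otimes\Gamma^dW\to\Gamma^d(V\otimes W),\ K\cong\Gamma^dK)$. The point is that each of these diagrams is obtained by restricting to $\mathfrak S_d$-invariants the corresponding coherence diagram for $(-)^{\otimes d}$, which is known to hold (the $d$-th tensor power is the $d$-fold iterate of the tensor-square monoidal functor, and monoidal structure is transported along the equivalence $V\mapsto V^{\otimes d}$ in the evident way). Since the invariants functor $N\mapsto N^{\mathfrak S_d}$ is faithful on the subcategory of modules where it matters (indeed it is just a submodule inclusion), commutativity of the ambient diagram forces commutativity of the restricted diagram. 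The unit identification $\Gamma^dK\cong K$ is immediate since $K^{\otimes d}=K$ with trivial $\mathfrak S_d$-action, so $(K^{\otimes d})^{\mathfrak S_d}=K$; one checks it satisfies the unit axioms again by restriction from $(-)^{\otimes d}$.

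I do not expect any genuine obstacle here — the content is entirely formal once one isolates the equivariance of the shuffle map. The one place requiring a small amount of care is the bookkeeping of which permutation implements $\mu_{V,W}$ and confirming it intertwines the diagonal $\mathfrak S_d$-action with the standard one (equivalently: the diagonal embedding $\mathfrak S_d\hookrightarrow\mathfrak S_d\times\mathfrak S_d\subset\mathfrak S_{2d}$ is conjugated, by the shuffle permutation, into the copy of $\mathfrak S_d$ in $\mathfrak S_{2d}$ that permutes the $d$ blocks $\{2i-1,2i\}$). Everything else — associativity, unitality, symmetry — then follows by restricting known diagrams to invariants, so the proof is short. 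One may also remark, as the author surely will elsewhere, that the dual statement for $\Sym^d$ follows by applying $(-)^*$ and the isomorphism $\Gamma^d(V^*)\cong(\Sym^d V)^*$ already noted, but this is not needed for the present proposition.
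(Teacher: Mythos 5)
Your proposal is correct and amounts to carrying out the verification that the paper dismisses with ``straightforward checking'': the key points (equivariance of the shuffle isomorphism for the diagonal versus standard $\mS_d$-actions, hence restriction to invariants, and coherence diagrams inherited from $(-)^{\otimes d}$) are exactly what that checking consists of. No difference in approach and no gaps.
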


\begin{proof}Straightforward checking.\end{proof}

Define the category $\Gamma^d\P_K$ as follows. Objects of $\Gamma^d\P_K$ are the same as of $\P_K.$ Further, we define
$$\Hom_{\Gamma^d\P_K}(V,W):=\Gamma^d\Hom_{\P_K}(V,W).$$
The composition is induced by the symmetric monoidal structure on $\Gamma^d$ from Proposition \ref{prop:gamma^d_monoidal}.

\begin{defi}The category of strict polynomial functors of degree $d$ over $K$ is the category
$$\Pol^d_K:=\Gamma^d\P_K\text{-Mod}=\Fun_K(\Gamma^d\P_K,K\text{-Mod}),$$
where in the RHS we take $K$-linear additive functors.

The strict polynomial functor $F$ is called finite if $F(V)\in P_K$ for any $V\in\P_K.$ The full subcategory of finite strict polynomial functors is denoted by $\pol^d_K\subseteq\Pol^d_K.$\end{defi}

The category $\Pol^d_K$ is abelian with infinite exact direct sums. Further, $\pol^d_K\subset\Pol^d_K$ is an exact subcategory. For any $V\in Ob(\P_K)=Ob(\Gamma^d\P_K),$ we denote by $\Gamma^{d,V}\in \pol^d_K$ the functor which is corepresented by $V.$ In other words,
$$\Gamma^{d,V}(W)=\Gamma^d\Hom(V,W).$$
By Yoneda Lemma, for any $F\in\Pol^d_K$ we have $$\Hom_{\Pol^d_K}(\Gamma^{d,V},F)=F(V),$$ hence $\Gamma^{d,V}$ is a projective object of $\Pol^d_K$ and $\pol^d_K.$

The bifunctors $-\otimes-$ and $\Hom(-,-)$ for $\P_K$ induce similar bifunctors for $\Gamma^d\P_K:$
$$-\otimes_K-:\Gamma^d\P_K\times\Gamma^d\P_K\to\Gamma^d\P_K,$$
$$\Hom_K(-,-):(\Gamma^d\P_K)^{op}\times\Gamma^d\P_K\to\Gamma^d\P_K,$$
which act in the same way on objects.

\begin{defi}1) The internal tensor product bifunctor
$$-\otimes_{\Gamma^d_K}-:\Pol^d_K\times\Pol^d_K\to\Pol^d_K$$
is a unique bifunctor which commutes with small colimits in both arguments and extends the bifunctor $-\otimes-$ on $\Gamma^d\P_K$ via Yoneda embedding. That is, we have $$\Gamma^{d,V}\otimes_{\Gamma^d_K}\Gamma^{d,W}=\Gamma^{d,V\otimes W}$$
for $V,W\in\P_K.$

2) The internal Hom functor
$$\cHom_{\Gamma^d_K}(-,-):(\Pol^d_K)^{op}\times\Pol^d_K\to\Pol^d_K$$
is just an internal Hom for the symmetric monoidal structure given by the tensor product $-\otimes_{\Gamma^d_K}-$ defined in 1). It is a unique bifunctor such that
$$\cHom_{\Gamma^d_K}(\Gamma^{d,V},F)=F\circ (V\otimes-)$$
and for each $F\in\Pol^d_K$ the functor $\cHom_{\Gamma^d_K}(-,F)$ takes colimits to limits.
\end{defi}

\begin{defi}The duality functor $(-)^\circ:(\Pol^d_K)^{op}\to\Pol^d_K$ is defined by the formula
$$F^{\circ}(V):=(F(V^*))^*.$$\end{defi}

Clearly, the functor $(-)^{\circ}$ preserves the subcategory $\pol^d_K.$ Further, we have a natural transformation
$(-)\to (-)^{\circ\circ},$ which is an isomorphism on $\pol^d_K.$ In particular, the functor
$$(-)^{\circ}:(\pol^d_K)^{op}\to \pol^d_K$$ is an anti-involution.

We define external tensor product
$$-\boxtimes-:\Pol^{d_1}_K\times\Pol^{d_2}_K\to\Pol^{d_1+d_2}_K$$
by the formula
$$F\boxtimes G(-):=F(-)\otimes G(-).$$
The other way to define the bifunctor $-\boxtimes-$ is to take a natural functor
$$F^{d_1,d_2}:\Gamma^{d_1+d_2}\P_K\to \Gamma^{d_1}\P_K\otimes \Gamma^{d_2}\P_K,$$
defined on objects by the formula $F^{d_1,d_2}(V):=(V,V),$ and on morphisms as a natural inclusion
\begin{multline*}\Gamma^{d_1+d_2}\Hom(V,W)\stackrel{\sim}{\to} (\Hom(V,W)^{\otimes (d_1+d_2)})^{\mS_{d_1+d_2}}\to (\Hom(V,W)^{\otimes (d_1+d_2)})^{\mS_{d_1}\times\mS_{d_2}}\\\stackrel{\sim}{\to}\Gamma^{d_1}\Hom(V,W)\otimes\Gamma^{d_2}\Hom(V,W),\quad V,W\in\P_K.\end{multline*}
Then the bifunctor $-\boxtimes-$ is a natural composition
$$\Gamma^{d_1}\P_K\text{-Mod}\times\Gamma^{d_2}\P_K\text{-Mod}\to (\Gamma^{d_1}\P_K\otimes \Gamma^{d_2}\P_K)\text{-Mod}\to \Gamma^{d_1+d_2}\P_K\text{-Mod},$$
where the first arrow is obvious and the second arrow is the composition with $F^{d_1,d_2}.$

\subsection{Schur algebras.}
\label{ssec:Schur_algebras}

\begin{defi}(\cite{Gr}, Theorem 2.6c) For any non-negative integers $n,d\in\Z_{\geq 0},$ Schur algebra $S_K(n,d)$ is defined by the formula
$$S_K(n,d):=\Gamma^d\Mat_{n\times n}(K).$$\end{defi}

\begin{theo}\label{th:modules_over_Schur}(\cite{BFS}, Theorem 2.4) The evaluation at $K^n$ defines a functor $\Pol^d_K\to S_K(n,d)\text{-Mod},$ which is an equivalence for $n\geq d.$ In particular, $\pol_K^d\simeq \Rep(K,S_K(n,d)^{op})$ for $n\geq d.$\end{theo}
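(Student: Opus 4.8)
The plan is to prove Theorem \ref{th:modules_over_Schur} by unwinding the definitions and exploiting the fact that both sides are module categories over explicitly related rings. First I would recall that by definition $\Pol^d_K = \Gamma^d\P_K\text{-Mod} = \Fun_K(\Gamma^d\P_K, K\text{-Mod})$, and that $\Gamma^{d,V}$ for $V \in \Gamma^d\P_K$ corepresents evaluation at $V$. Since every object of $\P_K$ is a direct summand of a free module $K^n$ for $n$ large, the object $K^n \in \Gamma^d\P_K$ is a \emph{generator} of $\Gamma^d\P_K$ in the appropriate sense once $n$ is at least the number of objects we need to "see"; the key quantitative claim is that for $n \geq d$ the single object $K^n$ already split-generates the category $\Gamma^d\P_K$ additively. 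The standard reason is that for any $V \in \P_K$ of rank $\leq d$, $V$ is a summand of $K^n$, and any $V$ of higher rank is irrelevant for degree-$d$ functors because $\Gamma^d(\Hom(V,W))$ factors through the image of $\Hom(V', W)$ for suitable rank-$\leq d$ summands — more precisely, the evaluation functor is determined by its values on modules of rank $\leq d$. I would cite or reprove this "rank $\leq d$ suffices" fact carefully, as it is the crux.

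Next, granting that $K^n$ is a split generator of $\Gamma^d\P_K$ for $n \geq d$, Morita theory gives an equivalence
$$\Fun_K(\Gamma^d\P_K, K\text{-Mod}) \;\simeq\; \End_{\Gamma^d\P_K}(K^n)\text{-Mod},$$
namely $F \mapsto F(K^n)$, with the module structure coming from the action of endomorphisms. By definition $\End_{\Gamma^d\P_K}(K^n) = \Gamma^d\Hom_{\P_K}(K^n, K^n) = \Gamma^d\Mat_{n\times n}(K) = S_K(n,d)$. One must be slightly careful with the variance: $F(K^n)$ is naturally a \emph{left} module over the opposite of $\End(K^n)$, or equivalently a right $S_K(n,d)$-module, depending on composition conventions; this accounts for the appearance of $S_K(n,d)^{op}$ in the last sentence of the statement, and I would track the convention explicitly so the two assertions ($\Pol^d_K \simeq S_K(n,d)\text{-Mod}$ and $\pol^d_K \simeq \Rep(K, S_K(n,d)^{op})$) are consistent. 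The restriction of the equivalence to the finite subcategory $\pol^d_K$ then lands exactly in the $A$-modules that are finitely generated projective over $K$, since $F$ finite means $F(K^n) \in \P_K$ and conversely a finitely generated projective $K$-module with $S_K(n,d)$-action yields a finite functor by the generation property.

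The main obstacle — and the step I would spend the most care on — is the split-generation claim for $n \geq d$. For $n < d$ it genuinely fails (there are strict polynomial functors, e.g. $\Lambda^{n+1}$ in degree $n+1 > n$, which vanish on $K^n$ but are nonzero), so the inequality $n \geq d$ must be used essentially. The clean way is: (a) observe that for any $V \in \P_K$ with $\rk V \leq d$ there is an idempotent $e \in \Mat_{n\times n}(K)$ with $eK^n \cong V$ and hence $\Gamma^d(e) \in S_K(n,d)$ is an idempotent with $\Gamma^{d,V} \cong \Gamma^{d,K^n} e$ as projective objects; (b) show every projective $\Gamma^{d,V}$ is a summand of a finite sum of copies of $\Gamma^{d,K^n}$ for $n \geq d$, using that $\Gamma^d$ of a module of rank $> d$ still only "uses" rank-$\leq d$ data via the decomposition $\Hom(V,W)^{\otimes d}$ into tensor factors — at most $d$ coordinates of $V$ are touched by any basis tensor, so the $\mS_d$-invariants $\Gamma^d\Hom(V,W)$ are spanned by images coming from rank-$\leq d$ quotients of $V$; (c) conclude $\{\Gamma^{d,K^n}\}$ split-generates, so the Morita/Yoneda argument applies. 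This is essentially Theorem 2.10 of Krause \cite{Kr}, so in a final write-up I would either cite it directly or give the short idempotent argument above; I expect the proof to be brief modulo this one careful generation lemma.
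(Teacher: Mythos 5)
Your overall skeleton is the same as the paper's: by Yoneda, evaluation at $K^n$ is $\Hom_{\Pol^d_K}(\Gamma^{d,K^n},-)$, one identifies $\End(\Gamma^{d,K^n})\cong\Gamma^d\Mat_{n\times n}(K)=S_K(n,d)$ (up to the op-convention you correctly track), and everything reduces by Morita theory to the claim that $\Gamma^{d,K^n}$ is a projective generator of $\Pol^d_K$ for $n\ge d$; the restriction to $\pol^d_K\simeq\Rep(K,S_K(n,d)^{op})$ then follows as you say. The genuine gap is in your argument for that generation claim, which you yourself flag as the crux. Step (a) is false over a general commutative ring $K$ (and the theorem is asserted for arbitrary $K$): a finitely generated projective module of rank $\le d$ need not be a direct summand of $K^n$ for $n\ge d$. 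For example, if $K$ is a Dedekind domain with nontrivial class group and $V$ is a non-free projective of rank $d$, a split surjection $K^d\to V$ would have kernel a finitely generated projective of rank $0$, hence zero, forcing $V$ to be free; and over rings of higher Krull dimension even rank-one projectives can require more than $n$ generators, so (a) fails for any fixed $n$. (Rank need not even be constant when $\Spec K$ is disconnected.) Step (b) does not close this hole: the ``at most $d$ coordinates of $V$ are touched'' argument presupposes a basis of $V$, i.e.\ that $V$ is free, and for non-free $V$ it is unclear which ``rank $\le d$ quotients'' you mean and why they would be summands of $K^n$.

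The repair is exactly the paper's argument, and it avoids any rank bookkeeping on $V$: embed an arbitrary $V\in\P_K$ as a direct summand of a free module $K^N$ with $N$ as large as needed (no bound on $N$ is required), so $\Gamma^{d,V}$ is a summand of $\Gamma^{d,K^N}$; then use the exponential formula $\Gamma^d(V_1\oplus V_2)\cong\bigoplus_{p=0}^d\Gamma^pV_1\otimes\Gamma^{d-p}V_2$ to decompose $\Gamma^{d,K^N}\cong\bigoplus_{\mu\in\Lambda(N,d)}\Gamma^{\mu}$. Each $\mu\in\Lambda(N,d)$ has at most $d$ nonzero entries, so after reordering every $\Gamma^{\mu}$ occurring here is already a direct summand of $\Gamma^{d,K^n}$ once $n\ge d$. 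Hence $\Gamma^{d,K^n}$ is a progenerator and your Morita argument, the op-convention discussion, and the identification of finite functors all go through unchanged; your remark that $n\ge d$ is sharp (e.g.\ $\Lambda^{d}$ vanishes on $K^n$ for $n<d$) is correct. Citing Krause's Theorem 2.10 outright is also legitimate, as the paper does, but the idempotent argument you sketch should be replaced by the decomposition above.
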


The proof of Theorem \ref{th:modules_over_Schur} is quite easy. Indeed, we have a decomposition
$$\Gamma^d(V_1\oplus V_2)\cong\bigoplus\limits_{p=0}^d\Gamma^pV_1\otimes\Gamma^{d-p}V_2,\quad V_1,V_2\in\P_K.$$
It follows that for any $n\in\Z_{\geq 0}$ we have that
$$\Gamma^{d,K^n}\cong\bigoplus\limits_{\lambda\in\Lambda(n,d)}\Gamma^{\lambda},$$
where $$\Lambda(n,d)=\{(\lambda_1,\dots,\lambda_n)\in\Z_{\geq 0}^n\mid \lambda_1+\dots+\lambda_n=d\},$$
and $$\Gamma^{\lambda}:=\Gamma^{\lambda_1}\boxtimes\dots\boxtimes\Gamma^{\lambda_n}.$$
Clearly, (the isomorphism class of) $\Gamma^{\lambda}$ does not depend on the order of $(\lambda_1,\dots,\lambda_n),$ hence we may reorder to obtain a Young diagram, i.e. assume that $\lambda_1\geq\dots\geq\lambda_n.$

It follows that for each Young diagram $\lambda$ with $|\lambda|=d$ the object $\Gamma^{\lambda}$ is a direct summand of $\Gamma^{d,K^n}$ for $n\geq d.$ Hence, for any $V\in\P_K,$ the object $\Gamma^{d,V}\in\Pol^d_K$ is a direct summand of a direct sum of copies of $\Gamma^{d,K^n}.$ Therefore, $\Gamma^{d,K^n}$ is a progenerator of $\Pol^d_K$ for $d\geq 0.$ This implies Theorem \ref{th:modules_over_Schur}.

The following was proved in \cite{CPS2} in a more general case ($q$-Schur algebras).

\begin{theo}(\cite{CPS2}, Theorem 3.7.2) The $K$-algebra $S_K(n,d)$ is split quasi-hereditary for any $n,d\geq 0.$ In particular, $\pol_K^d$ is a highest weight category.\end{theo}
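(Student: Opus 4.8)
We only sketch the proof. The plan is to read the quasi-hereditary structure off from the equivalence $\pol^d_K\simeq\Rep(K,S_K(n,d)^{op})$ of Theorem~\ref{th:modules_over_Schur}. Since $S_K(n,d)=K\otimes_{\Z}S_{\Z}(n,d)$ and split quasi-hereditary structures pass to $K\otimes_{\Z}(-)$ (base change for split quasi-hereditary algebras), it suffices to treat $K=\Z$; we keep $K$ general wherever this is costless.

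First, $\pol^d_K$ is the category of modules, finitely generated projective over $K$, over a finite projective $K$-algebra (take the projective generator $\Gamma^{d,K^n}$ for $n\geq d$, by the argument recalled after Theorem~\ref{th:modules_over_Schur}). The key step is to verify that $(\pol^d_K,\{W_\lambda\}_{\lambda\in\Lambda^+(d)})$ is a highest weight category in the sense of Definition~\ref{def:highest_weight_over_ring}, where $\Lambda^+(d)$ is the set of partitions of $d$ under the dominance order and the $W_\lambda$ are the Weyl functors (the standard objects). Over $\Z$, axioms (i) and (ii) are easy: the $W_\lambda$ are $\Z$-free, and after $-\otimes_{\Z}\mathbb{Q}$ they become the pairwise non-isomorphic irreducible Schur modules, so $\Hom_{\pol^d_\Z}(W_\lambda,W_\mu)$ is $\Z$ if $\lambda=\mu$ and $0$ otherwise. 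Axiom (iii) is the substantive point: $\Gamma^\lambda=\Gamma^{\lambda_1}\boxtimes\cdots\boxtimes\Gamma^{\lambda_n}$ is projective in $\pol^d_K$ (a direct summand of the progenerator $\Gamma^{d,K^n}$), it admits a canonical epimorphism $\Gamma^\lambda\twoheadrightarrow W_\lambda$, and its kernel is standardly filtered with subquotients $W_\mu$ for $\mu>\lambda$ in the dominance order --- this is the universal straightening law. For axiom (iv), the progenerator $\Gamma^{d,K^d}=\bigoplus_{\lambda\in\Lambda(d,d)}\Gamma^\lambda$ is standardly filtered: each $\Gamma^\lambda$ is an iterated external product of single-row Weyl functors $\Gamma^{\lambda_i}=W_{(\lambda_i)}$, and such external products are standardly filtered with multiplicities the Littlewood--Richardson coefficients. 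Finally the duality $(-)^\circ$ interchanges $W_\lambda$ with the Schur functor $S_\lambda=W_\lambda^\circ$, so $((\pol^d_K)^{op},\{S_\lambda\})$ is a highest weight category as well; combined with $\Ext^i_{\pol^d_K}(W_\lambda,S_\mu)=K$ for $i=0$, $\lambda=\mu$ and $0$ otherwise, Proposition~\ref{prop:costandard} identifies the $S_\mu$ as the costandard objects.

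By Theorem~\ref{th:quasi_her_equiv_highest_weight}, $\pol^d_K$ is then the module category of a split quasi-hereditary algebra, so $S_K(d,d)$ is split quasi-hereditary. For arbitrary $n$ one reduces to this: the set $\Lambda^+(n,d)\subseteq\Lambda^+(d)$ of partitions with at most $n$ parts is a coideal for the dominance order --- if $\mu\geq\lambda$ and $\lambda_{n+1}=0$ then $\sum_{i\leq n}\mu_i=d$, so $\mu_{n+1}=0$ --- and $S_K(n,d)$ is Morita equivalent to the algebra $A_\Omega$ of Proposition~\ref{prop:C[Omega]} attached to this coideal $\Omega$, with the $\Gamma^\lambda\twoheadrightarrow W_\lambda$ ($\lambda\in\Omega$) as the chosen projective covers (indeed $A_\Omega=\End_{\pol^d_K}(\bigoplus_{\lambda\in\Omega}\Gamma^\lambda)$ and $S_K(n,d)=\End_{\pol^d_K}(\Gamma^{d,K^n})$ are endomorphism algebras of objects with the same additive closure in $\pol^d_K$). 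Hence $S_K(n,d)$ is split quasi-hereditary by Proposition~\ref{prop:C[Omega]}~1).

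The principal obstacle is the input to axioms (iii) and (iv): the universal, hence characteristic-free, Littlewood--Richardson filtration of the divided-power functors $\Gamma^\lambda$ by Weyl functors, in the refined form that $\ker(\Gamma^\lambda\twoheadrightarrow W_\lambda)$ is filtered by strictly dominant $W_\mu$. Everything else is formal manipulation with the symmetric monoidal structure of $\Gamma^d$ and the machinery of Section~\ref{sec:highest_weight}; this substantive ingredient is \cite{Bo} (equivalently, Akin--Buchsbaum--Weyman, or Donkin), precisely the source already quoted in the paper for the universal Littlewood--Richardson rule.
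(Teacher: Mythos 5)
The paper does not prove this statement at all: it is imported from \cite{CPS2}, Theorem 3.7.2, where it is established (in the greater generality of $q$-Schur algebras) by quite different methods. Your argument is therefore a genuinely different, self-contained route, and as a sketch it is correct; in fact it runs parallel to what the paper itself does later for $\Rep(K,GL_n)^d_{\geq 0}$ in Section \ref{ssec:reps_of_GL_n}, whose proof likewise rests on the surjections $\Gamma^{\lambda}\twoheadrightarrow W_{\lambda}$ with kernel filtered by $W_{\mu}$, $\mu\rhd\lambda$, together with Proposition \ref{prop:C[Omega]}. Two points deserve emphasis. First, the ingredient you flag really is the crux, and it is strictly stronger than Theorem \ref{th:Littlewood-Richardson} as stated: that theorem gives a Weyl filtration of $\Gamma^{\lambda}=W_{(\lambda_1)}\boxtimes\cdots\boxtimes W_{(\lambda_n)}$ with the right multiplicities but says nothing about the order of the subquotients, and one cannot reorder via $\Ext^1$-vanishing between Weyl functors without circularity, so the refined characteristic-free form (top quotient $W_{\lambda}$, kernel filtered by strictly dominant $W_{\mu}$) must indeed be quoted from the straightening/standard-basis literature (\cite{Bo}, Akin--Buchsbaum--Weyman, Donkin), exactly as you do. Second, Theorem \ref{th:modules_over_Schur} identifies $\pol^d_K$ with $\Rep(K,S_K(n,d)^{op})$, so your argument directly makes $S_K(n,d)^{op}$ split quasi-hereditary; to pass to $S_K(n,d)$ itself invoke the transpose anti-automorphism $S_K(n,d)\cong S_K(n,d)^{op}$, or the stability of split quasi-heredity under taking opposites (which follows from Proposition \ref{prop:costandard}). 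Your remaining reductions (base change from $\Z$, the coideal $\{\lambda\mid l(\lambda)\leq n\}$ for dominance, and the Morita equivalence coming from $\add(\Gamma^{d,K^n})=\add(\bigoplus_{\lambda}\Gamma^{\lambda})$) are correct and standard. In sum, the paper's citation keeps the text short and also covers the $q$-analogue, while your route makes the result self-contained within the paper's Rouquier-style framework at the cost of importing the refined universal Littlewood--Richardson filtration.
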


We now recall standard and costandard objects in $\pol_K^d.$

For a finite sequence of non-negative integers $\lambda=(\lambda_1,\dots,\lambda_n)\in\Lambda(n,d),$ put
$$\Lambda^{\lambda}:=\Lambda^{\lambda_1}\boxtimes\dots\boxtimes \Lambda^{\lambda_n}\in\pol^d_K,\quad \Sym^{\lambda}:=\Sym^{\lambda_1}\boxtimes\dots\boxtimes \Sym^{\lambda_n}\in\pol^d_K.$$

\begin{defi}\label{def:Schur_and_Weyl} For any Young diagram $\lambda$ of weight $d,$ define $\sigma_{\lambda}\in\mS_d$ to be a permutation defined by the formula $$\sigma_{\lambda}(\lambda_1+\dots+\lambda_{i-1}+j)=\lambda_1'+\dots+\lambda_{j-1}'+i,\quad i\geq 1,\,1\leq j\leq\lambda_i.$$
One defines a Schur functor $S_{\lambda}\in\pol^d_K$ and a Weyl functor $W_{\lambda}\in\pol^d_K$ by the formulas
$$S_{\lambda}(V)=\im(\Lambda^{\lambda'}\to V^{\otimes d}\stackrel{s_{\lambda}}{\to}V^{\otimes d}\to \Sym^{\lambda}(V)),$$
$$W_{\lambda}(V)=\im(\Gamma^{\lambda}\to V^{\otimes d}\stackrel{s_{\lambda'}}{\to}V^{\otimes d}\to \Lambda^{\lambda'}(V)).$$
Here for any Young diagram $\mu$ of weight $d$ we define
$$s_{\mu}:V^{\otimes d}\to V^{\otimes d},\quad s_{\mu}(v_1\otimes\cdots\otimes v_d)=v_{\sigma_{\mu}(1)}\otimes\cdots\otimes v_{\sigma_{\mu}(d)}.$$
The map $\Lambda^{\lambda'}(V)\to V^{\otimes d}$ (resp. $\Gamma^{\lambda'}(V)\to V^{\otimes d}$) is the tensor product of inclusions $\Lambda^{\lambda_j'}(V)\to V^{\otimes\lambda_j'}$ (resp. $\Gamma^{\lambda_i}(V)\to V^{\otimes\lambda_i}$). The map $V^{\otimes d}\to \Sym^{\lambda}(V)$ (resp. $V^{\otimes d}\to \Lambda^{\lambda'}(V)$) is the tensor product of projections $V^{\otimes \lambda_i}\to \Sym^{\lambda_i}(V)$ (resp. $V^{\otimes \lambda_j'}\to \Lambda^{\lambda_j'}(V)$).\end{defi}

Weyl functors $W_{\lambda}$ are the standard objects of $\pol_K^d.$ The partial order is the dominance:
$\lambda\unlhd\mu$ if $$\lambda_1+\dots\lambda_i\leq\mu_1+\dots\mu_i\text{ for }i\geq 0.$$

Schur functors $S_{\lambda}$ are the costandard objects. In particular, we have that
$$\bR\Hom(W_{\lambda},S_{\mu})=\begin{cases}K & \text{for }\lambda=\mu;\\
0 & \text{otherwise.}\end{cases}$$

We will need the following classical result, which is the universal form of Littlewood-Richardson rule.

\begin{theo}\label{th:Littlewood-Richardson}Take some Young diagrams $\lambda$ and $\mu$ with $|\lambda|=d_1,$ $|\mu|=d_2.$

1) The external  tensor product $S_{\lambda}\boxtimes S_{\mu}\in\pol^{d_1+d_2}_K$ has a finite filtration, with subquotients isomorphic to $S_{\nu}^{\oplus c_{\lambda,\mu}^{\nu}},$ where $c_{\lambda,\mu}^{\nu}$ are Littlewood-Richardson coefficients.

2) Similarly, the external tensor product $W_{\lambda}\boxtimes W_{\mu}\in\pol^{d_1+d_2}_K$ has a finite filtration, with subquotients isomorphic to $W_{\nu}^{\oplus c_{\lambda,\mu}^{\nu}}.$\end{theo}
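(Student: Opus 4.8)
The plan is to reduce the statement, by means of the highest weight structure on $\pol^d_K$ (for $n\geq d$) together with base change, to a single $\Ext$-vanishing statement in positive characteristic, which is the only place where genuine input is needed.

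First I would observe that the two parts are equivalent. The duality functor $(-)^{\circ}$ of Subsection~\ref{ssec:pol_functors} is an exact anti-equivalence of $\pol^d_K$; it satisfies $(F\boxtimes G)^{\circ}\cong F^{\circ}\boxtimes G^{\circ}$, since both sides send $V$ to $F(V^{*})^{*}\otimes G(V^{*})^{*}$, and a direct inspection of Definition~\ref{def:Schur_and_Weyl} gives $S_{\nu}^{\circ}\cong W_{\nu}$. Hence applying $(-)^{\circ}$ to a finite filtration of $S_{\lambda}\boxtimes S_{\mu}$ with subquotients $S_{\nu}^{\oplus c_{\lambda,\mu}^{\nu}}$ and reversing the order of the filtration yields a finite filtration of $W_{\lambda}\boxtimes W_{\mu}$ with subquotients $W_{\nu}^{\oplus c_{\lambda,\mu}^{\nu}}$; so it suffices to prove 1). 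Next, the functors $S_{\nu}$, $W_{\nu}$, $\Lambda^{\bullet}$, $\Gamma^{\bullet}$ are defined over $\Z$, take values in finitely generated free modules and commute with base change, and a finite filtration over $\Z$ whose subquotients are (flat) $S_{\nu}$'s stays a filtration after $-\otimes_{\Z}K$; so by Proposition~\ref{prop:base_change} it is enough to prove 1) for $K=\Z$.

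Now fix $n\geq d:=d_1+d_2$, so that $\pol^d_{\Z}\simeq\Rep(\Z,S_{\Z}(n,d)^{op})$ is a highest weight category with standard objects the Weyl functors $W_{\nu}$ and costandard objects the Schur functors $S_{\nu}$, $|\nu|=d$. By Lemma~\ref{lem:criterion_for_cC^Delta}, equivalence (i')$\,\Leftrightarrow\,$(ii'), the object $S_{\lambda}\boxtimes S_{\mu}$ admits a finite filtration with subquotients among the $S_{\nu}$ precisely when
$$\bR\Hom_{\pol^d_{\Z}}(W_{\nu},\,S_{\lambda}\boxtimes S_{\mu})\in\cP_{\Z}[0]\qquad\text{for all }\nu,\ |\nu|=d,$$
and in that case the multiplicity of $S_{\nu}$ equals $\rk_{\Z}\Hom_{\pol^d_{\Z}}(W_{\nu},S_{\lambda}\boxtimes S_{\mu})$. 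I would compute this $\bR\Hom$ by a bounded complex $C^{\bullet}$ of finitely generated free $\Z$-modules: take a finite resolution of $W_{\nu}$ by finitely generated $\Z$-free projective $S_{\Z}(n,d)^{op}$-modules (which exists because $S_{\Z}(n,d)$ is split quasi-hereditary, hence homologically smooth, by Proposition~\ref{prop:D^b_of_highest_weight_category}), and apply $\Hom(-,S_{\lambda}\boxtimes S_{\mu})$; each term of $C^{\bullet}$ is then a submodule of a free $\Z$-module, hence $\Z$-free. Moreover $C^{\bullet}\otimes_{\Z}k$ computes $\bR\Hom_{\pol^d_{k}}(W_{\nu},S_{\lambda}\boxtimes S_{\mu})$ for every residue field $k$ of $\Z$, and $H^{0}(C^{\bullet})=\Hom_{\pol^d_{\Z}}(W_{\nu},S_{\lambda}\boxtimes S_{\mu})$ is automatically $\Z$-free, being a submodule of a free $\Z$-module. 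A universal coefficients argument then shows that $C^{\bullet}\in\cP_{\Z}[0]$ as soon as $H^{>0}(C^{\bullet}\otimes k)=0$ for every residue field $k$; over $\Q$ this is immediate (there $S=W$ and $\pol^d_{\Q}$ is semisimple), while the classical Littlewood--Richardson rule over $\Q$ additionally gives $\rk_{\Z}H^{0}(C^{\bullet})=\dim_{\Q}\Hom_{\pol^d_{\Q}}(W_{\nu},S_{\lambda}\boxtimes S_{\mu})=c_{\lambda,\mu}^{\nu}$, which fixes the multiplicities.

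So everything comes down to the single assertion
$$\Ext^{>0}_{\pol^d_{\FF_p}}(W_{\nu},\,S_{\lambda}\boxtimes S_{\mu})=0\qquad\text{for all primes }p\text{ and all }\nu,\ |\nu|=d,$$
which says exactly that an external tensor product of costandard objects has a good (Schur) filtration in every characteristic. This is the main obstacle: over $\Q$ it is free from semisimplicity, but in characteristic $p$ there is no shortcut. I would prove it by producing an explicit universally free resolution of the Weyl functor $W_{\nu}$ (equivalently, by duality, of the Schur functor $S_{\nu}$) by direct sums of tensor products of divided powers $\Gamma^{\bullet}$ (resp.\ of exterior powers $\Lambda^{\bullet}$) --- the Akin--Buchsbaum--Weyman / Jacobi--Trudi complex. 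This resolution is compatible with base change, and the relevant operations act on it purely combinatorially, the comultiplications $\Gamma^{p}\to\bigoplus_{a+b=p}\Gamma^{a}\otimes\Gamma^{b}$ merely redistributing boxes, so the cohomology of the resulting complex can be identified via the straightening law. Alternatively one can dualize through the adjunction between $-\boxtimes-$ and the bifunctor (Cauchy) decomposition $\pol^d_{\Z}\to\pol^{d_1}_{\Z}\otimes\pol^{d_2}_{\Z}$, reducing to the statement that restriction of scalars along $F^{d_1,d_2}$ sends costandard objects to costandardly filtered objects, and run the same resolution argument there. The full statement is proved along these lines in \cite{Bo}, so one may alternatively just invoke that reference.
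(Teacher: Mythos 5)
Your proposal is correct, and its logical endpoint coincides with the paper's own proof, which is very short: part 1) is quoted from \cite{Bo}, Theorem 3.7 (the universal Littlewood--Richardson rule, valid over an arbitrary commutative ring), and part 2) is deduced by applying $(-)^{\circ}$ --- exactly the duality step you perform, only in the opposite direction. What you do differently is to interpose a reduction: base change to $K=\Z$ (the appeal to Proposition \ref{prop:base_change} is slightly misplaced, since that statement is about schemes, but the base-change principle for $S_K(n,d)$ and the functors $S_\nu,W_\nu$ is clear), then the highest-weight criterion of Lemma \ref{lem:criterion_for_cC^Delta} translating the existence of a Schur filtration into $\bR\Hom_{\pol^d_\Z}(W_{\nu},S_{\lambda}\boxtimes S_{\mu})\in\cP_{\Z}[0]$, and a universal-coefficients argument reducing everything to $\Ext^{>0}_{\pol^d_{\FF_p}}(W_{\nu},S_{\lambda}\boxtimes S_{\mu})=0$ for all primes $p$, with the multiplicities then forced by the characteristic-zero Littlewood--Richardson rule. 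That reduction is sound and has the virtue of isolating precisely what must be checked in positive characteristic; but it does not shorten the genuinely hard part, and your sketch for that part (Akin--Buchsbaum--Weyman/Jacobi--Trudi resolutions and the straightening law) is only a gesture --- worked out in detail it is essentially Boffi's argument itself. Since you explicitly allow falling back on \cite{Bo}, the proposal stands as written; just note that, read as a self-contained argument, the characteristic-$p$ vanishing is the one real gap, whereas the paper sidesteps the entire reduction by citing Boffi's filtration statement, which is proved over an arbitrary commutative ring and is in fact stronger than the $\FF_p$ Ext-vanishing you reduce to.
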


\begin{proof}Part 1) is proved in \cite{Bo}, Theorem 3.7. Part 2) is obtained from 1) by applying the functor $(-)^{\circ}.$\end{proof}

\subsection{Koszul duality}
\label{ssec:Koszul_duality}

The bifunctor $-\otimes_{\Gamma^d_K}-$ is not biexact. One may replace it with a derived bifunctor

$$-\stackrel{\bL}{\otimes}_{\Gamma^d_K}-:D(\Pol^d_K)\times D(\Pol^d_K)\to D(\Pol^d_K).$$
Similarly, one has the derived internal Hom functor:
$$\bR\cHom_{\Gamma^d_K}:D(\Pol^d_K)^{op}\times D(\Pol^d_K)\to D(\Pol^d_K).$$

The following was shown in \cite{Kr}.

\begin{theo}(\cite{Kr}, Theorem 4.9) The functors $\Lambda^d\stackrel{\bL}{\otimes}_{\Gamma^d_K}-$ and $\bR\cHom_{\Gamma^d_K}(\Lambda^d,-)$ provide mutually inverse equivalences
$$D(\Pol^d_K)\simeq D(\Pol^d_K),\quad D^b(\pol^d_K)\simeq D^b(\pol^d_K).$$\end{theo}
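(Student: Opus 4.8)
The statement to prove is that $\Lambda^d\stackrel{\bL}{\otimes}_{\Gamma^d_K}-$ and $\bR\cHom_{\Gamma^d_K}(\Lambda^d,-)$ are mutually inverse equivalences on $D(\Pol^d_K)$, restricting to equivalences on $D^b(\pol^d_K)$. The plan is to reduce everything to a single statement about the functor $\Lambda^d$ as an object of $\Pol^d_K$, namely that $\Lambda^d$ is a ``compact projective-like'' generator whose (derived) endomorphisms behave well. First I would record that $\bR\cHom_{\Gamma^d_K}(\Lambda^d,-)$ is right adjoint to $\Lambda^d\stackrel{\bL}{\otimes}_{\Gamma^d_K}-$, which is immediate from the internal tensor/internal Hom adjunction together with the fact that everything has been derived correctly; so it suffices to check that the unit and counit of this adjunction are isomorphisms. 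By a standard devissage (both functors are exact and commute with arbitrary direct sums, and the projective corepresentables $\Gamma^{d,V}$ for $V\in\P_K$ generate $D(\Pol^d_K)$), it is enough to verify that the unit $\Gamma^{d,V}\to \bR\cHom_{\Gamma^d_K}(\Lambda^d,\Lambda^d\stackrel{\bL}{\otimes}_{\Gamma^d_K}\Gamma^{d,V})$ is an isomorphism for each $V$, and symmetrically for the counit on the injective objects $S^{d,V}$ (or on $\Lambda^d$-twisted corepresentables).

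The key computational input I would isolate as a lemma: $\Lambda^d\stackrel{\bL}{\otimes}_{\Gamma^d_K}\Gamma^{d,V}$ is concentrated in degree zero and equals $\Lambda^d\circ(V\otimes-)$, i.e. $W\mapsto \Lambda^d(V\otimes W)$; equivalently, $\Lambda^d$ is already flat for the internal tensor product when smashed against a projective corepresentable. This follows from the definition $\Gamma^{d,V}\otimes_{\Gamma^d_K}F = F\circ(V\otimes-)$ on the abelian level, plus the observation that $\Gamma^{d,V}$ is internally flat (tensoring with it is exact, since precomposition with $V\otimes-$ is exact on $\Pol^d_K$). Dually, $\bR\cHom_{\Gamma^d_K}(\Lambda^d,-)$ applied to an injective cogenerator is computed termwise. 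Granting the lemma, the unit map becomes the natural map
$$\Gamma^{d,V}\longrightarrow \cHom_{\Gamma^d_K}\bigl(\Lambda^d,\,\Lambda^d\circ(V\otimes-)\bigr),$$
and one checks this is an isomorphism by evaluating on $W$: the right side is $\Hom_{\Pol^d_K}\bigl(\Lambda^d, \Lambda^d((V\otimes W)\otimes-)\bigr)$-type expression, which unwinds via Yoneda and the self-duality pairing between $\Gamma^d$ and $\Lambda^d$ (the classical fact that $\Lambda^d$ is ``Koszul self-dual'' as a strict polynomial functor) to $\Gamma^d\Hom(V,W)=\Gamma^{d,V}(W)$. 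This last identification is the heart of the matter and is where I expect the real work to be: it rests on the fact that the internal endomorphism functor $\cHom_{\Gamma^d_K}(\Lambda^d,\Lambda^d)$ is the unit object $\Gamma^d(\operatorname{id})$, i.e. $\Lambda^d$ is invertible in a suitable monoidal sense up to the Koszul sign, which is essentially the statement that the Koszul complex relating $\Gamma^\bullet$, $\Lambda^\bullet$, $\Sym^\bullet$ is acyclic in the relevant internal degree.

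For the bounded version, once the equivalence on $D(\Pol^d_K)$ is established I would argue that the functors preserve compact objects and bounded-below/bounded-above conditions, hence preserve $D^b(\pol^d_K)$; concretely, $\Lambda^d\stackrel{\bL}{\otimes}_{\Gamma^d_K}-$ sends the compact generators $\Gamma^{d,V}$ to $\Lambda^d(V\otimes-)$ which lies in $\pol^d_K$ (finite, since $\Lambda^d$ preserves $\P_K$), and $\bR\cHom_{\Gamma^d_K}(\Lambda^d,-)$ sends the cocompact objects $S^{d,V}$ to finite functors by the dual computation; since each functor is an equivalence of the big categories and sends a generating set of compacts to compacts, it restricts to an equivalence of the subcategories of compact objects, which for these module categories is $D^b(\pol^d_K)$. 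The main obstacle, to reiterate, is the acyclicity/self-duality computation for the Koszul complex of divided-power, exterior, and symmetric functors in the internal ($\otimes_{\Gamma^d_K}$) world — but this is exactly the content cited from \cite{Kr}, so in the write-up I would simply invoke it, spelling out only the reduction to corepresentables and the Yoneda unwinding.
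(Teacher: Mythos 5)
This theorem is not proved in the paper at all: it is quoted verbatim from Krause (\cite{Kr}, Theorem 4.9), so there is no internal argument to compare yours against. Your outline --- record the adjunction between $\Lambda^d\stackrel{\bL}{\otimes}_{\Gamma^d_K}-$ and $\bR\cHom_{\Gamma^d_K}(\Lambda^d,-)$, reduce the unit to the corepresentables $\Gamma^{d,V}$ by coproduct devissage (using that $\Lambda^d$ is a perfect object, so the right adjoint preserves coproducts), and feed in the Koszul self-duality computation for $\Lambda^d$ --- is indeed the skeleton of Krause's own proof. But note that the step you propose to ``simply invoke'' from \cite{Kr}, namely that $\bR\cHom_{\Gamma^d_K}(\Lambda^d,\Lambda^d\stackrel{\bL}{\otimes}_{\Gamma^d_K}\Gamma^{d,V})$ is concentrated in degree zero and is identified with $\Gamma^{d,V}$ by the unit, is essentially the theorem itself; so as written the proposal is an outline of the cited proof rather than an independent one.

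If you wanted to make it self-contained, two points need repair beyond that. First, the counit check ``symmetrically on the injective objects $S^{d,V}$'' is unsupported: your devissage builds arbitrary objects of $D(\Pol^d_K)$ from the projectives $\Gamma^{d,V}$ under coproducts and triangles, and no analogous generation statement is established for the $S^{d,V}$ (the dual argument would need compatibility with products). The standard repair is to deduce from the unit that $\Lambda^d\stackrel{\bL}{\otimes}_{\Gamma^d_K}-$ is fully faithful and then prove essential surjectivity by showing that the objects $\Lambda^d\stackrel{\bL}{\otimes}_{\Gamma^d_K}\Gamma^{d,V}$ generate $D(\Pol^d_K)$; this uses the exact Koszul complexes $0\to\Gamma^dV\to\Gamma^{d-1}V\otimes\Lambda^1V\to\cdots\to\Lambda^dV\to0$ and their external products to resolve each $\Gamma^{\lambda}$ by exterior powers (the same induction the paper runs geometrically in Lemma \ref{lem:incl_im_Phi_geq0}). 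Second, two smaller points: with the paper's normalization $\cHom_{\Gamma^d_K}(\Gamma^{d,V},F)=F\circ(V\otimes-)$, the tensor formula is $\Gamma^{d,V}\otimes_{\Gamma^d_K}F\cong F\circ(V^{*}\otimes-)$ rather than $F\circ(V\otimes-)$, a dualization slip that propagates into your Yoneda unwinding; and for the bounded statement, the identification of the compact objects of $D(\Pol^d_K)$ with $D^b(\pol^d_K)$ requires the finite global dimension of the Schur algebra $S_K(n,d)$ (its split quasi-heredity), which should be said explicitly. None of this changes the architecture, which matches the source, but the substance of the theorem is still being imported from \cite{Kr}.
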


\begin{defi}\label{def:koszul}We call $\bR\cHom_{\Gamma^d_K}(\Lambda^d,-)$ the "Koszul duality functor", and $\Lambda^d\stackrel{\bL}{\otimes}_{\Gamma^d_K}-$ the "inverse Koszul duality functor".\end{defi}

\begin{prop}\label{prop:properties_of_koszul}(\cite{Kr}, Propositions 4.8, 4.16) For any Young diagram $\lambda$ of weight $d,$ we have natural isomorphisms:
$$\Lambda^d\stackrel{\bL}{\otimes}_{\Gamma^d_K}\Gamma^{\lambda}\cong\Lambda^{\lambda},\quad \Lambda^d\stackrel{\bL}{\otimes}_{\Gamma^d_K}\Lambda^{\lambda}\cong \Sym^{\lambda},$$
$$\bR\cHom_{\Gamma^d_K}(\Lambda^d,\Lambda^{\lambda})\cong\Gamma^{\lambda},\quad \bR\cHom_{\Gamma^d_K}(\Lambda^d,\Sym^{\lambda})\cong\Lambda^{\lambda},$$
$$\Lambda^d\stackrel{\bL}{\otimes}_{\Gamma^d_K}W_{\lambda}\cong S_{\lambda'},\quad \bR\cHom_{\Gamma^d_K}(\Lambda^d,S_{\lambda})=W_{\lambda'}.$$\end{prop}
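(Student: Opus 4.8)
The plan is to cut the six isomorphisms down to three and then to a single computation. Write $T:=\Lambda^d\stackrel{\bL}{\otimes}_{\Gamma^d_K}-$ for the inverse Koszul duality functor and $T^{-1}=\bR\cHom_{\Gamma^d_K}(\Lambda^d,-)$ for its quasi-inverse; these are mutually inverse equivalences of $D^b(\pol^d_K)$. Each $\Gamma^\mu$ is a direct summand of $\Gamma^{d,K^n}$, hence flat for the internal tensor product, so $T$ is computed on any bounded complex of such functors by applying $\Lambda^d\otimes_{\Gamma^d_K}-$ termwise. Applying $T^{-1}$ to the first stated isomorphism yields the third; applying it to $T(\Lambda^\lambda)\cong\Sym^\lambda$ yields the fourth; and applying it to $T(W_\lambda)\cong S_{\lambda'}$, and relabelling $\lambda\leftrightarrow\lambda'$, yields the sixth. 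So it suffices to establish $T(\Gamma^\lambda)\cong\Lambda^\lambda$, then $T(W_\lambda)\cong S_{\lambda'}$, then $T(\Lambda^\lambda)\cong\Sym^\lambda$.

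First I would compute $T$ on corepresentables and prove a multiplicativity property. For $V\in\P_K$ there is a natural isomorphism $F\otimes_{\Gamma^d_K}\Gamma^{d,V}\cong F(\Hom(V,-))$: both sides preserve colimits in $F$ and agree on $F=\Gamma^{d,W}$, where the left side is $\Gamma^{d,V\otimes W}$ and the right side is $\Gamma^d\Hom(V\otimes W,-)$. Thus $-\otimes_{\Gamma^d_K}\Gamma^{d,V}$ is precomposition with the exact functor $\Hom(V,-)$, so $\Gamma^{d,V}$ is flat and $T(\Gamma^{d,V})\cong\Lambda^d(\Hom(V,-))$; in particular $T(\Gamma^d)\cong\Lambda^d$, since $\Gamma^d=\Gamma^{d,K}$ is the $\otimes_{\Gamma^d_K}$-unit. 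Next, for $d=d_1+d_2$ and $F_i\in D^b(\pol^{d_i}_K)$ there is a natural isomorphism
\[
\Lambda^d\stackrel{\bL}{\otimes}_{\Gamma^d_K}(F_1\boxtimes F_2)\cong\bigl(\Lambda^{d_1}\stackrel{\bL}{\otimes}_{\Gamma^{d_1}_K}F_1\bigr)\boxtimes\bigl(\Lambda^{d_2}\stackrel{\bL}{\otimes}_{\Gamma^{d_2}_K}F_2\bigr);
\]
both sides are exact functors of each variable, so one reduces to the case $F_i=\Gamma^{d_i,V_i}$, where the identity becomes the decomposition $\Lambda^d(A\oplus B)\cong\bigoplus_{p+q=d}\Lambda^p(A)\otimes\Lambda^q(B)$ with $A=\Hom(V_1,-)$, $B=\Hom(V_2,-)$, matched against $\Gamma^{d,V_1\oplus V_2}\cong\bigoplus_{p+q=d}\Gamma^{p,V_1}\boxtimes\Gamma^{q,V_2}$. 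Iterating, and using that each $\Gamma^e$ is a tensor unit, this gives $T(\Gamma^\lambda)=T(\Gamma^{\lambda_1}\boxtimes\dots\boxtimes\Gamma^{\lambda_n})\cong\Lambda^{\lambda_1}\boxtimes\dots\boxtimes\Lambda^{\lambda_n}=\Lambda^\lambda$, which is the first isomorphism (and the third by $T^{-1}$).

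Next I would treat the Weyl functors. By its definition $W_\lambda$ is the image of a natural transformation $\Gamma^\lambda\to\Lambda^{\lambda'}$, and, by the Akin-Buchsbaum-Weyman theory of Schur and Weyl complexes, it admits a finite resolution $P_\bullet\to W_\lambda$ whose terms are finite direct sums of functors $\Gamma^\mu$ and whose differentials are assembled from the multiplications and comultiplications of divided and exterior powers. Applying $T$ and using $T(\Gamma^\mu)\cong\Lambda^\mu$ termwise, $T(W_\lambda)$ is computed by the complex obtained from $P_\bullet$ under the substitution $\Gamma\rightsquigarrow\Lambda$, with the structure maps replaced by the corresponding ones for exterior and symmetric powers. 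That substituted complex is precisely the analogous complex of exterior-power functors resolving $S_{\lambda'}$ --- the underlying combinatorial data coincide, the place-permutations $\sigma_\lambda$ and $\sigma_{\lambda'}$ being mutually inverse and the Littlewood-Richardson bookkeeping of Theorem \ref{th:Littlewood-Richardson} being shared --- so its only nonvanishing homology is $S_{\lambda'}$ in degree $0$; hence $T(W_\lambda)\cong S_{\lambda'}$, and applying $T^{-1}$ (and relabelling) gives $\bR\cHom_{\Gamma^d_K}(\Lambda^d,S_\lambda)\cong W_{\lambda'}$. Specializing to a single column $\lambda=(1^e)$ gives $T(\Lambda^e)=T(W_{(1^e)})\cong S_{(e)}=\Sym^e$; combined with the multiplicativity above this yields $T(\Lambda^\lambda)\cong\Sym^\lambda$, and $\bR\cHom_{\Gamma^d_K}(\Lambda^d,\Sym^\lambda)\cong\Lambda^\lambda$ follows by $T^{-1}$. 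Naturality of all the isomorphisms, and their compatibility with base change, is clear from the constructions.

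The hard part is the identification in the third paragraph: one must produce the explicit Akin-Buchsbaum-Weyman resolution of $W_\lambda$ by divided-power functors and verify that the substitution $\Gamma\rightsquigarrow\Lambda$ carries it onto a resolution of $S_{\lambda'}$ --- equivalently, that at the level of these combinatorial complexes inverse Koszul duality does nothing but turn $\Gamma$ into $\Lambda$. An alternative, more structural route would be to note that $T$ is a Koszul-type equivalence, so it must carry the standard objects $\{W_\lambda\}$ of the highest weight category $\pol^d_K$ onto the costandard objects of the Koszul-dual highest weight structure, with the dominance order reversed via transposition and with zero cohomological shift (since $T(\Gamma^d)=T(W_{(d)})\cong\Lambda^d=S_{(1^d)}$ lies in degree $0$); this pins down $T(W_\lambda)\cong S_{\lambda'}$ without appeal to the explicit complexes.
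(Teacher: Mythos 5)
The paper itself does not prove this proposition: it is imported from \cite{Kr} (Propositions 4.8 and 4.16), so there is no internal argument to compare yours with; your proposal has to stand on its own. Much of it does: the reduction of the six isomorphisms to three via the mutually inverse equivalences $T=\Lambda^d\stackrel{\bL}{\otimes}_{\Gamma^d_K}-$ and $T^{-1}=\bR\cHom_{\Gamma^d_K}(\Lambda^d,-)$ is correct, and your computation of $T(\Gamma^{\lambda})\cong\Lambda^{\lambda}$ is sound: $-\otimes_{\Gamma^d_K}\Gamma^{d,V}$ is precomposition with $\Hom(V,-)$, so representables and their summands $\Gamma^{\mu}$ are flat, and the compatibility of $T$ with $\boxtimes$ reduces to the decompositions of $\Gamma^{d,V_1\oplus V_2}$ and of $\Lambda^{d}$ applied to a direct sum (you should still exhibit the natural comparison map and check that it matches the idempotents indexing the two decompositions, but that is routine). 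This is in the same spirit as Krause's own treatment of the first four isomorphisms.

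The genuine gap is the central claim $T(W_{\lambda})\cong S_{\lambda'}$, on which, in your architecture, the second, fourth, fifth and sixth isomorphisms all depend. Your main route asserts that $W_{\lambda}$ admits a finite Akin--Buchsbaum--Weyman resolution by direct sums of $\Gamma^{\mu}$'s, with differentials built from (co)multiplications, such that replacing every $\Gamma$ by $\Lambda$ yields a resolution of $S_{\lambda'}$. Such explicit finite universal resolutions are not available in the required generality (ABW give the standard presentation of $W_{\lambda}$ by divided powers, and explicit finite resolutions only in special cases such as two-rowed shapes), a presentation alone does not compute a derived functor, and the assertion that the substituted complex has homology $S_{\lambda'}$ concentrated in degree zero is precisely the content of the statement rather than a citable black box. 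The fallback ``structural route'' is circular as stated: an autoequivalence of $D^b(\pol^d_K)$ has no a priori obligation to carry standard objects to costandard objects of a transposed highest weight structure; that is exactly what must be proved. A correct structural argument can be extracted from the ingredients you already have --- induct down the dominance order using the surjection $\Gamma^{\lambda}\to W_{\lambda}$ whose kernel is filtered by $W_{\mu}$ with $\mu\rhd\lambda$ (Theorem \ref{th:Littlewood-Richardson}), the established $T(\Gamma^{\mu})\cong\Lambda^{\mu}$, the costandard filtration of $\Lambda^{\lambda}$ in which $S_{\lambda'}$ occurs exactly once, and the orthogonality $\bR\Hom(W_{\mu},S_{\nu})\cong K$ for $\mu=\nu$ and $0$ otherwise --- but one must actually show that $T(W_{\lambda})$ is concentrated in degree zero and identify the cone, which your proposal does not do. Note also that $T(\Lambda^{e})\cong\Sym^{e}$ (hence isomorphisms two and four) can be obtained independently of the Weyl/Schur case from the exactness of the characteristic-free Koszul complexes relating $\Gamma,\Lambda$ and $\Lambda,\Sym$, which is how \cite{Kr} proceeds; routing it through $W_{(1^e)}$ as you do makes it hostage to the unproven step.
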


We also would like to mention a result on the Serre functor for the category $D^b(\Pol^d_K).$

\begin{prop}(\cite{Kr}, Proposition 5.4) The functor $\Sym^d\stackrel{\bL}{\otimes}-\cong(\Lambda^d\stackrel{\bL}{\otimes}-)^{2}$ is the Serre functor on the category $D^b(\pol^d_K).$\end{prop}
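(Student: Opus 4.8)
The plan is to separate the claim into the functor identification $\Sym^d\stackrel{\bL}{\otimes}_{\Gamma^d_K}-\cong(\Lambda^d\stackrel{\bL}{\otimes}_{\Gamma^d_K}-)^{2}$ and the assertion that this autoequivalence is the Serre functor. The first part is formal: the internal tensor product makes $\Pol^d_K$ a closed symmetric monoidal abelian category with unit $\Gamma^d=\Gamma^{d,K}$, and the corepresentables $\Gamma^{d,V}$ form a generating family of compact projectives that are homotopy-flat for it, so $\stackrel{\bL}{\otimes}_{\Gamma^d_K}$ endows $D^b(\pol^d_K)$ with the structure of an associative symmetric monoidal triangulated category with unit $\Gamma^d$. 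Hence $(\Lambda^d\stackrel{\bL}{\otimes}_{\Gamma^d_K}-)^{2}\cong(\Lambda^d\stackrel{\bL}{\otimes}_{\Gamma^d_K}\Lambda^d)\stackrel{\bL}{\otimes}_{\Gamma^d_K}-$, and $\Lambda^d\stackrel{\bL}{\otimes}_{\Gamma^d_K}\Lambda^d\cong\Sym^d$ is the case $\lambda=(d)$ of the isomorphism $\Lambda^d\stackrel{\bL}{\otimes}_{\Gamma^d_K}\Lambda^{\lambda}\cong\Sym^{\lambda}$ of Proposition \ref{prop:properties_of_koszul}. Since $\Lambda^d\stackrel{\bL}{\otimes}_{\Gamma^d_K}-$ is the inverse Koszul duality equivalence (Definition \ref{def:koszul}), tensoring with $\Lambda^d$ is an autoequivalence, so $\Lambda^d$, and therefore $\Sym^d=\Lambda^d\stackrel{\bL}{\otimes}_{\Gamma^d_K}\Lambda^d$, is $\stackrel{\bL}{\otimes}_{\Gamma^d_K}$-invertible; in particular $\Sym^d\stackrel{\bL}{\otimes}_{\Gamma^d_K}-$ is an autoequivalence of $D^b(\pol^d_K)$.

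For the Serre-functor statement, I would first note that $\pol^d_K$ is a highest weight category over $K$ (Subsection \ref{ssec:Schur_algebras}), so $D^b(\pol^d_K)\simeq\Perf(A)$ for a split quasi-hereditary --- in particular smooth and proper --- $K$-algebra $A$ by Proposition \ref{prop:D^b_of_highest_weight_category}; being smooth and proper, it admits a Serre functor, unique up to isomorphism, so it suffices to produce a natural isomorphism $\bR\Hom(F,G)^{*}\cong\bR\Hom(G,\Sym^d\stackrel{\bL}{\otimes}_{\Gamma^d_K}F)$. I would use three elementary ingredients. (a) Since $\Gamma^d=\Gamma^{d,K}$ is the monoidal unit and is corepresentable, evaluation at $K$ identifies $\bR\Hom_{D^b(\pol^d_K)}(F,G)\cong\bigl(\bR\cHom_{\Gamma^d_K}(F,G)\bigr)(K)$, naturally. (b) For any $H\in D^b(\pol^d_K)$, $\bigl(H(K)\bigr)^{*}\cong H^{\circ}(K)$, because $K^{*}=K$ and $H^{\circ}(V)=(H(V^{*}))^{*}$. (c) The duality functor is dualization against $\Sym^d$: $\bR\cHom_{\Gamma^d_K}(F,\Sym^d)\cong F^{\circ}$; this is checked on the corepresentable projectives, where $\cHom_{\Gamma^d_K}(\Gamma^{d,V},\Sym^d)=\Sym^d(V\otimes-)$ and $\Sym^d(V\otimes W)=\bigl(\Gamma^d\Hom(V,W^{*})\bigr)^{*}=\bigl(\Gamma^{d,V}(W^{*})\bigr)^{*}=(\Gamma^{d,V})^{\circ}(W)$, and then propagated to all of $D^b(\pol^d_K)$ using exactness of $\cHom_{\Gamma^d_K}(\Gamma^{d,V},-)$ and of the contravariant involution $(-)^{\circ}$. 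Finally I would use the formal fact that every object of $D^b(\pol^d_K)$ is dualizable for $\stackrel{\bL}{\otimes}_{\Gamma^d_K}$, with $F^{\vee}:=\bR\cHom_{\Gamma^d_K}(F,\Gamma^d)$, $\bR\cHom_{\Gamma^d_K}(F,G)\cong F^{\vee}\stackrel{\bL}{\otimes}_{\Gamma^d_K}G$ and $F^{\vee\vee}\cong F$; this reduces to the corepresentables, where $(\Gamma^{d,V})^{\vee}\cong\Gamma^{d,V^{*}}$, and then to dualizable objects forming a thick subcategory.

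Granting this, the Serre-duality isomorphism is the composite of natural isomorphisms
\begin{align*}
\bR\Hom(F,G)^{*}
&\cong\bigl(\bR\cHom_{\Gamma^d_K}(F,G)\bigr)^{\circ}(K)\\
&\cong\bR\cHom_{\Gamma^d_K}\bigl(\bR\cHom_{\Gamma^d_K}(F,G),\,\Sym^d\bigr)(K)\\
&\cong\bR\cHom_{\Gamma^d_K}\bigl(F^{\vee}\stackrel{\bL}{\otimes}_{\Gamma^d_K}G,\,\Sym^d\bigr)(K)\\
&\cong\bR\cHom_{\Gamma^d_K}\bigl(G,\,\bR\cHom_{\Gamma^d_K}(F^{\vee},\Sym^d)\bigr)(K)\\
&\cong\bR\cHom_{\Gamma^d_K}\bigl(G,\,F\stackrel{\bL}{\otimes}_{\Gamma^d_K}\Sym^d\bigr)(K)\\
&\cong\bR\Hom\bigl(G,\,\Sym^d\stackrel{\bL}{\otimes}_{\Gamma^d_K}F\bigr),
\end{align*}
using (a)--(b) first, then (c), then dualizability of $F$, then the internal tensor--Hom adjunction and symmetry of $\stackrel{\bL}{\otimes}_{\Gamma^d_K}$, then dualizability of $F^{\vee}$ (so $\bR\cHom_{\Gamma^d_K}(F^{\vee},\Sym^d)\cong F^{\vee\vee}\stackrel{\bL}{\otimes}_{\Gamma^d_K}\Sym^d\cong F\stackrel{\bL}{\otimes}_{\Gamma^d_K}\Sym^d$), and (a) again. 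As an object-level corroboration, one may instead use the formulas $\Lambda^d\stackrel{\bL}{\otimes}_{\Gamma^d_K}W_{\lambda}\cong S_{\lambda'}$ and $\bR\cHom_{\Gamma^d_K}(\Lambda^d,S_{\lambda})\cong W_{\lambda'}$ of Proposition \ref{prop:properties_of_koszul} to compute $\bR\Hom\bigl(S_{\mu},(\Lambda^d\stackrel{\bL}{\otimes}_{\Gamma^d_K}-)^{2}W_{\lambda}\bigr)\cong\bR\Hom(W_{\mu'},S_{\lambda'})$, which is $K$ for $\lambda=\mu$ and $0$ otherwise --- exactly what the Serre functor must give on the full exceptional collection of Weyl functors and its left dual collection of Schur functors, since $\bR\Hom(W_{\lambda},S_{\mu})$ is $K$ for $\lambda=\mu$ and $0$ otherwise.

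The main obstacle is the homological bookkeeping behind ingredients (a)--(c) and the dualizability claim: one must work with resolutions by the corepresentable projectives $\Gamma^{d,V}$ and check exactness (or homotopy-flatness) of the functors involved --- $\cHom_{\Gamma^d_K}(\Gamma^{d,V},-)$, evaluation at $K$, $(-)^{\circ}$, and $\Gamma^{d,V}\stackrel{\bL}{\otimes}_{\Gamma^d_K}-$ --- so that the derived statements reduce to the object-level identities above, and track naturality throughout. The one genuinely nontrivial input --- that $\Lambda^d$ is $\stackrel{\bL}{\otimes}_{\Gamma^d_K}$-invertible with $\Lambda^d\stackrel{\bL}{\otimes}_{\Gamma^d_K}\Lambda^d\cong\Sym^d$, reflecting the Koszul complex relating $\Lambda^{\bullet}$, $\Gamma^{\bullet}$ and $\Sym^{\bullet}$ --- is already available to us as Proposition \ref{prop:properties_of_koszul}; the rest is formal manipulation in the closed symmetric monoidal triangulated category $(D^b(\pol^d_K),\stackrel{\bL}{\otimes}_{\Gamma^d_K},\Gamma^d)$.
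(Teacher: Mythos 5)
Your argument is correct in outline, but it takes a different route from the paper for the simple reason that the paper gives no proof at all: it quotes the statement from \cite{Kr} (Proposition 5.4) and merely adds the remark that Krause's proof, written over a field, goes through verbatim over any commutative ring. What you have done is, in effect, reconstruct that proof in the base-ring-free setting, and your ingredients are essentially Krause's: the rigidity of the corepresentables $\Gamma^{d,V}$ (hence of all of $D^b(\pol^d_K)$, using that the Schur algebra is split quasi-hereditary, so the corepresentables generate and the category is smooth and proper), the identification $\bR\cHom_{\Gamma^d_K}(-,\Sym^d)\cong(-)^{\circ}$ checked on corepresentables, evaluation at the unit $\Gamma^{d}=\Gamma^{d,K}$, and the tensor--Hom adjunction, together with $\Lambda^d\stackrel{\bL}{\otimes}_{\Gamma^d_K}\Lambda^d\cong\Sym^d$ from Proposition \ref{prop:properties_of_koszul} to identify the square of the inverse Koszul duality with $\Sym^d\stackrel{\bL}{\otimes}_{\Gamma^d_K}-$. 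The paper's citation is the economical choice; your version has the merit of actually substantiating the paper's remark that nothing in the argument uses the field hypothesis. Two small cautions: the propagation of the projective-level identities (your (a)--(c) and the dualizability statement) to natural isomorphisms at the level of $D^b$ is exactly the bookkeeping you flag, and it does require that both sides be computed on resolutions by corepresentables, which are acyclic for all the functors involved; and the final computation on the Weyl/Schur collections should be kept, as you present it, only as a consistency check --- agreement of two functors objectwise on an exceptional collection does not by itself identify them, so the bifunctorial isomorphism in your displayed chain is what carries the proof.
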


\begin{remark} Strictly speaking, the result about the Serre functor is proved over a field in \cite{Kr}, but the proof over a commutative ring is the same.\end{remark}

So, the inverse Koszul duality functor $\Lambda^d\stackrel{\bL}{\otimes}_{\Gamma^d_K}-$ is the "square root" of the Serre functor $\Sym^d\stackrel{\bL}{\otimes}_{\Gamma^d_K}-.$

\section{Representations of $GL_n$}
\label{ssec:reps_of_GL_n}

The general references for this subsection are \cite{D}, \cite{Gr}.

As above, $K$ denotes the basic commutative ring. Fix some non-negative integer $n\in \Z_{\geq 0},$ and put $G:=GL_n(K).$ We consider $G$ as an algebraic group over $K.$ We denote by $\Rep(K,G)$ the category of $G$-modules which are finitely generated projective over $K.$ We write $\Rep(K,G)^d\subset \Rep(K,G)$ for the full abelian subcategory of representations of degree $d,$ where the degree is taken w.r.t. the central one-dimensional torus $G_m\subset G.$ Clearly, each object $M\in \Rep(K,G)$ splits uniquely as $$M=\bigoplus\limits_{d\in\Z}M_d,\quad M_d\in \Rep(K,G)^d,$$ where all but finitely many $M_d$ equal to zero. Moreover, $$\Hom_{\Rep(K,G)}(P,Q)=0\quad \text{for}\quad P\in \Rep(K,G)^i,\, Q\in \Rep(K,G)^j,\,i\ne j.$$
In other words, $\Rep(K,G)=\bigsqcup\limits_{d\in\Z}\Rep(K,G)^d,$ where the coproduct is taken in the category of small $K$-linear additive categories.

We have a maximal torus $T=G_m^n\subseteq G$ consisting of diagonal matrices. We have a bijection
 $$\Z^n\simeq\Hom(T,G_m),\quad \lambda\leftrightarrow\chi_{\lambda}.$$ Each object $N\in \Rep(K,T)$ can be written uniquely as $$N=\bigoplus\limits_{\lambda\in\Z^n}M_{\lambda}\otimes\chi_{\lambda},\quad M_{\lambda}\in\cP_K,$$
where $M_{\lambda}$ is non-zero for only finitely many of $\lambda.$ We call such $\lambda\in\Z^n$ the weights of $N.$ Clearly, the set of weights is invariant under $\mS_n$-action. Since any representation of $M\in\Rep(K,G)$ can be considered as an object of $\Rep(K,T),$ it also has a finite subset of weights in $\Z^n.$

For $r\in\Z,$ we denote by $$\Rep(K,G)^d_{\geq r}\subseteq \Rep(K,G)^d\quad\text{(resp. } \Rep(K,G)^d_{\leq r}\subseteq \Rep(K,G)^d)$$ the subcategory of representations for which all the weights $\lambda$ satisfy the inequality $\lambda_i\geq r$ (resp. $\lambda_i\leq r$), $1\leq i\leq n.$ We also put $$\Rep(K,G)^d_{[a,b]}:=\Rep(K,G)^d_{\leq b}\cap \Rep(K,G)^d_{\geq a}.$$

The objects of $\Rep(K,G)^d_{\geq 0}$ are known as polynomial representations of degree $d$ \cite{Gr}. Denote by $V_n\in \Rep(K,G)^1$ the tautological representation (of rank $n$). We have a natural exact functor $$\pi_d:\pol^d_K\to \Rep(K,G)^d_{\geq 0},\quad \pi_d(F)=F(V_n).$$ It induces a functor on derived categories $\bR\pi_d:D^b(\pol^d_K)\to D^b(\Rep(K,G)^d_{\geq 0}).$

\begin{defi}We denote by $\cP(m,n)$ the set of Young diagrams $\lambda$ such that $l(\lambda)\leq n,$ $\lambda_1\leq m.$ We denote by $\cP(m,n;d)\subseteq\cP(m,n)$ the subset of Young diagrams $\lambda$ such that $|\lambda|=d.$\end{defi}

 \begin{prop}1) The category $\Rep(K,G)_{\geq 0}^d$ is a highest weight category, which is equivalent to $\Rep(K,S_K(n,d)^{op}).$ The standard (resp. costandard) objects of $\Rep(K,G)_{\geq 0}^d$ are $W_{\lambda}(V_n)$ (resp. $S_{\lambda}(V_n)$), where $|\lambda|=d,$ $l(\lambda)\leq n.$

2) We have natural equivalences of highest weight categories $$\Rep(K,G)_{\geq 0}^d\simeq\pol^d_K[\{W_{\lambda}\}_{|\lambda|=d,l(\lambda)\leq n}],$$
$$\Rep(K,G)_{[0,m]}^d\simeq\Rep(K,G)_{\geq 0}^d[\{W_{\lambda}\}_{\lambda\in\cP(m,n;d)}].$$

3) The functor $\bR\pi_d:D^b(\pol^d_K)\to D^b(\Rep(K,G)^d_{\geq 0})$ is a semi-orthogonal projection, and its kernel has two natural exceptional collections
$$\ker \pi_d=\la W_{\lambda},|\lambda|=d,l(\lambda)>n;\unlhd\ra,\quad \ker \pi_d=\la S_{\lambda},|\lambda|=d,l(\lambda)>n;\unrhd\ra.$$

4) Denote by $i^d_{\geq 0}:D^b(\Rep(K,G)^d_{\geq 0})\to D^b(\pol^d_K)$ (resp. $j^d_{\geq 0}:D^b(\Rep(K,G)^d_{\geq 0})\to D^b(\pol^d_K)$) the left (resp. right) adjoint to $\pi^d.$ Both $i^d_{\geq 0}$ and $j^d_{\geq 0}$ are fully faithful. We have
\begin{equation}\label{i^d(W_lambda(V))}i^d_{\geq 0}(W_{\lambda}(V_n))=W_{\lambda},\quad j^d_{\geq 0}(S_{\lambda}(V_n))=S_{\lambda}\quad\text{for }|\lambda|=d,\,l(\lambda)\leq n.\end{equation}\end{prop}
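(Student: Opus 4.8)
The plan is to realize $\Rep(K,G)^d_{\geq 0}$ as the highest weight subquotient $\pol^d_K[\Omega]$ of the category of strict polynomial functors, where $\Omega=\{\lambda\mid |\lambda|=d,\ l(\lambda)\leq n\}$, in a way compatible with $\bR\pi_d$ and the quotient functor $\bR\pi_\Omega$ of Proposition \ref{prop:C[Omega]}; once this is set up, all four statements follow from the machinery of Section \ref{sec:highest_weight}. Two combinatorial observations about the dominance order on partitions of $d$ are needed first: the set $\{\lambda\mid l(\lambda)>n\}$ is an ideal (if $l(\lambda)>n$ then $\lambda_1+\dots+\lambda_n\leq d-1$, and $\mu\unlhd\lambda$ forces $\mu_1+\dots+\mu_n\leq d-1$, i.e. $l(\mu)>n$), so $\Omega$ is a coideal; and $\{\lambda\mid\lambda_1\leq m\}$ is an ideal. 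Recall from Subsection \ref{ssec:Schur_algebras} that $\pol^d_K$ is a highest weight category with standard (resp. costandard) objects the Weyl functors $W_\lambda$ (resp. Schur functors $S_\lambda$), $|\lambda|=d$, in the order $\unlhd$.

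For the identification: by Yoneda, evaluation at $K^n$ on $\pol^d_K$ is the functor $\Hom_{\pol^d_K}(\Gamma^{d,K^n},-)$, with $\End_{\pol^d_K}(\Gamma^{d,K^n})\cong S_K(n,d)^{op}$ (cf. Theorem \ref{th:modules_over_Schur}). Since $\Gamma^{d,K^n}\cong\bigoplus_{\mu\in\Lambda(n,d)}\Gamma^\mu$, and each $\Gamma^\mu$ (an external product of the single-row Weyl functors $W_{(\mu_i)}=\Gamma^{\mu_i}$) is standardly filtered with subquotients among the $W_\nu$, $\nu\unrhd\mu$, by Theorem \ref{th:Littlewood-Richardson} — hence with $l(\nu)\leq n$ — the indecomposable summands of $\Gamma^{d,K^n}$ are exactly the projective covers $P(\lambda)$, $l(\lambda)\leq n$. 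Thus $\add\Gamma^{d,K^n}=\add\big(\bigoplus_{\lambda\in\Omega}P(\lambda)\big)$, and Proposition \ref{prop:C[Omega]}, 1)--2) identifies $\pol^d_K[\Omega]$ with $\Rep(K,S_K(n,d)^{op})$, the quotient functor being naturally isomorphic to evaluation at $K^n$. Composing with the classical equivalence $\Rep(K,G)^d_{\geq 0}\simeq\Rep(K,S_K(n,d)^{op})$ of \cite{Gr} (recall $S_K(n,d)$ is split quasi-hereditary, \cite{CPS2}) gives $\Rep(K,G)^d_{\geq 0}\simeq\pol^d_K[\Omega]$, compatibly with $\bR\pi_d$ and $\bR\pi_\Omega$. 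This step — checking that ordinary evaluation of strict polynomial functors at $K^n$ literally is the quotient functor attached to the coideal $\Omega$ — is the main obstacle; everything after it is a translation of Section \ref{sec:highest_weight}.

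Part 1) and the first equivalence of 2) are now immediate: $\pol^d_K[\Omega]$ is a highest weight category by Proposition \ref{prop:C[Omega]}, 1), with standard objects $\bR\pi_\Omega(W_\lambda)=W_\lambda(V_n)$, $\lambda\in\Omega$. To see that $S_\lambda(V_n)=\bR\pi_\Omega(S_\lambda)$ is the costandard object, set $\Gamma:=\{\lambda\mid l(\lambda)>n\}$ and observe that for $\lambda\in\Omega$ one has $W_\lambda\in{}^{\perp}D^b(\pol^d_K[\Gamma])$ and $S_\lambda\in D^b(\pol^d_K[\Gamma])^{\perp}$: indeed $D^b(\pol^d_K[\Gamma])$ is generated by the $W_\mu$, $\mu\in\Gamma$, while $\Hom^{\bullet}(W_\lambda,W_\mu)=0$ (the standard objects of $\pol^d_K$ form an exceptional collection for $\unlhd$, and $\mu\unrhd\lambda$ with $\mu\in\Gamma$ an ideal would give $\lambda\in\Gamma$) and $\Hom^{\bullet}(W_\mu,S_\lambda)=0$ (it equals $\delta_{\mu\lambda}K$, and $\mu\neq\lambda$). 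Hence $i_\Omega\bR\pi_\Omega(W_\lambda)=W_\lambda$ and $j_\Omega\bR\pi_\Omega(S_\lambda)=S_\lambda$, so by adjunction $\bR\Hom_{\pol^d_K[\Omega]}(W_\lambda(V_n),S_\mu(V_n))\cong\bR\Hom_{\pol^d_K}(W_\lambda,S_\mu)$, which is $K$ for $\lambda=\mu$ and $0$ otherwise; uniqueness of costandard objects (Proposition \ref{prop:costandard}) identifies $S_\mu(V_n)$ with $\nabla(\mu)$. For the second equivalence of 2), a weight argument (all weights of $L(\lambda)$ have coordinates in $[0,\lambda_1]$) shows that $\Rep(K,G)^d_{[0,m]}$ is the full subcategory of $\Rep(K,G)^d_{\geq 0}$ generated by the simples $L(\lambda)$ with $\lambda_1\leq m$; since $\{\lambda\mid\lambda_1\leq m\}$ is an ideal, this is precisely $\Rep(K,G)^d_{\geq 0}[\cP(m,n;d)]$, a highest weight category with standard objects $W_\lambda(V_n)$, $\lambda\in\cP(m,n;d)$, by Proposition \ref{prop:C[Gamma]}.

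Finally, 3) and 4) are Proposition \ref{prop:C[Omega]}, 3) transported along the identification: $\bR\pi_d\cong\bR\pi_\Omega$ is a semi-orthogonal projection with kernel $D^b(\pol^d_K[\Gamma])$, which by Proposition \ref{prop:C[Gamma]} is a highest weight category with standard objects $\{W_\lambda\}_{\lambda\in\Gamma}$ and costandard objects $\{S_\lambda\}_{\lambda\in\Gamma}$. By Proposition \ref{prop:D^b_of_highest_weight_category}, 3) the collection $\{W_\lambda\}_{\lambda\in\Gamma}$ is a full exceptional collection in the kernel (partially ordered by $\unlhd$) with left dual $\{S_\lambda\}_{\lambda\in\Gamma}$, and by Proposition \ref{prop:order_on_dual_collection} the latter is exceptional for the opposite order $\unrhd$. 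The adjoints $i^d_{\geq 0}$, $j^d_{\geq 0}$ coincide with $i_\Omega$, $j_\Omega$, fully faithful by Proposition \ref{prop:C[Omega]}, 3), and the formulas \eqref{i^d(W_lambda(V))} are the identities $i_\Omega\bR\pi_\Omega(W_\lambda)=W_\lambda$ and $j_\Omega\bR\pi_\Omega(S_\lambda)=S_\lambda$ for $l(\lambda)\leq n$ established above.
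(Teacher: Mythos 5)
Your overall route is the paper's: you realize $\Rep(K,G)^d_{\geq 0}$ as the subquotient $\pol^d_K[\Omega]$ attached to the coideal $\Omega=\{\lambda : |\lambda|=d,\ l(\lambda)\leq n\}$, using the decomposition $\Gamma^{d,K^n}\cong\bigoplus_{\mu\in\Lambda(n,d)}\Gamma^{\mu}$, the Littlewood--Richardson filtrations, and Propositions \ref{prop:C[Omega]} and \ref{prop:C[Gamma]}; your identification of the costandard objects and of the kernel of $\bR\pi_d$ spells out what the paper leaves implicit. But two steps are not valid over an arbitrary commutative ring $K$, which is the setting of the proposition (and $K=\Z$ is the case the paper needs). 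The milder one: ``the indecomposable summands of $\Gamma^{d,K^n}$ are exactly the projective covers $P(\lambda)$, $l(\lambda)\leq n$'' has no meaning over general $K$ --- in $\Rep(K,G)$ over $\Z$ there are neither Krull--Schmidt decompositions nor projective covers. The detour is unnecessary: to apply Proposition \ref{prop:C[Omega]} take $P_{W_\lambda}:=\Gamma^{\lambda}$ for $\lambda\in\Omega$ (the LR filtration gives the surjection $\Gamma^{\lambda}\to W_{\lambda}$ with kernel in $(\pol^d_K)^{\widetilde{W_{\rhd\lambda}}}$), note $\add(\Gamma^{d,K^n})=\add(\bigoplus_{\lambda\in\Omega}\Gamma^{\lambda})$, and conclude that $A_{\Omega}$ is Morita equivalent to $\End(\Gamma^{d,K^n})\cong S_K(n,d)^{op}$; this is exactly what the paper does.

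The genuine gap is your proof of the second equivalence in 2): you argue through simple modules $L(\lambda)$ (``$\Rep(K,G)^d_{[0,m]}$ is generated by the simples $L(\lambda)$ with $\lambda_1\leq m$''). Over a general $K$ there are no simple objects in $\Rep(K,G)^d_{\geq 0}$, and the identification of $\cC[\Gamma]$ (defined in the paper by Hom-orthogonality to the standard objects outside the ideal) with a Serre subcategory generated by simples is only meaningful over a field. What must be shown is that $M\in\Rep(K,G)^d_{\geq 0}$ has all weights in $[0,m]$ if and only if $\Hom(W_{\lambda}(V_n),M)=0$ for all $\lambda$ with $\lambda_1>m$. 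The paper gets this from the fact that $\Hom_{G}(\Gamma^{\lambda}(V_n),M)$ is the $\lambda$-weight space of $M$, so $\Rep(K,G)^d_{[0,m]}$ is precisely the right orthogonal to the $\Gamma^{\lambda}(V_n)$ with $\lambda_1>m$; combined with the filtration of $\Gamma^{\lambda}$ by $W_{\mu}$ with $\mu\unrhd\lambda$ (hence $\mu_1>m$) and the surjection $\Gamma^{\lambda}\to W_{\lambda}$, this is literally the orthogonality defining $\Rep(K,G)^d_{\geq 0}[\{W_{\lambda}\}_{\lambda\in\cP(m,n;d)}]$. You should replace the simple-module argument by such a weight-space argument. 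A minor further point: to pin down the costandard objects, cite the uniqueness of the left dual exceptional collection (Proposition \ref{prop:criterion_for_dual_collection} together with Proposition \ref{prop:D^b_of_highest_weight_category} 3)) rather than Proposition \ref{prop:costandard}, whose uniqueness assumes condition (i) as well; with the Ext-computation you already made, this is a harmless fix.
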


\begin{proof}1) The equivalence $\Rep(K,G)_{\geq 0}^d\simeq \Rep(K,S_K(n,d)^{op})$ was shown in \cite{Gr} in the case when $K$ is a field, but the proof for commutative rings is the same. Indeed, the category $\Rep(K,G)^d_{\geq 0}$ is equivalent to the category of left comodules over the coalgebra $\Sym^d(\Mat_{n\times n}(K)^*),$ which are finitely generated projective over $K.$ But the coalgebra $\Sym^d(\Mat_{n\times n}(K)^*)$ is a finitely generated free $K$-module, hence a left $\Sym^d(\Mat_{n\times n}(K)^*)$-comodule is the same as left $\Gamma^d(\Mat_{n\times n}(K))$-module. This proves the equivalence.

Description of standard and costandard objects of $\Rep(K,G)_{\geq 0}^d$ follows from 2), which we prove next.

2) By Yoneda lemma, we have an isomorphism of algebras $$S_K(n,d)^{op}\cong\End_{\pol_K^d}(\Gamma^{d,K^n}).$$ According to the discussion after Theorem \ref{th:modules_over_Schur}, the object $\Gamma^{d,K^n}$ generates the same additive Karoubi complete category as $\bigoplus\limits_{|\lambda|=d,l(\lambda)\leq n}\Gamma^{\lambda}.$ Further, by Theorem \ref{th:Littlewood-Richardson} for any Young diagram $\lambda$ with $|\lambda|=d,$ we have a surjection $f:\Gamma^{\lambda}\to W_{\lambda}$ with $\ker(f)\in (\pol_K^d)^{\widetilde{W_{\rhd\lambda}}}.$ By definition (see Proposition \ref{prop:C[Omega]}), we obtain the equality $$\Rep(K,G)_{\geq 0}^d\cong\pol^d_K[\{W_{\lambda}\}_{|\lambda|=d,l(\lambda)\leq n}].$$

Further, the subcategory $\Rep(K,G)^d_{[0,m]}\subseteq\Rep(K,G)^d_{\geq 0}$ is exactly the right orthogonal to $\{\Gamma^{\lambda}(V_n)\}_{\lambda_1>m}.$ Hence, by definition (see Proposition \ref{prop:C[Gamma]}), we obtain the equality
$$\Rep(K,G)_{[0,m]}^d=\Rep(K,G)^d_{\geq 0}[\{W_{\lambda}\}_{\lambda\in\cP(m,n;d)}].$$

3) and 4) then follow from Proposition \ref{prop:C[Omega]}. If we put $\Omega:=\{W_{\lambda}\mid |\lambda|=d,l(\lambda)\leq n\},$ then in the notation of Proposition \ref{prop:C[Omega]} we have $\pi_d=\pi_{\Omega},$ $i^d_{\geq 0}=i_{\Omega},$ $j^d_{\geq 0}=j_{\Omega}.$
\end{proof}

Denote by $i^d_{[0,m]}$ (resp. $j^d_{[0,m]}$) the restriction of $i^d_{\geq 0}$ (resp. $j^d_{\geq 0}$) onto $D^b(\Rep(K,G)^d_{[0,m]}).$

\begin{prop}\label{prop:im_i_im_j}We have \begin{equation}\label{im_i}\im(i^d_{[0,m]})=\la W_{\lambda},\lambda\in\cP(m,n;d);\unlhd\ra.\end{equation}
Similarly,
\begin{equation}\label{im_j}\im(j^d_{[0,m]})=\la S_{\lambda},\lambda\in\cP(m,n;d);\unrhd\ra.\end{equation}\end{prop}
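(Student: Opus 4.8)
The plan is to deduce both equalities from the fully faithfulness of $i^d_{\geq 0}$ and $j^d_{\geq 0}$ together with the identification of $\Rep(K,G)^d_{[0,m]}$ as a subquotient highest weight category, as recorded in the previous proposition. First I would recall from part~2) of the previous proposition that $\Rep(K,G)^d_{[0,m]}=\Rep(K,G)^d_{\geq 0}[\{W_\lambda\}_{\lambda\in\cP(m,n;d)}]$ in the sense of the operation $\cC[\Theta]$ for a convex subset $\Theta$; here $\cP(m,n;d)$ is a coideal inside the poset of Weyl functors $W_\lambda$ with $|\lambda|=d,\ l(\lambda)\leq n$ (the dominance order restricted to partitions with $\lambda_1\leq m$). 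By Proposition~\ref{prop:C[Omega]} applied with $\Omega=\cP(m,n;d)$ inside $\Rep(K,G)^d_{\geq 0}$, the fully faithful left adjoint $i_\Omega$ realizes $D^b(\Rep(K,G)^d_{[0,m]})$ inside $D^b(\Rep(K,G)^d_{\geq 0})$ as the admissible subcategory $\la W_\lambda(V_n),\,\lambda\in\cP(m,n;d)\ra$ generated by the standard objects indexed by $\Omega$; dually $j_\Omega$ realizes it as $\la S_\lambda(V_n),\,\lambda\in\cP(m,n;d)\ra$.

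Next I would compose with the fully faithful functors $i^d_{\geq 0},j^d_{\geq 0}:D^b(\Rep(K,G)^d_{\geq 0})\to D^b(\pol^d_K)$ of part~4). By \eqref{i^d(W_lambda(V))} we have $i^d_{\geq 0}(W_\lambda(V_n))=W_\lambda$ and $j^d_{\geq 0}(S_\lambda(V_n))=S_\lambda$ for all $\lambda$ with $|\lambda|=d$, $l(\lambda)\leq n$. Since $i^d_{[0,m]}$ is by definition the restriction of $i^d_{\geq 0}$ to $D^b(\Rep(K,G)^d_{[0,m]})$, and the latter is generated by the objects $W_\lambda(V_n)$, $\lambda\in\cP(m,n;d)$, exactness and fully faithfulness of $i^d_{\geq 0}$ give $\im(i^d_{[0,m]})=\la W_\lambda,\ \lambda\in\cP(m,n;d)\ra$ as triangulated subcategories of $D^b(\pol^d_K)$; and the dominance order $\unlhd$ on the $W_\lambda$ is exactly the partial order under which they form an exceptional collection (as standard objects of the highest weight category $\pol^d_K$), so one records the order as well. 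The argument for \eqref{im_j} is verbatim the same with $j$, $S_\lambda$, and the opposite order $\unrhd$ in place of $i$, $W_\lambda$, $\unlhd$.

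The only point requiring care — the main obstacle, such as it is — is bookkeeping about the order: one must check that the partial order on $\{W_\lambda\}_{\lambda\in\cP(m,n;d)}$ induced from the exceptional collection of standard objects of $\pol^d_K$ agrees with the restriction of dominance and is compatible with the order coming from the highest weight structure on $\Rep(K,G)^d_{[0,m]}$ transported through $i^d_{[0,m]}$. This follows because $i^d_{\geq 0}$ and $j^d_{\geq 0}$, being the fully faithful adjoints attached to a semi-orthogonal projection, send standard (resp.\ costandard) objects to standard (resp.\ costandard) objects and are compatible with the posets; and passing to the convex subcategory $\cC[\Theta]$ via Proposition~\ref{prop:C[Omega]} only restricts the poset. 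Hence no genuine new computation is needed: the proposition is a formal consequence of the previous one together with \eqref{i^d(W_lambda(V))} and Proposition~\ref{prop:C[Omega]}.
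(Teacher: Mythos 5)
Your argument is correct and is essentially the paper's own proof: the paper simply observes that the proposition "follows immediately from \eqref{i^d(W_lambda(V))}", i.e.\ $D^b(\Rep(K,G)^d_{[0,m]})$ is generated by its standard objects $W_\lambda(V_n)$ (resp.\ costandard objects $S_\lambda(V_n)$), $\lambda\in\cP(m,n;d)$, and the fully faithful functors $i^d_{\geq 0}$, $j^d_{\geq 0}$ send these to $W_\lambda$, $S_\lambda$, which is exactly your reasoning. One cosmetic correction: with the dominance order $\unlhd$ used for the standard poset, $\cP(m,n;d)$ is an \emph{ideal} (downward closed) in $\{W_\lambda:\ |\lambda|=d,\ l(\lambda)\leq n\}$, so the identification $\Rep(K,G)^d_{[0,m]}\simeq\Rep(K,G)^d_{\geq 0}[\{W_\lambda\}_{\lambda\in\cP(m,n;d)}]$ and the full faithfulness of the inclusion come from Proposition \ref{prop:C[Gamma]} rather than from Proposition \ref{prop:C[Omega]} with $\Omega=\cP(m,n;d)$; this does not affect the substance of your proof.
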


\begin{proof}This follows immediately from \eqref{i^d(W_lambda(V))}.\end{proof}

Now recall the inverse Koszul duality functor from Definition \ref{def:koszul}:
$$\Lambda^d\stackrel{\bL}{\otimes}-:D^b(\pol^d_K)\to D^b(\pol^d_K).$$

\begin{prop}\label{prop:koszul_for_GL}We have an equality of full subcategories of $D^b(\pol^d_K):$
$$\Lambda^d\stackrel{\bL}{\otimes}_{\Gamma^d_K}i^d_{[0,m]}(D^b(\Rep(K,GL_n)^d_{[0,m]}))=j^d_{[0,n]}(D^b(GL_{m}\text{-mod}^d_{[0,n]})).$$\end{prop}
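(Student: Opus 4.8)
The plan is to combine the description of the images $\im(i^d_{[0,m]})$ and $\im(j^d_{[0,n]})$ as triangulated subcategories generated by Weyl (resp. Schur) functors, provided by Proposition \ref{prop:im_i_im_j}, with the behaviour of the inverse Koszul duality functor on these functors, recorded in Proposition \ref{prop:properties_of_koszul}. By \eqref{im_i} we have
$$\im(i^d_{[0,m]})=\la W_{\lambda},\ \lambda\in\cP(m,n;d)\ra,$$
so since $\Lambda^d\stackrel{\bL}{\otimes}_{\Gamma^d_K}-$ is an exact autoequivalence of $D^b(\pol^d_K)$, it carries the left side of the claimed equality to the triangulated subcategory generated by the objects $\Lambda^d\stackrel{\bL}{\otimes}_{\Gamma^d_K}W_{\lambda}\cong S_{\lambda'}$, where $\lambda$ runs over $\cP(m,n;d)$. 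Now $\lambda\mapsto\lambda'$ (transpose of Young diagrams) is a bijection $\cP(m,n;d)\xrightarrow{\sim}\cP(n,m;d)$, so the subcategory in question is exactly $\la S_{\mu},\ \mu\in\cP(n,m;d)\ra$, which by \eqref{im_j} (with the roles of $m$ and $n$ exchanged, i.e. working inside $D^b(\Rep(K,GL_m)^d_{[0,n]})$) is precisely $\im(j^d_{[0,n]})$. This would finish the proof.

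The steps, in order, are: (1) rewrite the left-hand side using \eqref{im_i} as the triangulated hull of $\{W_{\lambda}\mid\lambda\in\cP(m,n;d)\}$; (2) apply the functor $\Lambda^d\stackrel{\bL}{\otimes}_{\Gamma^d_K}-$, which commutes with the formation of triangulated hulls since it is an equivalence, and use the identity $\Lambda^d\stackrel{\bL}{\otimes}_{\Gamma^d_K}W_{\lambda}\cong S_{\lambda'}$ from Proposition \ref{prop:properties_of_koszul}; (3) observe that transposition is an order-reversing (for the dominance order) bijection between $\cP(m,n;d)$ and $\cP(n,m;d)$; (4) recognize the resulting subcategory as $\im(j^d_{[0,n]})$ via \eqref{im_j}. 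One should be a little careful about partial-order bookkeeping: \eqref{im_i} lists the $W_\lambda$ with the dominance order $\unlhd$ and \eqref{im_j} lists the $S_\mu$ with $\unrhd$; since transposition reverses dominance, $\{S_{\lambda'}\mid\lambda\in\cP(m,n;d);\unlhd\}$ and $\{S_{\mu}\mid\mu\in\cP(n,m;d);\unrhd\}$ are the same ordered exceptional collection, so the triangulated subcategories they generate coincide.

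The only genuine subtlety — and the step I would treat most carefully — is making sure the constraints match up: $i^d_{[0,m]}$ lives inside $D^b(\pol^d_K)$ built from $\Rep(K,GL_n)^d_{[0,m]}$, so the index set is $\cP(m,n;d)$ (column length $\le n$ from the group $GL_n$, part size $\le m$ from the weight bound $[0,m]$), whereas $j^d_{[0,n]}$ is built from $\Rep(K,GL_m)^d_{[0,n]}$, giving index set $\cP(n,m;d)$. Transposition swaps "number of columns $\le m$" with "number of rows $\le m$" and "number of rows $\le n$" with "number of columns $\le n$", so it interchanges $\cP(m,n;d)$ and $\cP(n,m;d)$ exactly as needed. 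With this dictionary in place the argument is purely formal, relying only on the fact that an equivalence of triangulated categories sends the triangulated hull of a set of objects to the triangulated hull of the images.
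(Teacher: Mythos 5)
Your argument is correct and follows essentially the same route as the paper: apply the equivalence $\Lambda^d\stackrel{\bL}{\otimes}_{\Gamma^d_K}-$ to the description of $\im(i^d_{[0,m]})$ as the triangulated hull of $\{W_{\lambda}\}_{\lambda\in\cP(m,n;d)}$ from Proposition \ref{prop:im_i_im_j}, use $\Lambda^d\stackrel{\bL}{\otimes}_{\Gamma^d_K}W_{\lambda}\cong S_{\lambda'}$ from Proposition \ref{prop:properties_of_koszul}, and identify the result with $\im(j^d_{[0,n]})$ via the transposition bijection $\cP(m,n;d)\simeq\cP(n,m;d)$. Your extra remarks on the order bookkeeping and the matching of the index sets are careful elaborations of what the paper leaves implicit, not a different proof.
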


\begin{proof}Indeed, by Proposition \ref{prop:properties_of_koszul} we have that $$\Lambda^d\stackrel{\bL}{\otimes}_{\Gamma^d_K}W_{\lambda}\cong S_{\lambda'}$$ for any Young diagram $\lambda$ with $|\lambda|=d.$ Hence, by Proposition \ref{prop:im_i_im_j} we have
\begin{multline*}\Lambda^d\stackrel{\bL}{\otimes}_{\Gamma^d_K}i^d_{[0,m]}(D^b(\Rep(K,GL_n)^d_{[0,m]}))=\la \Lambda^d\stackrel{\bL}{\otimes}_{\Gamma^d_K}W_{\lambda},\lambda\in\cP(m,n;d)\ra\\
=\la S_{\mu},\mu\in\cP(n,m;d)\ra=j^d_{[0,n]}(D^b(GL_{m}\text{-mod}^d_{[0,n]})).\end{multline*}
This proves the proposition.\end{proof}

\begin{defi}\label{def:koszul_for_GL}We denote by
\begin{equation}\label{koszul_for_GL}\Lambda_{n,m}^d:D^b(\Rep(K,GL_n)^d_{[0,m]})\stackrel{\sim}{\to} D^b(\Rep(K,GL_m)^d_{[0,n]}).\end{equation}
an equivalence such that
$$j^d_{[0,n]}\circ\Lambda^d_{n,m}\cong (\Lambda^d\stackrel{\bL}{\otimes}_{\Gamma^d_K}-)\circ i^d_{[0,m]}.$$
Such an equivalence exists (and is unique up to a natural isomorphism) by Proposition \ref{prop:koszul_for_GL}.\end{defi}

It is clear from the proof of Proposition \ref{prop:koszul_for_GL} that
\begin{equation}\label{Lambda^d_n,m(W_lambda(V_n))}\Lambda^d_{n,m}(W_{\lambda}(V_n))\cong S_{\lambda'}(V_m).\end{equation}

\begin{defi}Denote by $E(m,n;d)\in \Rep(K,GL_n)^d_{[0,m]}$ the following representation:
$$E(m,n;d):=\bigoplus\limits_{\lambda\in\cP(m,n;d)}\Lambda^{\lambda'}(V_n).$$\end{defi}

\begin{prop}\label{prop:tilting_E(m,n;d)}1) The $GL_n$-module $E(m,n;d)$ is a tilting object in $D^b(\Rep(K,GL_n)^d_{[0,m]}).$

2) The algebra $\End(E(m,n;d))$ is split quasi-hereditary. The standard objects in $\Rep(K,\End(E(m,n;d)))$ correspond to $S_{\lambda}(V_n),$ $\lambda\in\cP(m,n;d),$ under the equivalence
$$D^b(\Rep(K,\End(E(m,n;d))))\simeq D^b(\Rep(K,GL_n)_{[0,m]}^d).$$
\end{prop}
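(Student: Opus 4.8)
The strategy is to deduce everything from the Koszul duality equivalence $\Lambda^d_{n,m}$ of Definition \ref{def:koszul_for_GL} together with the known highest weight structure on $\Rep(K,GL_m)^d_{[0,n]}$. The key observation is that $E(m,n;d) = \bigoplus_{\lambda\in\cP(m,n;d)}\Lambda^{\lambda'}(V_n)$ is, up to the identification $\cP(m,n;d)\leftrightarrow\cP(n,m;d)$ via $\lambda\mapsto\lambda'$, precisely the object whose summands are indexed by Young diagrams fitting in an $n\times m$ box. So first I would compute the image of each summand under $j^d_{[0,n]}$. By Proposition \ref{prop:properties_of_koszul} we have $\bR\cHom_{\Gamma^d_K}(\Lambda^d,\Lambda^{\mu})\cong\Gamma^{\mu}$, equivalently $\Lambda^d\stackrel{\bL}{\otimes}_{\Gamma^d_K}\Gamma^{\mu}\cong\Lambda^{\mu}$; combined with \eqref{Lambda^d_n,m(W_lambda(V_n))} and the compatibility $j^d_{[0,n]}\circ\Lambda^d_{n,m}\cong(\Lambda^d\stackrel{\bL}{\otimes}_{\Gamma^d_K}-)\circ i^d_{[0,m]}$, I expect $\Lambda^{\lambda'}(V_n)$ to correspond, under the equivalence $D^b(\Rep(K,GL_n)^d_{[0,m]})\simeq D^b(\Rep(K,GL_m)^d_{[0,n]})$, to a direct sum of the bundles $\Lambda^{\nu}(V_m)$ with $\nu$ running over a subset of $\cP(n,m;d)$ (coming from the plethysm/branching of $\Lambda^{\lambda'}$), so that the whole object $E(m,n;d)$ corresponds to a tilting-type object built from exterior powers on the $GL_m$ side.

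Next I would invoke the tilting criterion. By the Koszul duality equivalence it suffices to check that the image of $E(m,n;d)$ in $D^b(\Rep(K,GL_m)^d_{[0,n]})$ is a tilting object there, i.e. that it is a generator, has no higher self-$\Ext$, and has endomorphism module finitely generated projective over $K$. Generation follows because the summands $\Lambda^{\lambda'}(V_n)$, $\lambda\in\cP(m,n;d)$, classically generate $D^b(\Rep(K,GL_n)^d_{[0,m]})$: indeed via \eqref{Lambda^d_n,m(W_lambda(V_n))} (and its Schur-functor analogue) the images are exterior powers which, through the standard Koszul-type resolutions relating $\Lambda^{\bullet}$, $\Gamma^{\bullet}$, $S_{\bullet}$, $W_{\bullet}$, generate the same subcategory as the standard objects $W_{\lambda}(V_n)$; and these generate by Proposition \ref{prop:D^b_of_highest_weight_category}(3). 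For the Ext-vanishing I would use that, under Koszul duality, $\bR\Hom$ between the relevant exterior-power objects is concentrated in degree $0$ — this can be seen from the fact that $\bR\cHom_{\Gamma^d_K}(\Lambda^d,-)$ sends $\Lambda^{\mu}$ to $\Gamma^{\mu}$ and $S_{\lambda}$ to $W_{\lambda'}$, and $\bR\Hom(\Gamma^{\mu},S_{\nu})$, $\bR\Hom(\Gamma^{\mu},W_{\nu})$ are computed by the Littlewood–Richardson filtration (Theorem \ref{th:Littlewood-Richardson}) together with $\bR\Hom(W_\lambda,S_\mu)=K$ or $0$, giving a result in $\cP_K[0]$. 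Projectivity over $K$ of $\End(E(m,n;d))$ then follows since all these $\Hom$-modules are built from $\Gamma$, $\Lambda$, $\Sym$ of free modules.

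For part 2), once $E(m,n;d)$ is known to be tilting, $\End(E(m,n;d))$ is a finite projective $K$-algebra with $\Perf(\End(E(m,n;d)))\simeq D^b(\Rep(K,GL_n)^d_{[0,m]})$, and the latter is a highest weight category by the second displayed equivalence in Proposition 5.? (the identification $\Rep(K,GL_n)^d_{[0,m]}\simeq\Rep(K,GL_n)^d_{\geq0}[\{W_\lambda\}_{\lambda\in\cP(m,n;d)}]$), whose standard objects are the $W_\lambda(V_n)$ and costandard objects the $S_\lambda(V_n)$, $\lambda\in\cP(m,n;d)$. I claim $E(m,n;d)$ is, up to the right identification, the direct sum of the indecomposable tilting objects $T(\lambda)$ — equivalently, that it is simultaneously standardly and costandardly filtered with each $W_\lambda$ and each $S_\lambda$ appearing. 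Standard filtration: each $\Lambda^{\lambda'}(V_n)=\Lambda^{\lambda_1'}(V_n)\boxtimes\cdots$ evaluated appropriately has a Weyl filtration by the universal Littlewood–Richardson rule (Theorem \ref{th:Littlewood-Richardson}(2)), since a single $\Lambda^{a}$ is already a Weyl functor $W_{(1^a)}$; dually it has a Schur (good) filtration. So $E(m,n;d)$ is a tilting module in the highest weight sense, and by the standard theory its endomorphism algebra is split quasi-hereditary (this is exactly the Ringel-dual setup, which works over arbitrary $K$ by Proposition \ref{prop:C[Omega]} and the machinery of Section \ref{sec:highest_weight}). Finally I would pin down the ordering so that the standard $\End(E(m,n;d))$-modules are the $\Hom(E(m,n;d),S_\lambda(V_n))$, i.e. correspond to $S_\lambda(V_n)$ under the equivalence: this is the defining property of Ringel duality — tilting modules with a good filtration are sent to projectives, costandards to standards — so the costandard objects $S_\lambda(V_n)$ of the original category become the standard modules over the endomorphism algebra. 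The main obstacle I anticipate is the careful bookkeeping of which $\Lambda^\nu(V_m)$ (and with what multiplicities) appear in the Koszul-dual description of $E(m,n;d)$, and verifying that no higher $\Ext$'s sneak in — i.e. controlling the Littlewood–Richardson combinatorics uniformly over $\Z$ rather than over a field; everything else is a formal consequence of the highest-weight/Ringel-duality formalism already set up in Sections \ref{sec:highest_weight} and \ref{sec:pol_functors_GL_n_Koszul}.
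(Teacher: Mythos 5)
Your plan breaks down exactly where part 2) is supposed to be obtained: you appeal to Ringel duality (``tilting modules with a good filtration are sent to projectives, costandards to standards'') as something that ``works over arbitrary $K$ by Proposition \ref{prop:C[Omega]} and the machinery of Section \ref{sec:highest_weight}''. No such statement is available in the paper: Proposition \ref{prop:C[Omega]} only concerns truncation of a highest weight category to a coideal, and Section \ref{sec:highest_weight} never proves that the endomorphism algebra of a full tilting module over an arbitrary commutative ring is split quasi-hereditary with standard modules $\Hom(T,\nabla(\lambda))$ --- that is precisely the nontrivial content you would have to supply. Moreover your reduction to that statement is itself faulty: the claimed equivalence ``$E(m,n;d)$ is the direct sum of all indecomposable tiltings $T(\lambda)$ $\Leftrightarrow$ it is standardly and costandardly filtered with every $W_{\lambda}$ and $S_{\lambda}$ occurring'' is false. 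Already for $GL_n$ ($n\geq 2$) in characteristic $2$, the module $V_n^{\otimes 2}=\Lambda^{(1,1)}(V_n)$ is an indecomposable tilting module (its endomorphism algebra $K\mS_2$ is local) whose filtrations contain both $W_{(2)},W_{(1,1)}$ and both $S_{(2)},S_{(1,1)}$, yet $T(1,1)=\Lambda^2V_n$ is not a direct summand of it; and over $\Z$ the objects $T(\lambda)$ are not even defined in the paper's framework. By contrast, the ingredients you list for part 1) are sound in substance: each $\Lambda^{\lambda'}(V_n)$ has both a Weyl and a Schur filtration by Theorem \ref{th:Littlewood-Richardson}, which gives $\Ext^{>0}(E(m,n;d),E(m,n;d))=0$ and $K$-projectivity of the endomorphism ring via Lemma \ref{lem:criterion_for_cC^Delta}, and generation follows by an induction on the dominance order using that $W_{\lambda}$ occurs once in the Weyl filtration of $\Lambda^{\lambda'}$ with the remaining factors strictly lower (your actual generation sketch, via ``Koszul-type resolutions'', is too vague to check, but this repair is routine and parallels Lemma \ref{lem:incl_im_Phi_geq0}).

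The second problem is that your Koszul-duality bookkeeping, which is where the paper's whole proof lives, is wrong in the decisive place: $\Lambda^{\lambda'}(V_n)$ does \emph{not} correspond to a direct sum of exterior powers $\Lambda^{\nu}(V_m)$ on the $GL_m$ side. By Proposition \ref{prop:properties_of_koszul}, inverse Koszul duality takes divided powers to exterior powers, so the summands of $E(m,n;d)$ are the images under $\Lambda^d_{m,n}$ of the objects $\pi(\Gamma^{\mu}(V_m))$, $\mu\in\cP(n,m;d)$, where $\pi$ is the projection onto $D^b(\Rep(K,GL_m)^d_{[0,n]})$. Since $\bigoplus_{|\mu|=d,\,l(\mu)\leq m}\Gamma^{\mu}(V_m)$ is a projective generator of $\Rep(K,GL_m)^d_{\geq 0}$, $\pi$ preserves projectives and kills the $\Gamma^{\mu}(V_m)$ with $\mu_1>n$, the Koszul dual of $E(m,n;d)$ is a \emph{projective generator} of the highest weight category $\Rep(K,GL_m)^d_{[0,n]}$. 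This single observation is the paper's proof and yields both parts at once, with no Ringel duality: an equivalence carrying a projective generator to $E(m,n;d)$ shows $E(m,n;d)$ is tilting, identifies $\End(E(m,n;d))$ up to Morita equivalence with the split quasi-hereditary algebra underlying $\Rep(K,GL_m)^d_{[0,n]}$, and identifies its standard modules with the images of the $W_{\mu}(V_m)$, which are $S_{\mu'}(V_n)$ by the identity $\Lambda^d_{m,n}(W_{\mu}(V_m))\cong S_{\mu'}(V_n)$ recorded after Definition \ref{def:koszul_for_GL}. I would redirect your argument accordingly.
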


\begin{proof} To prove both 1) and 2), it suffices to check that $(\Lambda^d_{m,n})^{-1}(E(m,n;d))\in D^b(\Rep(K,GL_m)_{[0,n]}^d)$ is actually a projective generator in $\Rep(K,GL_m)_{[0,n]}^d.$

Denote by $$\pi:D^b(\Rep(K,GL_m)^d_{\geq 0})\to D^b(\Rep(K,GL_m)_{[0,n]}^d)$$ the left adjoint to the inclusion. Then $\pi$ takes projective objects of $\Rep(K,GL_m)^d_{\geq 0})$ to projective objects of $\Rep(K,GL_m)_{[0,n]}^d,$ and any projective generator is mapped to a projective generator. It remains to note that $$\Lambda^d_{m,n}(\pi(\Gamma^{\mu}(V_m)))=\Lambda^{\mu^{\prime}}(V_n),\quad \mu\in\cP(n,m;d),$$
$$\pi(\Gamma^{\mu}(V_m))=0\text{ if }\mu_1>n,$$
and $\bigoplus\limits_{|\mu|=d,l(\mu)\leq m}\Gamma^{\mu}(V_m)$ is a projective generator of $\Rep(K,GL_m)_{\geq 0}^d.$
\end{proof}

\section{Base change}
\label{sec:base_change}

Let $X$ be a quasi-compact quasi-separated scheme, flat over a commutative ring $K.$ For any homomorphism $K\to K',$
we put
$$X\times_K K'=X\times_{\Spec(K)}\Spec(K').$$
This is again a quasi-compact quasi-separated scheme, flat over $K'.$

\begin{prop}\label{prop:base_change}1) We have a natural $K'$-Morita equivalence $$\Perf(X)\stackrel{\bL}{\otimes}_K K'\simeq \Perf(X\times_K K').$$

2) Suppose that we have a semi-orthogonal decomposition $\Perf(X)=\la\cT_1,\dots,\cT_n\ra.$ If we denote by $\cT_i'\subset\Perf(X\times_K K')$ the full thick triangulated subcategory generated by $\cT_i\stackrel{\bL}{\otimes}_K K',$ then we have a semi-orthogonal decomposition $\Perf(X\times_K K')=\la\cT_1',\dots,\cT_n'\ra.$

Suppose that $X$ is smooth and proper over $\Spec K.$

3) If we have a full exceptional collection $\Perf(X)=\la E_1,\dots,E_n\ra,$ then  we also have a full exceptional collection $\Perf(X\times_K K')=\la E_1',\dots,E_n'\ra,$  where $E_i':=E_i\stackrel{\bL}{\otimes}_K K'.$ Moreover,
$$\bR\Hom(E_i',E_j')\cong\bR\Hom(E_i,E_j)\stackrel{\bL}{\otimes}_K K'.$$

4) Suppose that $T\in \Perf(X)$ is a tilting object. Then $T':=T\stackrel{\bL}{\otimes}_K K'\in\Perf(X')$ is also a tilting object.
\end{prop}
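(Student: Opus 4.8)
The plan is to prove the four parts in order, with each part building on the previous one; all of them ultimately reduce to the single fundamental fact of part 1), which is standard Morita theory / base change for perfect complexes (see \cite{BVdB}, \cite{TV}). First I would establish 1). The category $\Perf(X)$ is classically generated by a single compact object $G$ (e.g. a direct sum of twists of $\O_X$ relative to an ample family), so $\Perf(X)\simeq\Perf(A)$ for the DG algebra $A:=\bR\Hom_X(G,G)$. Flatness of $X$ over $K$ guarantees that $A\stackrel{\bL}{\otimes}_K K'$ computes $\bR\Hom_{X'}(G\boxtimes_K K', G\boxtimes_K K')$, and that the pullback of $G$ is again a classical generator of $\Perf(X')$; hence $\Perf(X')\simeq\Perf(A\stackrel{\bL}{\otimes}_K K')$, which is exactly the assertion that $\Perf(X)\stackrel{\bL}{\otimes}_K K'\simeq\Perf(X')$ as $K'$-linear enhanced triangulated categories. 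The key point here is that pullback $\bL f^*$ for $f\colon X'\to X$ is monoidal, sends perfect complexes to perfect complexes, and is compatible with $\bR\Hom$ of perfect complexes — this is where flatness is used.

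For 2), given the SOD $\Perf(X)=\la\cT_1,\dots,\cT_n\ra$, semiorthogonality $\bR\Hom(\cT_i,\cT_j)=0$ for $i>j$ is preserved after applying $-\stackrel{\bL}{\otimes}_K K'$ because, by 1) together with the projection-formula compatibility of pullback with $\bR\Hom$, one has $\bR\Hom_{X'}(F',E')\cong\bR\Hom_X(F,E)\stackrel{\bL}{\otimes}_K K'$ for $F\in\cT_i$, $E\in\cT_j$; this vanishes, and the vanishing passes to the thick subcategories $\cT_i'$ they generate by a standard dévissage. That $\cT_1',\dots,\cT_n'$ generate $\Perf(X')$ follows since their union contains the pullback of a generating set of $\Perf(X)$, whose pullback generates $\Perf(X')$ by 1). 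For 3), under the extra hypothesis that $X$ is smooth and proper we have $\Perf(X)=D^b(\Coh X)$ is proper over $K$, so each $\bR\Hom(E_i,E_j)\in\Perf(K)$; then the displayed isomorphism $\bR\Hom(E_i',E_j')\cong\bR\Hom(E_i,E_j)\stackrel{\bL}{\otimes}_K K'$ (again from 1)) shows directly that each $E_i'$ is exceptional — $\bR\Hom(E_i',E_i')\cong K\stackrel{\bL}{\otimes}_K K'=K'$ — and that $\bR\Hom(E_i',E_j')=0$ for $i>j$; fullness is the $n=1$-component case of 2). Finally 4): tilting means $T$ is a generator with $\Hom^{\ne 0}(T,T)=0$ and $\Hom(T,T)\in\cP_K$. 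Generation of $T'$ is the $n=1$ case of 2). The vanishing $\Hom^i_{X'}(T',T')=H^i(\bR\Hom_X(T,T)\stackrel{\bL}{\otimes}_K K')=0$ for $i\ne 0$ is not automatic from $\Hom^i_X(T,T)=0$ alone, but it is automatic from condition (ii): $\Hom_X(T,T)$ is finitely generated projective over $K$, so $\bR\Hom_X(T,T)\simeq\Hom_X(T,T)[0]$ is already a flat $K$-module concentrated in degree $0$, hence $-\stackrel{\bL}{\otimes}_K K'$ leaves it concentrated in degree $0$ and yields $\Hom_X(T,T)\otimes_K K'$, which is finitely generated projective over $K'$. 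This verifies (i) and (ii) for $T'$.

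I expect the only genuinely delicate point to be the precise formulation and proof of part 1) at the level of DG enhancements — namely that $\Perf(X)\stackrel{\bL}{\otimes}_K K'\simeq\Perf(X\times_K K')$ as enhanced $K'$-linear categories and not merely as triangulated categories — together with the compatibility $\bL f^*\bR\Hom_X(-,-)\simeq\bR\Hom_{X'}(\bL f^*-,\bL f^*-)$ on perfect complexes over the flat base change. Once that compatibility isomorphism is in hand, parts 2), 3), 4) are formal consequences obtained by chasing $\bR\Hom$'s and invoking, in 4), that a finitely generated projective $K$-module stays so (and stays in a single homological degree) under $-\stackrel{\bL}{\otimes}_K K'$ — which is exactly the reason condition (ii) was built into Definition \ref{def:tilting_object_intro}.
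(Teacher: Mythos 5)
Your proposal is correct and follows essentially the same route as the paper, which simply declares part 1) standard and derives 2)--4) formally from it; you supply the standard details (compact generator, DG endomorphism algebra, tor-independent base change giving $\bR\Hom_{X'}(E',F')\cong\bR\Hom_X(E,F)\stackrel{\bL}{\otimes}_K K'$) that the paper omits. In particular your observation that condition (ii) in the definition of a tilting object is exactly what makes part 4) work matches the paper's stated motivation for that condition.
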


\begin{proof}1) is standard, and 2),3),4) follows immediately from 1).\end{proof}

\section{Derived categories of Grassmannians}
\label{sec:D^b(Grassmannians)}

In this section the basic ring will always be $\Z,$ the ring of integers. All the results generalize immediately to an arbitrary commutative ring (in particular to an arbitrary field) by Proposition \ref{prop:base_change}.

\subsection{Semi-orthogonal decomposition of $D^b(\Gr(k,n))$}
\label{ssec:SOD_of_Gr}

Take some positive integers $0<k< n.$ Let $V\cong\Z^n$ be a free finitely generated $\Z$-module of rank $n,$ and $X=\Gr(k,V)$ be the "Grassmannian of $k$-dimensional vector subspaces in $V$". To be more precise, for any commutative ring $R,$  $X(R)=\Hom(\Spec R,X)$ is the set of $R$-submodules $P\subset R\otimes V,$ such that the $R$-module $(R\otimes V)/P$ is projective of constant rank $n-k$ (hence $P$ is projective of constant rank $k$).

As above, we denote by $V_k$ the tautological representation of $G=GL_k.$ We denote by $W$ the affine space associated to the $\Z$-module $\Hom(V_k,V)=V_k^{\vee}\otimes V.$ That is, $W=\Spec(\Sym^*(V_k\otimes V^{\vee})).$ Then $W$ is acted on by $G.$

We denote by $W^{ss}\subset W$ the open subscheme of the rank $k$ homomorphisms. That is, for any commutative ring $R,$ the set $W^{ss}$ is the set of split injections of $R$-modules $R\otimes V_k\to R\otimes V.$ We have an obvious identification
$$X\cong W^{ss}//G.$$ The complement $W\setminus W^{ss}$ has a natural stratification by the rank function:
 $$W\setminus W^{ss}=Y_0\sqcup Y_1\sqcup\dots\sqcup Y_{k-1},$$ where for each $0\leq r\leq k,$ $Y_r\subset W$ is a locally closed subscheme of homomorphisms of rank $r.$ In particular, $Y_k=W^{ss}.$ We put
 $$W_{\geq r}:=W\setminus \overline{Y_{r-1}}\subseteq W,\quad 0\leq r\leq k,$$
 where $Y_{-1}=\emptyset.$
 We denote by $\iota_r:Y_r\hookrightarrow W_{\geq r}$ the tautological closed embedding. For a closed embedding $Z\hookrightarrow Y$ of smooth schemes, we denote by $\cN_{Z\mid Y}$ the normal bundle.

We have a natural functor $$\Phi:\Rep(\Z,G)\to \Coh(X),$$ which is induced by our quotient presentation of $X.$ More precisely, $\Phi(N)$ corresponds to $N\otimes\cO_{W^{ss}}$ under the equivalence
$$\Coh(X)\simeq \Coh_G(W^{ss}).$$

If $Y$ is a variety with a trivial $G$-action, then we define the subcategories
$$\Coh_{G}(Y)^d,\,\Coh_{G}(Y)^d_{\leq l},\dots\subset\Coh_G(Y).$$
in the same way as for representations of $G,$ see Subsection \ref{ssec:reps_of_GL_n}.

We denote by $\cF$ the tautological subbundle of rank $k$ on $X,$ and by $\cQ$ the tautological quotient bundle of rank $n-k$ on $X.$ We have a short exact sequence
$$0\to \cF\to V\otimes\cO_X\to \cQ\to 0.$$

\begin{lemma}\label{lem:vanishing_of_local_H^*}For any $M\in \Rep(\Z,G)_{\leq (n-k)},$ we have that $H^{\bullet}_{G,Y_r}(M\otimes \cO_{W_{\geq r}})=0$ for $r<k.$\end{lemma}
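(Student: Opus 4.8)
\smallskip
\noindent\textbf{Proof plan.}
\emph{Step 1 (local cohomology).} Inside the smooth scheme $W_{\geq r}$ the stratum $Y_r$ is smooth over $\Z$: near a rank-$r$ homomorphism written in block form $\bigl(\begin{smallmatrix}A&B\\ C&D\end{smallmatrix}\bigr)$ with $A$ invertible, the rank-$r$ locus is cut out by the graph equation $D=CA^{-1}B$. Hence $\iota_r$ is a regular closed embedding of codimension $c_r=(k-r)(n-r)$, with normal bundle
$$\cN_r:=\cN_{Y_r\mid W_{\geq r}}\;\cong\;\cK_r^{\vee}\otimes\bigl((V\otimes\cO_{Y_r})/\cI_r\bigr),$$
where $\cK_r\subset V_k\otimes\cO_{Y_r}$ is the tautological kernel subbundle (rank $k-r$) and $\cI_r\subset V\otimes\cO_{Y_r}$ the tautological image subbundle (rank $r$). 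For a regular embedding of smooth schemes, the local cohomology of a vector bundle $\mathcal E$ on the ambient scheme is $\bR\Gamma_{Y_r}(\mathcal E)\cong\iota_{r*}\bigl(\mathcal E|_{Y_r}\otimes\Sym^{\bullet}\cN_r\otimes\det\cN_r\bigr)[-c_r]$, the symmetric algebra being read off one graded piece at a time via the central $G_m$-grading. Therefore
$$H^{\bullet}_{G,Y_r}(M\otimes\cO_{W_{\geq r}})\;\cong\;\bR\Gamma_G\bigl(Y_r,\;M\otimes\Sym^{\bullet}\cN_r\otimes\det\cN_r\bigr)[-c_r].$$

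\emph{Step 2 (reduction to a parabolic).} The morphism $\pi_r\colon Y_r\to\Gr(k-r,V_k)=G/P_r$, $\phi\mapsto\ker\phi$, is $G$-equivariant, where $P_r=\Stab_G(K_0)$ is the parabolic fixing a fixed $(k-r)$-plane $K_0\subset V_k$; write its Levi decomposition $P_r=U_r\rtimes(GL_{k-r}\times GL_r)$ and set $N_r:=U_r\rtimes GL_{k-r}$, a normal subgroup with $P_r/N_r\cong GL_r$. The fibre $Z_r$ of $\pi_r$ over the base point is acted on by $P_r$ through the projection $P_r\to GL_r$, so $N_r$ acts trivially on $Z_r$; and on $Z_r$ the bundle $\cK_r$ becomes constant with fibre $K_0$ — the standard $GL_{k-r}$-representation, inflated along $P_r\to GL_{k-r}$ — while $(V\otimes\cO)/\cI_r$ restricts to a bundle trivial under $N_r$. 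Hence
$$H^{\bullet}_{G,Y_r}(M\otimes\cO_{W_{\geq r}})[c_r]\;\cong\;\bR\Gamma_{P_r}(Z_r,\mathcal B)\;=\;\bR\Gamma_{GL_r}\bigl(Z_r,\bR\Gamma_{N_r}(\mathcal B)\bigr),$$
where $\mathcal B$ is the restriction of $M\otimes\Sym^{\bullet}\cN_r\otimes\det\cN_r$ to $Z_r$; so it is enough to prove $\bR\Gamma_{N_r}(\mathcal B)=0$.

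\emph{Step 3 (the weight estimate, the heart of the matter).} For a weight of $\mathcal B$ with respect to the maximal torus, consider its $GL_{k-r}$-$\det$-weight, i.e. the sum of the first $k-r$ torus coordinates. From $M$ this is $\leq(k-r)(n-k)$ by the hypothesis $M\in\Rep(\Z,G)_{\leq(n-k)}$; from $\Sym^{\bullet}\cN_r$ it is $\leq0$, since the $GL_{k-r}$-weights of $\cK_r^{\vee}$ are $-\varepsilon_1,\dots,-\varepsilon_{k-r}$ and $(V\otimes\cO)/\cI_r$ is $GL_{k-r}$-trivial; and from $\det\cN_r=(\det\cK_r^{\vee})^{\otimes(n-r)}\otimes\bigl(\det((V\otimes\cO)/\cI_r)\bigr)^{\otimes(k-r)}$ it equals $-(k-r)(n-r)$. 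Adding these, every weight of $\mathcal B$ has $GL_{k-r}$-$\det$-weight $\leq(k-r)\bigl((n-k)-(n-r)\bigr)=-(k-r)^{2}<0$, because $r<k$. Now $\bR\Gamma_{N_r}=\bR\Gamma_{GL_{k-r}}\circ\bR\Gamma_{U_r}$, and every weight of $\bR\Gamma_{U_r}(\mathcal B)$ is a weight of $\mathcal B$ plus a non-negative integer combination of weights of $\mathfrak u_r^{\vee}$, each of which has $GL_{k-r}$-$\det$-weight $-1$; so $\bR\Gamma_{U_r}(\mathcal B)$ still has all $GL_{k-r}$-$\det$-weights negative. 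Finally, for any complex of $GL_{k-r}$-representations with all $GL_{k-r}$-$\det$-weights negative, $\bR\Gamma_{GL_{k-r}}$ vanishes: factoring $\bR\Gamma_{GL_{k-r}}=\bR\Gamma_{G_m}\circ\bR\Gamma_{SL_{k-r}}$ along the determinant $1\to SL_{k-r}\to GL_{k-r}\to G_m\to1$, the inner functor respects the grading by the determinant and the outer one is exact (as $G_m$ is linearly reductive over $\Z$) and annihilates everything of nonzero weight. Hence $\bR\Gamma_{N_r}(\mathcal B)=0$, proving the lemma.

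\emph{Main obstacle.} The delicate part is making Steps 1--2 correct over $\Z$: the integral smoothness and Cohen--Macaulayness of the determinantal strata, the precise form of $\bR\Gamma_{Y_r}(\cO)$ (symmetric versus divided powers of the normal bundle — immaterial for the weight bound but requiring care), and the clean non-reductive manipulation of the group cohomologies $\bR\Gamma_{U_r}$ and $\bR\Gamma_{GL_{k-r}}$ that turns ``strictly negative $\det$-weight'' into vanishing.
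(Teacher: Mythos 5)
Your proposal is correct and follows essentially the same route as the paper: local cohomology along $Y_r$ expressed through the normal bundle twisted by $\det\cN_{Y_r\mid W_{\geq r}}$, the equivariant reduction $Y_r\cong G\stackrel{P_r}{\times}Z_r$ to the parabolic (your $N_r$ is the paper's $Q_r$), and the identical central-weight estimate $\leq -(k-r)^2<0$, which kills first the $U_r$- and then the $GL_{k-r}$-cohomology. The only cosmetic deviation is that over $\Z$ the graded pieces are divided powers $\Gamma^i(\cN_{Y_r\mid W_{\geq r}})$ (duals of $\Sym^i$ of the conormal bundle) rather than symmetric powers, which, as you yourself note, has the same weights and so does not affect the argument.
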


\begin{proof}The argument is similar to the techniques in \cite{BFK}, although a bit more complicated since we are working over integers instead of a field of characteristic zero.

Since $$\bR\Gamma_{Y_r}(-)\cong\hocolim_{\substack{n\geq 1}}\bR\cHom_{\cO_{W_{\geq r}}}(\cO_{W_{\geq r}}/I_{Y_r}^n,-),$$
it suffices to check that
\begin{equation}\label{O/I^n}H^{\bullet}_G(\bR\cHom_{\cO_{W_{\geq r}}}(\cO_{W_{\geq r}}/I_{Y_r}^n,M\otimes\cO_{W_{\geq r}}))=0,\quad n\geq 1.\end{equation}
Now, the sheaf $\cO_{W_{\geq r}}/I_{Y_r}^n$ has a filtration
$$\cO_{W_{\geq r}}/I_{Y_r}^n\supset I_{Y_r}/I_{Y_r}^n\supset\dots I_{Y_r}^{n-1}/I_{Y_r}^n,$$
with subquotients $\iota_{r*}(\Sym^i(\cN_{Y_r\mid W_{\geq r}}^{\vee})),$ $0\leq i\leq n-1.$ We have
\begin{multline*}\bR\cHom_{\cO_{W_{\geq r}}}(\iota_{r*}(\Sym^i(\cN_{Y_r\mid W_{\geq r}}^{\vee})),\cO_{W_{\geq r}})\cong\\ \iota_{r*}(\Gamma^i(\cN_{Y_r\mid W_{\geq r}})\otimes \det(\cN_{Y_r\mid W_{\geq r}}))[-(n-r)(k-r)].\end{multline*}
Hence, the vanishing \eqref{O/I^n} would follow from
\begin{equation}\label{Gamma^i(N)}H^{\bullet}_G(Y_r,M\otimes\Gamma^i(\cN_{Y_r\mid W_{\geq r}})\otimes \det(\cN_{Y_r\mid W_{\geq r}}))=0,\quad i\geq 0.\end{equation}

Let us now fix some bases for $V$ and $V_k,$ i.e. isomorphisms $V\cong \Z^n,$ $V_k\cong\Z^k.$ Then $W$ is identified with affine space of matrices $\Mat_{n\times k}.$ Denote by $Z_r\subset Y_r$ the subscheme of matrices $B\in\Mat_{n\times k}$ such that $B_{ij}=0$ for $r+1\leq j\leq k.$ Further, take the parabolic subgroup
$$P_r=\{A\in G\mid A_{ij}=0\text{ for }r+1\leq i\leq k,\,1\leq j\leq r\}\subset G.$$ Clearly, $Z_r$ is stable under $P_r.$ We have an obvious $G$-equivariant identification $$G\stackrel{P_r}{\times}Z_r\cong Y_r.$$ Here $G\stackrel{P_r}{\times}Z_r=(G\times Z_r)/P_r,$ where $P_r$ acts on $G$ by right translation. It follows that
\begin{multline*}H^{\bullet}_G(Y_r,M\otimes\Gamma^i(\cN_{Y_r\mid W_{\geq r}})\otimes \det(\cN_{Y_r\mid W_{\geq r}}))\cong\\ H^{\bullet}_{P_r}(Z_r,M\otimes(\Gamma^i(\cN_{Y_r\mid W_{\geq r}})\otimes \det(\cN_{Y_r\mid W_{\geq r}}))_{\mid Z_r})\end{multline*}
for $i\geq 0.$
Hence, the vanishing \eqref{Gamma^i(N)} would follow from
\begin{equation}\label{H^*_P_r}H^{\bullet}_{P_r}(Z_r,M\otimes (\Gamma^i(\cN_{Y_r\mid W_{\geq r}})\otimes \det(\cN_{Y_r\mid W_{\geq r}}))_{\mid Z_r})=0,\quad i\geq 0.\end{equation}
Take the normal subgroup
$$Q_r=\{A\in P_r\mid A_{ij}=\delta_{ij}\text{ for }1\leq i,j\leq r\}\lhd P_r.$$
Clearly, $P_r/Q_r\cong GL_r.$
Note that the action of $Q_r$ on $Z_r$ is trivial. Hence, for any $P_r$-equivariant coherent sheaf $\cE$ on $Z_r$ we have a Hochschild-Serre spectral sequence
$$E_2^{p,q}=H^p_{GL_r}(Z_r,H^q(Q_r,\cE))\Rightarrow H^{p+q}_{P_r}(Z_r,\cE).$$
Hence, the vanishing \eqref{H^*_P_r} would follow from
\begin{equation}\label{H^*(Q_r,-)}H^{\bullet}(Q_r,M\otimes (\Gamma^i(\cN_{Y_r\mid W_{\geq r}})\otimes \det(\cN_{Y_r\mid W_{\geq r}}))_{\mid Z_r})=0,\quad i\geq 0.\end{equation}
Take the one-parameter subgroup $$\lambda_r:G_m\to Q_r,\quad \lambda_r(t)_{ij}=\begin{cases}\delta_{ij} & \text{for }1\leq i\leq r;\\
t\delta_{ij} & \text{for }r+1\leq i\leq k.\end{cases}$$ Also, denote by $U_r$ the unipotent radical of $Q_r.$ Clearly,
$$U_r=\{A\in Q_r\mid A_{ij}=\delta_{ij}\text{ for }r+1\leq i,j\leq k\},\quad Q_r/U_r\cong GL_{k-r}.$$
For any $Q_r$-equivariant coherent sheaf $\cE$ on $Z_r,$ we have the Hochschild-Serre spectral sequence
\begin{equation*}\label{E_2_for_U_r}E_2^{p,q}=H^p(GL_{k-r},H^q(U_r,\cE))\Rightarrow H^{p+q}(Q_r,\cE).\end{equation*}
Hence, to prove the vanishing \eqref{H^*(Q_r,-)}, it suffices to show that
\begin{equation}\label{weights_for_H^*(U_r,-)}H^{\bullet}(U_r,M\otimes (\Gamma^i(\cN_{Y_r\mid W_{\geq r}})\otimes \det(\cN_{Y_r\mid W_{\geq r}}))_{\mid Z_r})\in\Coh_{GL_{k-r}}(Z_r)^{<0},\quad i\geq 0.\end{equation}
To prove \eqref{weights_for_H^*(U_r,-)}, let us first note that $$(\cN_{Y_r\mid W_{\geq r}})_{\mid Z_r}\in\Coh_{GL_{k-r}}(Z_r)^{-1},\quad \rk((\cN_{Y_r\mid W_{\geq r}})_{\mid Z_r})=(n-r)(k-r).$$
It follows that
\begin{equation}\label{weights_for_Gamma^i(N)otimes_det(N)}(\Gamma^i(\cN_{Y_r\mid W_{\geq r}})\otimes \det(\cN_{Y_r\mid W_{\geq r}}))_{\mid Z_r}\in \Coh_{GL_{k-r}}(Z_r)^{-i-(n-r)(k-r)}.\end{equation}
Since $M\in \Rep(\Z,G)_{\leq (n-k)},$ it follows that
\begin{equation}\label{weights_for_F_otimes_O_Z_r}M\otimes\cO_{Z_r}\in \Coh_{GL_{k-r}}(Z_r)^{\leq(n-k)(k-r)}.\end{equation}
Combining \eqref{weights_for_Gamma^i(N)otimes_det(N)} and \eqref{weights_for_F_otimes_O_Z_r}, we get
\begin{multline}\label{weights_for_F_otimes_Gamma_etc}M\otimes (\Gamma^i(\cN_{Y_r\mid W_{\geq r}})\otimes \det(\cN_{Y_r\mid W_{\geq r}}))_{\mid Z_r}\in \Coh_{GL_{k-r}}(Z_r)^{\leq(-i-(k-r)^2)}\\\subset \Coh_{GL_{k-r}}(Z_r)^{<0}.\end{multline}
Now, let us note that $$\cO(U_r)\in GL_{k-r}\text{-Mod}^{\leq 0}.$$ Computing $H^{\bullet}(U_r,-)$ via the cobar resolution, we get
\begin{multline*}H^{\bullet}(U_r,M\otimes (\Gamma^i(\cN_{Y_r\mid W_{\geq r}})\otimes \det(\cN_{Y_r\mid W_{\geq r}}))_{\mid Z_r})\in \Coh_{GL_{k-r}}(Z_r)^{\leq(-i-(k-r)^2)}\\\subset \Coh_{GL_{k-r}}(Z_r)^{<0}.\end{multline*}
This proves \eqref{weights_for_H^*(U_r,-)}. Lemma is proved.
\end{proof}

\begin{lemma}\label{lem:vanishing}For any representation $M\in \Rep(\Z,G)_{\leq (n-k)}$ we have $$H^{\bullet}(X,\Phi(M))=\bigoplus\limits_{d\geq 0}H^{\bullet}(G,M\otimes \Sym^d(V_k\otimes V^{\vee})).$$
\end{lemma}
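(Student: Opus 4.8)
The plan is to rewrite $\bR\Gamma(X,\Phi(M))$ as $G$-equivariant sheaf cohomology on $W^{ss}$, to compare it with the $G$-equivariant cohomology of the affine space $W$ by successively deleting the strata $Y_r$, and finally to identify the latter with the right-hand side using that $W$ is affine.

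First I would record two identifications. The morphism $\pi\colon W^{ss}\to X$ is a $G$-torsor, so $[W^{ss}/G]\simeq X$ and the equivalence $\Coh(X)\simeq\Coh_G(W^{ss})$ is pullback along $\pi$; hence, by the definition of $\Phi$,
\[
\bR\Gamma(X,\Phi(M))\cong\bR\Gamma_G(W^{ss},M\otimes\cO_{W^{ss}}),
\]
where $\bR\Gamma_G$ denotes cohomology of the quotient stack, equivalently derived $G$-invariants of derived global sections. On the other hand $W=\Spec(\Sym^*(V_k\otimes V^{\vee}))$ is affine, so $M\otimes\cO_W$ has no higher sheaf cohomology on $W$ and
\begin{multline*}
\bR\Gamma_G(W,M\otimes\cO_W)\cong\bR\Gamma\bigl(G,M\otimes\Sym^*(V_k\otimes V^{\vee})\bigr)\\
=\bigoplus_{d\geq 0}\bR\Gamma\bigl(G,M\otimes\Sym^d(V_k\otimes V^{\vee})\bigr),
\end{multline*}
where the last equality uses that rational cohomology of $G$ commutes with arbitrary direct sums. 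Thus it suffices to prove that the restriction map $\bR\Gamma_G(W,M\otimes\cO_W)\to\bR\Gamma_G(W^{ss},M\otimes\cO_{W^{ss}})$ along the open immersion $W^{ss}\hookrightarrow W$ is an isomorphism.

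For this I would use the chain of $G$-stable opens $W=W_{\geq 0}\supseteq W_{\geq 1}\supseteq\cdots\supseteq W_{\geq k}=W^{ss}$, in which $Y_r\subset W_{\geq r}$ is closed with complement $W_{\geq r+1}$. Applying the exact functor ``derived $G$-invariants'' to the local cohomology triangle of the pair $(W_{\geq r},Y_r)$ gives an exact triangle
\begin{multline*}
\bR\Gamma_{G,Y_r}(W_{\geq r},M\otimes\cO_{W_{\geq r}})\to\bR\Gamma_G(W_{\geq r},M\otimes\cO_{W_{\geq r}})\\
\to\bR\Gamma_G(W_{\geq r+1},M\otimes\cO_{W_{\geq r+1}})\to\bR\Gamma_{G,Y_r}(W_{\geq r},M\otimes\cO_{W_{\geq r}})[1].
\end{multline*}
By Lemma \ref{lem:vanishing_of_local_H^*}, applicable because $r<k$ and $M\in\Rep(\Z,G)_{\leq (n-k)}$, the outer term vanishes, so each restriction $\bR\Gamma_G(W_{\geq r},-)\to\bR\Gamma_G(W_{\geq r+1},-)$ is an isomorphism; composing for $r=0,1,\dots,k-1$ yields the isomorphism $\bR\Gamma_G(W,M\otimes\cO_W)\stackrel{\sim}{\to}\bR\Gamma_G(W^{ss},M\otimes\cO_{W^{ss}})$. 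Passing to cohomology finishes the proof.

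The whole argument is formal once Lemma \ref{lem:vanishing_of_local_H^*} is available, which is where all the real work sits; the only points needing care are that the local cohomology triangle is to be read in the $G$-equivariant derived category (harmless, since one simply applies the exact functor of derived $G$-invariants to the usual triangle on $W_{\geq r}$), that $\bR\Gamma(G,-)$ commutes with the infinite direct sum $\bigoplus_d\Sym^d(V_k\otimes V^{\vee})$ (true because rational cohomology commutes with filtered colimits), and the torsor-descent identification of $\bR\Gamma(X,\Phi(M))$ with $\bR\Gamma_G(W^{ss},M\otimes\cO_{W^{ss}})$. I do not expect any genuine obstacle beyond invoking Lemma \ref{lem:vanishing_of_local_H^*}. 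Note finally that, $X$ being proper over $\Z$, the left-hand side is a bounded complex of finitely generated $\Z$-modules, so a posteriori only finitely many of the summands on the right are nonzero.
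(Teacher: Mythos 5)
Your proof is correct and follows essentially the same route as the paper: the paper packages your successive local-cohomology (excision) triangles for the strata $Y_0,\dots,Y_{k-1}$ into the Cousin--Grothendieck spectral sequence, whose $E_1$-terms are exactly the groups you annihilate via Lemma \ref{lem:vanishing_of_local_H^*}, and it uses the same endpoint identifications (affineness of $W$ and $H^{\bullet}_G(W^{ss},-)=H^{\bullet}(X,\Phi(-))$). The only difference is the bookkeeping (unrolled triangles versus the spectral sequence), which is immaterial.
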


\begin{proof}We have a Cousin-Grothendieck spectral sequence
$$E_1^{p,q}=H^{p+q}_{G,Y_{-p}}(W_{\geq (-p)},M\otimes \cO_{W_{\geq (-p)}})\Rightarrow H^{p+q}_G(W,M\otimes\cO_W).$$
Lemma \ref{lem:vanishing_of_local_H^*} implies that
$$E_1^{p,q}=0\text{ for }p\ne -k.$$
Hence, we have
$$H^{\bullet}_G(W_{\geq k},M\otimes\cO_{W_{\geq k}})\cong H^{\bullet}_G(W,M\otimes\cO_W).$$
It remains to note that
$$H^{\bullet}_G(W_{\geq k},M\otimes\cO_{W_{\geq k}})=H^{\bullet}(X,\Phi(M)),$$ $$H^{\bullet}_G(W,M\otimes\cO_W)=\bigoplus\limits_{d\geq 0}H^{\bullet}(G,M\otimes \Sym^d(V_k\otimes V^{\vee})).$$
Lemma is proved.\end{proof}

We denote by $$\Phi^d_{[a,b]}:\Rep(\Z,G)_{[a,b]}^d\to \Coh(X)$$ the restriction of $\Phi.$

\begin{lemma}\label{lem:semi-orthogonality}Let $E_1\in \Rep(\Z,G)_{[0,n-k]}^{d_1},$ $E_2\in \Rep(\Z,G)_{[0,n-k]}^{d_2}.$

(i) If $d_1<d_2,$ then $\Ext^{\bullet}(\Phi_{[0,n-k]}^{d_1}(E_1),\Phi_{[0,n-k]}^{d_2}(E_2))=0;$

(ii) If $d_1\geq d_2,$ then $$\Ext^{\bullet}(\Phi_{[0,n-k]}^{d_1}(E_1),\Phi_{[0,n-k]}^{d_2}(E_2))\cong \Ext^{\bullet}_{G}(E_1,\Sym^{d_1-d_2}(V_k\otimes V^{\vee})\otimes E_2).$$
\end{lemma}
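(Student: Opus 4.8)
The plan is to reduce $\Ext^{\bullet}_X(\Phi(E_1),\Phi(E_2))$ to the cohomology of a single sheaf on $X$ and then to invoke Lemma \ref{lem:vanishing}. First I would use that $\Phi$ is an exact tensor functor: then $\Phi(E_1)$ and $\Phi(E_2)$ are vector bundles on $X,$ so $\mathcal{E}xt^i_X(\Phi(E_1),\Phi(E_2))=0$ for $i>0$ and $\mathcal{H}om_X(\Phi(E_1),\Phi(E_2))\cong\Phi(E_1)^{\vee}\otimes\Phi(E_2)\cong\Phi(E_1^{\vee}\otimes E_2).$ By the (degenerate) local-to-global spectral sequence this gives
$$\Ext^{\bullet}_X(\Phi(E_1),\Phi(E_2))\cong H^{\bullet}(X,\Phi(E_1^{\vee}\otimes E_2)).$$

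Next I would check the hypothesis of Lemma \ref{lem:vanishing} for $M:=E_1^{\vee}\otimes E_2.$ The weights of $E_1^{\vee}$ are the negatives of the weights of $E_1,$ hence lie in $[-(n-k),0]^k,$ while the weights of $E_2$ lie in $[0,n-k]^k;$ adding, every weight of $M$ has all coordinates $\leq n-k,$ so $M\in\Rep(\Z,G)_{\leq (n-k)}.$ (This is precisely where the choice of the interval $[0,n-k]$ is used.) Lemma \ref{lem:vanishing}, together with the fact that $E_1$ is free over $\Z$ (which turns group cohomology $H^{\bullet}(G,-)$ into $\Ext^{\bullet}_G$), then gives
$$\Ext^{\bullet}_X(\Phi(E_1),\Phi(E_2))\cong\bigoplus_{d\geq 0}\Ext^{\bullet}_G\bigl(E_1,\,\Sym^d(V_k\otimes V^{\vee})\otimes E_2\bigr).$$

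Finally I would split by the grading with respect to the central torus $G_m\subset G.$ The module $\Sym^d(V_k\otimes V^{\vee})$ lies in degree $d,$ so $\Sym^d(V_k\otimes V^{\vee})\otimes E_2$ lies in degree $d+d_2,$ while $E_1$ lies in degree $d_1;$ since $\Rep(\Z,G)$ is the coproduct $\bigsqcup_{e}\Rep(\Z,G)^e$, the $d$-th summand vanishes unless $d=d_1-d_2.$ If $d_1<d_2$ there is no such $d\geq 0,$ which yields (i); if $d_1\geq d_2$ only the term $d=d_1-d_2$ survives, which yields (ii). I do not expect a genuine obstacle here: the substance is Lemma \ref{lem:vanishing}, and the remaining input is the identification $\mathcal{H}om_X(\Phi(E_1),\Phi(E_2))\cong\Phi(E_1^{\vee}\otimes E_2)$ together with the weight estimate and the degree bookkeeping; the only mild care needed is to keep straight which cohomology (sheaf cohomology on $X$ versus cohomology of the group $G$) is being used at each step.
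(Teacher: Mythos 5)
Your proposal is correct and follows essentially the same route as the paper: identify $\Ext^{\bullet}_X(\Phi(E_1),\Phi(E_2))$ with $H^{\bullet}(X,\Phi(E_1^{\vee}\otimes E_2))$, check that $E_1^{\vee}\otimes E_2\in\Rep(\Z,G)_{\leq (n-k)}$, apply Lemma \ref{lem:vanishing}, and conclude by the central-degree decomposition, which kills all summands except $d=d_1-d_2$. The extra details you supply (the local-to-global reduction and the adjunction using that $E_1$ is free over $\Z$) are exactly what the paper leaves implicit.
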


\begin{proof}In both cases we have that $E_1^{\vee}\otimes E_2\in \Rep(\Z,G)_{\leq (n-k)}.$ By Lemma \ref{lem:vanishing} we have
\begin{multline}\label{computation_of_ext}\Ext^{\bullet}(\Phi_{[0,n-k]}^{d_1}(E_1),\Phi_{[0,n-k]}^{d_2}(E_2))\cong H^{\bullet}(\Phi(E_1^{\vee}\otimes E_2))\\\cong \bigoplus\limits_{d\geq 0}H^{\bullet}(G,E_1^{\vee}\otimes E_2\otimes \Sym^d(V_k\otimes V^{\vee})).\end{multline}
Further, $\deg(E_1^{\vee}\otimes E_2\otimes \Sym^d(V_k\otimes V^{\vee}))=d+d_2-d_1.$

In the case (i), it follows that the RHS of \eqref{computation_of_ext} vanishes. In the case (ii), it follows that the RHS of \eqref{computation_of_ext}
equals $$H^{\bullet}(G,E_1^{\vee}\otimes E_2\otimes \Sym^{d_1-d_2}(V_k\otimes V^{\vee}))\cong\Ext^{\bullet}_{G}(E_1,\Sym^{d_1-d_2}(V_k\otimes V^{\vee})\otimes E_2).$$
This proves Lemma.\end{proof}

From now on, we denote the extensions of functors $\Phi^d_{[a,b]},\Phi^d_{\geq 0}$ etc. to the bounded derived categories by the same symbols.

\begin{lemma}\label{lem:incl_im_Phi_geq0}We have an inclusion
\begin{equation}\label{incl_im_Phi_geq0}\im(\Phi^d_{\geq 0})\subseteq \la \im(\Phi^0_{[0,n-k]}),\im(\Phi^1_{[0,n-k]}),\dots,\im(\Phi^{d}_{[0,n-k]})\ra\subseteq D^b(X)\end{equation}
for $d\geq 0.$

Moreover, for a Young diagram $lambda$ with $|\lambda|=d,$ $\lambda_1>n-k,$ $l(\lambda)\leq k,$ we have
\begin{equation}\label{incl_w_lambda}W_{\lambda}(\cF)\in \la \im(\Phi^0_{[0,n-k]}),\im(\Phi^1_{[0,n-k]}),\dots,\im(\Phi^{d-1}_{[0,n-k]})\ra.\end{equation}
\end{lemma}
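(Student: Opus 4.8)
The plan is to prove \eqref{incl_im_Phi_geq0} and \eqref{incl_w_lambda} together by induction on $d$, deducing the second from the first at each stage. Put $q:=n-k$ and $\cS_m:=\la\im(\Phi^0_{[0,q]}),\dots,\im(\Phi^m_{[0,q]})\ra\subseteq D^b(X)$. By the description of the categories $\Rep(\Z,GL_k)^{d}_{[0,q]}$ as highest weight categories with standard objects $W_\mu(V_k)$, $\mu\in\cP(q,k;d)$, the subcategory $\cS_m$ is generated by the bundles $W_\mu(\cF)$ with $\mu\in\cP(q,k)$ and $|\mu|\le m$. Since $\{W_\lambda(V_k)\}_{|\lambda|=d,\ l(\lambda)\le k}$ is a full exceptional collection of $D^b(\Rep(\Z,GL_k)^{d}_{\ge 0})$ and $\Phi$ is exact, $\im(\Phi^d_{\ge 0})$ is generated by the bundles $W_\lambda(\cF)$; those with $\lambda_1\le q$ already lie in $\im(\Phi^d_{[0,q]})$, so \eqref{incl_im_Phi_geq0} is equivalent to the single assertion that $W_\lambda(\cF)\in\cS_d$ whenever $\lambda_1>q$.

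Assume this; I claim \eqref{incl_w_lambda} then follows formally. For $\mu,\mu'\in\cP(q,k)$ one has $\Hom^\bullet(W_\mu(\cF),W_{\mu'}(\cF))=H^\bullet\!\big(X,\Phi(W_\mu(V_k)^\vee\otimes W_{\mu'}(V_k))\big)$; the module $W_\mu(V_k)^\vee\otimes W_{\mu'}(V_k)$ has all weights in $[-q,q]$, so Lemma \ref{lem:vanishing} applies, and — its degree being $|\mu'|-|\mu|$ — only the summand indexed by $e=|\mu|-|\mu'|\ge 0$ contributes, whence $\Hom^\bullet(W_\mu(\cF),W_{\mu'}(\cF))\cong\Ext^\bullet_{GL_k}\!\big(W_\mu(V_k),\Sym^{|\mu|-|\mu'|}(V_k\otimes V^\vee)\otimes W_{\mu'}(V_k)\big)$, which vanishes for $|\mu|<|\mu'|$ and, for $|\mu|=|\mu'|$, equals $\Ext^\bullet_{GL_k}(W_\mu(V_k),W_{\mu'}(V_k))$, vanishing unless $\mu\unlhd\mu'$. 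Hence the $W_\mu(\cF)$, $\mu\in\cP(q,k)$, form an exceptional collection ordered by $(|\mu|,\unlhd)$, so $\cS_d=\la\im(\Phi^d_{[0,q]}),\cS_{d-1}\ra$ is a semi-orthogonal decomposition and $\cS_{d-1}={}^{\perp}\im(\Phi^d_{[0,q]})$ inside $\cS_d$. Now take $\lambda$ with $\lambda_1>q$, $|\lambda|=d$. By the granted claim $W_\lambda(\cF)\in\cS_d$; and exactly as above $\Hom^\bullet(W_\lambda(\cF),W_\mu(\cF))\cong\Ext^\bullet_{GL_k}(W_\lambda(V_k),W_\mu(V_k))$ for $\mu\in\cP(q,k;d)$ (the weights of $W_\lambda(V_k)^\vee\otimes W_\mu(V_k)$ are still $\le\mu_1\le q$, and only $e=0$ survives). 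Since $\lambda_1>q\ge\mu_1$ forces $\lambda\ntrianglelefteq\mu$ and $\lambda\ne\mu$, this $\Ext$ is $0$; thus $W_\lambda(\cF)\in{}^{\perp}\im(\Phi^d_{[0,q]})$ and therefore $W_\lambda(\cF)\in\cS_{d-1}$, which is \eqref{incl_w_lambda}.

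It remains to prove $W_\lambda(\cF)\in\cS_d$ for $\lambda_1>q$, where the geometry enters through the tautological sequence $0\to\cF\to V\otimes\cO_X\to\cQ\to 0$. Applying the Weyl functor $W_\lambda$ to it gives, via the classical Cauchy/branching filtration of Weyl functors over an arbitrary base (cf.\ \cite{Bo}), a finite filtration of the \emph{trivial} bundle $W_\lambda(V)\otimes\cO_X$ with $W_\lambda(\cF)$ as a sub-bundle and successive quotients $W_\mu(\cF)\otimes W_{\lambda/\mu}(\cQ)$, $\mu\subsetneq\lambda$ ($W_{\lambda/\mu}$ the skew Weyl functor). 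Thus $W_\lambda(\cF)$ lies in the triangulated subcategory generated by $W_\lambda(V)\otimes\cO_X\in\cS_0$ together with the $W_\mu(\cF)\otimes W_{\lambda/\mu}(\cQ)$ with $|\mu|<d$. These one controls via the auxiliary statement that $W_\mu(\cF)\otimes W_\nu(\cQ)\in\cS_{|\mu|+|\nu|}$ for all partitions $\mu$ (with $l(\mu)\le k$) and $\nu$, proved by induction on $|\nu|$: applying the same sequence with $W_\nu$ presents $W_\nu(\cQ)$ as the cokernel of an inclusion of a bundle filtered by $W_\rho(\cF)\otimes W_{\nu/\rho}(\cQ)$ ($\rho\ne\emptyset$) into a trivial bundle; tensoring with $W_\mu(\cF)$ and using that $W_\mu(\cF)\otimes W_\rho(\cF)=\Phi\big(W_\mu(V_k)\otimes W_\rho(V_k)\big)$ has a Weyl filtration while $W_{\nu/\rho}(\cQ)$ has a Weyl filtration (Theorem \ref{th:Littlewood-Richardson}) reduces one to bundles $W_\sigma(\cF)\otimes W_\tau(\cQ)$ with $|\tau|<|\nu|$ and $|\sigma|+|\tau|=|\mu|+|\nu|$ (handled by the inner induction) together with terms $W_\sigma(\cF)$ of $\cF$-degree $<d$ (handled by the outer hypothesis $\im(\Phi^{d'}_{\ge0})\subseteq\cS_{d'}$, $d'<d$).

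The main obstacle is the degree bookkeeping in this last reduction: it also produces summands $W_\sigma(\cF)$ of $\cF$-degree exactly $d$, possibly again with $\sigma_1>q$, so one cannot simply invoke the outer induction for them. Closing the argument requires treating the degree-$d$ case of \eqref{incl_im_Phi_geq0} for all such $\lambda$ simultaneously, arranged so that the $W_\sigma(\cF)$ occurring in the filtration of a given $W_\lambda(\cF)$ are strictly earlier in the chosen order — this can be organized using the dominance order on partitions of $d$ together with the constraints that the Littlewood--Richardson coefficients $c^{\sigma}_{\mu\rho}$ impose on which $\sigma$ can occur, and seeded by the rectangular cases ($\lambda$ with all $k$ rows of length $\ge n$), which follow at once from the Koszul complex of the natural surjection $V^\vee\otimes\cF\to\cO_X$: each $\Lambda^p(V^\vee\otimes\cF)$ is, by the Cauchy filtration, a trivial bundle tensored with Schur functors of $\cF$ of degree $p$, and reading this exact complex from its left-hand end expresses its leftmost term — a rectangular $W_\lambda(\cF)$ tensored with a trivial line bundle — through bundles of strictly smaller $\cF$-degree. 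Making this ordering precise is the technical heart of the proof.
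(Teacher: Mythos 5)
Your formal reductions are sound: $\im(\Phi^d_{\geq 0})$ is generated by the $W_{\lambda}(\cF)$ with $|\lambda|=d$, $l(\lambda)\leq k$, the collection $\cS_d$ is generated by the $W_{\mu}(\cF)$ with $\mu\in\cP(n-k,k)$, $|\mu|\leq d$, and your derivation of \eqref{incl_w_lambda} from the degree-$d$ case of \eqref{incl_im_Phi_geq0} via Lemma \ref{lem:vanishing} and semi-orthogonality is correct (the paper argues in the opposite order, proving \eqref{incl_w_lambda} first, but your order works). The genuine gap is the one you yourself flag: the statement $W_{\lambda}(\cF)\in\cS_d$ for $\lambda_1>n-k$ is never proved. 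Your route through Weyl-functor filtrations of the tautological sequence and the auxiliary bundles $W_{\mu}(\cF)\otimes W_{\nu}(\cQ)$ regenerates terms $W_{\sigma}(\cF)$ of the same $\cF$-degree $d$ with $\sigma_1>n-k$, so the induction on $d$ does not close, and the proposed repair (a well-ordering built from dominance and Littlewood--Richardson constraints, seeded by rectangular partitions via the Koszul complex of $V^{\vee}\otimes\cF\to\cO_X$) is only a sketch; as written it is exactly the ``technical heart'' that is missing, and it is not clear it can be organized without circularity.

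The paper closes precisely this step by a short device that avoids same-degree terms altogether: work with divided powers instead of Weyl functors. For $\lambda_1>n-k$ the complex
\[
0\to \Gamma^{\lambda_1}(\cF)\to \Lambda^{1}(V)\otimes \Gamma^{\lambda_1-1}(\cF)\to\dots\to\Lambda^n(V)\otimes \Gamma^{\lambda_1-n}(\cF)\to 0
\]
is acyclic (its cokernel would be $\Lambda^{\lambda_1}(\cQ)=0$), so after tensoring with $\Gamma^{\lambda_2}(\cF)\otimes\cdots\otimes\Gamma^{\lambda_k}(\cF)$ it expresses $\Gamma^{\lambda}(\cF)$ purely through objects of $\cF$-degree $<d$; these lie in $\cS_{d-1}$ by the induction hypothesis \eqref{incl_im_Phi_geq0} in lower degrees, whence $\Gamma^{\lambda}(\cF)\in\cS_{d-1}$ with no bookkeeping problem. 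Theorem \ref{th:Littlewood-Richardson} 2) then filters $\Gamma^{\lambda}(\cF)$ with top quotient $W_{\lambda}(\cF)$ and remaining subquotients $W_{\mu}(\cF)$, $\mu\rhd\lambda$ (so $\mu_1\geq\lambda_1>n-k$), and a descending induction on the dominance order within degree $d$ (base case $\lambda=(d)$, where $\Gamma^{(d)}=W_{(d)}$) gives $W_{\lambda}(\cF)\in\cS_{d-1}$, i.e.\ \eqref{incl_w_lambda}; \eqref{incl_im_Phi_geq0} follows since $W_{\lambda}(\cF)\in\im(\Phi^d_{[0,n-k]})$ when $\lambda_1\leq n-k$. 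Substituting this divided-power Koszul complex for your Weyl-functor filtration of the tautological sequence is the missing idea and would make your argument complete.
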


\begin{proof}We prove both \eqref{incl_im_Phi_geq0} and \eqref{incl_w_lambda} by induction on $d.$ For $d=0$ the statement is evident.

Suppose that the statement is proved for $0\leq d\leq m,$ where $m\geq 0.$ We prove it for $d=m+1.$ We first prove
\eqref{incl_w_lambda}. Suppose that $|\lambda|=d,$ $\lambda_1>n-k,$ $l(\lambda)\leq k.$ We have an acyclic Koszul complex
$$0\to \Lambda^0(V)\otimes \Gamma^{\lambda_1}(\cF)\to \Lambda^{1}(V)\otimes \Gamma^{\lambda_1-1}(\cF)\to\dots\to\Lambda^n(V)\otimes \Gamma^{\lambda_1-n}(\cF)\to 0,$$
where we put $\Gamma^l(\cF)=0$ for $l<0.$ Multiplying this complex by $\Gamma^{\lambda_2}(\cF)\otimes\dots \Gamma^{\lambda_{k}}(\cF)$ and applying the inductive assumption, we get
\begin{equation}\label{incl_Gamma^lambda1}\Gamma^{\lambda}(\cF)\in \la \im(\Phi^0_{[0,k]}),\im(\Phi^1_{[0,k]}),\dots,\im(\Phi^{d-1}_{[0,k]})\ra\text{ for }|\lambda|=d,\,\lambda_1>n-k,\,l(\lambda)\leq k.\end{equation}

Further, by Theorem \ref{th:Littlewood-Richardson} 2), $\Gamma^{\lambda}(\cF)$ has a filtration with top subquotient isomorphic to $W_{\lambda}(\cF),$ with all the other subquotients being of the form $W_{\mu}(\cF),$ where $\mu\rhd \lambda,$ $|\mu|=|\lambda|.$ Hence, \eqref{incl_Gamma^lambda1} implies \eqref{incl_w_lambda}.

In the case $|\lambda|=d,$ $\lambda_1\leq n-k,$ $l(\lambda)\leq k,$ we clearly have that $W_{\lambda}(\cF)\in \im(\Phi^{d}_{[0,n-k]}).$
Therefore, for all $\lambda$ with $|\lambda|=d$ we have that
$$W_{\lambda}(\cF)\in \la \im(\Phi^0_{[0,n-k]}),\im(\Phi^1_{[0,n-k]}),\dots,\im(\Phi^{d}_{[0,n-k]})\ra.$$
This proves \eqref{incl_im_Phi_geq0}. Inductive statement is proved.
\end{proof}

We obtain the following result.

\begin{theo}\label{th:SOD_of_Gr}The functors $\Phi^d_{[0,n-k]}:D^b(\Rep(\Z,G)^d_{[0,n-k]})\to D^b(X)$ are fully faithful for $0\leq d\leq k(n-k).$ We have a semi-orthogonal decomposition
$$D^b(X)=\la \im(\Phi^{k(n-k)}_{[0,n-k]}),\im(\Phi^{k(n-k)-1}_{[0,n-k]}),\dots,\im(\Phi^{0}_{[0,n-k]})\ra.$$
Moreover, for $k(n-k)\geq d_1>d_2\geq 0,$ $E_1\in D^b(\Rep(\Z,G)^{d_1}_{[0,n-k]}),$ $E_2\in D^b(\Rep(\Z,G)^{d_2}_{[0,n-k]}),$  we have
\begin{equation}\label{computation_of_RHom}\bR\Hom(\Phi^{d_1}_{[0,n-k]}(E_1),\Phi^{d_2}_{[0,n-k]}(E_2))\cong \bR\Hom_{G}(E_1,\Sym^{d_1-d_2}(V_k\otimes V^{\vee})\otimes E_2)\end{equation}
\end{theo}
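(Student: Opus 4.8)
We first record that, since $X=\Gr(k,V)$ is smooth and proper over the regular ring $\Z$, we have $D^b(X)=\Perf(X)$ (Remark \ref{remark:Perf=D^b}), so the generation arguments below may be carried out in $\Perf(X)$; we also extend $\Phi^d_{[a,b]}$ and $\Phi^d_{\ge 0}$ to the bounded derived categories, and use the $\Ext$-computations of this subsection in their derived form (obtained from the stated versions by d\'evissage). Semi-orthogonality is then immediate: Lemma \ref{lem:semi-orthogonality}(i) gives $\bR\Hom(\Phi^{d_1}_{[0,n-k]}(E_1),\Phi^{d_2}_{[0,n-k]}(E_2))=0$ for $d_1<d_2$, which is exactly the vanishing needed for the ordering $\langle\im(\Phi^{k(n-k)}_{[0,n-k]}),\dots,\im(\Phi^{0}_{[0,n-k]})\rangle$ (morphisms from a lower-degree to a higher-degree component must vanish), and Lemma \ref{lem:semi-orthogonality}(ii) is precisely formula \eqref{computation_of_RHom}.

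For full faithfulness of $\Phi^d_{[0,n-k]}$ I would apply Lemma \ref{lem:semi-orthogonality}(ii) with $d_1=d_2=d$, so that $\Sym^0(V_k\otimes V^\vee)=\Z$ and $\bR\Hom_{D^b(X)}(\Phi^d_{[0,n-k]}(E_1),\Phi^d_{[0,n-k]}(E_2))\cong\bR\Hom_{GL_k}(E_1,E_2)$, the right-hand side being derived Hom of rational $GL_k$-representations. It then remains to identify this with $\bR\Hom_{D^b(\Rep(\Z,GL_k)^d_{[0,n-k]})}(E_1,E_2)$, i.e. to check that the exact inclusion $\Rep(\Z,GL_k)^d_{[0,n-k]}\hookrightarrow\Rep(\Z,GL_k)$ induces a fully faithful functor on bounded derived categories. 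This factors as $D^b(\Rep(\Z,GL_k)^d_{[0,n-k]})\to D^b(\Rep(\Z,GL_k)^d_{\ge 0})\to D^b(\Rep(\Z,GL_k))$; the first functor is fully faithful by the highest-weight description of Section \ref{ssec:reps_of_GL_n} together with Proposition \ref{prop:C[Gamma]} (the subcategory $\Rep(\Z,GL_k)^d_{[0,n-k]}$ is the one cut out by the ideal $\cP(n-k,k;d)$ of the dominance poset), and the second is fully faithful because $\Ext$-groups between polynomial $GL_k$-representations of degree $d$ agree whether computed among polynomial representations (equivalently over the Schur algebra $S_\Z(k,d)$) or among all rational $GL_k$-representations — the standard characteristic-free comparison.

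For generation, set $\cD:=\langle\im(\Phi^{k(n-k)}_{[0,n-k]}),\dots,\im(\Phi^{0}_{[0,n-k]})\rangle$. Embed $X$ into $\bbP^N_\Z$, $N=\binom{n}{k}-1$, via the Pl\"ucker bundle $\cO_X(1)=(\det\cF)^\vee$; this is a regular embedding since $X$ is smooth. Restricting Beilinson's resolution of the diagonal on $\bbP^N$ expresses any $\cG\in D^b(\bbP^N)$ in terms of the $\cO_{\bbP^N}(j)$, $-N\le j\le 0$; pulling back to $X$ and using that $\cG$ is a direct summand of $\bL i^*i_*\cG$ (the Koszul formula for a regular embedding), one gets the standard fact $D^b(X)=\langle\cO_X(j)\mid -N\le j\le 0\rangle$ (the right-hand side being a thick subcategory of the idempotent-complete category $\Perf(X)$). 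Now for $j\le 0$ we have $\cO_X(j)=(\det\cF)^{\otimes(-j)}=\Phi\big((\det V_k)^{\otimes(-j)}\big)$, and $(\det V_k)^{\otimes(-j)}$ is a polynomial $GL_k$-representation of degree $-jk\ge 0$, so Lemma \ref{lem:incl_im_Phi_geq0} gives $\cO_X(j)\in\langle\im(\Phi^0_{[0,n-k]}),\dots,\im(\Phi^{-jk}_{[0,n-k]})\rangle$. Since a representation of $GL_k$ all of whose weights lie in $[0,n-k]$ has total degree in $[0,k(n-k)]$, the categories $\Rep(\Z,GL_k)^{d'}_{[0,n-k]}$ vanish for $d'>k(n-k)$, so this last subcategory is contained in $\cD$. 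Hence $\cO_X(j)\in\cD$ for all $j\le 0$, and therefore $\cD=D^b(X)$.

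Combining the three points yields the semi-orthogonal decomposition together with formula \eqref{computation_of_RHom}. The step requiring the most care is the identification of morphism spaces in the full-faithfulness argument: one must be certain that the rational-cohomology groups produced by Lemma \ref{lem:semi-orthogonality} really are the $\bR\Hom$-groups of $D^b(\Rep(\Z,GL_k)^d_{[0,n-k]})$, i.e. that all the relevant derived-category embeddings between categories of $GL_k$-representations are fully faithful over $\Z$ and not merely over a field. Once this is granted, everything else is a formal assembly of the lemmas of this subsection.
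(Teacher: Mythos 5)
Your handling of semi-orthogonality, of formula \eqref{computation_of_RHom}, and of full faithfulness is in line with the paper's proof, which simply derives all of these from Lemma \ref{lem:semi-orthogonality}; the extra care you take in identifying the rational $\Ext_G$-groups produced by that lemma with $\bR\Hom$ in $D^b(\Rep(\Z,G)^d_{[0,n-k]})$ (via Proposition \ref{prop:C[Gamma]} and the polynomial-versus-rational comparison) is exactly the point the paper leaves implicit, and is fine. The genuine gap is in your generation argument. You assert that for the Pl\"ucker embedding $i:X\hookrightarrow\bbP^N$ every $\cG\in D^b(X)$ is a direct summand of $\bL i^*i_*\cG$, calling this ``the Koszul formula for a regular embedding.'' The Koszul resolution only gives a filtration of $\bL i^*i_*\cG$ whose graded pieces are $\cG\otimes\Lambda^q\cN_{X\mid \bbP^N}^{\vee}[q]$; neither this filtration nor even its top piece $\cG$ splits off in general (this is exactly the failure of formality of the derived self-intersection studied by Arinkin and Caldararu, which requires extra hypotheses on the embedding). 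The triangle identity only yields that $i_*\cG$ is a summand of $i_*\bL i^*i_*\cG$ after pushing forward, and $i_*$ is not fully faithful on derived categories, so no splitting on $X$ follows. Hence the step from ``$\bL i^*i_*\cG\in\la\cO_X(j)\mid -N\le j\le 0\ra$'' to ``$\cG\in\la\cO_X(j)\mid -N\le j\le 0\ra$'' is unjustified, and with it your claimed generation statement.

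The repair is easy and is the paper's own route, which you have already set up: via Lemma \ref{lem:incl_im_Phi_geq0} and the vanishing of $\Rep(\Z,G)^{d'}_{[0,n-k]}$ for $d'>k(n-k)$ you have $\det(\cF)^{\otimes m}=\cO_X(-m)\in\cD$ for \emph{all} $m\geq 0$. Since $\det(\cF)$ is anti-ample and $X$ is regular (so Ext-amplitude between coherent sheaves is bounded), the nonpositive twists of an ample line bundle classically generate $\Perf(X)=D^b(X)$: resolve any sheaf by finite direct sums of the $\cO_X(-m)$, truncate the resolution after more than $\dim X$ steps, and use the vanishing of Ext in degrees above the global dimension to split the sheaf off the truncated complex. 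This is what the paper means by ``since $\det(\cF)$ is anti-ample, $\cT=D^b(X)$.'' (The alternative route mentioned in the paper's remark is a resolution-of-the-diagonal argument as in \cite{BLVdB}, but that resolution lives on $X\times X$ and is built from $\cF$ and $\cQ$, not from Beilinson's resolution on an ambient projective space.) With that replacement, your proof coincides with the paper's.
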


\begin{proof}Fullness, faithfulness, semi-orthogonality and \eqref{computation_of_RHom} are implied by Lemma \ref{lem:semi-orthogonality}.

We are left to prove that $D^b(X)$ is generated by $\im(\Phi_{[0,n-k]}).$ Denote by $\cT\subset D^b(X)$ the full thick subcategory generated by $\im(\Phi_{[0,n-k]}).$ It follows from Lemma \ref{lem:incl_im_Phi_geq0} that $\im(\Phi_{\geq 0})\subset \cT.$ In particular, $$\det(\cF)^{\otimes n}\in\cT,\quad n\geq 0.$$ Since $\det(\cF)$ is an anti-ample line bundle, it follows that $\cT=D^b(X).$ This proves theorem.\end{proof}

\begin{remark}The fact that the categories $\im(\Phi^d_{[0,n-k]})$ generate $D^b(X)$ can be shown by the resolution of the diagonal argument, as in \cite{BLVdB} (generalizing the argument of Kapranov \cite{Kap} to the characteristic-free situation).\end{remark}

\subsection{Dual decomposition and Koszul duality}
\label{ssec:dual_decomposition_Koszul}

Now we would like to describe the semi-orthogonal decomposition of $D^b(X)$ which is right dual to the decomposition from Theorem \ref{th:SOD_of_Gr}.
First, consider the dual quotient presentation of Grassmannian. Namely, Let $W'$ be the affine space associated to the $\Z$-module $\Hom(V,V_{n-k}),$ where $V_{n-k}$ is the tautological representation of $GL_{n-k}.$ That is, $W'=\Spec(\Sym^*(V_{n-k}^{\vee}\otimes V)).$ We denote by $W'^{ss}\subset W'$ subscheme of homomorphisms of rank $n-k.$ We have a natural isomorphism
$X\cong W^{\prime ss}//GL_{n-k}.$ Further, we have the induced functor $$\Psi:\Rep(\Z,GL_{n-k})\to \Coh(X),$$ and its restrictions
like $\Psi^d_{[a,b]}$ and so on.

\begin{lemma}\label{lem:incl_im_Psi_geq0} We have
\begin{equation}\label{incl_im_Psi_geq0}\im(\Psi^d_{\geq 0})\in \la \im(\Psi^0_{[0,k]}),\im(\Psi^1_{[0,k]}),\dots,\im(\Psi^{d}_{[0,k]})\ra\end{equation}
for $d\geq 0.$
Moreover, for any Young diagram $\lambda$ with $|\lambda|=d,$ $\lambda_1>k,$ $l(\lambda)\leq n-k,$ we have
\begin{equation}\label{incl_s_lambda}S_{\lambda}(\cQ)\in \la \im(\Psi^0_{[0,k]}),\im(\Psi^1_{[0,k]}),\dots,\im(\Psi^{d-1}_{[0,k]})\ra.\end{equation}
\end{lemma}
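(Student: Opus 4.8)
The plan is to prove Lemma \ref{lem:incl_im_Psi_geq0} by exactly the same induction on $d$ that was used for Lemma \ref{lem:incl_im_Phi_geq0}, simply interchanging the roles of $k$ and $n-k$ and replacing $\cF$ by $\cQ$, $\Phi$ by $\Psi$, $W$ by $W'$. The only point where one must be careful is that the normalizations differ: in the first quotient presentation $X = W^{ss}//GL_k$ the bundle $\cF$ plays the role of the ``tautological subbundle'', whereas in the dual presentation $X = W'^{ss}//GL_{n-k}$ it is $\cQ$ that corresponds to the tautological representation $V_{n-k}$ of $GL_{n-k}$ (more precisely, one should check against the conventions whether $\Psi$ sends $V_{n-k}$ to $\cQ$ or to $\cQ^{\vee}$, and twist accordingly; this is the only genuine bookkeeping step). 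So all the ingredients are already available in dual form.

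First I would record the dual analogue of Lemma \ref{lem:vanishing}: for $M \in \Rep(\Z,GL_{n-k})_{\le k}$ one has
\begin{equation*}
H^{\bullet}(X,\Psi(M)) = \bigoplus_{d\ge 0} H^{\bullet}\bigl(GL_{n-k},\, M \otimes \Sym^d(V_{n-k}\otimes V^{\vee})\bigr),
\end{equation*}
which follows verbatim from the Cousin--Grothendieck spectral sequence argument of Lemmas \ref{lem:vanishing_of_local_H^*}--\ref{lem:vanishing} applied to the stratification of $W'$ by the rank function, since the only input there was $\dim V_{n-k} = n-k$ and the weight bound $M\in\Rep(\Z,GL_{n-k})_{\le (n-(n-k))} = \Rep(\Z,GL_{n-k})_{\le k}$. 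This is not needed for the statement of Lemma \ref{lem:incl_im_Psi_geq0} itself, but it is what makes the images $\im(\Psi^d_{[0,k]})$ behave like the images $\im(\Phi^d_{[0,n-k]})$, and in particular it gives the semi-orthogonality that legitimises writing $\la \im(\Psi^0_{[0,k]}),\dots,\im(\Psi^d_{[0,k]})\ra$ as an honest filtration.

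Next, the inductive step. For $d=0$ both \eqref{incl_im_Psi_geq0} and \eqref{incl_s_lambda} are trivial. Assuming the result for degrees $\le m$, take $\lambda$ with $|\lambda|=m+1$, $\lambda_1 > k$, $l(\lambda)\le n-k$. The Koszul complex to use is the dual one:
\begin{equation*}
0 \to \Lambda^0(V^{\vee})\otimes \Sym^{\lambda_1}(\cQ) \to \Lambda^1(V^{\vee})\otimes \Sym^{\lambda_1-1}(\cQ) \to \cdots \to \Lambda^n(V^{\vee})\otimes \Sym^{\lambda_1-n}(\cQ) \to 0,
\end{equation*}
arising from the surjection $V^{\vee}\otimes\cO_X \twoheadrightarrow \cQ^{\vee}$ dual to $0\to \cF \to V\otimes\cO_X \to \cQ\to 0$ (with $\Sym^{l}(\cQ)=0$ for $l<0$); tensoring this exact complex by $\Sym^{\lambda_2}(\cQ)\otimes\cdots\otimes\Sym^{\lambda_{n-k}}(\cQ)$ and applying the inductive hypothesis to the terms with $\Lambda^i(V^{\vee})$, $i\ge 1$ (which have lower degree) expresses $\Sym^{\lambda}(\cQ) := \Sym^{\lambda_1}(\cQ)\otimes\cdots\otimes\Sym^{\lambda_{n-k}}(\cQ)$ inside $\la \im(\Psi^0_{[0,k]}),\dots,\im(\Psi^{m}_{[0,k]})\ra$. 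Then by Theorem \ref{th:Littlewood-Richardson} 1), $\Sym^{\lambda}(\cQ)$ has a finite filtration whose subquotients are Schur functors $S_{\mu}(\cQ)$ with $\mu \unrhd \lambda$ and $|\mu| = |\lambda|$, the top one being $S_{\lambda}(\cQ)$; this filtration, together with the just-proved inclusion and descending induction on the dominance order among diagrams of fixed size, yields \eqref{incl_s_lambda}. Finally, for $\lambda$ with $|\lambda|=d$, $\lambda_1\le k$, $l(\lambda)\le n-k$ one has $S_{\lambda}(\cQ)\in \im(\Psi^d_{[0,k]})$ directly, and since the $S_{\lambda}(\cQ)$ with $|\lambda|=d$ generate $\im(\Psi^d_{\ge 0})$ (using the dual of the description in Section \ref{ssec:reps_of_GL_n}), combining the two cases gives \eqref{incl_im_Psi_geq0} for $d=m+1$.

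The only real obstacle, as above, is making sure the duality conventions line up: one must verify that the surjection of bundles on $X$ corresponding to the standard quotient map for the presentation $X=W'^{ss}//GL_{n-k}$ is indeed $V^{\vee}\otimes\cO_X\twoheadrightarrow\cQ^{\vee}$ (equivalently, that $\Psi$ sends $V_{n-k}$ to $\cQ$), so that the Koszul complex above is the correct ($GL_{n-k}$-equivariant) one and the weight/degree bookkeeping in the analogue of Lemma \ref{lem:vanishing} goes through with the bound $\le k$. Everything else is a transcription of the proof of Lemma \ref{lem:incl_im_Phi_geq0} with $k \leftrightarrow n-k$, $\cF \leftrightarrow \cQ$, $\Gamma \leftrightarrow \Sym$, $W_{\bullet} \leftrightarrow S_{\bullet}$, and $\unrhd \leftrightarrow \unlhd$.
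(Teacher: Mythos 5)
Your proof is correct and is exactly the route the paper takes: the paper's own proof of Lemma \ref{lem:incl_im_Psi_geq0} consists of the single remark that it is "completely analogous to (and formally implied by) Lemma \ref{lem:incl_im_Phi_geq0}", and your write-up is precisely that analogous induction (dual Koszul complex, Littlewood--Richardson filtration of $\Sym^{\lambda}(\cQ)$ by $S_{\mu}(\cQ)$ with $\mu\unrhd\lambda$, descending induction on dominance). The only slips are cosmetic and do not affect the argument: the dual sequence is $0\to\cQ^{\vee}\to V^{\vee}\otimes\cO_X\to\cF^{\vee}\to 0$ (the surjection is onto $\cF^{\vee}$, not $\cQ^{\vee}$), so the acyclic complex with terms $\Lambda^{i}(V)\otimes\Sym^{\lambda_1-i}(\cQ)$ is the dual of the paper's $\Gamma$--$\Lambda$ complex and is exact precisely because $\Lambda^{\lambda_1}(\cF^{\vee})=0$ when $\lambda_1>k$; and in your (unneeded) aside, $\cO(W')=\Sym^{*}(V_{n-k}^{\vee}\otimes V)$, which is why the $\Psi$-components end up ordered oppositely to the $\Phi$-components.
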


\begin{proof}This is completely analogous to (and is formally implied by) Lemma \ref{lem:incl_im_Phi_geq0}.\end{proof}

\begin{theo}\label{th:right_dual_decomposition}1) The functors $\Psi^d_{[0,k]}:D^b(\Rep(\Z,GL_{n-k})^d_{[0,k]})\to D^b(X)$ are fully faithful and we have a semi-orthogonal decomposition
\begin{equation}\label{decomposition_Psi}D^b(X)=\la \im(\Psi^0_{[0,k]}),\im(\Psi^1_{[0,k]}),\dots,\im(\Psi^{k(n-k)}_{[0,k]})\ra,\end{equation}
which is right dual to the decomposition
$$D^b(X)=\la \im(\Phi^{k(n-k)}_{[0,n-k]}),\im(\Phi^{k(n-k)-1}_{[0,n-k]}),\dots,\im(\Phi^{0}_{[0,n-k]})\ra$$
from Theorem \ref{th:SOD_of_Gr}.

2) For each $d,$ the induced equivalence functor $$D^b(\Rep(\Z,GL_k)^d_{[0,n-k]})\to D^b(\Rep(\Z,GL_{n-k})^d_{[0,k]})$$ is naturally isomorphic to $\Lambda^d_{k,n-k}[-d],$ where $\Lambda^d_{k,n-k}$ is the functor introduced in Definition \ref{def:koszul_for_GL}.\end{theo}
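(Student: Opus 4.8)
The plan is to prove 1) by carrying the results of Subsection~\ref{ssec:SOD_of_Gr} over to the dual GIT presentation $X\cong W^{\prime ss}//GL_{n-k}$ and matching the outcome with the right dual of the $\Phi$-decomposition, and to prove 2) by computing the spaces $\bR\Hom_X(\Psi(F),\Phi(E))$ via the tautological exact sequence $0\to\cF\to V\otimes\cO_X\to\cQ\to 0$. For 1), first note that Lemmas~\ref{lem:vanishing_of_local_H^*}, \ref{lem:vanishing} and \ref{lem:semi-orthogonality} have verbatim analogues for $\Psi$, obtained by interchanging $k$ with $n-k$ (and $\cF$ with $\cQ$, $V_k$ with $V_{n-k}$, $W$ with $W'$); the only change is a sign, since the coordinate ring $\Sym^{\ast}(V\otimes V_{n-k}^{\vee})$ of $W'$ sits in non-positive degrees for the centre of $GL_{n-k}$, so that the dual of Lemma~\ref{lem:semi-orthogonality} gives $\bR\Hom_X(\Psi(E_1),\Psi(E_2))=0$ for $\deg E_1>\deg E_2$ and $\bR\Hom_X(\Psi(E_1),\Psi(E_2))\cong\bR\Hom_{GL_{n-k}}(E_1,\Sym^{\deg E_2-\deg E_1}(V\otimes V_{n-k}^{\vee})\otimes E_2)$ otherwise. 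Together with Lemma~\ref{lem:incl_im_Psi_geq0} and the ampleness of $\det\cQ\in\im(\Psi^1_{\geq 0})$ this shows at once that the $\Psi^d_{[0,k]}$ are fully faithful and that \eqref{decomposition_Psi} is a semi-orthogonal decomposition. To identify it with the right dual of the $\Phi$-decomposition I would use the right-dual analogue of Proposition~\ref{prop:criterion_for_dual_decomposition}: an SOD $\la\cC_m,\dots,\cC_1\ra$ is right dual to $\la\cA_1,\dots,\cA_m\ra$ exactly when $\la\cA_j,\dots,\cA_m\ra=\la\cC_m,\dots,\cC_j\ra$ for all $j$. With $\cA_i=\im(\Phi^{k(n-k)+1-i}_{[0,n-k]})$, $\cC_i=\im(\Psi^{k(n-k)+1-i}_{[0,k]})$ this reduces to the equalities of triangulated subcategories
$$\la\im(\Phi^d_{[0,n-k]}),\dots,\im(\Phi^0_{[0,n-k]})\ra=\la\im(\Psi^0_{[0,k]}),\dots,\im(\Psi^d_{[0,k]})\ra,\qquad 0\leq d\leq k(n-k),$$
which, by Lemmas~\ref{lem:incl_im_Phi_geq0} and \ref{lem:incl_im_Psi_geq0}, say that the thick subcategory generated by $\{\Gamma^{\mu}(\cF):|\mu|\leq d\}$ equals the one generated by $\{S_{\nu}(\cQ):|\nu|\leq d\}$; this is proved by induction on $d$, using the tautological sequence and the Littlewood--Richardson filtrations of Theorem~\ref{th:Littlewood-Richardson} to resolve each $S_{\nu}(\cQ)$, $|\nu|\le d$, by bundles of the form $W_{\mu}(\cF)\otimes(\text{trivial bundle})$ with $|\mu|\le d$, and symmetrically to express each $\Gamma^{\mu}(\cF)$, $|\mu|\le d$, through bundles built from $\cQ$.

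For 2), write $R\colon\cA_i\stackrel{\sim}{\to}\cC_i$ for the mutation equivalence of the right dual ($i=k(n-k)+1-d$), so that the functor in question is $G_d=(\Psi^d_{[0,k]})^{-1}\circ R\circ\Phi^d_{[0,n-k]}$. Since $\la\cA_i,\dots,\cA_m\ra=\la\la\cA_{i+1},\dots,\cA_m\ra,\cC_i\ra$, the object $R(\Phi(E))$ is the $\cC_i$-component of $\Phi(E)$ in this SOD, hence differs from $\Phi(E)$ by an object of $\la\im(\Phi^{d-1}_{[0,n-k]}),\dots,\im(\Phi^0_{[0,n-k]})\ra$, which is left orthogonal to $\cC_i=\im(\Psi^d_{[0,k]})$; therefore for all $F\in D^b(\Rep(\Z,GL_{n-k})^d_{[0,k]})$ one has $\bR\Hom(F,G_d(E))\cong\bR\Hom_X(\Psi(F),R(\Phi(E)))\cong\bR\Hom_X(\Psi(F),\Phi(E))$, and it remains to compute the last space, naturally in $E$ and $F$, and recognise it as $\bR\Hom(F,\Lambda^d_{k,n-k}(E)[-d])$. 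Reducing to $F=S_{\nu}(V_{n-k})$ with $\nu\in\cP(k,n-k;d)$, so $\Psi(F)=S_{\nu}(\cQ)$, I would resolve $S_{\nu}(\cQ)$ by the Schur complex coming from the tautological sequence: all of its terms except the top one are of the form $W_{\mu}(\cF)\otimes(\text{trivial bundle})=\Phi(W_{\mu}(V_k))\otimes(\text{trivial bundle})$ with $|\mu|<d$ and all weights $\le n-k$, hence contribute nothing to $\bR\Hom_X(-,\Phi(E))$ by Lemma~\ref{lem:semi-orthogonality}(i) (source of degree $|\mu|<d$, target of degree $d$), while the top term is exactly $W_{\nu'}(\cF)$, placed in homological degree $d$, so that Lemma~\ref{lem:semi-orthogonality}(ii) gives $\bR\Hom_X(S_{\nu}(\cQ),\Phi(E))\cong\bR\Hom_{GL_k}(W_{\nu'}(V_k),E)[-d]$. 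By \eqref{Lambda^d_n,m(W_lambda(V_n))} we have $W_{\nu'}(V_k)=(\Lambda^d_{k,n-k})^{-1}(S_{\nu}(V_{n-k}))$, so the right side equals $\bR\Hom(S_{\nu}(V_{n-k}),\Lambda^d_{k,n-k}(E)[-d])$; since the $S_{\nu}(V_{n-k})$ generate and all the isomorphisms are natural, Yoneda then gives $G_d\cong\Lambda^d_{k,n-k}[-d]$.

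\textbf{Where the difficulty lies.} The delicate ingredient is the Schur-complex resolution of $S_{\nu}(\cQ)$ (and of $\Gamma^{\mu}(\cF)$) over $\Z$. One has to check that applying the strict polynomial functor $S_{\nu}$ to the two-term complex $[\cF\to V\otimes\cO_X]$ actually computes $S_{\nu}(\cQ)$ (this uses that $\cQ$ is $\Z$-flat, so that the derived functor is concentrated in degree $0$) and, crucially, that the $\cF$-contribution in top homological degree is the Weyl functor $W_{\nu'}(\cF)$ and not some other functor of $\cF$; this is precisely the geometric shadow of the Koszul-duality identity $\bR\cHom_{\Gamma^d_K}(\Lambda^d,S_{\nu})\cong W_{\nu'}$ of Proposition~\ref{prop:properties_of_koszul}, and it is safest to deduce it from that proposition rather than to verify it by hand. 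A secondary point is the naturality of the comparison isomorphism in $E$ and $F$, which is needed to upgrade the object-level statement $G_d(E)\cong\Lambda^d_{k,n-k}(E)[-d]$ to an isomorphism of functors.
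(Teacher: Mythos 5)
Your treatment of 1) follows the paper's skeleton quite closely: the full faithfulness of the $\Psi^d_{[0,k]}$ and the decomposition \eqref{decomposition_Psi} come from the mirror of Theorem \ref{th:SOD_of_Gr} for the presentation $X\cong W'^{ss}//GL_{n-k}$ (your sign bookkeeping is right; the only slip is that $\det\cQ$ has degree $n-k$, not $1$, which is immaterial for the generation argument), and the identification with the right dual is reduced, via the right-dual form of Proposition \ref{prop:criterion_for_dual_decomposition}, to the equalities \eqref{equality_decompositions}, exactly as in the paper. Where you diverge is in how \eqref{equality_decompositions} and part 2) are proved: you want, for every $\nu\in\cP(k,n-k;d)$, a resolution of $S_{\nu}(\cQ)$ over $\Z$ whose lower terms are $W_{\mu}(\cF)\otimes(\text{trivial bundle})$ with $|\mu|<d$ and whose top term is exactly $W_{\nu'}(\cF)$ in homological degree $d$. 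The paper needs much less: it only tensors the elementary row-wise Koszul complexes \eqref{Koszul_res_for_S^l(N)} to get maps $\Sym^{\mu}(\cQ)[-d]\to\Lambda^{\mu}(\cF)$ with cone in the lower part, and then uses that the $\Lambda^{\mu}(\cF)$, resp.\ the projections $p_d(\Sym^{\mu}(\cQ))$, generate the two degree-$d$ components (Proposition \ref{prop:tilting_E(m,n;d)} together with the auxiliary lemma that $(\Psi^{d}_{[0,k]})^{-1}p_d\Psi^d_{\geq 0}$ is right adjoint to the inclusion).

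The genuine gaps are the two points you yourself flag in part 2), and they are not minor. First, the Schur-complex resolution of $S_{\nu}(\cQ)$ with the identified top term $W_{\nu'}(\cF)$ is a true statement over $\Z$ (Akin--Buchsbaum--Weyman), but it cannot simply be ``deduced from Proposition \ref{prop:properties_of_koszul}'': that proposition is an identity for the internal tensor product of strict polynomial functors and says nothing, without a substantial further argument, about what happens when a Schur functor is applied to the two-term complex $\cF\to V\otimes\cO_X$ of bundles; as written this step is asserted, not proved, and it carries the whole weight of your computation of $\bR\Hom_X(S_{\nu}(\cQ),\Phi(E))$. Second, passing from isomorphisms $\bR\Hom(S_{\nu}(V_{n-k}),G_d(E))\cong\bR\Hom(S_{\nu}(V_{n-k}),\Lambda^d_{k,n-k}(E)[-d])$, built from chosen resolutions, to an isomorphism of triangulated functors $G_d\cong\Lambda^d_{k,n-k}[-d]$ is not a formal Yoneda step: you have no natural transformation between the two functors to evaluate on generators, and compatibility with the (nonzero) morphisms among the various $S_{\nu}$ is precisely what must be checked. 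The paper sidesteps both issues: it reads off $F_d(\Lambda^{\mu}(V_k))\cong\Lambda^d_{k,n-k}(\Lambda^{\mu}(V_k))[-d]$ on the tilting generators directly from the proof of 1), and then compares the two functors on morphisms after base change $\Z\to\Q$, using that the Hom-modules are finite free over $\Z$ and that $\Rep(\Q,GL_k)$ and $\Rep(\Q,GL_{n-k})$ are semisimple. If you keep your route, you must either cite/prove the Schur-complex resolution and construct an actual natural transformation, or fall back on an argument of the paper's type (agreement on an additive generating subcategory plus the $\Q$-reduction).
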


\begin{proof}1) Fully-faithfulness of $\Psi^d_{[0,k]},$ as well as semi-orthogonal decomposition \eqref{decomposition_Psi}, is formally implied by Theorem \ref{th:gluing_of_quasi_hereditary_alg}.

To prove duality of decompositions, we need the following auxiliary result.

\begin{lemma}If $p_d:D^b(X)\to \im(\Psi^{d}_{[0,k]})$ denotes the projection onto the component, then the composition
$$(\Psi^{d}_{[0,k]})^{-1}p_d\Psi^d_{\geq 0}:D^b(\Rep(\Z,GL_{n-k})_{\geq 0}^d)\to D^b(\Rep(\Z,GL_{n-k})_{[0,k]}^d)$$ is the right adjoint to the inclusion.\end{lemma}

\begin{proof}By Lemma \ref{lem:incl_im_Psi_geq0}, the composition $(\Psi^{d}_{[0,k]})^{-1}p_d\Psi^d_{\geq 0}$ vanishes on $S_{\lambda}(V_{n-k})$ for
$\lambda_1>k.$ Moreover, it is identical on $D^b(\Rep(\Z,GL_{n-k})_{[0,k]}^d)\subseteq D^b(\Rep(\Z,GL_{n-k})_{\geq 0}^d).$
The assertion follows.
\end{proof}

By Proposition \ref{prop:criterion_for_dual_decomposition}, it suffices to prove that for $0\leq d\leq k(n-k)$ we have
\begin{equation}\label{equality_decompositions}\la \im(\Phi^{d}_{[0,n-k]}),\dots,\im(\Phi^{0}_{[0,n-k]})\ra=\la \im(\Psi^0_{[0,k]}),\dots,\im(\Psi^{d}_{[0,k]})\ra.\end{equation}
We proceed by induction on $d.$

For $d=0,$ we have $\im(\Phi^0_{[0,n-k]})=\im(\Psi^0_{[0,k]})=\la\cO_X\ra.$

Suppose that \eqref{equality_decompositions} is proved for $0\leq d\leq m.$ We prove it for $d=m+1.$
For $1\leq l\leq k$ we have an acyclic Koszul complex
\begin{equation}\label{Koszul_res_for_S^l(N)}0\to\Lambda^l\cF\otimes\Sym^0(V)\to\Lambda^{l-1}\cF\otimes \Sym^1(V)\to\dots\to \Lambda^0\cF\otimes \Sym^l(V)\to \Sym^l(\cQ)\to 0.\end{equation}
In particular, this gives a morphism $\Sym^l(\cQ)[-l]\to \Lambda^l(\cF).$ Further, for any Young diagram $\mu\in\cP(k,n-k;d)$ we can take the tensor product of resolutions \eqref{Koszul_res_for_S^l(N)} for $l=\mu_1,\dots,\mu_{n-k},$ and get a resolution for $\Sym^{\mu}(\cQ).$ We get a morphism
$\Sym^{\mu}[-d]\to\Lambda^{\mu},$ such that its cone is contained in $$\la \im(\Phi^{d-1}_{[0,n-k]}),\im(\Phi^{d-2}_{[0,n-k]}),\dots,\im(\Phi^{0}_{[0,n-k]})\ra=\la \im(\Psi^0_{[0,k]}),\im(\Psi^1_{[0,k]}),\dots,\im(\Psi^{d-1}_{[0,k]})\ra.$$ By Proposition \ref{prop:tilting_E(m,n;d)}, the objects $\Lambda^{\mu}(\cF),$ $\mu\in\cP(k,n-k;d),$ generate $\im(\Phi^{d}_{[0,n-k]}).$ Further, the objects $p_d(\Sym^{\mu}(\cQ)),$ $\mu\in\cP(k,n-k;d),$ generate
$\im(\Psi^{d}_{[0,n-k]}).$ This implies \eqref{equality_decompositions}.

2) For $0\leq d\leq k(n-k),$ put $$F_d:=(\Psi^d_{[0,k]})^{-1}p_d\Phi^d_{[0,n-k]}:D^b(\Rep(\Z,GL_k)^d_{[0,n-k]})\to D^b(\Rep(\Z,GL_{n-k})^d_{[0,k]}).$$ It follows from the proof of 1) that we have natural isomorphisms
$$F_d(\Lambda^{\mu}(V_k))\cong \Lambda^d_{k,n-k}(\Lambda^{\mu}(V_k))[-d],\quad \mu\in\cP(k,n-k;d).$$
It remains to show that both functors $F_d$ and $\Lambda^d_{k,n-k}$ induce the same maps on morphisms between $\Lambda^{\mu}(V_k).$ Since the morphisms form free finitely generated $\Z$-modules, the statement reduces to the case when the basic ring is $\Q$ instead of $\Z.$ But in that case the statement is trivial because the categories $\Rep(\Q,GL_k)$ and $\Rep(\Q,GL_{n-k})$ are semi-simple. This proves theorem.
\end{proof}

\begin{theo}\label{th:exceptional_on_Gr}1) The category $D^b(X)$ has a full exceptional collection $\{S_{\lambda}(\cF)\}_{\lambda\in\cP(n-k,k)}.$ Its right dual exceptional collection is $\{S_{\mu}(\cQ)[-|\mu|]\}_{\mu\in\cP(k,n-k)}.$

2) The category The category $D^b(X)$ has a full exceptional collection $\{W_{\lambda}(\cF)\}_{\lambda\in\cP(n-k,k)}.$ Its right dual exceptional collection is $\{W_{\mu}(\cQ)[-|\mu|]\}_{\mu\in\cP(k,n-k)}.$\end{theo}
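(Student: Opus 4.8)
The plan is to deduce both statements from the two semi-orthogonal decompositions already established --- the $\Phi$-decomposition of Theorem~\ref{th:SOD_of_Gr} and its right dual, the $\Psi$-decomposition of Theorem~\ref{th:right_dual_decomposition} --- using that every component of these decompositions is the bounded derived category of a highest weight category in which the standard and costandard objects are exactly the Weyl and Schur functors of the tautological bundle. The one general ingredient I would isolate first is a formal observation about exceptional collections inside a semi-orthogonal decomposition: if $\cT=\la\cA_1,\dots,\cA_m\ra$ is an SOD and each $\cA_i$ carries a full exceptional collection, then their concatenation (each block internally ordered by a linear extension of the relevant poset, cf.\ Proposition~\ref{prop:order_on_dual_collection}) is a full exceptional collection of $\cT$; and if $\cT=\la\cC_m,\dots,\cC_1\ra$ is the right dual SOD with induced equivalences $\epsilon_i\colon\cA_i\stackrel{\sim}{\to}\cC_i$, then the right dual of the concatenated collection is, block by block, the image under $\epsilon_i$ of the right dual of the collection inside $\cA_i$; the same holds for left duals. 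This follows directly from the recursive Definitions~\ref{def:dual_collections} and~\ref{def:dual_decompositions} together with the identity $\bR_{F[b]}(E[a])\cong(\bR_F E)[a]$, which also shows that shifting the members of a collection shifts the corresponding members of its dual. I will also use freely that left and right dualization are mutually inverse, both for SODs and for collections.

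For part~1, Theorem~\ref{th:SOD_of_Gr} identifies the component $\im(\Phi^d_{[0,n-k]})$, via the exact tensor (hence image-preserving) fully faithful functor $\Phi^d_{[0,n-k]}$ taking $V_k$ to $\cF$, with $D^b(\Rep(\Z,GL_k)^d_{[0,n-k]})$, a highest weight category with standard objects $W_\lambda(V_k)$ and costandard objects $S_\lambda(V_k)$, $\lambda\in\cP(n-k,k;d)$; by Proposition~\ref{prop:D^b_of_highest_weight_category}(3) both families are full exceptional collections, left/right dual to each other. Transporting along $\Phi$ and concatenating over $d=k(n-k),\dots,0$ (note $\cP(n-k,k)=\bigsqcup_d\cP(n-k,k;d)$) shows that $\{S_\lambda(\cF)\}_{\lambda\in\cP(n-k,k)}$ and $\{W_\lambda(\cF)\}_{\lambda\in\cP(n-k,k)}$ are full exceptional collections of $D^b(X)$. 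By the observation above, the right dual of $\{S_\lambda(\cF)\}$ lives in the components $\im(\Psi^d_{[0,k]})$ of the right dual decomposition (Theorem~\ref{th:right_dual_decomposition}); in the $d$-th block it is the image, under the induced equivalence $\Psi^d_{[0,k]}\circ(\Lambda^d_{k,n-k}[-d])\circ(\Phi^d_{[0,n-k]})^{-1}$ of Theorem~\ref{th:right_dual_decomposition}(2), of the right dual of $\{S_\lambda(V_k)\}$ inside $D^b(\Rep(\Z,GL_k)^d_{[0,n-k]})$, namely the standard collection $\{W_\lambda(V_k)\}$. By~\eqref{Lambda^d_n,m(W_lambda(V_n))} this image is $S_{\lambda'}(\cQ)[-d]=S_{\lambda'}(\cQ)[-|\lambda'|]$, and reindexing by the transpose bijection $\mu=\lambda'$ identifies the right dual with $\{S_\mu(\cQ)[-|\mu|]\}_{\mu\in\cP(k,n-k)}$.

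For part~2 I would argue symmetrically, now using the $\Psi$-decomposition. Its $d$-th component is $\im(\Psi^d_{[0,k]})\simeq D^b(\Rep(\Z,GL_{n-k})^d_{[0,k]})$, with standard objects $W_\mu(V_{n-k})$ and costandard objects $S_\mu(V_{n-k})$, $\mu\in\cP(k,n-k;d)$, so $\{W_\mu(\cQ)\}_{\mu}$ is a full exceptional collection of $D^b(X)$. Since the $\Phi$-decomposition is the left dual of the $\Psi$-decomposition (Theorem~\ref{th:right_dual_decomposition}), the left dual of $\{W_\mu(\cQ)\}$ lies in the components $\im(\Phi^d_{[0,n-k]})$, where it is the image, under the induced equivalence $\Phi^d_{[0,n-k]}\circ(\Lambda^d_{k,n-k})^{-1}[d]\circ(\Psi^d_{[0,k]})^{-1}$, of the left dual of the standard collection $\{W_\mu(V_{n-k})\}$, i.e.\ of the costandard collection $\{S_\mu(V_{n-k})\}$. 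Reading~\eqref{Lambda^d_n,m(W_lambda(V_n))} in reverse gives $(\Lambda^d_{k,n-k})^{-1}(S_\mu(V_{n-k}))\cong W_{\mu'}(V_k)$, so this image is $W_{\mu'}(\cF)[d]$; hence the left dual of $\{W_\mu(\cQ)\}$ equals $\{W_\lambda(\cF)[|\lambda|]\}_{\lambda\in\cP(n-k,k)}$ with $\lambda=\mu'$. Passing to right duals (the inverse operation) and removing the shift by the shift-transfer principle, I conclude that the right dual of $\{W_\lambda(\cF)\}$ is $\{W_\mu(\cQ)[-|\mu|]\}_{\mu\in\cP(k,n-k)}$. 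Everything else being an assembly of results already in hand, I expect the main obstacle to be purely a matter of bookkeeping: keeping the orders, the homological shifts, and the interaction between dual exceptional collections and dual semi-orthogonal decompositions consistent --- in particular verifying carefully that the equivalences matching the blocks of the $\Phi$- and $\Psi$-decompositions are precisely the Koszul-duality functors $\Lambda^d_{k,n-k}[-d]$ of Theorem~\ref{th:right_dual_decomposition}(2) (with the correct shift), and that transposition of Young diagrams, together with $|\lambda|=|\lambda'|$, produces exactly the claimed reindexing.
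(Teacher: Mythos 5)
Your argument is correct and rests on exactly the same ingredients as the paper's proof: fullness and exceptionality come from Theorem \ref{th:SOD_of_Gr} together with the standard/costandard collections in each block (Proposition \ref{prop:D^b_of_highest_weight_category}), and the identification of the dual collection comes from Theorem \ref{th:right_dual_decomposition} and the formula \eqref{Lambda^d_n,m(W_lambda(V_n))}. The only (harmless) difference is presentational: the paper verifies the right-dual property directly via the Hom-pairing criterion of Proposition \ref{prop:criterion_for_dual_collection}, computing $\bR\Hom(S_{\mu}(\cQ)[-|\mu|],S_{\lambda}(\cF))$, whereas you route the same computation through a (true, and correctly justified) blockwise compatibility of dual exceptional collections with dual semi-orthogonal decompositions, plus, in part 2, a detour through left duals and shift transfer.
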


\begin{proof}1) Since $\{S_{\lambda}(V_k)\}_{\lambda\in\cP(n-k,k;d)}$ is a full exceptional collection in $D^b(\Rep(\Z,GL_k)^d_{[0,n-k]}),$ it follows from Theorem \ref{th:SOD_of_Gr} that $\{S_{\lambda}(\cF)\}_{\lambda\in\cP(n-k,k)}$ is a full exceptional collection in $D^b(X).$

By Theorem \ref{th:right_dual_decomposition} 1) we have $\bR\Hom(S_{\mu}(\cQ)[-|\mu|],S_{\lambda})=0$ if $|\lambda|\ne|\mu|.$ In the case $|\lambda|=|\mu|,$ by Theorem \ref{th:right_dual_decomposition} 2) we have
\begin{multline*}\bR\Hom(S_{\mu}(\cQ)[-|\mu|],S_{\lambda})\cong
\bR\Hom_{GL_k}((\Lambda^d_{k,n-k})^{-1}(S_{\mu}(V_{n-k})),S_{\lambda}(V_k))\\\cong \bR\Hom_{GL_k}(W_{\mu'}(V_k)),S_{\lambda}(V_k))=\begin{cases}\Z & \text{if }\mu'=\lambda;\\
0 & \text{otherwise.}\end{cases}\end{multline*}

This shows that $\{S_{\mu}(\cQ)[-|\mu|]\}_{\mu\in\cP(k,n-k)}$ is indeed the right dual exceptional collection

2) is analogous.
\end{proof}

\subsection{Tilting vector bundle}
\label{ssec:tilting_vector_bundle}

Let $X=\Gr(k,n)$ be as above.

\begin{defi}We denote by $\cE(k,n)$ the following vector bundle on $X:$
$$\cE(k,n):=\bigoplus_{\lambda\in\cP(n-k,k)}\Lambda^{\lambda'}(\cF).$$\end{defi}

Clearly, we have
\begin{equation}\label{expression_for_cE(k,n)}\cE(k,n)=\bigoplus\limits_{d=0}^{k(n-k)}\Phi^d_{[0,n-k]}(E(n-k,k;d)).\end{equation}

\begin{lemma}\label{lem:higher_exts_vanishing_cE(k,n)}For each Young diagram $\mu\in\cP(n-k,k)$ we have that
\begin{equation}\label{higher_exts_vanishing_cE(k,n)}\Ext^{>0}(\cE(k,n),S_{\mu}(\cF))=0.\end{equation}\end{lemma}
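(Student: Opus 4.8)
The plan is to reduce \eqref{higher_exts_vanishing_cE(k,n)}, one component of the semi-orthogonal decomposition of Theorem~\ref{th:SOD_of_Gr} at a time, to an $\Ext$-vanishing between a standardly filtered and a costandardly filtered object in the highest weight categories of strict polynomial functors. First I would use \eqref{expression_for_cE(k,n)}, which writes $\cE(k,n)=\bigoplus_{d=0}^{k(n-k)}\Phi^d_{[0,n-k]}(E(n-k,k;d))$ with $E(n-k,k;d)=\bigoplus_{\lambda\in\cP(n-k,k;d)}\Lambda^{\lambda'}(V_k)$, together with the observation that $\mu\in\cP(n-k,k)$ forces $S_\mu(V_k)\in\Rep(\Z,G)^{|\mu|}_{[0,n-k]}$ and $S_\mu(\cF)=\Phi^{|\mu|}_{[0,n-k]}(S_\mu(V_k))$. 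Thus it suffices to show that $\bR\Hom\bigl(\Phi^d_{[0,n-k]}(\Lambda^{\lambda'}(V_k)),\,\Phi^{|\mu|}_{[0,n-k]}(S_\mu(V_k))\bigr)$ lies in non-positive cohomological degrees for every $\lambda\in\cP(n-k,k;d)$ and every $0\le d\le k(n-k)$, and I would split this according to the sign of $d-|\mu|$.

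When $d<|\mu|$ the whole $\bR\Hom$ vanishes, by the semi-orthogonality in Theorem~\ref{th:SOD_of_Gr} (equivalently Lemma~\ref{lem:semi-orthogonality}(i)). When $d\ge|\mu|$, I would invoke \eqref{computation_of_RHom} (for $d>|\mu|$), resp.\ the full faithfulness of $\Phi^{|\mu|}_{[0,n-k]}$ with $\Sym^{0}=\Z$ (for $d=|\mu|$), to identify it with
$$\bR\Hom_{G}\bigl(\Lambda^{\lambda'}(V_k),\ \Sym^{d-|\mu|}(V_k\otimes V^{\vee})\otimes S_\mu(V_k)\bigr).$$
Both arguments are polynomial $G$-modules of degree $d$, so this rational $\bR\Hom$ coincides with the one computed in the highest weight category $\Rep(\Z,G)^{d}_{\ge 0}\simeq\pol^d_\Z[\dots]$ (this is a standard fact; alternatively it can be extracted from the adjunction between $\bR\pi_d$ and the fully faithful $i^{d}_{\ge 0}$ of Section~\ref{ssec:reps_of_GL_n} via \eqref{i^d(W_lambda(V))}). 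The source $\Lambda^{\lambda'}(V_k)$ is standardly (Weyl-)filtered: a one-column exterior power satisfies $\Lambda^{a}=W_{(1^{a})}$, hence $\Lambda^{\lambda'}=W_{(1^{\lambda'_1})}\boxtimes\cdots\boxtimes W_{(1^{\lambda'_{\lambda_1}})}$ has a $W$-filtration by Theorem~\ref{th:Littlewood-Richardson}(2), and so does its value at $V_k$. The target is costandardly (Schur-)filtered: $V^{\vee}$ is a trivial $G$-module of rank $n$, so $\Sym^{e}(V_k\otimes V^{\vee})\cong\bigoplus_{|\alpha|=e}\Sym^{\alpha_1}(V_k)\otimes\cdots\otimes\Sym^{\alpha_n}(V_k)$, and since $\Sym^{a}=S_{(a)}$ each summand twisted by $S_\mu(V_k)$ is the value at $V_k$ of $S_{(\alpha_1)}\boxtimes\cdots\boxtimes S_{(\alpha_n)}\boxtimes S_\mu$, which carries an $S$-filtration by iterated use of Theorem~\ref{th:Littlewood-Richardson}(1). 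Then Lemma~\ref{lem:criterion_for_cC^Delta} (the implication $(i)\Rightarrow(ii)$), applied to each costandard subquotient of the target followed by a dévissage, shows that the $\bR\Hom$ above is concentrated in degree $0$; for $d=|\mu|$ this is even more immediate, since $\Lambda^{\lambda'}(V_k)$ is standardly filtered and $S_\mu(V_k)$ is costandard, so one invokes Proposition~\ref{prop:costandard}(ii) directly. Summing over $d$ and over $\lambda\in\cP(n-k,k;d)$ would then give \eqref{higher_exts_vanishing_cE(k,n)}.

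The step I expect to be the main obstacle is the case $d>|\mu|$: one has to verify that twisting $S_\mu(V_k)$ by $\Sym^{d-|\mu|}(V_k\otimes V^{\vee})$ preserves being costandardly filtered, which is exactly where the characteristic-free Littlewood--Richardson and Cauchy filtrations are needed, and one also has to be slightly careful to identify the rational $\Ext_G$ with the $\Ext$ in the polynomial functor category for modules of equal degree. By contrast, the cases $d\le|\mu|$ are formal consequences of the semi-orthogonal decomposition of Theorem~\ref{th:SOD_of_Gr} and of the highest weight structure on $\Rep(\Z,G)^{|\mu|}_{[0,n-k]}$ already in place.
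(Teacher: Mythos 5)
Your proof is correct and follows essentially the same route as the paper: decompose $\cE(k,n)$ via \eqref{expression_for_cE(k,n)}, split according to $d$ versus $|\mu|$, use Lemma \ref{lem:semi-orthogonality} to reduce to a rational $\Ext_{GL_k}$ computation, and kill the higher $\Ext$ by noting that $\Sym^{d-|\mu|}(V_k\otimes V^{\vee})\otimes S_{\mu}(V_k)$ is costandardly filtered by Theorem \ref{th:Littlewood-Richardson}. The only cosmetic difference is the last step, where the paper cites Proposition \ref{prop:tilting_E(m,n;d)} for the vanishing of $\Ext^{>0}_{GL_k}(E(n-k,k;d),-)$ against costandardly filtered modules, while you unwind this directly by observing that $\Lambda^{\lambda'}(V_k)$ is Weyl-filtered and invoking the standard/costandard $\Ext$-orthogonality of the highest weight structure.
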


\begin{proof}By \eqref{expression_for_cE(k,n)}, we have to show that
$$\Ext^{>0}(\Phi^d_{[0,n-k]}(E(n-k,k;d)),S_{\mu}(\cF))=0.$$ Put $d':=|\mu|.$ If $d'>d,$ then by Lemma \ref{lem:semi-orthogonality} (i), we have that
$$\Ext^{*}(\Phi^d_{[0,n-k]}(E(n-k,k;d)),S_{\mu}(\cF))=0.$$

Now assume that $d'\leq d.$ Then by Lemma \ref{lem:semi-orthogonality} (ii) we have
\begin{multline}\label{higher_exts_S_mu(cF)}\Ext^{>0}(\Phi^d_{[0,n-k]}(E(n-k,k;d)),S_{\mu}(\cF))\cong\\ \Ext^{>0}_{GL_k}(E(n-k,k;d),\Sym^{d-d'}(V_k\otimes V^{\vee})\otimes S_{\mu}(V_k)).\end{multline}
By Theorem \ref{th:Littlewood-Richardson} 1), the $GL_k$-module $\Sym^{d-d'}(V_k\otimes V^{\vee})\otimes S_{\mu}(V_k)$ is costandardly filtered. Hence, by Proposition \ref{prop:tilting_E(m,n;d)} we have that the RHS of \eqref{higher_exts_S_mu(cF)} equals to zero. This proves lemma.\end{proof}

\begin{theo}\label{th:tilting_cE(k,n)_on_X}The vector bundle $\cE(k,n)$ is a tilting object of $D^b(X).$\end{theo}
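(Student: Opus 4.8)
The plan is to verify the three conditions of Definition \ref{def:tilting_object_intro}: that $\cE(k,n)$ is a generator of $D^b(X)$, that $\Hom^i(\cE(k,n),\cE(k,n))=0$ for $i\ne 0$, and that $\Hom^0(\cE(k,n),\cE(k,n))$ is a finitely generated projective $\Z$-module.

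For the generation property I would use \eqref{expression_for_cE(k,n)}, which presents $\cE(k,n)=\bigoplus_{d=0}^{k(n-k)}\Phi^d_{[0,n-k]}(E(n-k,k;d))$. By Proposition \ref{prop:tilting_E(m,n;d)} each $E(n-k,k;d)$ is a tilting, in particular generating, object of $D^b(\Rep(\Z,GL_k)^d_{[0,n-k]})$; since $\Phi^d_{[0,n-k]}$ is fully faithful (Theorem \ref{th:SOD_of_Gr}), the summand $\Phi^d_{[0,n-k]}(E(n-k,k;d))$ classically generates the component $\im(\Phi^d_{[0,n-k]})$. As these components form a semi-orthogonal decomposition of $D^b(X)$ (Theorem \ref{th:SOD_of_Gr}), their direct sum $\cE(k,n)$ generates $D^b(X)$.

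The main point is the vanishing of higher self-extensions. Since $\cE(k,n)$ is a vector bundle, $\Hom^i(\cE(k,n),\cE(k,n))=0$ automatically for $i<0$, so it remains to show $\Ext^{>0}(\cE(k,n),\cE(k,n))=0$, i.e. $\Ext^{>0}(\cE(k,n),\Lambda^{\beta'}(\cF))=0$ for each $\beta\in\cP(n-k,k)$. The key observation is that, as a strict polynomial functor, $\Lambda^{\beta'}=\Lambda^{\beta_1'}\boxtimes\dots\boxtimes\Lambda^{\beta_{\beta_1}'}=S_{(1^{\beta_1'})}\boxtimes\dots\boxtimes S_{(1^{\beta_{\beta_1}'})}$ is an external tensor product of $\beta_1\le n-k$ single-column functors; applying Theorem \ref{th:Littlewood-Richardson} 1) repeatedly, it admits a finite filtration with subquotients isomorphic to direct sums of Schur functors $S_\mu$, and because each factor is a single column the Littlewood--Richardson (Pieri) rule forces every such $\mu$ to satisfy $\mu_1\le\beta_1\le n-k$ and $|\mu|=|\beta|$. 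Evaluating this filtration on $\cF$, which is exact (evaluation of strict polynomial functors at a finitely generated free module is exact, and $\cF$ is locally free), produces a filtration of $\Lambda^{\beta'}(\cF)$ whose subquotients are direct sums of the $S_\mu(\cF)$; those with $l(\mu)>k$ vanish since $\cF$ has rank $k$, and the remaining ones have $\mu\in\cP(n-k,k)$. By Lemma \ref{lem:higher_exts_vanishing_cE(k,n)}, $\Ext^{>0}(\cE(k,n),S_\mu(\cF))=0$ for all such $\mu$, so the long exact sequences attached to the filtration give $\Ext^{>0}(\cE(k,n),\Lambda^{\beta'}(\cF))=0$, and summing over $\beta$ yields $\Ext^{>0}(\cE(k,n),\cE(k,n))=0$.

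Finally, $X$ is smooth and proper over $\Z$, so $D^b(X)=\Perf(X)$ is a proper triangulated category and $\bR\Hom(\cE(k,n),\cE(k,n))\in\Perf(\Z)$; combined with the vanishing just proved, this complex is concentrated in degree $0$, so $\Hom^0(\cE(k,n),\cE(k,n))$ is a finitely generated projective $\Z$-module, which completes the verification. The main obstacle is the third paragraph, namely exhibiting the $S_\mu(\cF)$-filtration of each summand $\Lambda^{\beta'}(\cF)$ together with the column bound that keeps all the $\mu$ inside $\cP(n-k,k)$, so that Lemma \ref{lem:higher_exts_vanishing_cE(k,n)} applies; everything else is formal given the earlier results.
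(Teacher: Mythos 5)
Your argument is correct and, on the two points the paper actually verifies, it is essentially the paper's proof: generation comes from \eqref{expression_for_cE(k,n)}, Proposition \ref{prop:tilting_E(m,n;d)} and the semi-orthogonal decomposition of Theorem \ref{th:SOD_of_Gr}, and the vanishing of higher self-extensions comes from a Littlewood--Richardson filtration of $\cE(k,n)$ with subquotients $S_{\mu}(\cF),$ $\mu\in\cP(n-k,k),$ combined with Lemma \ref{lem:higher_exts_vanishing_cE(k,n)}. The paper asserts the existence of this filtration in one line, while you supply the Pieri-type bookkeeping (at most $\beta_1\leq n-k$ columns, and $S_{\mu}(\cF)=0$ when $l(\mu)>k=\rk\cF$) that keeps every $\mu$ inside $\cP(n-k,k)$; that bookkeeping is correct and is a genuine improvement in detail.

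The one slip is in your final paragraph: a complex in $\Perf(\Z)$ whose cohomology is concentrated in degree $0$ need not have projective $H^0$ (for instance $\Z/2\simeq[\Z\stackrel{2}{\to}\Z]$ is perfect over $\Z$), so properness plus the vanishing of $\Ext^{\ne 0}$ does not by itself give condition (ii) of Definition \ref{def:tilting_object_intro}. The conclusion is still easy to secure: $\Hom(\cE(k,n),\cE(k,n))=H^0(X,\cE(k,n)^{\vee}\otimes\cE(k,n))$ is a finitely generated $\Z$-module, and it is torsion-free because $\cE(k,n)^{\vee}\otimes\cE(k,n)$ is locally free on the $\Z$-flat scheme $X,$ hence it is free; alternatively, projectivity can be read off from the description of these Hom groups given by Lemma \ref{lem:semi-orthogonality} (degree-zero invariants inside a finitely generated free $\Z$-module). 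With that one-line repair your verification of all three conditions is complete.
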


\begin{proof}We first show that $\cE(k,n)$ is a generator of $D^b(X).$ Indeed, by Proposition \ref{prop:tilting_E(m,n;d)}, the object $E(n-k,k;d)$ generates $D^b(\Rep(\Z,GL_k)^d_{[0,n-k]}).$ By Theorem \ref{th:SOD_of_Gr} the categories $\im(\Phi^d_{[0,n-k]})$ generate $D^b(X).$ Hence, it follows from \eqref{expression_for_cE(k,n)} that $\cE(k,n)$ generates $D^b(X).$

To show that $\Ext^{>0}(\cE(k,n),\cE(k,n))=0,$ let us note that (by Theorem \ref{th:Littlewood-Richardson} 1)) the object $\cE(k,n)$ has a filtration with subquotients of the form $S_{\mu}(\cF),$ $\mu\in\cP(n-k,k).$ Then, the assertion follows from Lemma \ref{lem:higher_exts_vanishing_cE(k,n)}. Theorem is proved.
\end{proof}

We put
$$\cB(k,n):=\End_{D^b(X)}(\cE(k,n)).$$
Clearly, $\cB(k,n)$ is a finite projective algebra over $\Z.$ By Theorem \ref{th:tilting_cE(k,n)_on_X}, we have a natural equivalence
\begin{equation}\label{equiv_D^b(X)_D^b(Rep)}D^b(X)\cong D^b(\Rep(\Z,\cB(k,n))).\end{equation}

\begin{theo}\label{th:B(k,n)_quasi_hereditary}The algebra $\cB(k,n)$ (resp. the category $\Rep(\Z,\cB(k,n))$) has two natural structures of a split quasi-hereditary algebra (resp. of a highest weight category).

1) In the first structure, the standard (resp. costandard) objects of $\Rep(\Z,\cB(k,n))$ correspond under the equivalence \eqref{equiv_D^b(X)_D^b(Rep)} exactly to $S_{\lambda}(\cF)$ (resp. $S_{\mu}(\cQ)\otimes\omega_X[k(n-k)-|\mu|]$), where $\lambda\in\cP(n-k,k)$ (resp. $\mu\in\cP(k,n-k)$).

2) In the second structure, the standard (resp. costandard) objects of $\Rep(\Z,\cB(k,n))$ correspond under the equivalence \eqref{equiv_D^b(X)_D^b(Rep)} exactly to $W_{\mu}(\cQ)\otimes\omega_X[k(n-k)-|\mu|]$ (resp. $W_{\lambda}(\cF)\otimes\omega_X[k(n-k)]$), where $\mu\in\cP(k,n-k)$ (resp. $\lambda\in\cP(n-k,k)$).\end{theo}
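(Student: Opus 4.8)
The plan is to exhibit $\cB(k,n)$ as a gluing of split quasi-hereditary algebras in the sense of Definition~\ref{def:gluing_of_A_i_via_M} and apply Theorem~\ref{th:gluing_of_quasi_hereditary_alg}.

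\textbf{Step 1: the gluing presentation.} For $0\le d\le k(n-k)$ set $\cE_d:=\Phi^d_{[0,n-k]}(E(n-k,k;d))$ and $A_d:=\End_X(\cE_d)\cong\End(E(n-k,k;d))$; by Proposition~\ref{prop:tilting_E(m,n;d)} each $A_d$ is split quasi-hereditary, $\Perf(A_d)\simeq\im(\Phi^d_{[0,n-k]})\simeq D^b(\Rep(\Z,GL_k)^d_{[0,n-k]})$, and under $N\mapsto N\otimes^{\bL}_{A_d}E(n-k,k;d)$ the standard $A_d$-modules correspond to $S_\lambda(V_k)$, $\lambda\in\cP(n-k,k;d)$. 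By \eqref{expression_for_cE(k,n)} we have $\cE(k,n)=\bigoplus_d\cE_d$, so $\cB(k,n)=\bigoplus_{d,d'}\Hom_X(\cE_d,\cE_{d'})$; by Lemma~\ref{lem:semi-orthogonality}(i) the summands with $d<d'$ vanish. Thus, ordering the blocks by decreasing $d$, $\cB(k,n)$ is precisely the glued algebra $A_{k(n-k)}\times_M\dots\times_M A_0$, with bimodules $M_{d,d'}:=\Hom_X(\cE_d,\cE_{d'})$ (for $d>d'$), regarded as an $(A_{d'},A_d)$-bimodule, and structure maps $m$ induced by composition in $D^b(X)$ (associativity being automatic).

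\textbf{Step 2: verifying condition $(\star)$.} This is the main point. I must show each $M_{d,d'}$ ($d>d'$) is standardly filtered; by Lemma~\ref{lem:equivalence_of_5_conditions_on_bimodule} it suffices to check its condition (a): for every $\lambda\in\cP(n-k,k;d')$ the $A_d$-module $\Delta_{A_{d'}}(\lambda)\otimes_{A_{d'}}M_{d,d'}$ is standardly filtered and concentrated in degree $0$. Since $\cE_{d'}$ is a tilting generator of $\im(\Phi^{d'}_{[0,n-k]})$ we have $\Delta_{A_{d'}}(\lambda)\otimes^{\bL}_{A_{d'}}\cE_{d'}\cong S_\lambda(\cF)$, and since $\cE_d$ is perfect,
$$\Delta_{A_{d'}}(\lambda)\otimes^{\bL}_{A_{d'}}M_{d,d'}\;\cong\;\bR\Hom_X\big(\cE_d,\,\Delta_{A_{d'}}(\lambda)\otimes^{\bL}_{A_{d'}}\cE_{d'}\big)\;\cong\;\bR\Hom_X(\cE_d,S_\lambda(\cF)).$$
By Lemma~\ref{lem:semi-orthogonality}(ii) this equals $\bR\Hom_{GL_k}\big(E(n-k,k;d),\ \Sym^{d-d'}(V_k\otimes V^{\vee})\otimes S_\lambda(V_k)\big)$. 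The $GL_k$-module $\Sym^{d-d'}(V_k\otimes V^{\vee})\otimes S_\lambda(V_k)$ is costandardly filtered by Theorem~\ref{th:Littlewood-Richardson}(1) (exactly as in the proof of Lemma~\ref{lem:higher_exts_vanishing_cE(k,n)}), and $\bR\Hom_{GL_k}(E(n-k,k;d),-)$ carries costandardly filtered modules to standardly filtered $A_d$-modules concentrated in degree $0$ (because $E(n-k,k;d)$ is tilting, hence has no higher Ext into costandard objects, and by Proposition~\ref{prop:tilting_E(m,n;d)} the resulting equivalence matches the $S_\mu(V_k)$ with the standard $A_d$-modules; then dévissage along a costandard filtration). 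Hence $(\star)$ holds, and Theorem~\ref{th:gluing_of_quasi_hereditary_alg} endows $\cB(k,n)$ with its two split quasi-hereditary structures.

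\textbf{Step 3: the first structure.} The functor $G_d^{*}$ of Theorem~\ref{th:gluing_of_quasi_hereditary_alg} is the left adjoint of restriction to the block $A_d$, i.e. of $\bR\Hom_X(\cE_d,-)$, hence $G_d^{*}(N)\simeq N\otimes^{\bL}_{A_d}\cE_d$; this is exactly the fully faithful inclusion $\Phi^d_{[0,n-k]}\colon D^b(\Rep(\Z,GL_k)^d_{[0,n-k]})\hookrightarrow D^b(X)$. Therefore the standard objects of the first structure are $\bigsqcup_d\Phi^d_{[0,n-k]}\big(\{S_\lambda(V_k)\}_{\lambda\in\cP(n-k,k;d)}\big)=\{S_\lambda(\cF)\}_{\lambda\in\cP(n-k,k)}$, using Proposition~\ref{prop:tilting_E(m,n;d)}. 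By Proposition~\ref{prop:D^b_of_highest_weight_category}(3) the costandard objects form the left dual exceptional collection; for any full exceptional collection the left dual is $\mathbb{S}_X$ applied to the right dual, where $\mathbb{S}_X=(-)\otimes\omega_X[k(n-k)]$ is the Serre functor of $D^b(X)$, and by Theorem~\ref{th:exceptional_on_Gr}(1) the right dual of $\{S_\lambda(\cF)\}_{\lambda\in\cP(n-k,k)}$ is $\{S_\mu(\cQ)[-|\mu|]\}_{\mu\in\cP(k,n-k)}$. Hence the costandard objects of the first structure are $\{S_\mu(\cQ)\otimes\omega_X[k(n-k)-|\mu|]\}_{\mu\in\cP(k,n-k)}$, which proves part 1).

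\textbf{Step 4: the second structure, and the main obstacle.} By Theorem~\ref{th:gluing_of_quasi_hereditary_alg}(2) the second structure has standard objects $F_d(\Delta_{A_d})$ with the block order reversed; one checks that $F_d$ is fully faithful with essential image $\{\cG\in D^b(X)\mid\bR\Hom_X(\cE_{d'},\cG)=0\ \text{for all}\ d'\ne d\}$, and that $\bR\Hom_X(\cE_d,-)$ is a quasi-inverse of $F_d$ on this image. The claim is $F_d(\Delta_{A_d}(\lambda))\cong W_{\lambda'}(\cQ)\otimes\omega_X[k(n-k)-|\lambda|]$ for $\lambda\in\cP(n-k,k;d)$ (so $\lambda'\in\cP(k,n-k;d)$). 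That $W_{\lambda'}(\cQ)\otimes\omega_X[k(n-k)-d]$ lies in the image of $F_d$ follows by applying Serre duality on $X$ to $\bR\Hom_X(\cE_{d'},-)$ of it, together with the cross-orthogonality $\bR\Hom_X(\im(\Psi^{d''}_{[0,k]}),\im(\Phi^{d'}_{[0,n-k]}))=0$ for $d''\ne d'$, which is a formal consequence of the $\Psi$-decomposition being the right dual of the $\Phi$-decomposition (Theorem~\ref{th:right_dual_decomposition}), since $W_{\lambda'}(\cQ)\in\im(\Psi^d_{[0,k]})$; identifying the quasi-inverse $\bR\Hom_X(\cE_d,-)$ of it with $\Delta_{A_d}(\lambda)$ uses the Koszul-duality comparison of the $d$-th blocks, Theorem~\ref{th:right_dual_decomposition}(2) and \eqref{Lambda^d_n,m(W_lambda(V_n))}. (Alternatively — and probably more cleanly — one runs Steps 1--3 for $\cB(k,n)^{op}\cong\End_X\big(\bR\shHom_X(\cE(k,n),\omega_X)\big)$ and transports the first structure back through the contravariant local-duality equivalence $\bR\shHom_X(-,\omega_X)$ and the $\Z$-linear duality on modules, as in the proof of Theorem~\ref{th:gluing_of_quasi_hereditary_alg}(2).) Once the standard objects of the second structure are identified as $\{W_\mu(\cQ)\otimes\omega_X[k(n-k)-|\mu|]\}_{\mu\in\cP(k,n-k)}$, its costandard objects are forced: this collection equals $\mathbb{S}_X$ applied to the right dual of $\{W_\lambda(\cF)\}_{\lambda\in\cP(n-k,k)}$ by Theorem~\ref{th:exceptional_on_Gr}(2), hence equals the left dual of $\{W_\lambda(\cF)\}$, so its own left dual — the costandard collection by Proposition~\ref{prop:D^b_of_highest_weight_category}(3) — is $\mathbb{S}_X(\{W_\lambda(\cF)\})=\{W_\lambda(\cF)\otimes\omega_X[k(n-k)]\}_{\lambda\in\cP(n-k,k)}$, proving part 2). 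The technical heart is Step 2, but it reduces cleanly to Littlewood--Richardson and the tilting properties of $E(n-k,k;d)$; the genuinely delicate part is Step 4 — matching the abstract reversed-block, dualized structure produced by Theorem~\ref{th:gluing_of_quasi_hereditary_alg}(2) with the explicit objects $W_\mu(\cQ)\otimes\omega_X[\,\cdot\,]$, pinning down the transpose $\lambda\leftrightarrow\lambda'$ and the cohomological shift, and controlling the local-duality dictionary.
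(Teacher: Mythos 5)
Your proposal is essentially the paper's own proof: you present $\cB(k,n)$ as the glued algebra $A_{k(n-k)}\times_M\dots\times_M A_0$ with $A_d=\End(E(n-k,k;d))$ and $M_{d,d'}\cong\Hom_{GL_k}(N_d,N_{d'}\otimes\Sym^{d-d'}(V_k\otimes V^{\vee}))$, verify condition $(\star)$ via Lemma \ref{lem:equivalence_of_5_conditions_on_bimodule}, Littlewood--Richardson and Proposition \ref{prop:tilting_E(m,n;d)}, and then read off the standard/costandard objects using Lemma \ref{lem:semi-orthogonality}, Theorem \ref{th:exceptional_on_Gr} and Serre duality, exactly as the paper does. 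Two remarks. In Step 2 your d\'evissage is incomplete as stated: the costandard subquotients of $\Sym^{d-d'}(V_k\otimes V^{\vee})\otimes S_{\lambda}(V_k)$ include $S_{\nu}(V_k)$ with $\nu_1>n-k$, which do not correspond to standard $A_d$-modules; you additionally need $\bR\Hom_{GL_k}(E(n-k,k;d),S_{\nu}(V_k))=0$ for such $\nu$, which holds because these $S_{\nu}$ lie in the right orthogonal to $D^b(\Rep(\Z,GL_k)^d_{[0,n-k]})$ inside $D^b(\Rep(\Z,GL_k)^d_{\geq 0})$ (this is precisely what the paper's projection $\pi^{d}_{[0,n-k]}$, right adjoint to the inclusion, accomplishes in its commutative diagram); with that observation the step closes. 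In Step 4 the paper is more economical: it notes that, block by block, $\{\Delta_{(2)}(\lambda)\}_{\lambda\in\cP(n-k,k;d)}$ is the right dual exceptional collection of $\{\nabla_{(1)}(\lambda)\}_{\lambda\in\cP(n-k,k;d)}$ (directly from Theorem \ref{th:gluing_of_quasi_hereditary_alg}), so from part 1) the $\Delta_{(2)}$'s are forced to be $W_{\lambda'}(\cQ)\otimes\omega_X[k(n-k)-|\lambda|]$, avoiding your explicit identification of the essential image of $F_d$ and the Koszul-duality matching; your parenthetical alternative via $\cB(k,n)^{op}$ is in spirit how the second structure is produced inside Theorem \ref{th:gluing_of_quasi_hereditary_alg} itself, and is also fine.
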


\begin{proof}This result is essentially a straightforward application of Theorem \ref{th:gluing_of_quasi_hereditary_alg}.

We first describe $\cB(k,n)$ as a gluing. For convenience, we denote by $N_d$ the object $E(n-k,k;d)\in \Rep(\Z,GL_k)^d_{[0,n-k]}.$ Let us put $$A_d:=\End_{\Rep(\Z,GL_k)^d_{[0,n-k]}}(N_d),\quad 0\leq d\leq k(n-k).$$
Further, we put
$$M_{d_1,d_2}:=\Hom_{GL_k}(N_{d_1},N_{d_2}\otimes \Sym^{d_1-d_2}(V_k\otimes V^{\vee}))\in\Rep(\Z,A_{d_1}\otimes A_{d_2}^{op}),$$
where $k(n-k)\geq d_1>d_2\geq 0.$ The products
$$\Sym^{d_1-d_2}(V_k\otimes V^{\vee})\otimes\Sym^{d_2-d_3}(V_k\otimes V^{\vee})\to \Sym^{d_1-d_3}(V_k\otimes V^{\vee})$$
induce the morphisms $$m_{d_1,d_2,d_3}:M_{d_2,d_3}\otimes_{A_{d_2}}M_{d_1,d_2}\to M_{d_1,d_3},\quad k(n-k)\geq d_1>d_2>d_3\geq 0,$$
satisfying the associativity condition. By Lemma \ref{lem:semi-orthogonality}, we have a natural isomorphism
$$\cB(k,n)\cong \widetilde{A}:=A_{k(n-k)}\times_M A_{k(n-k)-1}\dots\times_M A_0,$$
where the gluing was introduced in Definition \ref{def:gluing_of_A_i_via_M}.

{\noindent{\bf Claim.}} {\it The algebras $A_i$ and bimodules $M_{ij}$ satisfy the condition $(\star)$ of Theorem \ref{th:gluing_of_quasi_hereditary_alg}.}

\begin{proof}By Proposition \ref{prop:tilting_E(m,n;d)} 2), The standard objects in $\Rep(\Z,A_d)$ correspond to $S_{\lambda}(V),$ $\lambda\in\cP(n-k,k;d),$
under the equivalence $D^b(\Rep(\Z,A_d))\cong D^b(\Rep(\Z,GL_k)^d_{[0,n-k]}).$ Further, we have a commutative diagram of functors
\begin{equation}\label{commutative_diagram}
\begin{CD}
D^b(\Rep(\Z,A_{d_2})) @>-\stackrel{\bL}{\otimes}_{A_{d_2}}M_{d_1,d_2}>> D^b(\Rep(\Z,A_{d_1}))\\
@VVV @VVV\\
D^b(\Rep(\Z,GL_k)^{d_2}_{[0,n-k]}) @>\pi^{d_1}_{[0,n-k]}(-\otimes\Sym^{d_1-d_2}(V_k\otimes V^{\vee}))>> D^b(\Rep(\Z,GL_k)^{d_1}_{[0,n-k]}),
\end{CD}
\end{equation}
where $k(n-k)\geq d_1>d_2\geq 0,$ and the functor
\begin{equation}\label{pi^d_[0,n-k]}\pi^{d}_{[0,n-k]}:D^b(\Rep(\Z,GL_k)^d_{\geq 0})\to D^b(\Rep(\Z,GL_k)^{d}_{[0,n-k]}).\end{equation}
is right adjoint to the inclusion. Further, for each $0\leq d\leq k(n-k),$ By Proposition \ref{prop:tilting_E(m,n;d)} the full subcategory $\Rep(\Z,A_d)^{\widetilde{\Delta}}\subset D^b(\Rep(\Z,A_d))$ corresponds to $(\Rep(\Z,GL_k)^d_{[0,n-k]})^{\widetilde{\nabla}}\subset D^b(\Rep(\Z,GL_k)^d_{[0,n-k]})$ under the natural equivalence.
By Theorem \ref{th:Littlewood-Richardson}, the functor $$-\otimes\Sym^{d_1-d_2}(V_k\otimes V^{\vee}):D^b(\Rep(\Z,GL_k)^{d_2}_{[0,n-k]})\to D^b(\Rep(\Z,GL_k)^{d_1}_{\geq 0})$$ takes $(\Rep(\Z,GL_k)^{d_2}_{[0,n-k]})^{\widetilde{\nabla}}$ to $(\Rep(\Z,GL_k)^{d_1}_{\geq 0})^{\widetilde{\nabla}},$ and the functor \eqref{pi^d_[0,n-k]} for $d=d_1$ takes
$(\Rep(\Z,GL_k)^{d_1}_{\geq 0})^{\widetilde{\nabla}}$ to $(\Rep(\Z,GL_k)^{d_1}_{[0,n-k]})^{\widetilde{\nabla}}.$ Therefore, the commutativity of diagram \eqref{commutative_diagram} implies that the functor $-\stackrel{\bL}{\otimes}_{A_{d_2}}M_{d_1,d_2}$ takes
$\Rep(\Z,A_{d_2})^{\widetilde{\Delta}}$ to $\Rep(\Z,A_{d_1})^{\widetilde{\Delta}}.$ By Lemma \ref{lem:equivalence_of_5_conditions_on_bimodule} this is equivalent to the condition $(\star)$ of Theorem \ref{th:gluing_of_quasi_hereditary_alg}. This proves Claim.
\end{proof}

It follows from Claim that Theorem \ref{th:gluing_of_quasi_hereditary_alg} can be applied to the algebra $\cB(k,n).$

1) We check that the standard $\cB(k,n)$-modules in the first highest weight structure correspond to $S_{\lambda}(\cF).$ Indeed, by Lemma \ref{lem:semi-orthogonality}
$$\bR\Hom(\Phi^d_{[0,n-k]}(N_d),S_{\lambda}(\cF))=\begin{cases}0 & \text{ if }d<|\lambda|;\\
\bR\Hom_{GL_k}(N_d,\Sym^{d-|\lambda|}(V_k\otimes V^{\vee})\otimes S_{\lambda}(V)) & \text{ if }d\geq |\lambda|.\end{cases}$$
But we have that
$$\bR\Hom_{GL_k}(N_d,\Sym^{d-|\lambda|}(V_k\otimes V^{\vee})\otimes S_{\lambda}(V))\cong \Delta(\lambda)\otimes_{A_{|\lambda|}}M_{d,|\lambda|}.$$
Hence the objects $S_{\lambda}(\cF),$ $\lambda\in\cP(n-k,k),$ correspond exactly to the standard objects in the first highest weight structure.

By Theorem \ref{th:exceptional_on_Gr}, the right dual of the full exceptional collection $\{S_{\lambda}(\cF)\}_{\lambda\in\cP(n-k,k)}$ is exactly $\{S_{\mu}(\cQ)[-|\mu|]\}_{\mu\in\cP(k,n-k)}.$ By Serre duality, the left dual collection is $$\{S_{\mu}(\cQ)\otimes\omega_X[k(n-k)-|\mu|]\}_{\mu\in\cP(k,n-k)}.$$ This proves 1).

2) It follows from Theorem \ref{th:gluing_of_quasi_hereditary_alg} that for each $0\leq d\leq k(n-k)$ the (not full!) exceptional collection $\{\Delta_{(2)}(\lambda)\}_{\lambda\in\cP(n-k,k;d)}$ is right dual to $\{\nabla_{(1)}(\lambda)\}_{\lambda\in\cP(n-k,k;d)}.$ We know from 1) that the object $\nabla_{(1)}(\lambda)\in D^b(\Rep(\Z,\cB(k,n)))$ corresponds to $S_{\lambda'}(\cQ)\otimes\omega_X[k(n-k)-|\mu|]\in D^b(X).$ It follows that the object $\Delta_{(2)}(\lambda)\in D^b(\Rep(\Z,\cB(k,n)))$ corresponds to $W_{\lambda'}(\cQ)\otimes\omega_X[k(n-k)-|\mu|]\in D^b(X).$
Finally, it follows from Theorem \ref{th:right_dual_decomposition} that the left dual to the exceptional collection $\{W_{\mu}(\cQ)\otimes\omega_X[k(n-k)-|\mu|]\}_{\mu\in\cP(k,n-k)}$ is exactly $\{W_{\lambda}(\cF)\otimes\omega_X[k(n-k)]\}_{\lambda\in\cP(n-k,k)}.$ This proves theorem.
\end{proof}


\begin{thebibliography}{BLVdB}

\bibitem[BFK]{BFK}M.~Ballard, D.~Favero, L.~Katzarkov, Variation of geometric invariant theory quotients and derived
categories, arXiv:1203.6643 (preprint).


\bibitem[BFS]{BFS}C.P.~Bendel, E.M.~Friedlander, A.~Suslin, Infinitesimal $1$-parameter subgroups
and cohomology, J. Amer. Math. Soc. 10 (1997), 693-728.

\bibitem[Bo]{Bo}G.~Boffi, The universal form of the Littlewood-Richardson rule. Adv. in Math. 68 (1988), no. 1, 40-63.

\bibitem[B]{B}A.~Bondal, Representations of associative algebras and coherent sheaves, (Russian) Izv. Akad. Nauk SSSR Ser.
Mat. 53 (1989), no. 1, 25-44; translation in Math. USSR-Izv. 34 (1990), no. 1, 23-42.

\bibitem[BK]{BK}A.~Bondal, M.~Kapranov, Representable functors, Serre functors, and reconstructions, (Russian) Izv. Akad. Nauk SSSR Ser. Mat. 53 (1989), no. 6, 1183-1205, 1337; translation in Math. USSR-Izv. 35 (1990), no. 3, 519-541.

\bibitem[BVdB]{BVdB}A.~Bondal, M.~Van~den~Bergh, Generators and representability of functors in commutative and noncommutative geometry.  Mosc. Math. J. 3 (2003), no. 1, 1-36.

\bibitem[BLVdB]{BLVdB}R.-O.~Buchweitz, G.J.~Leuschke, M.~Van~den~Bergh, On the derived category of Grassmannians in arbitrary characteristic. Compos. Math. 151 (2015), no. 7, 1242-1264.

\bibitem[CPS1]{CPS}E.~Cline, B.~Parshall, L.~Scott, Finite-dimensional algebras and highest weight categories. J. Reine Angew. Math. 391 (1988), 85-99.

\bibitem[CPS2]{CPS2}E.~Cline, B.~Parshall, L.~Scott, Integral and graded quasi-hereditary algebras. I.
J. Algebra 131 (1990), no. 1, 126-160.

\bibitem[D]{D}S.~Donkin, On tilting modules for algebraic groups.
Math. Z. 212 (1993), no. 1, 39-60.

\bibitem[FS]{FS}E.M.~Friedlander, A.~Suslin. Cohomology of finite group schemes over a field, Invent. Math. 127
(1997), no. 2, 209-270.

\bibitem[Gr]{Gr}J.~A. Green, Polynomial representations of $GL_n,$ Lecture Notes in Mathematics, 830, Springer, Berlin,
1980.


\bibitem[Kap]{Kap}M.M.~Kapranov, On the derived categories of coherent sheaves on some homogeneous spaces. Invent. Math. 92 (1988), no. 3, 479-508.




\bibitem[Kr]{Kr}H.~Krause, Koszul, Ringel and Serre duality for strict polynomial functors. Compos. Math. 149 (2013), no. 6, 996-1018.

\bibitem[LS]{LS}V.~Lunts, O.~Schn\"urer, Smoothness of equivariant derived categories. Proc. Lond. Math. Soc. (3) 108 (2014), no. 5, 1226-1276.




\bibitem[Ro]{Ro}R.~Rouquier,$q$-Schur algebras and complex reflection groups. Mosc. Math. J. 8 (2008), no. 1, 119-158.

\bibitem[S]{S}L.~Scott, Simulating algebraic geometry with algebra. I. The algebraic theory of derived categories. The Arcata Conference on Representations of Finite Groups (Arcata, Calif., 1986), 271-281, Proc. Sympos. Pure Math., 47, Part 2, Amer. Math. Soc., Providence, RI, 1987.

\bibitem[TV]{TV}B.~To\"en, M.~Vaqui\'e, Moduli of objects in dg-categories. Ann. Sci. \'Ecole Norm. Sup. (4) 40 (2007), no. 3, 387-444.


\end{thebibliography}
\end{document}